\author{Andy Hammerlindl}
\title{Ergodic components of partially hyperbolic systems}
\def\saveenum{\xdef\@savedenum{\the\c@enumi\relax}}
\def\resetenum{\global\c@enumi\@savedenum}
\newcommand{\graph}{\operatorname{graph}}
\newcommand{\Aut}{\operatorname{Aut}}
\newcommand{\bbR}{\mathbb{R}}
\newcommand{\bbQ}{\mathbb{Q}}
\newcommand{\bbZ}{\mathbb{Z}}
\newcommand{\bbN}{\mathbb{N}}
\newcommand{\bbT}{\mathbb{T}}
\newcommand{\bbS}{\mathbb{S}^1}
\newcommand{\orbi}{\bbS/\bbZ_2}
    \newcommand{\tSig}{\tilde{\Sig}}
    \newcommand{\bbZd}{\mathbb{Z}^d}
    \newcommand{\bbRd}{\mathbb{R}^d}
    \newcommand{\bbQd}{\mathbb{Q}^d}
    \newcommand{\kbbZ}{\tfrac{1}{k} \bbZ}
    \newcommand{\tilh}{\tilde{h}}
    \newcommand{\bt}{\beta}
\newcommand{\subof}{\subset}
\newcommand{\ti}{\times}
\newcommand{\Es}{E^s}
\newcommand{\Ec}{E^c}
\newcommand{\Eu}{E^u}
\newcommand{\Ws}{W^s}
\newcommand{\Wc}{W^c}
\newcommand{\Wu}{W^u}
\newcommand{\Wcu}{W^{cu}}
\newcommand{\Wcs}{W^{cs}}
\newcommand{\inv}{^{-1}}
\newcommand{\tX}{\tilde{X}}
\newcommand{\tY}{\tilde{Y}}
\newcommand{\tx}{\tilde{x}}
\newcommand{\ty}{\tilde{y}}
\newcommand{\tq}{\tilde{q}}
\newcommand{\tp}{\tilde{p}}
\newcommand{\tM}{\tilde{M}}
\newcommand{\hM}{\hat{M}}
\newcommand{\tS}{\tilde{S}}
\newcommand{\tJ}{\tilde{J}}
\newcommand{\tf}{\tilde{f}}
\newcommand{\tN}{\tilde{N}}
\newcommand{\ep}{\epsilon}
\newcommand{\lam}{\lambda}
\newcommand{\Lam}{\Lambda}
\newcommand{\Gam}{\Gamma}
\newcommand{\gam}{\gamma}
\newcommand{\Sig}{\Sigma}
\newcommand{\sig}{\sigma}
\newcommand{\Fix}{\operatorname{Fix}}
\newcommand{\Homeo}{\operatorname{Homeo}}
\newcommand{\HPG}{\Homeo^+(\Gam)}
\newcommand{\HPR}{\Homeo^+(\bbR)}
\newcommand{\qandq}{\quad \text{and} \quad}
\newcommand{\dist}{\operatorname{dist}}
\newcommand{\id}{\operatorname{id}}
\newcommand{\supp}{\operatorname{supp}}
\newcommand{\Aff}{\operatorname{Aff}}
\newcommand{\Trans}{\operatorname{Trans}}
\newcommand{\Isom}{\operatorname{Isom}}
\newcommand{\fAB}{f_{AB}}
\numberwithin{equation}{section}
\newtheorem{thm}[equation]{Theorem}
\newtheorem{cor}[equation]{Corollary}
\newtheorem{lemma}[equation]{Lemma}
\newtheorem{prop}[equation]{Proposition}
\newtheorem{question}[equation]{\textbf{Question}}
\newtheorem*{questionA}{\textbf{Question}}
\newtheorem*{questionB}{\textbf{Question}}
\newtheorem{conjecture}[equation]{\textbf{Conjecture}}
\newtheorem*{conjecture1}{\textbf{Conjecture 1}}
\newtheorem*{conjecture2}{\textbf{Conjecture 2}}
\newtheorem*{conjecture3}{\textbf{Conjecture 3}}
\newtheorem{assumption}[equation]{\textbf{Assumption}}
\theoremstyle{remark}
\newtheorem*{remark} {\textbf{Remark}}
\newtheorem*{notation} {\textbf{Notation}}
\providecommand{\acknowledgement}{{\noindent \textbf{Acknowledgements}}\quad}
\begin{document}

\maketitle

%{\small
%{\sc disclaimer.}
%\ \ This paper is still in preparation.
%Please, contact me if you have any questions.
%Comments and suggestions for improvement are also most welcome.
%}
%
%\bigskip

\begin{abstract}
    This paper gives a complete classification of the possible ergodic
    decompositions for certain open families of volume-preserving partially
    hyperbolic diffeomorphisms.
    These families include systems with compact center leaves and
    perturbations of Anosov flows under conditions
    on the dimensions of the invariant subbundles.
    The paper further shows that the non-open accessibility classes form a $C^1$
    lamination and gives results about the accessibility classes
    of non-volume-preserving systems.
\end{abstract}

% Intro {{{1

%\tableofcontents

\medskip

\textbf{Note: this document has been modified slightly from
earlier preprints. The numbering of sections was changed to match
the published version and an erratum has been added to the end.}

\section{Introduction}

Invariant measures are important objects in the study of
dynamical systems.
Often, these measures are ergodic, allowing a single orbit
to express the global behaviour of the system.
However, this is not always the case.
For instance, a Hamiltonian system always possesses a
smooth invariant measure, but a generic smooth Hamiltonian
yields level sets on which the dynamics are not ergodic \cite{mm1974}.
Any invariant measure may be expressed as a linear combination
of ergodic measures
and while such a decomposition always exists,
it is not, in general, tractable to find it.
For partially hyperbolic systems,
there is a natural candidate for the
ergodic decomposition given by the accessibility classes of the system.
This paper analyzes certain families of partially hyperbolic systems,
characterizing the possible accessibility classes and
showing that these coincide with the ergodic components of any smooth
invariant measure.

\medskip

By the classical work of Hopf,
the geodesic flow on a surface of negative curvature is ergodic
\cite{hopf1939}.
Further, by the work Anosov and Sinai, the flow is
\emph{stably ergodic} meaning that
all nearby flows are also ergodic \cite{Anosov,as1967}.
Based on these techniques, Grayson, Pugh, and Shub showed
that the time-one map
of this geodesic flow is also stably ergodic
as a diffeomorphism \cite{grayson1994stably}.
To prove this, they observed two important properties.
The first property is \emph{partial hyperbolicity}.
A diffeomorphism $f$ is partially hyperbolic if 
there is an invariant splitting
of the tangent bundle of the phase space $M$ into three subbundles
\[
    TM = \Eu \oplus \Ec \oplus \Es
\]
such that vectors in the unstable bundle $\Eu$
are expanded by the derivative $Tf$,
vectors in the stable bundle $\Es$ are contracted,
and these dominate any expansion and
contraction of vectors in the center bundle $\Ec$.
(Appendix \ref{sec-define} gives a precise definition.)
The second property is \emph{accessibility}.
For a point $x \in M$, the accessibility class $AC(x)$
is the set of all points that can be reached from $x$ by a 
concatenation of paths, each tangent to either $\Es$ or $\Eu$.
A system is called accessible if its phase space consists of a single
accessibility class.
For the geodesic flow, the phase space $M$ is the unit tangent bundle of the
surface, $\Ec$ is the direction of the flow, and $\Es$ and $\Eu$ are given by the
horocycles.
Grayson, Pugh, and Shub demonstrated that any diffeomorphism near the
time-one map of the flow 
is both partially hyperbolic and accessible and used this to prove
its ergodicity.
This breakthrough was followed by a number of papers demonstrating stable
ergodicity for specific cases of partially hyperbolic systems
(see the surveys \cite{RHRHU-survey,wilkinson2010conservative})
and lead Pugh and Shub to formulate the following conjecture
\cite{pugh2000stable}.

\begin{conjecture1}
    Ergodicity holds on an open and dense set of
    volume-preserving partially hyperbolic
    diffeomorphisms.
\end{conjecture1}
They further split this into two subconjectures.

\begin{conjecture2}
    Accessibility implies ergodicity.  \end{conjecture2}
\begin{conjecture3}
    Accessibility holds on an open and dense set of partially hyperbolic
    diffeomorphisms (volume-preserving or not).
\end{conjecture3}
The Pugh-Shub conjectures have been established in a number of settings.
In particular, they are true when the center bundle $\Ec$ is one-dimensional
\cite{RHRHU-accessibility}.
However, there are a number of partially hyperbolic systems which arise
naturally and which are not ergodic, leading to the following questions.

\begin{questionA}
    Is it possible to give an exact description of the set of non-ergodic\\
    partially hyperbolic diffeomorphisms?
\end{questionA}
\begin{questionB}
    For a non-ergodic partially hyperbolic diffeomorphism, do the \\ ergodic
    components coincide with the accessibility classes of the system?
\end{questionB}
This paper answers these questions in the affirmative under certain
assumptions on the system.
We first give one example as motivation before introducing more general
results.
Consider on the 3-torus
$\bbT^3 = \bbR^3 / \bbZ^3$ a diffeomorphism $f$ \linebreak{}
defined by
\[
    f(x,y,z)=(2x+y,x+y,z).
\]
The eigenvalues are $\lam < 1 < \lam \inv$ and $f$ is therefore partially 
hyperbolic.
Arguably, this is the simplest partially hyperbolic example one can
find.
It preserves Lebesgue measure but is not ergodic.
Further, there are several ways to construct nearby
diffeomorphisms which are also non-ergodic.
With a bit of thought, the following methods come to mind.

\begin{enumerate}
    \item Rotate $f$ slightly along the center direction, yielding a diffeomorphism
    \[        (x,y,z) \mapsto (2x+y,x+y,z+\theta)  \]
    for some small rational $\theta \in \bbR/\bbZ$.

    \item Compose $f$ with a map of the form $(x,y,z) \mapsto (\psi(x,y,z),z)$
    for some $\psi:\bbT^3 \to \bbT^2$.

    \item Perturb $f$ on a subset of 
    the form $\bbT^2 \times X$ where $X \subsetneq \bbS$.

    \item Conjugate $f$ with a diffeomorphism close to the identity.
      \end{enumerate}
The results of this paper imply that any non-ergodic diffeomorphism in a
neighbourhood of $f$ can be constructed by applying these four steps in this
order.

\section{Statement of Results} \label{sec-results} %{{{1

%We next define a family of diffeomorphisms which will be the focus of the
%paper.
%This relies on some common notions in partially hyperbolic
%theory, all of which are defined in Appendix \ref{sec-define}.

%thm thm-cons-skew:
%    Suppose $f:M \to M$ is a $C^2$ conservative
%    partially hyperbolic topological skew product, where
%    the fibers are compact one-dimensional center leaves 
%    and the base map is a hyperbolic automorphism $A$ of a nilmanifold $N$.
%    Then, one of the following occurs.
%    enumerate:
%        \item $f$ is accessible and stably ergodic.
%        \item $\Eu$ and $\Es$ are jointly integrable and $f$ is topologically
%        conjugate to the product
%        math:
%            N times bbS to N times bbS, quad (v,t) mapsto (Av, t + theta).
%
%        Further, $f$ is ergodic if and only if $\theta$ defines an
%        irrational rotation.
%        \item There are $n \ge 1$, a $C^1$ surjection $p: M \to \bbS$, and
%        a non-empty open set $U \subsetneq \bbS$ such that
%        itemize:
%            \item
%            for every connected component $I$ of $U$,
%            $p \inv(I)$
%            is an $f^n$-invariant subset
%            homeomorphic to $N \times I$ and the restriction of $f^n$ to
%            this subset is accessible and ergodic,
%            \item
%            for every $t \in \bbS \setminus U$,
%            $p \inv(t)$
%            is an $f^n$-invariant submanifold tangent to
%            $\Eu \oplus \Es$ and homeomorphic to $N$.

We again refer the reader to the appendix for a list of
definitions.

Suppose $A$ and $B$ are automorphisms of a
compact nilmanifold $N$
such that $A$ is \mbox{hyperbolic} and $A B=B A$.
Then, $A$ and $B$ define a diffeomorphism
\[
    \fAB : M_B \to M_B,\quad(v,t) \mapsto (Av, t)
\]
on the manifold
\[
        M_B = N \times \bbR / (v, t) \sim (Bv, t-1).
\]
Call $\fAB$ an \emph{AB-prototype}.

Note that every AB-prototype is an example of a volume-preserving, partially
hyperbolic, non-ergodic system.
Further, just like the linear example on $\bbT^3$ given above,
every AB-prototype may be perturbed to
produce nearby diffeomorphisms which are also non-ergodic.

To consider such perturbations,
we use the notion of leaf conjugacy as introduced in \cite{HPS}.
Two partially hyperbolic diffeomorphisms $f$ and $g$ are
\emph{leaf conjugate}
if there are invariant foliations $\Wc_f$ and $\Wc_g$ tangent to
$\Ec_f$ and $\Ec_g$ and a homeomorphism $h$ such that
for every leaf in $L$ in $\Wc_f$, $h(L)$ is a leaf of $\Wc_g$
and $h(f(L)) = g(h(L))$.

We now define a family of diffeomorphisms which will be the focus of the
paper.
A partially hyperbolic system $f:M \to M$ is an \emph{AB-system}
if it preserves an orientation of the center bundle $\Ec$
and is leaf conjugate to an AB-prototype.
%Note that $\fAB$ is partially hyperbolic and that
%the center bundle $E^c_{\fAB}$ is tangent to a foliation $W^c_{\fAB}$
%with leaves of the form $\{v\} \times \bbR / {\sim}$.
%A partially hyperbolic $C^1$ diffeomorphism $f:M \to M$
%is an \emph{AB-system} if
%blenum:
%    $f$ is {\em dynamically coherent}:\\
%    there is an $f$-invariant foliation $\Wc_f$ tangent to $\Ec_f$,
%
%    $f$ preserves the orientation of the center bundle $\Ec_f$, and
%
%    $f$ is {\em leaf conjugate} to an AB-prototype $f_{AB}$:\\
%    there is a homeomorphism $h:M \to M_B$ such that $h(L) \in W^c_{f_{AB}}$
%    and $f_{AB} h(L) = h f(L)$ for every leaf $L \in \Wc_f$.
%$f$ preserves the orientation of the center bundle $\Ec_f$ and
%there are an $f$-invariant center foliation $\Wc_f$ tangent to $\Ec_f$
%and a homeomorphism $h:M \to M_B$ such that
%
%    \h{L} in W^c_{fAB} qandq f_{AB}(\h{L}) = \h{\f{L}}
%
%for every center leaf $L \in \Wc_f$.
%there are a foliation $\Wc^f$ and homeomorphism 
%it is dynamically coherent, preserves an orientation of the center bundle,
%and is leaf conjugate to an AB-prototype.

In order to consider skew-products over infranilmanifolds
and systems which do not preserve an orientation of $\Ec$, we also consider the
following generalization.
A diffeomorphism $f_0$ is an \emph{infra-AB-system} if an iterate of $f_0$
lifts to an AB-system on a finite cover.
To the best of the author's knowledge,
this family of partially hyperbolic diffeomorphisms includes
every currently known example of a non-ergodic system
with one-dimen\-sional center.
Further, there are manifolds on which every
conservative partially hyperbolic diffeomorphism is an AB-system.

\begin{question}
    Suppose $f$ is a conservative, non-ergodic, partially hyperbolic
    $C^2$ diffeomorphism with one-dimensional center.
    Is $f$ necessarily an infra-AB-system?
\end{question}

Skew products with trivial bundles correspond to AB-systems where $B$ is
the identity map.  The suspensions of Anosov diffeomorphisms correspond to
the case $A=B$.  These are not the only cases, however.  For instance,
one could take hyperbolic automorphisms $A,B:\bbT^3 \to \bbT^3$ defined by the
commuting matrices

%    pmatrix:
%        phantom{-}1 & -1 & phantom{-}0 \\
%        -1 & phantom{-}2 & -1 \\
%        phantom{-}0 & -1 & phantom{-}2
%    qandq
%    pmatrix:
%        phantom{-}2 & phantom{-}0 & -1 \\
%        phantom{-}0 & phantom{-}1 & phantom{-}1 \\
%        -1 & phantom{-}1 & phantom{-}2
%    .
\[        \begin{pmatrix}
        3&2&1\\
        2&2&1\\
        1&1&1  \end{pmatrix}
    \qandq
        \begin{pmatrix}
        2&1&1\\
        1&2&0\\
        1&0&1  \end{pmatrix}
    .
\]
%It is possible that
%the ergodic properties of such AB-systems are related to the measure
%rigidity of Anosov $\bbZ^2$-actions as studied, for instance, in
%\cite{kkrh2011}.
%What such a relation might be, however, is unclear.
%The example matrices above are from \cite{pollicottAB}.

Throughout this paper, the letters
$A$ and $B$ will always refer to the maps associated to
the AB-system under study, and $N$ and $M_B$ will be the manifolds in the
definition.
In general, if $f:M \to M$ is an AB-system, $M$ need only be homeomorphic to
$M_B$, not diffeomorphic \cite{fj1978anosov,fg2012anosov}.

We show that
every conservative AB-system belongs to one of
three cases,
each with distinct dynamical and ergodic properties.

\begin{thm} \label{thm-consAB}
    Suppose $f:M \to M$ is a $C^2$ AB-system which preserves a smooth volume
    form.
    Then, one of the following occurs.
    \begin{enumerate}
        \itemsep0.5em % Added to mitigate really bad auto-spacing.
        \item $f$ is accessible and stably ergodic.
        \item $\Eu$ and $\Es$ are jointly integrable and $f$ is topologically
        conjugate to
        $M_B \to M_B,$ $(v,t) \mapsto (Av, t + \theta)$
        for some\, $\theta$.
        Further, $f$ is (non-stably) ergodic if and only if\, $\theta$ defines an
        irrational rotation.
        \item There are $n \ge 1$, a $C^1$ surjection $p: M \to \bbS$, and
        a non-empty open set $U \subsetneq \bbS$ such that
        \begin{itemize}
            \item
            for every connected component $I$ of $U$,
            \,$p \inv(I)$
            is an $f^n$-invariant subset
            homeomorphic to $N \times I$ and the restriction of $f^n$ to
            this subset is accessible and ergodic, and
            \item
            for every $t \in \bbS \setminus U$,
            \,$p \inv(t)$
            is an $f^n$-invariant submanifold tangent to
            \mbox{$\Eu \oplus \Es$} and homeomorphic to $N$.
        \end{itemize}  \end{enumerate}  \end{thm}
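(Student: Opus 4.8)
The strategy is to first reduce to the simplest "linear" case using leaf conjugacy, then to use the structure of accessibility classes to produce the trichotomy. Since $f$ is an AB-system, it is leaf conjugate to an AB-prototype $f_{AB}$ on $M_B$, so there is an $f$-invariant center foliation $\Wc$ whose leaves are circles (the fibers of the fibration $M_B \to N/B$ coming from the $\bbR$-coordinate, transported by the leaf conjugacy), and $f$ permutes these leaves in a way conjugate to how $f_{AB}$ acts on the base. The first main step is to analyze the \emph{accessibility class} $AC(x)$ of a point $x$: by a now-standard argument (Dolgopyat--Wilkinson type, using that $\Eu \oplus \Es$ has a well-defined two-dimensional "$us$-lamination" structure on the complement of the closed $us$-saturated set of center leaves), $AC(x)$ is either open or it is a $C^1$ immersed submanifold tangent to $\Eu \oplus \Es$. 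The key dichotomy is whether \emph{some} accessibility class is open, and whether the set $\Gamma$ of center leaves that lie in a non-open accessibility class is all of $M$, a single leaf's worth, or something in between.

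The second step handles the two extreme cases. If every accessibility class is non-open, then $\Eu$ and $\Es$ are jointly integrable, and the $us$-leaves foliate $M$ by codimension-one tori each homeomorphic to $N$; the quotient is a circle $\bbS$, and $f$ descends to a circle homeomorphism. Because $f$ comes from an AB-prototype, this circle map is conjugate to a rotation $t \mapsto t+\theta$ (the rotation number is forced by the combinatorics of how $A,B$ act), and one gets case (2). The classical Hopf/joint-integrability argument then shows ergodicity holds precisely when $\theta$ is irrational, and fails to be stable because a rational perturbation of $\theta$ destroys it. At the opposite extreme, if the entire manifold $M$ is a single open accessibility class, then $f$ is accessible; invoking the Pugh--Shub result in the one-dimensional-center setting (stated in the excerpt via \cite{RHRHU-accessibility} and \cite{RHRHU-survey}), accessibility plus the $C^2$, volume-preserving, partially hyperbolic hypotheses give stable ergodicity, which is case (1).

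The third and most substantial step is the intermediate case: some accessibility class is open but $f$ is not accessible. Here the set $\Gamma \subset M$ of points in non-open accessibility classes is a nonempty, proper, closed, $f$-invariant, $us$-saturated set that is a union of center leaves. One shows $\Gamma$ is in fact a union of finitely many $us$-laminated pieces, each a compact submanifold tangent to $\Eu \oplus \Es$ and homeomorphic to $N$ — this uses the leaf conjugacy to transport the picture to $M_B$, where such a $us$-saturated closed set projects to a closed subset of the base circle $N/B$ (after passing to an iterate $f^n$ that fixes, rather than permutes, the components). The complementary open accessibility classes then each fill up a region $p^{-1}(I) \cong N \times I$ for a component $I$ of the open set $U = \bbS \setminus p(\Gamma)$; within such a region $f^n$ is accessible, and one applies the ergodicity criterion again on this invariant open piece to conclude $f^n|_{p^{-1}(I)}$ is ergodic. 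The surjection $p: M \to \bbS$ is obtained by collapsing center leaves and is $C^1$ because the $us$-lamination and the center foliation are $C^1$ (center leaves being compact, the holonomy is controlled).

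**Main obstacle.** The hard part will be Step 3: showing that the closed $us$-saturated set $\Gamma$ of non-open accessibility classes is \emph{finitely} many tame submanifolds rather than a wild Cantor-like union of center leaves, and that the quotient map $p$ is genuinely $C^1$. This requires combining the leaf-conjugacy model on $M_B$ (to get the coarse combinatorial picture of how center leaves are organized over the base) with regularity theory for the $us$-lamination (to upgrade topological statements to $C^1$), and carefully tracking the action of $f$ on components of $U$ to extract the uniform power $n$. The ergodicity statements themselves, once accessibility is localized to each piece, follow from the established accessibility-implies-ergodicity machinery and should be comparatively routine.
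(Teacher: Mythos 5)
Your proposal has the right overall architecture (a three‑way trichotomy driven by the structure of accessibility classes, with case (1) handled by the Hertz--Hertz--Ures machinery, case (2) by joint integrability, and case (3) as the intermediate situation), and you correctly flag the $C^1$ regularity of $p$ and the extraction of the uniform power $n$ as the delicate points. But there are several genuine gaps that would stop this argument from going through as described.

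First, the assertion that the closed $us$-saturated set $\Gamma$ of non-open accessibility classes ``is in fact a union of \emph{finitely many} $us$-laminated pieces'' is false. In the paper's case (3), the set $\bbS \setminus U$ is only required to be closed and proper; it can perfectly well be a Cantor set, giving uncountably many compact $us$-leaves. Nothing in the hypotheses forces finiteness, and the argument must therefore work with an arbitrary closed set of leaves, not a finite one.

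Second, and more fundamentally, you rely on ``us[ing] the leaf conjugacy to transport the picture to $M_B$, where such a $us$-saturated closed set projects to a closed subset of the base circle.'' A leaf conjugacy controls only the center foliation; it gives no information whatsoever about $\Eu$, $\Es$, or the $us$-lamination, so one cannot simply ``transport the $us$-picture.'' The paper's actual mechanism is entirely different and this is where the real work lies: one proves that AB-systems have global product structure (Theorem 4.1), lifts to the universal cover, shows each $us$-leaf intersects a distinguished invariant center line $L$ exactly once, and then studies the resulting order-preserving action of the (nilpotent) fundamental group together with $f$ on the closed subset $\Lam \subset L$ via Plante-style solvable-group-on-the-line theory. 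It is this one-dimensional action that produces the semiconjugacy to an affine model, the set $K$ of compact $us$-leaves, the uniform period $n$, and the projection $p$. Without global product structure and the group action there is no route from ``$\Gamma$ is a closed $us$-saturated set'' to ``each compact $us$-leaf meets each center circle exactly once and is fixed by a common iterate $f^n$.''

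Third, in case (2) you claim that ``because $f$ comes from an AB-prototype, this circle map is conjugate to a rotation $t \mapsto t+\theta$.'' This is not automatic from the leaf conjugacy: a priori the induced circle map could be Denjoy-type. The conservative hypothesis enters precisely here, by forcing $NW(f)=M$ (Poincar\'e recurrence), hence $NW(r)=\bbS$ for the induced circle map $r$, and it is this non-wandering condition (not the $A,B$ combinatorics) that upgrades $r$ to a rigid rotation. Finally, when you ``apply the ergodicity criterion again on this invariant open piece'' $p\inv(I)$, note that this piece is non-compact; the standard accessibility-implies-ergodicity results are stated on compact manifolds, and the paper has to extend the Hopf-type argument (its Proposition 9.2) to a non-compact invariant piece carrying a finite smooth measure, and to combine it with the Rokhlin disintegration. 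This is not ``comparatively routine'' and should be part of the plan.
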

Note that the first case can be thought of as a degenerate form of the third
case with $U = \bbS$.
Similarly, the second case with rational rotation
corresponds to $U = \varnothing$.

To give the ergodic decomposition of these systems, we decompose the
measure and show that each of the resulting measures is ergodic.
Suppose $\mu$ is a smooth measure on a manifold $M$ and $p:M \to \bbS$ is
continuous and surjective such that $p_*\mu = m$
where $m$ is Lebesgue measure on $\bbS = \bbR/\bbZ$.
The Rokhlin disintegration theorem \cite{rokhlin1962}
implies that $\mu$ can be written as
\[
    \mu = \int_{t \in \bbS} \mu_t\,dm(t)
\]
where the support of each $\mu_t$ is contained in $p \inv(t)$.
Moreover, this disintegration is essentially unique; if measures
$\{\nu_t\}_{t \in \bbS}$ give another
disintegration of $\mu$, then $\nu_t = \mu_t$ for $m$-a.e. $t \in \bbS$.
For an open interval $I \subset \bbS$
define
\[
    \mu_I := \frac{1}{m(I)} \int_I \mu_t\,dm(t).
\]
Note that $\mu_I$ is the normalized restriction
of $\mu$ to $p \inv(I)$. 
Then an open subset $U \subset \bbS$ yields a decomposition
\begin{equation} \label{decomp}
    \mu = \sum_I m(I)\,\mu_{I} + \int_{t \in \bbS \setminus U} \mu_t\,dm(t)
\end{equation}
where $\sum_I$ denotes summation over all of the connected components $I$ of
$U$.

\begin{thm} \label{thm-decomp}
    If $f:M \to M$ is a $C^2$ AB-system and $\mu$ is a smooth, invariant, non-ergodic measure
    with $\mu(M)=1$, then
    there are $n \ge 1$, a $C^1$ surjection $p: M \to \bbS$, and
    an open set $U \subsetneq \bbS$ such that $p_*\mu=m$ and
    \eqref{decomp} is the ergodic decomposition of\, $(f^n, \mu)$.
\end{thm}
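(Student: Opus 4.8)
The plan is to reduce Theorem~\ref{thm-decomp} to the structural dichotomy of Theorem~\ref{thm-consAB}. Given a smooth invariant non-ergodic measure $\mu$, I would first argue that $f$ cannot be in case (1) of Theorem~\ref{thm-consAB} (accessible and stably ergodic), and that in case (2) with irrational rotation $f$ is ergodic; hence only case (2) with rational $\theta$ or case (3) can occur, and in both of these the theorem furnishes an integer $n$, a $C^1$ surjection $p:M \to \bbS$, and an open set $U \subsetneq \bbS$ with the stated invariance and accessibility properties. A subtlety here is that Theorem~\ref{thm-consAB} is stated for AB-systems preserving a smooth \emph{volume form}, whereas Theorem~\ref{thm-decomp} only assumes a smooth invariant measure $\mu$; I would address this by noting that a smooth invariant measure has a smooth positive density (or reduce to the volume-form case by the standard fact that an invariant smooth measure for a $C^2$ diffeomorphism whose density is bounded away from $0$ and $\infty$ can be treated on the same footing, using that accessibility is a topological property independent of the measure).

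Next I would verify $p_*\mu = m$. Since $p$ is $C^1$ and surjective onto $\bbS$, $p_*\mu$ is a measure on $\bbS$; I need it to be Lebesgue. The key point is $f^n$-invariance: in case (3), $f^n$ preserves each $p\inv(t)$ for $t \notin U$ and each $p\inv(I)$ for components $I$ of $U$, so $p$ semiconjugates $f^n$ to the identity on $\bbS$ and this gives no constraint directly. Instead I would use $f$ itself: $f$ permutes the level sets, inducing a homeomorphism $g$ of $\bbS$ with $g \circ p = p \circ f$ and $g^n = \id$; then $p_*\mu$ is $g$-invariant, and combined with the fact that $p$ arises (via the leaf conjugacy to an AB-prototype) from the projection $M_B \to \bbS$, under which the center foliation has leaves mapping to points and Lebesgue is the natural transverse measure, one identifies $p_*\mu$ with $m$ up to the choice of parametrization of $\bbS$ — which we are free to make, absorbing any smooth density into the $C^1$ chart on $\bbS$. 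This is largely bookkeeping once the geometric picture from Theorem~\ref{thm-consAB} is in hand.

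The substantive step is showing \eqref{decomp} is the \emph{ergodic} decomposition of $(f^n,\mu)$, which has two halves. First, each piece in \eqref{decomp} must be $f^n$-invariant and ergodic. For $\mu_I$ with $I$ a component of $U$: $p\inv(I)$ is $f^n$-invariant, so $\mu_I$ (the normalized restriction of $\mu$) is $f^n$-invariant; its density is smooth and positive on $p\inv(I) \cong N \times I$; and since $f^n$ restricted to $p\inv(I)$ is accessible (by Theorem~\ref{thm-consAB}), I would invoke the Burns--Wilkinson / Pugh--Shub ergodicity criterion — accessibility plus the center-bunching that holds automatically for one-dimensional center — to conclude $(f^n|_{p\inv(I)}, \mu_I)$ is ergodic. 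For the conditional measures $\mu_t$ with $t \in \bbS \setminus U$: here $p\inv(t)$ is a submanifold tangent to $\Eu \oplus \Es$ homeomorphic to $N$, and $f^n$ restricted to it is (topologically conjugate to) an Anosov diffeomorphism of a nilmanifold; the hard part is identifying $\mu_t$ and proving it ergodic. I would argue that the disintegrated measure $\mu_t$ is, for $m$-a.e.\ $t$, the smooth (Lebesgue-class) measure on the leaf $p\inv(t)$ — using that $\mu$ is smooth, the foliation by level sets of $p$ is $C^1$ near such leaves (the $C^1$ lamination result alluded to in the abstract), so the disintegration is absolutely continuous with smooth conditionals — and then ergodicity follows since a $C^2$ volume-preserving Anosov diffeomorphism is ergodic (Anosov). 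Second, for \eqref{decomp} to be \emph{the} ergodic decomposition I must check the pieces are mutually singular and that the decomposition is the coarsest into ergodic measures; mutual singularity is immediate from disjointness of supports, and the essential uniqueness of the ergodic decomposition then forces \eqref{decomp} to coincide with it, since any ergodic component of $(f^n,\mu)$ is supported in a single $p\inv(t)$ or $p\inv(I)$ by $f^n$-invariance of $p$ and indecomposability of each $\mu_I$.

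The main obstacle I anticipate is the treatment of the conditional measures $\mu_t$ over the "bad" set $\bbS \setminus U$: one must show that these are absolutely continuous (with ergodic conditionals) rather than, a priori, some singular measures on the Anosov leaves. This requires knowing the level-set foliation of $p$ is regular enough near $\Eu \oplus \Es$-leaves to guarantee smooth disintegration — presumably leveraging the $C^1$ lamination structure of the non-open accessibility classes established elsewhere in the paper — and then combining with smoothness of $\mu$. Everything else (invariance, $p_*\mu = m$, ergodicity of the open pieces via accessibility, mutual singularity, and the final appeal to uniqueness of the ergodic decomposition) I expect to be routine given Theorem~\ref{thm-consAB}.
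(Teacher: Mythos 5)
Your treatment of the open pieces $\mu_I$ via accessibility plus the Burns--Wilkinson / Pugh--Shub criterion matches the paper (that is exactly \eqref{prop-ergprops}(5)). The genuine gap is in your treatment of the conditionals $\mu_t$ for $t \in \bbS \setminus U$, and it is serious: you propose to identify $\mu_t$ with a smooth measure on the $C^1$ submanifold $p\inv(t)$ and then invoke ergodicity of $C^2$ volume-preserving Anosov diffeomorphisms. But $p\inv(t)$ is only a $C^1$ submanifold, so the restriction $f^n|_{p\inv(t)}$ is merely a $C^1$ Anosov diffeomorphism of a $C^1$ manifold. The $C^2$ (or $C^{1+\alpha}$) regularity that you cite is exactly what you no longer have after restricting, and without it the absolute continuity of stable/unstable holonomies \emph{inside the leaf} is unavailable, so the Hopf argument does not close. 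The paper flags this exact trap immediately after its proof of \eqref{thm-decomp}: ``The problem is that we have only shown that $p\inv(t)$ is a $C^1$ submanifold of $M$, which is not enough regularity to conclude ergodicity for an Anosov system.''

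The paper's workaround is to run the Hopf argument \emph{globally} on $M$, where $f$ is honestly $C^2$. One takes a countable separating family $\{\phi_j\}$, forms the Birkhoff limits $\phi_j^s, \phi_j^u$, and uses $C^2$-regularity of $f$ on $M$ together with absolute continuity of the ambient stable/unstable foliations (\eqref{prop-ergprops}(3)--(4)) to replace the sub/superlevel sets $X^s_{j,q}, X^u_{j,q}$ by an accessibility-saturated set $X_{j,q}$ up to $\mu$-measure zero. Rokhlin disintegration then shows that for $m$-a.e.\ $t$ the exceptional set has $\mu_t$-measure zero, and since $p\inv(t)$ is a single accessibility class, each $X_{j,q}$ is $\mu_t$-trivial, giving ergodicity of $(f^n,\mu_t)$ by \eqref{prop-ergprops}(2). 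You would need to replace your per-leaf Anosov argument with this ambient argument (or something equivalent) to make the proof go through. One smaller point: the paper enters via $NW(f)=M$ (Poincar\'e recurrence) and Theorem \ref{thm-ABNW} together with Proposition \ref{prop-pproj} (which supplies both the $C^1$ regularity of $p$ and the normalization $p_*\mu=m$); your separate worries about volume forms versus smooth measures and about arranging $p_*\mu=m$ by reparametrizing $\bbS$ are essentially handled by \eqref{prop-pproj}.
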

If $f$ is in case (3) of \eqref{thm-consAB}, then the $n$, $p$, and $U$ can be
taken to be the same in both theorems.
If $f$ is in case (2) and non-ergodic, then $\theta$ is rational,
and the map $p$ can be defined by composing the topological conjugacy
from $M$ to $M_B$ with a projection from $M_B$ to $\bbS$.

As $f$ preserves $\mu$ and $p_*\mu = m$, it follows that
$p(f(x)) = p(x) + q$ for some rational $q \in \bbS$
and all $x$ with $p(x) \notin U$.
Because of this, one can derive
the ergodic decomposition of $(f,\mu)$ from \eqref{thm-decomp}.
Each component is either of the form
$\frac{1}{n} \sum_{j=1}^n \mu_{t+j q}$
or
$\frac{1}{n} \sum_{j=1}^n \mu_{I_{k,j}}$
where if $I_k = (a,b)$ then $I_{k,j} = (a+j q,b+j q)$.
In \eqref{thm-decomp}, the ergodic components of $(f^n, \mu)$ are mixing and,
in fact, have the Kolmogorov property \cite{BW-annals}.
The ergodic components of $f$ are mixing if and only if
\eqref{thm-decomp} holds with $n=1$.

Using the perturbation techniques of \cite{RHRHU-accessibility},
for any AB-prototype $\fAB$,
rational number $\theta = \frac{k}{n}$,
and open subset $U \subsetneq \bbS$
which satisfies $U + \theta = U$,
one can construct an example of a volume-preserving 
AB-system which satisfies \eqref{thm-decomp}
with the same $n$ and $U$.
In this sense, the classification
given by \eqref{thm-consAB} and \eqref{thm-decomp}
may be thought of as complete.
Versions of %\eqref{thm-consAB} and \eqref{thm-decomp}
these theorems 
for infra-AB-systems are given in Section \ref{sec-infra}.

%Mention
%acc implies stable mixing implies ess acc implies mixing
%none of these arrows can be reversed.  see appendix ???.

\medskip

Accessibility also has applications beyond the conservative setting.
For instance, Brin
showed that accessibility and a non-wandering
condition imply that the system is (topologically)
transitive \cite{brin1975trans}.
Therefore, we state a version of \eqref{thm-consAB}
which assumes only this non-wandering condition.
For a homeomorphism $f:M \to M$,
a \emph{wandering domain} is a non-empty open subset $U$
such that $U \cap f^n(U)$ is empty for all $n  \ge 1$.
Let $NW(f)$
be the non-wandering set, the set of all points
$x \in M$ which do not lie in a wandering domain.

\begin{thm} \label{thm-ABNW}
    Suppose $f:M \to M$ is an AB-system such that $NW(f)=M$.
    Then, one of the following occurs.
    \begin{enumerate}
        \item $f$ is accessible and transitive.
        \item $\Eu$ and $\Es$ are jointly integrable and $f$ is topologically
        conjugate to
        $M_B \to M_B,$ $(v,t) \mapsto (Av, t + \theta)$
        for some $\theta$.
        Further, $f$ is transitive if and only if $\theta$ defines an
        irrational rotation.
        \item There are $n \ge 1$, a continuous surjection $p: M \to \bbS$, and
        a non-empty open set $U \subsetneq \bbS$ such that
        \begin{itemize}
            \item
            for every connected component $I$ of $U$,
            $p \inv(I)$
            is an $f^n$-invariant subset
            homeomorphic to $N \times I$, and
            \item
            for every $t \in \bbS \setminus U$,
            $p \inv(t)$
            is an $f^n$-invariant submanifold tangent to
            $\Eu \oplus \Es$ and homeomorphic to $N$.
        \end{itemize}
        The restriction of $f^n$ to a subset
        $p \inv(t)$ or $p \inv(I)$ is
        transitive.
    \end{enumerate}  \end{thm}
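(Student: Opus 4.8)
The plan is to mirror the proof of \eqref{thm-consAB}, replacing the single place where that proof uses the invariant volume by an appeal to Brin's transitivity criterion \cite{brin1975trans}. The point is that, in the proof of \eqref{thm-consAB}, the construction of the integer $n$, the map $p$ and the set $U$, the trichotomy itself, and the topological descriptions of the fibres $p\inv(t)$ and slabs $p\inv(I)$ rely only on the partially hyperbolic structure, the invariant orientation of $\Ec$, the leaf conjugacy to an AB-prototype, and the classification of the non-open accessibility classes of an AB-system as a $C^1$ lamination by codimension-one submanifolds tangent to $\Eu\oplus\Es$; the invariant volume is used there solely to upgrade the accessibility of $f^n$ on each $p\inv(I)$ to ergodicity (together with the Kolmogorov property via \cite{BW-annals}). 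Running this volume-free portion of the argument for our $f$ therefore produces one of three situations, matching the three alternatives in the statement except that the transitivity claims remain to be proved; in the third situation we also record that $\Gamma:=\bigcup_{t\notin U}p\inv(t)$ is exactly the union of the non-open accessibility classes of $f$, so that each slab $p\inv(I)$, being connected and disjoint from $\Gamma$, is a single accessibility class and $f^n$ is accessible on it. In the first situation $f$ is accessible; since $NW(f)=M$, Brin's theorem yields transitivity and we are in alternative (1).

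In the second situation $f$ is topologically conjugate to $g:M_B\to M_B$, $g(v,t)=(Av,t+\theta)$, so $f$ is transitive precisely when $g$ is. Here I would bootstrap off \eqref{thm-consAB} rather than argue transitivity by hand: $g$ is a smooth, volume-preserving AB-system --- an AB-prototype followed by a rotation of the center --- whose distribution $\Eu\oplus\Es$ is tangent to the fibres of $M_B\to\bbS$ and hence jointly integrable, so $g$ lies in case (2) of \eqref{thm-consAB} with rotation $\theta$ and is ergodic, hence transitive, when $\theta$ is irrational. When $\theta=k/n$ is rational, writing $\pi:M_B\to\bbS$ for the fibre projection, $g$ maps $\pi\inv(J)$ onto $\pi\inv(J+k/n)$ for every arc $J\subset\bbS$, so for a short arc $J$ the set $\bigcup_{j=0}^{n-1}g^j(\pi\inv(J))$ is a nonempty proper $g$-invariant open set and $g$, hence $f$, is not transitive. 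This gives alternative (2).

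In the third situation, I would first promote $NW(f)=M$ to non-wandering of $f^n$ on each piece. Let $q=k/n\in\bbR/\bbZ$ in lowest terms be the rational number with $p(f(x))=p(x)+q$ for all $x$ with $p(x)\notin U$. Then $f$ carries each $p\inv(I)$ onto $p\inv(I+q)$ and each $p\inv(t)$, $t\notin U$, onto $p\inv(t+q)$, and since the rotation by $q$ has order exactly $n$ and no arc other than $\bbS$ is invariant under a nontrivial rotation, the translates $I,I+q,\dots,I+(n-1)q$ are $n$ distinct, hence pairwise disjoint, components of $U$. Given $x\in p\inv(I)$ and an open neighbourhood $V\subset p\inv(I)$, the hypothesis $NW(f)=M$ provides some $\ell\ge1$ with $f^\ell(V)\cap V\neq\varnothing$; as $f^\ell(V)\subset p\inv(I+\ell q)$ and distinct components are disjoint, this forces $I+\ell q=I$, hence $n\mid\ell$, so $x$ is non-wandering for $f^n|_{p\inv(I)}$. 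The same reasoning gives $NW(f^n|_{p\inv(t)})=p\inv(t)$ for $t\notin U$. Now $f^n|_{p\inv(I)}$ is partially hyperbolic, accessible and non-wandering, so Brin's theorem \cite{brin1975trans} makes it transitive; and $f^n|_{p\inv(t)}$ is a diffeomorphism of the closed connected manifold $p\inv(t)\cong N$ whose tangent bundle equals $\Eu\oplus\Es$ --- hence an Anosov diffeomorphism --- and, having full non-wandering set, it is transitive (by Smale's spectral decomposition it consists of a single basic set; alternatively, an Anosov diffeomorphism of an infranilmanifold is topologically conjugate to a hyperbolic automorphism, which is transitive). This establishes alternative (3).

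The step I expect to be the real work is the first: carefully checking that, in the proof of \eqref{thm-consAB}, the trichotomy together with the construction of $(n,p,U)$ and the topology of the fibres and slabs survive the removal of the volume-preserving hypothesis --- equivalently, that the invariant measure is used there only to pass from accessibility to ergodicity on the slabs. A minor technical point is that Brin's criterion is invoked above on the non-compact pieces $p\inv(I)\cong N\times I$; this is harmless, since the criterion is local --- it propagates the accessibility relation along $su$-paths and closes up orbits using the non-wandering hypothesis --- and so needs only that $p\inv(I)$ be a proper partially hyperbolic subsystem rather than that the ambient manifold be compact.
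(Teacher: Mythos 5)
Your plan is conceptually close to the paper's proof, but the logical scaffolding is inverted. In the paper, \eqref{thm-ABNW} is not a weakening of \eqref{thm-consAB}; rather, \eqref{thm-ABNW} is proved directly from the purely topological structure theorems \eqref{thm-cmptleaf}, \eqref{thm-ratAB}, \eqref{thm-irratAB}, and \eqref{lemma-KSconj}, and then \eqref{thm-consAB} is derived from \eqref{thm-ABNW} (its proof literally begins ``If $f$ is in case (1) or (3) of \eqref{thm-ABNW}\dots''). So there is no independent ``volume-free portion of the proof of \eqref{thm-consAB}'' to mirror; the content you want is in \eqref{thm-ratAB} and \eqref{thm-irratAB}. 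This makes your case (2) argument circular: you propose to ``bootstrap off \eqref{thm-consAB}'' to get transitivity for irrational $\theta$, but \eqref{thm-consAB} is proved using \eqref{thm-ABNW}. (Your direct argument for the rational case is fine, and a direct argument for the irrational case is possible --- minimality of the rotation together with transitivity of $A$ on $N$ and the product structure of $M_B$ --- but you did not give it.)

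The more substantive gap is in case (3). You assert that $\Gamma=\bigcup_{t\notin U}p\inv(t)$ is \emph{exactly} the union of the non-open accessibility classes, so that each slab $p\inv(I)$, being connected and disjoint from $\Gamma$, is a single accessibility class and hence $f^n$ is accessible on it. That equality is false without the non-wandering hypothesis: \eqref{thm-ratAB} explicitly allows, for a given component $I$, alternative (2) (a nested family of open sets whose boundary is a compact $us$-leaf) and alternative (3) (no compact but uncountably many non-compact $us$-leaves in $p\inv(I)$), and in the latter case $p\inv(I)$ contains non-open accessibility classes not of the form $p\inv(t)$. The paper rules these out by observing that $NW(f)=M$ forces $NW\bigl(f^n|_{p\inv(I)}\bigr)=p\inv(I)$ (you prove this part correctly) and then noting that alternatives (2) and (3) of \eqref{thm-ratAB} are incompatible with a full non-wandering set (alternative (2) has a wandering domain by construction; alternative (3) is semiconjugate to $(v,t)\mapsto(Av,\lambda t)$ with $\lambda\neq1$, which is non-recurrent off the fixed fiber). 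You need to make this elimination step explicit: it is precisely the place where $NW(f)=M$ earns its keep, beyond merely propagating to the pieces. The rest of your case (3) --- Brin's criterion on the slabs, and transitivity of the restricted Anosov diffeomorphism on $p\inv(t)\cong N$ --- is sound and matches the paper (which cites Franks for the latter where you offer a spectral-decomposition alternative).
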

The non-wandering assumption is used in only a few places in the proof and so
certian results may be stated without this assumption.
For a partially hyperbolic diffeomorphism with one-dimensional center,
a \emph{$us$-leaf} is a complete $C^1$ submanifold
tangent to $\Eu \oplus \Es$. %\cite{RHRHU-accessibility}.

\begin{thm} \label{thm-cmptleaf}
    Every non-accessible AB-system has a compact $us$-leaf.
\end{thm}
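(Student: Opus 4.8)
The plan is to combine the general structure theory of accessibility classes for one‑dimensional center bundles with the description of the center foliation coming from the leaf conjugacy. Since $f$ is not accessible, this structure theory (as in \cite{RHRHU-accessibility} and the references therein) gives that the set $\Gamma$, the union of those accessibility classes which are not open, is non‑empty, compact and $f$‑invariant, and that $\Gamma$ is laminated by complete $C^1$ submanifolds tangent to $\Eu\oplus\Es$, each leaf of this lamination being exactly one of these non‑open accessibility classes. So it suffices to produce one compact leaf of $\Gamma$.

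Next I would bring in the circle direction through the leaf conjugacy. By definition there is a homeomorphism $h$ carrying $\Wc_f$ to $\Wc_{\fAB}$; the manifold $M_B$ fibres over $\bbS$ with compact fibres $N\times\{t\}$, which are transverse to the center leaves of $\fAB$, and each center leaf of $\fAB$ maps onto $\bbS$. Pulling this back through $h$ gives a continuous surjection $\rho\colon M\to\bbS$ that maps every center leaf of $f$ onto $\bbS$; passing to the associated $\bbZ$‑cover $\hat M\to M$ (with generating deck transformation $\gamma$ and a lift $\hat f$), the leaf conjugacy yields a continuous map $\psi\colon\hat M\to N$ whose fibres are exactly the center leaves of $\hat f$, with $\psi\circ\hat f=A\circ\psi$ and $\psi\circ\gamma=B\circ\psi$. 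Because $\psi$ collapses center leaves and those are transverse to $\Eu\oplus\Es$, the restriction of $\psi$ to any $us$‑leaf of $\hat f$ is locally injective; using that a hyperbolic automorphism of $N$ has a single $su$‑accessibility class, this restriction is a covering map onto $N$. Consequently a $us$‑leaf of $\hat f$ is compact precisely when this covering has finite degree — equivalently, when the leaf meets one (hence every) center leaf of $\hat f$ in a finite set — and a compact $us$‑leaf of $\hat f$ projects to a compact $us$‑leaf of $f$.

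Now let $L$ be a leaf of $\Gamma$ and $c$ a center leaf of $f$ that is a circle lying over a periodic point of $A$. Since $\Gamma$ is closed and $c$ is compact, $L\cap c$ is closed and discrete in $c$, hence finite. If $L$ does not wind around $\bbS$, i.e.\ $\rho|_L$ is null‑homotopic, then $L$ lifts to a homeomorphic copy $\hat L$ in $\hat M$, and $\hat L$ meets the center leaf of $\hat f$ lying over $c$ in a set of the same finite cardinality as $L\cap c$; by the previous paragraph $\hat L$, and therefore $L$, is compact. It remains to exhibit one leaf of $\Gamma$ that does not wind around $\bbS$. For this I would pass to a minimal sublamination $\Lambda\subseteq\Gamma$ (Zorn's lemma). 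If $\Lambda$ is a single leaf it is compact and we are done. Otherwise every leaf of $\Lambda$ is non‑compact and dense in $\Lambda$, so by the above each such leaf must wind around $\bbS$; to rule this out one invokes the defining property of a non‑open accessibility class — that every loop of $su$‑paths inside it has trivial center holonomy — together with the facts that $f$ permutes the leaves of $\Lambda$, acts with uniform hyperbolicity along them, and, through $\psi$, semiconjugates the leaf‑collapsed dynamics to the Anosov automorphism $A$ of $N$; these are incompatible with a minimal lamination all of whose leaves are non‑compact.

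I expect this last step to be the main obstacle. The leaf conjugacy controls the center foliation but \emph{not} the stable and unstable foliations, so the $us$‑leaves of $f$ cannot be located directly inside $M_B$; one must argue intrinsically with $\Gamma$, converting the ``trivial holonomy'' characterisation of non‑openness, the hyperbolicity of $f$ along the leaves, and the semiconjugacy to $A$ into the statement that a non‑open accessibility class cannot wind around the circle direction — equivalently, that it is compact.
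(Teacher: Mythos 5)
Your proposal takes a genuinely different route from the paper's and, as you yourself suspect, stalls at the final step; as written, that step is an assertion rather than an argument.

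The paper makes no use of a minimal sublamination of $\Gamma$ and no holonomy argument. Instead it lifts $f$ to the universal cover $\tM$, fixes an $\tf$-invariant center line $\tS$ covering a circle $S\subset M$, and uses global product structure (\eqref{lemma-ussu}) to show that every non-open accessibility class in $\tM$ meets $\tS$ in exactly one point. Each deck transformation $\alpha\in\pi_1(M)$ then acts on the closed set $\Lam=\{t\in\tS:AC(t)\text{ not open}\}$ by $g_\alpha=R\circ\alpha$, giving an order-preserving action of the finitely generated nilpotent group $G=\{g_\alpha\}$ on $\Lam\subset\bbR$, normalised by $\tf$, and — crucially — the deck transformation $\beta$ in the $B$-direction commutes with $\tf$ on $\tS$ and is fixed-point free. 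The paper then invokes Plante's theory of invariant measures for nilpotent group actions on the line (\eqref{prop-mu}--\eqref{prop-lamF}, \eqref{master-lemma}, \eqref{cor-AIcomfix}): if $\Fix(G)=\varnothing$, there is a $G$-invariant Radon measure, a non-zero translation homomorphism $\tau\colon G\to\bbR$, and a scaling factor $\lam\neq1$ for conjugation by $\tf$ (the hyperbolicity of $A$ enters through \eqref{prop-nofixcoset} to force $\lam\neq1$), whence every homeomorphism of $\tS$ commuting with $\tf$ must have a fixed point. Applied to $\beta$, this is a contradiction; so $\Fix(G)\neq\varnothing$, and \eqref{lemma-compactFix} identifies points of $\Fix(G)$ with compact $us$-leaves.

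Your last paragraph asserts that a minimal sublamination $\Lambda$ all of whose leaves are non-compact and wind around $\bbS$ is ``incompatible'' with trivial center holonomy, leafwise hyperbolicity, and the semiconjugacy to $A$, but offers no derivation, and none is easy: case (3) of \eqref{thm-mainAI} (realised explicitly in Section~\ref{sec-incoherent}) exhibits exactly such a configuration on the $\bbZ$-cover $\hM$, with no compact $us$-leaves, uncountably many non-compact ones, and a semiconjugacy to $(v,t)\mapsto(Av,\lam t)$. What excludes this for the AB-system on the compact $M$ is precisely the fixed-point-free $\beta$ commuting with $\tf$, together with the translation-number machinery that converts ``$G$ has no common fixed point on $\Lam$'' into ``$\beta$ would have to have a fixed point.'' Your sketch never uses $\beta$, never constructs an invariant measure on $\tS$, and never extracts a translation number, so the missing step is not a detail — it is the substance of the theorem. (A secondary concern: the claim that $\psi$ restricted to a $us$-leaf is a \emph{covering} of $N$ needs properness, not just local injectivity; the paper obtains the corresponding statement from global product structure on the universal cover via \eqref{lemma-ussu}, not from the single-accessibility-class property of $A$.)
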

\begin{thm} \label{thm-ratAB}
    Suppose $f:M \to M$ is a non-accessible AB-system with at least one compact
    periodic $us$-leaf.
    Then, there are $n  \ge  1$, a continuous surjection $p:M \to \bbS$
    and an open subset $U \subset \bbS$ with the following properties.
    
    For $t \in \bbS \setminus U$, $p \inv(t)$ is an $f^n$-invariant compact $us$-leaf.
    Moreover, every $f$-periodic compact $us$-leaf is of this form.

    For every connected component $I$ of $U$, $p \inv(I)$
    is $f^n$-invariant, homeomorphic to $N \times I$ and,
    letting $g$ denote the restriction of $f^n$ to $p \inv(I)$,
    one of three cases occurs{:}
    \begin{enumerate}
        \item
        $g$ is accessible,
        \item
        there is an open set $V \subset p \inv(I)$ such that
        \[
            \overline{g(V)} \subset V, \quad
            \bigcup_{k \in \bbZ} g^k(V) = p \inv(I), \quad
            \bigcap_{k \in \bbZ} g^k(V) = \varnothing,
        \]
        and the boundary of $V$
        %(in $p \inv(I)$)
        is a compact $us$-leaf, or
        \item
        there are no compact $us$-leaves in $p \inv(I)$,
        uncountably many non-compact $us$-leaves in $p \inv(I)$,
        and $\lam  \ne  1$ such that $g$ is semiconjugate to
        \[
            N \times \bbR \to N \times \bbR, \quad
            (v,t) \mapsto (Av, \lam t).
      \]  \end{enumerate}  \end{thm}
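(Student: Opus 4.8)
The plan is to collapse the lamination of compact $us$-leaves to a circle on which $f$ acts as an orientation-preserving homeomorphism, to use the periodic leaf to force a rational rotation number (and hence a single $n$), and finally to classify the restriction of $f^n$ to each arc between consecutive periodic compact $us$-leaves. By Theorem~\ref{thm-cmptleaf} a non-accessible AB-system always has a compact $us$-leaf; here we assume in addition that one of them, say $L_0$, is $f$-periodic. Throughout I will use the results of the preceding sections: the union $\Lambda$ of the non-open accessibility classes is a closed, $f$-invariant $C^1$ lamination whose leaves are precisely the non-open $us$-leaves, and each leaf is everywhere transverse to the center foliation $\Wc$ supplied by the leaf conjugacy to an AB-prototype.

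The first step is to record the transverse structure of $\Lambda$ and build the circle. Using the transversality to $\Wc$ together with the fact that $M$ is homeomorphic to $M_B$, an $N$-bundle over the circle, one obtains: distinct compact $us$-leaves are disjoint, each is homeomorphic to $N$, and any two of them cobound a region homeomorphic to $N\times(0,1)$; a monotone sequence of compact $us$-leaves lying in such a region converges in the $C^1$ topology to a compact $us$-leaf; hence the set $\mathcal{K}$ of compact $us$-leaves carries a natural cyclic order in which it is closed. Collapsing each leaf of $\mathcal{K}$ to a point and each complementary region of $\bigcup\mathcal{K}$ to a closed arc (say by a product coordinate) yields a circle $\bbS$ and a continuous surjection $p\colon M\to\bbS$; since $f$ permutes $\mathcal{K}$ preserving the cyclic order and, because $f$ preserves an orientation of $\Ec$, also preserves the transverse orientation of $\Lambda$, it descends to an orientation-preserving homeomorphism $\bar f$ of $\bbS$ with $\bar f\circ p=p\circ f$.

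Because $f$ has a periodic compact $us$-leaf, $\bar f$ has a periodic point; its rotation number is therefore some $k/n$ in lowest terms and, by the classical theory of circle homeomorphisms, every periodic orbit of $\bar f$ has period exactly $n$ — this is what allows a single $n$ in the statement. Let $\bbS\setminus U$ be the set of periodic points of $\bar f$ that are images under $p$ of compact $us$-leaves; this set is closed (the intersection of the two closed, $\bar f$-invariant sets of periodic points and of $p(\mathcal{K})$) and nonempty, so $U$ is open and $U\neq\bbS$. For $t\in\bbS\setminus U$, $p\inv(t)$ is the corresponding compact $us$-leaf and is $f^n$-invariant since $\bar f^n$ fixes $\bbS\setminus U$ pointwise; conversely any $f$-periodic compact $us$-leaf $L$ has $p(L)\in\bbS\setminus U$ with $p\inv(p(L))=L$, so these are exactly the fibres over $\bbS\setminus U$. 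For a connected component $I$ of $U$, the endpoints of $I$ lie in $\bbS\setminus U$ and are fixed by the orientation-preserving map $\bar f^n$, so $\bar f^n(I)=I$; then $p\inv(I)$ is $f^n$-invariant and, by a point-set argument using the structure of $M$, homeomorphic to $N\times I$. Write $g$ for the restriction of $f^n$ to $p\inv(I)$.

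It remains to classify $g$. If $\Lambda\cap p\inv(I)=\varnothing$, then every accessibility class meeting $p\inv(I)$ is open, hence relatively clopen, so by connectedness $g$ is accessible: this is case (1). Otherwise $\Lambda\cap p\inv(I)\neq\varnothing$. If this set contains a compact $us$-leaf $L$, then along the forward and backward orbit of $L$ the heights form two monotone sequences in $I$; neither limit can be interior to $I$, because a monotone sequence of compact $us$-leaves converging inside $p\inv(I)$ converges to a compact $us$-leaf which, being a limit of the $g$-orbit, would be $g$-invariant and hence $f$-periodic — impossible in the interior of $I$. Thus the orbit of $L$ sweeps from one end of $I$ to the other, and taking $V$ to be the component of $p\inv(I)\setminus L$ on the side toward which the orbit contracts gives $\overline{g(V)}\subset V$, $\bigcup_k g^k(V)=p\inv(I)$, $\bigcap_k g^k(V)=\varnothing$, with $\partial V=L$ a compact $us$-leaf: this is case (2). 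The remaining possibility — $\Lambda\cap p\inv(I)$ nonempty but containing no compact $us$-leaf — must lead to case (3), and this is where I expect the main difficulty. One has to produce a semiconjugacy of $g$ onto $N\times\bbR\to N\times\bbR$, $(v,t)\mapsto(Av,\lam t)$ with $\lam\neq1$, and show there are uncountably many non-compact $us$-leaves. The point is that $g$ has no invariant (indeed no periodic) $us$-leaf in $p\inv(I)$, so the transverse holonomy of the codimension-one lamination $\Lambda\cap p\inv(I)$ along $g$-orbits is nontrivial; quantifying this contraction or expansion should yield the factor $\lam\neq1$ and the affine semiconjugacy, while its nontriviality forces the non-compact leaves to self-accumulate and hence to be uncountable. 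Making this precise requires a complementary-region and holonomy analysis of codimension-one laminations without compact leaves, adapted to AB-systems, and is the technical heart of the proof; by contrast, the topological inputs used above — compact $us$-leaves being incompressible fibres with product complements, monotone limits of them being compact, and $p\inv(I)\cong N\times I$ — follow from the earlier lamination results together with standard facts.
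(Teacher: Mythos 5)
Your overall scaffolding — collapse to a circle, use the periodic compact leaf to get a rational rotation number and a single period $n$, peel off the fibers over non-wandering points, and then classify $g=f^n$ on each complementary arc — matches the shape of the paper's theorem, and your case (1) and case (2) arguments are essentially sound sketches (your case (2) monotone-sweep argument is close in spirit to the paper's, once one grants that monotone limits of compact $us$-leaves are compact $us$-leaves, which the paper obtains from the closedness of $K=\pi(\Fix(G))$).

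However, there is a genuine gap at case (3), and you acknowledge it yourself: you state that producing the affine semiconjugacy $(v,t)\mapsto(Av,\lambda t)$ with $\lambda\ne 1$ and the uncountably many non-compact leaves ``is the technical heart of the proof'' and propose a holonomy analysis you do not carry out. This is precisely the content that the paper does \emph{not} obtain by a holonomy argument. Instead, the paper's proof of \eqref{thm-ratAB} is a short reduction: using the invariant center circle $S$ and the projection from \eqref{lemma-ctsproj}, it applies \eqref{lemma-AIinAB} to identify the restriction $g$ on $p\inv(I)$ as an AI-system with no invariant compact $us$-leaf, and then invokes \eqref{thm-mainAI}. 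The third case of \eqref{thm-mainAI} is where the semiconjugacy and $\lambda\ne 1$ are produced, and that theorem is proved in Section~\ref{sec-AIsys} via the solvable group action on a closed subset of the line and Plante-type results (\eqref{master-lemma}, \eqref{prop-lamF}, \eqref{prop-nofixcoset}): the automorphism $F(g)=fgf\inv$ must scale the translation number $\tau$ by some $\lambda$, and the absence of fixed points and of nontrivial fixed cosets forces $\tau(G)$ to be dense and $\lambda\ne1$, from which the semiconjugacy $Q=PR-TH$ is built. Your proposal never engages with the AI-system framework or the $\pi_1$-action machinery, so the key dichotomy that rules out ``$\lambda=1$ with no fixed compact leaf'' — which is exactly what forces the affine scaling — is missing. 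Without it, there is no argument that the no-compact-leaf, nonempty-$\Lambda$ case actually lands in case (3) rather than some unclassified behaviour.

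A second, smaller divergence: the paper does not build the circle by collapsing the lamination directly; it fixes the $f$-invariant center circle $S$ given by the leaf conjugacy, shows $K\subset S$ (points of $S$ with compact accessibility class) is closed, and uses \eqref{lemma-ctsproj} to extend a parametrization of $S$ to a projection constant on compact $us$-leaves. Your quotient construction is plausible but requires the (nontrivial) facts that distinct compact $us$-leaves are disjoint, each meets $S$ once, and the set of them is closed — all of which the paper obtains from \eqref{lemma-compactFix} and \eqref{cor-Kclosed} rather than from general lamination theory. Grounding your $p$ in the paper's $S$-based construction would both simplify the point-set topology and give you the hooks needed to invoke the AI-system reduction.
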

It is relatively easy to construct examples in the first two cases
above.
Section \ref{sec-incoherent}
gives an example of the third case.
It is based on the
discovery by
F.~Rodriguez Hertz, J.~Rodriguez Hertz, and
R.~Ures of a non-dynamically
coherent system on the 3-torus \cite{RHRHU-non-dyn}.
Theorem \eqref{thm-ratAB} corresponds to a rational rotation on an 
$f$-invariant circle.
The following two theorems correspond to irrational rotation.

\begin{thm} \label{thm-irratAB}
    Suppose $f:M \to M$ is a non-accessible AB-system with no periodic compact
    $us$-leaves.
    Then, there is a continuous surjection $p:M \to \bbS$ and a $C^1$
    diffeomorphism $r:\bbS \to \bbS$ such that
    \begin{itemize}
        \item
        $NW(f) = p \inv(NW(r))$,
        \item
        if $t \in NW(r)$ then $p \inv(t)$ is a compact $us$-leaf and
        $f(p \inv(t)) = p \inv(r(t))$, and
        \item
        if $I$ is a connected component of $\bbS \setminus NW(r)$,
        then $f(p \inv(I)) = p \inv(r(I))$.
        In particular, $p \inv(I) \subset M$ is a wandering domain.
    \end{itemize}  \end{thm}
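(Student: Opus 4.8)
The plan is to realise $r$ as the homeomorphism induced by $f$ on the space of compact $us$-leaves, to feed this circle map into Denjoy's theory, and to read off the structure of $NW(f)$ from it. By Theorem~\ref{thm-cmptleaf}, $f$ has a compact $us$-leaf. Each compact $us$-leaf is $us$-saturated — a complete $C^{1}$ submanifold tangent to $\Eu\oplus\Es$ contains the stable and unstable manifolds through each of its points — hence is a single accessibility class, so distinct compact $us$-leaves are disjoint. Using the leaf conjugacy to an AB-prototype $\fAB$ and the natural fibration $M_{B}\to\bbS$ whose fibres are the compact $us$-leaves of $\fAB$, one obtains — this is the content of the structural lemmas underlying Theorem~\ref{thm-ratAB} — a continuous surjection $p:M\to\bbS$ and an orientation-preserving homeomorphism $r:\bbS\to\bbS$ with $f(p\inv(t))=p\inv(r(t))$ for all $t$, and a nonempty closed $r$-invariant set $Z\subseteq\bbS$ such that $p\inv(t)$ is a compact $us$-leaf precisely for $t\in Z$. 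If $r$ had a periodic point $t$, then either $p\inv(t)$ is an $f$-periodic compact $us$-leaf, or $t$ lies in a component $I$ of $\bbS\setminus Z$ with $r^{k}(I)=I$ for some $k\ge1$, forcing an endpoint of $I$ to be an $r$-periodic point of $Z$ and hence $p\inv$ of that endpoint to be an $f$-periodic compact $us$-leaf; either way the hypothesis is violated, so the rotation number $\rho(r)$ is irrational. Consequently $NW(r)$ is the unique minimal set of $r$ — all of $\bbS$ or a Cantor set — $r|_{NW(r)}$ is minimal and equicontinuous, $NW(r)\subseteq Z$ (as $Z$ is closed, nonempty and $r$-invariant), and each component $I$ of $\bbS\setminus NW(r)$ is a wandering interval of $r$. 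This already yields the second conclusion of the theorem and the equivariance statement $f(p\inv(I))=p\inv(r(I))$ in the third.

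Next I would identify $NW(f)$. If $I$ is a component of $\bbS\setminus NW(r)$, then $r^{n}(I)\cap I=\varnothing$ for all $n\ge1$, hence $f^{n}(p\inv(I))\cap p\inv(I)=\varnothing$, so $p\inv(I)$ is a wandering domain and $NW(f)\subseteq p\inv(NW(r))$. For the reverse inclusion one must show every point of $K:=p\inv(NW(r))$ is non-wandering. The set $K$ is compact and $f$-invariant, $Tf$ is uniformly hyperbolic along $\Eu\oplus\Es$ over $K$, and $K$ is a fibre bundle over $NW(r)$ with fibre homeomorphic to $N$ on which $f$ acts as a skew product over the equicontinuous minimal base $(NW(r),\,r|_{NW(r)})$, uniformly hyperbolic along the fibres. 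Fix $t\in NW(r)$ and $x\in p\inv(t)$. For small $\ep>0$ the return times $S=\{\,n\ge1:\ \sup_{NW(r)}d(r^{n},\id)<\ep\,\}$ form a syndetic set; for $n\in S$ the leaf $p\inv(r^{n}(t))$ is $C^{0}$-close to $p\inv(t)$, there is a holonomy $\pi_{n}:p\inv(r^{n}(t))\to p\inv(t)$ moving points by at most some $\ep'$ with $\ep'\to0$ as $\ep\to0$, and $g_{n}:=\pi_{n}\circ f^{n}|_{p\inv(t)}$ is a self-homeomorphism of $p\inv(t)\cong N$ homotopic to the $n$-th iterate of a hyperbolic automorphism of $N$ (the holonomy being homotopic to the bundle identification), hence of nonzero Lefschetz number and so possessing a fixed point $w_{n}$; then $f^{n}(w_{n})$ is $\ep'$-close to $w_{n}$. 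Letting $\ep'\to0$ and passing to a limit point of the $w_{n}$ produces a non-wandering point in $p\inv(t)$, so $NW(f)\cap K$ is a nonempty closed $f$-invariant set projecting onto the minimal set $NW(r)$, hence meeting every fibre. Propagating this recurrence along the stable and unstable manifolds inside each fibre, using the uniform hyperbolicity of $Tf$ along $\Eu\oplus\Es$ over $K$, one concludes $K\subseteq NW(f)$. This last step — upgrading "some orbit meeting a fibre recurs" to "the whole fibre is non-wandering" — is the main obstacle: it is in essence a non-stationary hyperbolic recurrence (shadowing) argument along the $N$-fibres of $K$, and it is where both the uniform hyperbolicity over $K$ and the equicontinuity of $r$ on $NW(r)$ are used essentially.

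Finally I would establish that $r$ is a $C^{1}$ diffeomorphism. The compact $us$-leaves are non-open accessibility classes, so by the $C^{1}$ lamination property they form a $C^{1}$ lamination of $K$; reading $r$ off a $C^{1}$ center transversal to this lamination shows $r$ is $C^{1}$ on $NW(r)$, with positive derivative. Across each component $I$ of $\bbS\setminus NW(r)$, the map $r$ is governed by the $C^{1}$ fibration of $p\inv(I)$ furnished by the construction of $p$ (compare the identification $p\inv(I)\cong N\times I$ in Theorem~\ref{thm-ratAB}), which $f$ carries to the corresponding fibration of $p\inv(r(I))$, so $r|_{\overline{I}}$ is $C^{1}$ with one-sided endpoint derivatives matching those coming from the lamination. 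Hence $r$ is a $C^{1}$ diffeomorphism of $\bbS$, and assembling the statements above gives the three bulleted conclusions of Theorem~\ref{thm-irratAB}.
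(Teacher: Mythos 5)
Your construction of $p$ and $r$ and the reduction to the dynamics of a circle homeomorphism with irrational rotation number is essentially the paper's, but the argument for the crucial inclusion $p^{-1}(NW(r)) \subseteq NW(f)$ diverges sharply and contains a genuine gap that you yourself flag. The Lefschetz step produces one non-wandering point $w$ in the fibre $p^{-1}(t)$, and you then hope to ``propagate recurrence along the stable and unstable manifolds inside each fibre'' by a non-stationary shadowing argument. This cannot be carried out in the way sketched: $K=p^{-1}(NW(r))$ is \emph{not} a hyperbolic set — the center direction $\Ec$ over $K$ is merely dominated, not contracted or expanded — so the standard hyperbolic closing/shadowing machinery does not apply, and the naive observation that $f^{n_k}(w)\to w$ says nothing about $f^{n_k}(x)$ for $x\in W^u(w)$, since points on the unstable manifold separate from $w$ exponentially fast under iteration. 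The paper closes this in one stroke without Lefschetz or shadowing: fix $t\in NW(f|_S)$, $x\in AC(t)$, a small unstable plaque $D$ through $x$, and return times $n_k$ for $t$; after passing to a subsequence so that $f^{n_k}(x)\to y\in AC(t)$, the plaques $f^{n_k}(D)$ grow and accumulate on $W^u(y)$, and $W^u(y)$ is \emph{dense} in $AC(t)\cong N$ because unstable leaves of an Anosov nilmanifold automorphism are dense and the leaf conjugacy transports this density; hence some $f^{n_k}(D)$ returns to any neighbourhood of $x$. That density of unstable leaves — not a fixed-point count — is the idea your proposal is missing, and it makes the whole-fibre statement immediate rather than a separate propagation problem.

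Two smaller points. First, the paper's proof of $NW(f)\subseteq AC(NW(f|_S))$ is via the explicit disjointness argument on $AC(J)$ for a complementary gap $J$ (any intersection $AC(J)\cap AC(f^k(J))$ would be bounded by a compact $us$-leaf meeting $S$ in $J\cap f^k(J)$, a contradiction), not via the pointwise identity $p\circ f=r\circ p$ that you invoke; the latter is stronger than what \eqref{lemma-ctsproj} gives off the circle. Second, your argument for the $C^1$ regularity of $r$ routes through the $C^1$ lamination of $us$-leaves and the $C^1$ fibration of $p^{-1}(I)$, both of which require $f\in C^2$ — an assumption absent from the statement of \eqref{thm-irratAB}. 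The paper instead takes $p|_S$ to be a $C^1$-diffeomorphism from the $C^1$ circle $S$ to $\bbS$ (which \eqref{lemma-ctsproj} permits) and observes that $r=p|_S\circ f|_S\circ(p|_S)^{-1}$ is then automatically a $C^1$ diffeomorphism, with no smoothness assumption beyond $f\in C^1$.
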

\begin{thm} \label{thm-irratsemi}
    Suppose $f:M \to M$ is a non-accessible AB-system with no periodic compact
    $us$-leaves.
    Then, $f$ is semiconjugate to
    \[
        M_B \to M_B, \quad (v,t) \mapsto (Av, t + \theta)
    \]
    for $\theta$ defining an irrational rotation.
\end{thm}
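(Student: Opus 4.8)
The plan is to feed the structure produced by Theorem~\ref{thm-irratAB} into Poincar\'e's classification of circle homeomorphisms, and then to upgrade the resulting semiconjugacy on the base circle to a semiconjugacy onto all of $M_B$ by correcting the leaf conjugacy to $\fAB$ along the center direction. First, apply Theorem~\ref{thm-irratAB} to obtain the continuous surjection $p:M \to \bbS$ and the $C^1$ diffeomorphism $r:\bbS \to \bbS$. Since $f$ preserves an orientation of $\Ec$ and $r$ records the induced action of $f$ on the base circle, $r$ is orientation preserving. If $r$ had a periodic point $t_0$ of period $k$, then $t_0 \in NW(r)$, so $p\inv(t_0)$ would be a compact $us$-leaf with $f^k(p\inv(t_0)) = p\inv(r^k(t_0)) = p\inv(t_0)$, i.e.\ a periodic compact $us$-leaf; by hypothesis there are none, so $r$ has no periodic orbit and hence an irrational rotation number $\theta$. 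By Poincar\'e's theorem there is a monotone degree-one continuous surjection $q:\bbS \to \bbS$ with $q \circ r = R_\theta \circ q$, where $R_\theta(s) = s+\theta$, and $q$ is constant precisely on the closure of each connected component of $\bbS \setminus NW(r)$.

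Set $P := q \circ p : M \to \bbS$. On $p\inv(NW(r))$ one has $p \circ f = r \circ p$, so $P \circ f = R_\theta \circ P$ there; on $p\inv(I)$ with $I$ a component of $\bbS \setminus NW(r)$, $f$ carries $p\inv(I)$ into $p\inv(r(I))$ and $q$ collapses $\overline I$ and $\overline{r(I)}$ to points interchanged by $R_\theta$, so again $P \circ f = R_\theta \circ P$. Thus $P$ semiconjugates $f$ to the rigid rotation $R_\theta$, and composing any eventual semiconjugacy $M \to M_B$ with the projection $M_B \to \bbS$, $(v,t)\mapsto t$, must recover $P$ up to the collapse $q$ — which is why the target rotation is forced to be by this same $\theta$.

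It remains to promote $P$ to a map $h:M \to M_B$ with $h \circ f = g_\theta \circ h$, where $g_\theta(v,t) = (Av, t+\theta)$. Since $f$ is an AB-system, fix a leaf conjugacy $h_0:M \to M_B$ between $f$ and $\fAB$, so $h_0$ sends center leaves of $f$ to center leaves of $\fAB$ and $h_0 f(L) = \fAB h_0(L)$ for each such leaf $L$. Let $\phi_s$ be the flow on $M_B$ along the center leaves of $\fAB$, namely $\phi_s(v,t) = (v,t+s)$; then $\fAB$ commutes with $\phi$ and $g_\theta = \phi_\theta \circ \fAB$. As $h_0(f(x))$ and $\fAB(h_0(x))$ lie on the same center leaf of $\fAB$, there is a function $\tau$ — defined on the universal cover, to sidestep the periods of $\phi$ along compact center leaves — with $h_0(f(x)) = \phi_{\tau(x)}(\fAB(h_0(x)))$. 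Trying $h(x) := \phi_{\sigma(x)}(h_0(x))$ and using that $\fAB$ commutes with $\phi$, the identity $h \circ f = g_\theta \circ h$ becomes the cohomological equation
\[
    \sigma \circ f - \sigma = \theta - \tau .
\]

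The main obstacle is to solve this equation for the value of $\theta$ found above. The plan is to show that the cocycle $\tau$ is cohomologous over $f$ to one that factors through $p$ — the discrepancy between the leaf conjugacy $h_0$ and an honest conjugacy can only be read off on the compact $us$-leaves $p\inv(t)$, on which $f$ acts through $r$ — and then to reduce to solving $\bar\sigma \circ r - \bar\sigma = \theta - \bar\tau$ on $\bbS$, which is exactly the content of the Poincar\'e theory used above, the required $\bar\sigma$ being supplied (through the collapsing map $q$) by the semiconjugacy of $r$ to $R_\theta$. The real work lies in this reduction together with the accompanying regularity control: $\tau$ need not be continuous near compact center leaves, and one must check that the hyperbolic behaviour in the $N$-direction does not obstruct the cohomological equation. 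Once $\sigma$ is found, one verifies that $h$ is continuous, descends to $M$, and is surjective — it surjects onto $\bbS$ by the previous paragraph and onto each fiber $N$ because some compact $us$-leaf $p\inv(t) \cong N$ maps onto it — which gives the desired semiconjugacy.
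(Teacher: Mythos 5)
Your overall strategy matches the paper's: combine the $N$-coordinate of the leaf conjugacy with a Poincar\'e semiconjugacy for the induced circle dynamics, exactly as in the proof of \eqref{lemma-KSconj}. The first half of your argument (deriving $r$ has irrational rotation number, constructing $q$, and verifying $P\circ f = R_\theta\circ P$) is correct and is what the paper intends.

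The gap is in the final step. You set up the cohomological equation $\sigma\circ f - \sigma = \theta - \tau$ but do not solve it, and instead sketch a plan — ``show $\tau$ is cohomologous to a cocycle factoring through $p$'' followed by ``regularity control'' and a worry about ``hyperbolic behaviour in the $N$-direction'' — that is both unverified and unnecessary. The equation in fact has an explicit solution on the universal cover. Write the lifted leaf conjugacy as $h_0 = (H_1, H_2) : \tM \to \tN\times\bbR$, so that $\tau = H_2\circ\tf - H_2$, and let $\tilde P:\tM\to\bbR$ be a lift of $P = q\circ p$. Then $\sigma := \tilde P - H_2$ satisfies
\[
\sigma\circ\tf - \sigma = (\tilde P\circ\tf - \tilde P) - (H_2\circ\tf - H_2) = \theta - \tau ,
\]
using exactly the relation $P\circ f = R_\theta\circ P$ you already proved. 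The resulting map is $(H_1, H_2+\sigma) = (H_1,\tilde P)$, which is precisely the map $H\times\phi$ built in \eqref{lemma-KSconj} with $\phi$ replaced by a lift of the Poincar\'e semiconjugacy (extended to $\tM$ by constancy on accessibility classes, since $q\circ p$ is constant on both compact $us$-leaves and wandering domains). No reduction to a cocycle factoring through $p$ is needed, and the $N$-direction plays no role because $\fAB$ commutes with the center flow. What does remain to be checked — and what \eqref{lemma-KSconj} actually spends its effort on — is that $(H_1,\tilde P)$ is equivariant under the deck groups of $\tM\to M$ and $\tN\times\bbR\to M_B$ so that it descends to a map $M\to M_B$, and that it is surjective (which follows because $p$ restricts to a covering on each center leaf by \eqref{lemma-ctsproj}). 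Your sketch gestures at this in the last sentence but the equivariance under the $\beta$-type generator, which produces the $(Bv,t-1)$ twist, should be verified explicitly.
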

One can construct $C^1$ examples of AB-systems satisfying the conditions of
\eqref{thm-irratAB} and with $NW(f)  \ne  M$.
For instance, if $r$ is a Denjoy diffeomorphism of the circle, simply consider
a direct product $A \times r$ where $A$ is Anosov.

The diffeomorphism $f$ in \eqref{thm-ABNW}--\eqref{thm-irratsemi} need only be
$C^1$ in general.  If $f$ is a $C^2$ diffeomorphism, then the
surjection $p:M \to \bbS$ may be
taken as $C^1$.
This is a consequence of the following regularity result,
proven in Section
%The technical reasons for this are discussed in Section
\ref{sec-regularity}.

\begin{thm} \label{thm-regularity}
    For a non-accessible partially hyperbolic $C^2$ diffeomorphism with one-dimensional
    center, the $us$-leaves form a $C^1$ lamination.
\end{thm}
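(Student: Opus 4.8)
Since a $us$-leaf is by definition a $C^1$ submanifold, the content of the statement is the transverse, local-product structure, and the plan is to obtain it from a $C^1$-compactness argument: the tangent plane to a $us$-leaf at a point $y$ is exactly $(\Eu\oplus\Es)(y)$, a direction that varies with a modulus of continuity uniform over the compact manifold $M$, and this forces the leaves to vary continuously in the $C^1$ topology. I would first pin down which set the $us$-leaves sweep out. Let $\Gamma$ be the set of points whose accessibility class is not open; as $\{x : AC(x)\text{ is open}\}$ is open, $\Gamma$ is closed. If $L$ is a $us$-leaf and $y\in L$, completeness of $L$ together with the unique integrability of $\Eu$ and of $\Es$ forces $\Wu(y)\subseteq L$ and $\Ws(y)\subseteq L$, and iterating, $AC(y)\subseteq L$; since $L$ has codimension one, $AC(y)$ has empty interior, so $y\in\Gamma$. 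Conversely, the structure theory of \cite{RHRHU-accessibility} shows that a non-open accessibility class is a complete $C^1$ submanifold tangent to $\Eu\oplus\Es$, that is, a $us$-leaf. Hence the $us$-leaves are precisely the accessibility classes contained in $\Gamma$; being accessibility classes they are pairwise disjoint, and their union is the closed set $\Gamma$.

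For the uniform control, fix $x_0\in\Gamma$, put $d=\dim M-1$, and choose $C^1$ coordinates near $x_0$ with $x_0=0$ and $(\Eu\oplus\Es)(0)=\bbR^d\times\{0\}$. On a small cube $Q=Q'\times(-\delta,\delta)$ one can write $(\Eu\oplus\Es)(\xi)=\graph\omega(\xi)$ for a small-norm linear functional $\omega(\xi)$, and since $\Eu\oplus\Es$ is H\"older and $M$ is compact, $\xi\mapsto\omega(\xi)$ has a modulus of continuity uniform over $M$. Each connected component of $L\cap Q$, for $L$ a $us$-leaf, is a graph $\{(\zeta,u(\zeta))\}$ of a $C^1$ function $u$, and tangency to $\Eu\oplus\Es$ is the identity $Du(\zeta)=\omega(\zeta,u(\zeta))$. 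Therefore the family of all such plaque-graphs meeting a slightly smaller cube is uniformly bounded with derivatives sharing a common modulus of continuity, so by Arzel\`{a}--Ascoli it is precompact in $C^1$; passing to the limit in $Du=\omega(\id,u)$, any $C^1$ limit of plaque-graphs is again tangent to $\Eu\oplus\Es$, and (arguing as in the previous paragraph, via saturation by strong leaves) is contained in a $us$-leaf.

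Now take a short $C^1$ arc $\tau\subseteq Q$ through $x_0$ transverse to $\bbR^d\times\{0\}$; after shrinking $Q$, it is transverse to every $us$-leaf plaque in $Q$ and, by disjointness, meets each in exactly one point. Set $T=\tau\cap\Gamma$, a closed subset of $\tau$, and for $t\in T$ let $\mathcal P_t=\{(\zeta,u_t(\zeta))\}$ be the plaque through $t$. If $t_n\to t$ in $T$, precompactness yields a $C^1$-convergent subsequence of $\mathcal P_{t_n}$ whose limit is a plaque through $t$, hence is $\mathcal P_t$ by disjointness; as every subsequence has such a sub-subsequence, $\mathcal P_{t_n}\to\mathcal P_t$ in $C^1$, i.e.\ $t\mapsto u_t$ is continuous into $C^1$. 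Then $(\zeta,t)\mapsto(\zeta,u_t(\zeta))$ is a continuous chart carrying a neighbourhood in $Q$ onto $\Gamma\cap Q$ and the plaques onto the slices $Q'\times\{t\}$, $t\in T$ — a $C^1$ lamination chart. Ranging over $x_0\in\Gamma$, and recalling that points outside $\Gamma$ lie on no $us$-leaf, this exhibits the $us$-leaves as a $C^1$ lamination.

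The step I expect to be the main obstacle is not the compactness but the handling of possible non-dynamical-coherence: $\Eu\oplus\Es$ need not be uniquely integrable, so one cannot conclude that two $us$-leaves tangent at a point coincide, or that a $C^1$ limit of plaques lies on an honest $us$-leaf, by any local uniqueness of integral manifolds of the sum. One must instead work only with completeness and with the genuine local product structure of the foliations $\Wu$ and $\Ws$ restricted to a $us$-leaf, and it is precisely here that the input from \cite{RHRHU-accessibility} — that a non-open accessibility class is already a complete $C^1$ submanifold tangent to $\Eu\oplus\Es$ — is indispensable.
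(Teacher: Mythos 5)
There is a genuine gap: you prove the wrong notion of ``$C^1$ lamination.'' What your Arzel\`a--Ascoli argument establishes is that $t\mapsto u_t$, the map from the transversal parameter to the plaque-graph, is \emph{continuous into the $C^1$ topology}. That is essentially the $C^0$ lamination result already in \cite{RHRHU-accessibility}: once you know the leaves are $C^1$ and tangent to the continuous plane field $\Eu\oplus\Es$, $C^1$-continuous variation of plaques follows from the compactness argument almost for free. But Theorem \eqref{thm-regularity} --- and its application to \eqref{prop-pproj} and hence to the ergodic decomposition in \eqref{thm-decomp} --- asserts something strictly stronger: there is a genuine $C^1$ function $g$ on an open neighbourhood $V$, with $dg\ne0$ everywhere, whose level sets are the $us$-plaques. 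Your chart $(\zeta,t)\mapsto(\zeta,u_t(\zeta))$ is defined only for $t$ in the closed transversal set $T$, and $C^1$-continuity of $t\mapsto u_t$ gives no control at all over the transversal differential quotient $\bigl(u_{t'}(\zeta)-u_t(\zeta)\bigr)/(t'-t)$. The holonomy $t\mapsto u_t(\zeta_1)$ between two transversals can be a non-differentiable homeomorphism even when $t\mapsto u_t$ is $C^1$-continuous; a foliation with $C^1$ leaves tangent to a merely H\"older plane field need not have $C^1$ holonomy, and then no $C^1$ flattening chart exists.

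This is precisely where the $C^2$ hypothesis enters and where the real work in the paper's proof lies. Proposition \eqref{prop-niceholo} establishes, via \cite{PSWc}, that the unstable holonomy inside a $cu$-leaf is $C^1$, with Jacobian $J_{x y}=\prod_{n\ge0}\|T^c_{f^{-n}(y)}f\|/\|T^c_{f^{-n}(x)}f\|$, convergent and tending uniformly to $1$ as $d(x,y)\to0$ because $Tf$ is Lipschitz and $\Ec$ is H\"older. The paper then assigns a candidate derivative $dg_x$ at each lamination point by $\ker dg_x=\Eu_x\oplus\Es_x$ and $dg_x(v^c_x)=D(x)$, with $D$ a limit of holonomy Jacobians, and runs a lengthy case analysis verifying the hypotheses of Whitney's extension theorem to produce the $C^1$ function $g$. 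Your plan never computes or controls this Jacobian, uses no dynamics beyond the existence of the lamination and the modulus of continuity of $\Eu\oplus\Es$, and would apply verbatim to a merely $C^1$ diffeomorphism --- a tell that it cannot be proving a result that requires $C^2$. Your concern about non-coherence is not the main obstacle (the structure from \cite{RHRHU-accessibility} already handles it); the hard part is the transversal $C^1$ regularity governed by the holonomy Jacobian, and your plan does not address it.
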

The existence of a $C^0$ lamination was shown in \cite{RHRHU-accessibility}.

%\emph{A priori},
%it is conceivable that a $C^2$ AB-system could have the same
%properties.
%Such an example would by necessity have periodic compact 
%center leaves which are only $C^1$ regular and where the dynamics on each
%such center leaf is a Denjoy diffeomorphism.
%It is not clear if such a $C^2$ example exists.
%    
%Our definition of topological skew product allowed for a non-trivial fiber
%bundle.  However, in this case, the system is not an AB-system.
%To deduce \eqref{thm-cons-skew} from \eqref{thm-consAB}, we will need
%to show that non-accessible skew products only occur when the bundle is
%trivial.

\medskip

The next sections
discuss how this work relates to other results in partially
hyperbolic theory, first for three-dimensional systems in Section
\ref{sec-dimthree} and for higher dimensions in Section \ref{sec-higher}.
Section \ref{sec-outline} gives an outline of the proof and of the
organization of the rest of the paper.
The appendix gives precise definitions for many of the terms
used in these next few sections.

\section{Dimension three} \label{sec-dimthree} %{{{1

The study of partially hyperbolic systems has had its greatest success in
dimension three, where $\dim \Eu = \dim \Ec = \dim \Es = 1$.
Still, in this simplest of cases, a number of important questions remain open.
%The following conjecture was first stated informally
%by E.~Pujals
%and appears in \cite{BonattiWilkinson}.
%
%conjecture conj-class:
%    Every \emph{transitive} partially hyperbolic diffeomorphism in dimension
%    three, is (up to finite iterates and finite covers) leaf conjugate to
%    itemize:
%        \item
%        an Anosov diffeomorphism on $\bbT^3$,
%        \item
%        a skew product over an Anosov diffeomorphism on $\bbT^2$, or
%        \item
%        the time-one map of an Anosov flow.
%
%As suggested by Hertz, Hertz, and Ures,
%this conjecture can also be stated with the condition ``transitive''
%replaced by any one of the following:
%dynamically coherent,
%non-wandering,
%chain recurrent, or
%conservative.
%In each of these cases, the conjecture is open.
%In particular, the only known counterexamples in dimension three poss\-ess
%an invariant 2-torus tangent either to $\Ec \oplus \Es$ or $\Ec \oplus \Eu$.
%These examples are further discussed in Section \ref{sec-incoherent}.
Hertz, Hertz, and Ures posed the following conjecture specifically
regarding ergodicity.

\begin{conjecture} \label{conj-torus}
    If a conservative partially hyperbolic diffeomorphism in dimension three
    is not ergodic, then there is a periodic 2-torus tangent to $\Eu \oplus \Es$.
\end{conjecture}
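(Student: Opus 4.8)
The plan is to reduce Conjecture \ref{conj-torus} to the classification of conservative AB-systems established in this paper, at the cost of one genuinely hard step. In dimension three the center bundle is one-dimensional, so by the work of Rodriguez Hertz, Rodriguez Hertz, and Ures \cite{RHRHU-accessibility} accessibility implies ergodicity; hence a conservative, non-ergodic, partially hyperbolic $f:M\to M$ with $\dim M=3$ is \emph{not} accessible. By the same work, together with Theorem \ref{thm-regularity} for the $C^1$ regularity when $f$ is $C^2$, every accessibility class of such an $f$ is either open or a $us$-leaf, and since $f$ is not accessible the non-open classes form a nonempty, $f$-invariant, codimension-one lamination $\cL$, each leaf of which is a surface tangent to $\Eu\oplus\Es$; the lamination is nonempty because otherwise the open classes would partition the connected manifold $M$. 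It therefore suffices to produce a compact, $f$-periodic leaf of $\cL$. Indeed, if $L$ is such a leaf with $f^n(L)=L$, then $Df^n$ preserves the splitting $TL=\Eu|_L\oplus\Es|_L$ and uniformly expands one summand while contracting the other, so $f^n|_L$ is an Anosov diffeomorphism of a closed surface and hence $L\cong\bbT^2$; being tangent to $\Eu\oplus\Es$ and $f$-periodic, $L$ is the torus demanded by the conjecture. (The same Anosov obstruction also excludes a Klein bottle, which will matter below.)

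The substance is thus to find a compact, periodic leaf of $\cL$. The approach I would take is to show that $f$ is an infra-AB-system and then appeal to this paper. Once $f$ is known to be an AB-system --- after passing to a finite cover and an iterate if necessary --- Theorem \ref{thm-consAB} forces $f$, being conservative and non-ergodic, into case (2) with $\theta$ rational or into case (3); in either case the sets $p\inv(t)$ with $t\in\bbS\setminus U$ are $f^n$-invariant compact $us$-leaves homeomorphic to the nilmanifold $N$. In dimension three $N$ is a two-dimensional nilmanifold, hence $N\cong\bbT^2$; in the orientation-reversing or infranilmanifold cases $N$ is only finitely covered by $\bbT^2$, but projecting an invariant torus down from the cover yields a periodic compact $us$-leaf of $f$ that again admits an Anosov diffeomorphism and so is a genuine torus. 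The infra-AB versions of the theorem collected in Section \ref{sec-infra} carry this through in general. This reduces Conjecture \ref{conj-torus} to the assertion that every conservative, non-ergodic, partially hyperbolic $C^2$ diffeomorphism in dimension three is an infra-AB-system --- which is exactly the open Question posed in Section \ref{sec-results}.

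That classification step is where I expect the real difficulty to lie. I would separate the dynamically coherent and incoherent cases. In the coherent case there is an $f$-invariant center foliation $\Wc$, and the strategy is to show that $\Wc$ together with $\cL$ organizes $M$ as a circle or interval bundle over $\bbT^2$ on which $f$ acts, up to leaf conjugacy, by an automorphism --- a rigidity statement in the spirit of the existing classification of dynamically coherent three-dimensional partially hyperbolic diffeomorphisms. The incoherent case is the delicate one: the branching examples of Rodriguez Hertz, Rodriguez Hertz, and Ures \cite{RHRHU-non-dyn}, revisited in Section \ref{sec-incoherent}, show that the center direction can branch, and one must rule this out under the conservative, non-ergodic hypothesis. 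Here conservativity is a powerful constraint --- Poincar\'e recurrence gives $NW(f)=M$, which already excludes the wandering-domain and Denjoy-type phenomena of Theorems \ref{thm-ratAB} and \ref{thm-irratAB} --- and the natural line is to analyze the leaf space of $\cL$, showing it is a circle on which $f$ induces a homeomorphism with no wandering intervals: in the rational case this yields a periodic leaf directly, while in the irrational (minimal) case one would argue that $f$ is in fact ergodic, a contradiction with the hypothesis. The obstruction is that pinning down the global topology of $\cL$ and excluding incoherent leaf-space dynamics appears to require essentially the full classification of conservative partially hyperbolic diffeomorphisms in dimension three, which remains open.
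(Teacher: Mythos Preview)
The statement you are attempting to prove is not a theorem of the paper but an open \emph{conjecture}, attributed there to Rodriguez Hertz, Rodriguez Hertz, and Ures. The paper offers no proof of it, so there is nothing to compare your attempt against; instead the paper uses the conjecture as motivation, and immediately afterwards poses as an open question (Question~1.1) precisely the statement you end up reducing to: whether every conservative, non-ergodic, partially hyperbolic $C^2$ diffeomorphism with one-dimensional center is an infra-AB-system.

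Your proposal is therefore not a proof but a reduction, and you yourself say so. The reduction is sound as far as it goes: if $f$ is a non-ergodic conservative infra-AB-system then Theorem~\ref{thm-consAB} (or its infra version in Section~\ref{sec-infra}) forces it into case~(2) with rational $\theta$ or case~(3), and in either case there is a periodic compact $us$-leaf, which in dimension three carries an Anosov diffeomorphism and hence is a $2$-torus. So ``non-ergodic infra-AB in dimension three $\Rightarrow$ periodic $us$-torus'' is correct.

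The genuine gap is exactly the one you identify, and it is not a matter of filling in details: the missing step is at least as hard as the conjecture itself. In fact your proposed route is strictly \emph{stronger} than Conjecture~\ref{conj-torus}. Even on $3$-manifolds with solvable fundamental group, where the classification of Theorem~\ref{thm-classthree} is available, case~(c) --- systems leaf conjugate to a hyperbolic automorphism of $\bbT^3$ --- is \emph{not} an infra-AB-system (the center leaves are dense lines, not circles), and the paper explicitly notes that whether such a system can be non-ergodic is open. A positive answer to your reduction would simultaneously settle that question by showing no such non-ergodic example exists. So your outline correctly locates the obstruction, but the ``classification step'' you defer is not a technicality: it is the entire content of the conjecture, and more.
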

They also showed that the existence of such a torus would have strong
dynamical consequences.
We state this theorem as follows.

\begin{thm}
    [\cite{RHRHU-tori}]
    If a partially hyperbolic diffeomorphism on a three dimensional
    manifold $M$
    has a periodic 2-torus tangent to $\Eu \oplus \Es$,
    then $M$ has solvable fundamental group.
\end{thm}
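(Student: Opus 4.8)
The plan is to reduce to an invariant torus carrying Anosov dynamics, to show that such a torus is $\pi_1$-injective in $M$, and then to read off the topology of $M$ from three-manifold theory.

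\emph{Reduction.} Replacing $f$ by a power, assume $f(T)=T$ and that $f$ preserves a co-orientation of $T$. Since $T\cong\bbT^2$ is tangent to $\Eu\oplus\Es$, the restriction $g:=f|_T$ preserves the continuous splitting $\Eu|_T\oplus\Es|_T$, uniformly expanding the first summand and contracting the second, so $g$ is an Anosov diffeomorphism of the two-torus. By the theorem of Franks, $g$ is topologically conjugate to a linear hyperbolic automorphism, so the induced automorphism of $\pi_1(T)=\bbZ^2$ is a hyperbolic matrix $L\in GL(2,\bbZ)$, which leaves no rational line invariant. Moreover, since $\Eu$ and $\Es$ are tangent to $T$ along $T$ and the strong foliations of $f$ are the unique foliations tangent to these line fields, $T$ is a union of complete strong stable leaves and a union of complete strong unstable leaves of $f$; under the conjugacy these restrict on $T$ to the two minimal linear foliations of $g$.

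\emph{Incompressibility.} The crux is that $i_*\colon\pi_1(T)\to\pi_1(M)$ is injective. Suppose not; by the Loop Theorem there is an essential simple closed curve $c\subset T$ bounding an embedded disk $D\subset M$, chosen innermost so that $\interior D\cap T=\varnothing$. If $c$ cuts off from $M$ a solid torus $V$ with $\partial V=T$, then $V$ is determined by $T$ and its embedding, so $f(V)=V$, which forces $L$ to preserve the meridian line of $V$ inside $\pi_1(T)$ — impossible. In general, pass to the universal cover $\tilde M$: the component $\tilde T$ of the preimage of $T$ through a chosen lift $\tilde c$ of $c$ is a properly embedded cylinder with core $\tilde c$; the curve $\tilde c$ bounds a lift $\tilde D$ of $D$ whose interior misses the preimage of $T$; and $\tilde T$ carries a lift $\tilde\ell$ of a strong unstable leaf $\ell\subset T$ which, because $\ell$ is dense in $T$, is a properly embedded line crossing $\tilde c$ and running coherently from one end of $\tilde T$ to the other. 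Capping the two halves of $\tilde T\setminus\tilde c$ with parallel copies of $\tilde D$ yields properly embedded planes in $\tilde M$; comparing these with the strong unstable leaves of $f$ in $M$ — which are properly embedded copies of $\bbR$ that $f$ expands — forces the strong leaf through a point of $\tilde c$ either to close up or to fail to be properly embedded, against the minimality of $\Wu|_T$. I expect this to be the main obstacle: it is the one step where the partial hyperbolicity of $f$ over all of $M$, rather than only the Anosov behaviour on $T$, has to be used.

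\emph{Topological classification.} It remains to show that a closed three-manifold $M$ carrying an incompressible two-torus $T$ invariant under a homeomorphism acting on $\pi_1(T)=\bbZ^2$ as the hyperbolic matrix $L$ has solvable fundamental group. Such an $M$ is irreducible (a reducible partially hyperbolic three-manifold carrying such a torus would contradict the sphere theorem together with the dynamics) and hence aspherical, so by the Torus Theorem together with geometrization it is geometric or admits a nontrivial JSJ decomposition. A power of $f$ respects the canonical Seifert fibrations and the JSJ decomposition; but a boundary torus of a Seifert piece is vertical, so $\pi_1(T)$ would then contain the central fiber class, a rational line fixed by a power of $L$ — impossible — and a cusp subgroup of a hyperbolic piece is self-normalizing, so a power of $f_*$, which is inner there by Mostow rigidity and finiteness of the outer automorphism group, would act trivially on $\pi_1(T)$, making a power of $L$ the identity — also impossible. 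Hence $T$ is neither a JSJ torus nor vertical in a closed Seifert manifold, so the only remaining possibilities are that $M$ is a Sol-manifold or a closed Seifert manifold in which $T$ is horizontal, in which case $M$ is a torus bundle over the circle. In either case $M$ is finitely covered by a torus bundle over the circle, and therefore $\pi_1(M)$ is solvable.
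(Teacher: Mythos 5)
This statement is not proved in the paper: it is quoted verbatim from \cite{RHRHU-tori} with only a remark that a much stronger version holds there (namely, that $M$ is in fact one of a short list of torus bundles). So there is no in-house proof to compare against; I can only evaluate your attempt against the original, whose argument hinges on foliation-theoretic results (Novikov, Rosenberg, Roussarie--Thurston) applied to the codimension-one foliation tangent to $\Eu\oplus\Es$, rather than on a direct construction of compressing disks and their lifts.

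Your reduction step and your final three-manifold classification step are reasonable, and the classification step is in fact where most of your effort should go given the tools you invoke (Torus Theorem, JSJ, geometrization): once $T$ is known to be incompressible with a hyperbolic monodromy, ruling out the atoroidal and Seifert cases is routine and your case analysis is essentially the right one. But the incompressibility step has a real gap. You treat only the case where $T$ is separating and bounds a solid torus, plus an informal ``general case'' in which you assume the component $\tilde T$ of the preimage of $T$ in $\tM$ is a cylinder with core a lift of $c$. That tacitly assumes the kernel of $i_*\colon\pi_1(T)\to\pi_1(M)$ is infinite cyclic; if the whole of $\pi_1(T)$ dies, $\tilde T$ is a plane and your argument about coherently-running lifted leaves has nothing to compare against. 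You also never rule out the non-separating case, nor the case where compressing $T$ produces an essential sphere (which, if one wanted to argue directly, would require a Burago--Ivanov-type obstruction on $S^3$ or $S^2\times S^1$ rather than anything you have set up). Finally, the decisive sentence --- that the existence of parallel capping disks ``forces the strong leaf through a point of $\tilde c$ either to close up or to fail to be properly embedded'' --- is an assertion, not an argument: nothing in the preceding construction explains why a properly embedded plane in $\tM$ obtained from $\tilde T$ and $\tilde D$ must interact with a strong unstable leaf at all, let alone trap it. This is exactly the step where the original paper instead invokes the Roussarie--Thurston theorem: the torus is a leaf of a codimension-one foliation without Reeb components, hence $\pi_1$-injective. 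I would either cite that theorem outright or replace the paragraph with a genuine Haefliger/Novikov-style argument; as written, the incompressibility claim is not established, and the rest of the proof rests on it.
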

In fact, the theorem may be stated in a much stronger form.
See \cite{RHRHU-tori} for details.

%Work on proving the classification in \eqref{conj-class}
Work on classifying partially hyperbolic systems
has seen some success
in recent years, at least for 3-manifolds with ``small'' fundamental group.
This was made possible by the breakthrough results of Brin, Burago, and Ivanov
to rule out partially hyperbolic diffeomorphisms on the 3-sphere
and prove dynamical coherence
on the 3-torus \cite{BI,BBI2}.
Building on this work, the author and R.~Potrie gave a classification
up to leaf conjugacy of all partially hyperbolic systems on 3-manifolds with
solvable fundamental group.
Using the terminology of the current paper,
the conservative version of
this classification can be stated as follows.

\begin{thm}
    [\cite{HP2}] \label{thm-classthree}
    A conservative partially hyperbolic diffeomorphism on a 3-mani\-fold with
    solvable fundamental group is (up to finite iterates and finite covers)
    either
    \begin{enumerate}
        \item[(a)]
        an AB-system,

        \item[(b)]
        a skew-product with a non-trivial fiber bundle, or

        \item[(c)]
        a system leaf conjugate to an Anosov diffeomorphism.
    \end{enumerate}  \end{thm}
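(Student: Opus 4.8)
The plan is to combine the classification of closed $3$-manifolds with solvable fundamental group with the branching-foliation techniques of Burago and Ivanov, treating a short list of model manifolds one at a time. First I would reduce to the models: by the geometrization of $3$-manifolds, a closed $3$-manifold with solvable $\pi_{1}$ is, up to finite cover, $\bbT^{3}$, a nilmanifold covered by the Heisenberg group, a $\mathrm{Sol}$-manifold (a mapping torus of a hyperbolic automorphism of $\bbT^{2}$), a spherical space form, or $S^{2}\times S^{1}$; the spherical and $S^{2}\times S^{1}$ cases are excluded by known topological obstructions (for the $3$-sphere, \cite{BI}, together with the fact that partial hyperbolicity lifts to covers). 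Replacing $f$ by an iterate, we may assume in addition that $f$ preserves orientations of $M$, $\Eu$, $\Ec$ and $\Es$.

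On $\bbT^{3}$ I would use that dynamical coherence is automatic \cite{BBI2}, so $f$ carries invariant foliations $\Wcs$, $\Wcu$ and $\Wc$; lifting to $\bbR^{3}$ and comparing with the linear part $A=f_{*}\in\mathrm{GL}(3,\bbZ)$, which is itself a partially hyperbolic matrix, one shows the lifted foliations stay a bounded distance from the corresponding linear foliations and concludes that $f$ is leaf conjugate to $A$. It then remains to classify partially hyperbolic $A\in\mathrm{GL}(3,\bbZ)$: since $\det A=\pm1$ and $A$ has a nontrivial stable and a nontrivial unstable eigenvalue, the remaining eigenvalue is real (a complex pair on the unit circle would force every eigenvalue there, precluding partial hyperbolicity), so either $A$ is hyperbolic, giving case (c), or, after an iterate, $A$ has a simple eigenvalue $1$ whose eigenspace is rational; putting $A$ in the resulting block-triangular form exhibits it, up to a finite cover, as an AB-prototype with $B$ trivial, giving case (a).

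For the nilmanifold and $\mathrm{Sol}$ cases dynamical coherence is not available for free, and I would establish it together with the leaf conjugacy directly from the branching-foliation theory. On a Heisenberg nilmanifold $N$ the abelianization is a nontrivial circle bundle $N\to\bbT^{2}$; the induced action on $H_{1}=\bbZ^{2}$ must be hyperbolic, the center direction projects to zero, and one shows the center leaves remain uniformly close to the circle fibers in the universal cover, which yields a leaf conjugacy to an algebraic skew product over a linear Anosov map of $\bbT^{2}$ --- case (b) (and not an AB-system, since no hyperbolic matrix commutes with the parabolic monodromy of the Heisenberg bundle over $\bbS$). On a $\mathrm{Sol}$-manifold the torus fiber of $M\to\bbS$ is $\pi_{1}$-injective and an iterate of $f$ fixes its class; the center is forced to be the suspension direction, and a global product structure argument gives a leaf conjugacy to the suspension of the monodromy, which is an AB-prototype with $A=B$ --- again case (a).

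The main obstacle, present in every case, is to establish a global product structure in the universal cover --- and with it the leaf conjugacy --- without dynamical coherence as an input, since coherence is free only on $\bbT^{3}$. This is exactly where the Burago--Ivanov theory of branching foliations and surface laminations is needed: one must show that the branching foliations tangent to $\Ecs$ and $\Ecu$ lift to foliations by properly embedded planes, that center curves are quasi-isometrically embedded in the universal cover, and that Reeb-type configurations cannot occur. Solvability of $\pi_{1}$ is indispensable here, as it forces the leaf space of the center foliation to be simple --- an $\bbR$ or a circle --- and rules out the branching pathologies available on, for instance, hyperbolic $3$-manifolds.
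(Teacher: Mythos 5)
Your outline follows roughly the same skeleton as the cited source \cite{HP2} — reduce to model 3-manifolds with solvable fundamental group (Euclidean, Nil, Sol, after excluding the spherical and $S^2\times S^1$ cases), then prove a leaf conjugacy to the corresponding algebraic model — so the overall strategy is sound. But there is a concrete error that propagates through the argument and, more importantly, it leaves the role of conservativity unexplained.

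The claim that ``dynamical coherence is automatic on $\bbT^3$ by \cite{BBI2}'' is false. That result establishes coherence only when the linear part $f_*\in \mathrm{GL}(3,\bbZ)$ is hyperbolic (Anosov). When the linear part has an eigenvalue of modulus one — which is precisely the case that produces AB-prototypes and is the motivating example of this paper — coherence can fail: the Rodriguez Hertz--Rodriguez Hertz--Ures example \cite{RHRHU-non-dyn}, reconstructed in Section \ref{sec-incoherent} of this very paper, is a partially hyperbolic diffeomorphism of $\bbT^3$ that is not dynamically coherent. So coherence is not ``free only on $\bbT^3$''; it is not free anywhere, and the $\bbT^3$ case needs the same hard work you defer to the nil and Sol cases.

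This is where conservativity enters, and it is nowhere used in your proposal. The classification in \cite{HP2} covers \emph{all} partially hyperbolic diffeomorphisms on these manifolds and includes a fourth possibility that you omit: dynamically incoherent systems, which necessarily carry a periodic torus tangent to $\Eu\oplus\Ec$ or $\Es\oplus\Ec$ with wandering dynamics on one side. Conservativity kills exactly this case, since by Poincar\'e recurrence $NW(f)=M$. Without invoking this, your reduction to cases (a)--(c) has a hole that is independent of the branching-foliation work you flag at the end. (As a minor additional point, your eigenvalue argument in the $\bbT^3$ step only excludes a complex pair on the unit circle; a complex pair off the circle is also possible a priori and would make $f_*$ Anosov with a $2$-dimensional stable or unstable space, which needs a separate argument to reconcile with $f$ having one-dimensional center.)
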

Further, the ergodic properties of each of these three cases have been
examined in detail.
Case (a) is the subject of the current paper.
Case (b) was studied in \cite{RHRHU-nil}, where it was first shown that
there are manifolds on which all partially hyperbolic systems
are accessible and ergodic.
Case (c) was studied in \cite{HamUres}, which showed that if such a system
is not ergodic then it is topologically conjugate to an Anosov
diffeomorphism (not just leaf
conjugate).
It is an open question if such a non-ergodic system can occur.
All of these results can be synthesized into the following statement,
similar in form to \eqref{thm-consAB}.

\begin{thm} \label{thm-consthree}
    Suppose $M$ is a 3-manifold with solvable fundamental group and
    $f:M \to M$ is a $C^2$ conservative partially hyperbolic system.
    Then, (up to finite iterates and finite covers) one of the following occurs.
    \begin{enumerate}
        \item $f$ is accessible and stably ergodic.
        \item $\Eu$ and $\Es$ are jointly integrable and $f$ is topologically
        conjugate either to a linear hyperbolic automorphism of\, $\bbT^3$ or to
        \[
            M_B \to M_B, \, (v,t) \mapsto (Av, t + \theta)
              \]
        where $A,B:\bbT^2 \to \bbT^2$ define an AB-prototype and $\theta \in \bbS$.
        \item There are $n \ge 1$, a $C^1$ surjection $p: M \to \bbS$, and
        a non-empty open set $U \subsetneq \bbS$ such that
        \begin{itemize}
            \item
            for every connected component $I$ of $U$,
            $p \inv(I)$
            is an $f^n$-invariant subset
            homeomorphic to $\bbT^2 \times I$ and the restriction of $f^n$ to
            this subset is accessible and ergodic,
            \item
            for every $t \in \bbS \setminus U$, 
            \ $p \inv(t)$
            is an $f^n$-invariant 2-torus tangent to
            $\Eu \oplus \Es$.
        \end{itemize}  \end{enumerate}  \end{thm}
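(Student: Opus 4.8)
The plan is to obtain Theorem~\ref{thm-consthree} by combining the leaf-conjugacy classification of \cite{HP2} (Theorem~\ref{thm-classthree}) with an already available ergodic analysis of each of its three cases: Theorem~\ref{thm-consAB} of the present paper for AB-systems, \cite{RHRHU-nil} for skew products with a non-trivial fiber bundle, and \cite{HamUres} for systems leaf conjugate to an Anosov diffeomorphism. Theorem~\ref{thm-classthree} and all three of these ergodic statements are phrased up to finite iterates and finite covers, exactly as Theorem~\ref{thm-consthree} is, so this reduction is carried along throughout. Concretely, I fix a finite cover $\tM \to M$, write $\tf$ for a lift of $f$, and let $g = \tf^{\,k}$ be an iterate which, by Theorem~\ref{thm-classthree}, is (a) an AB-system, (b) a skew product with a non-trivial fiber bundle, or (c) leaf conjugate to an Anosov diffeomorphism. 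It suffices to prove the trichotomy for $g$; note that $g$ is still $C^2$ and volume-preserving, and that $\dim\Eu = \dim\Ec = \dim\Es = 1$ since $\dim\tM = 3$.

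In case (a), $g$ is leaf conjugate to an AB-prototype $\fAB$ built from a closed nilmanifold $N$ carrying a hyperbolic automorphism. Since $\dim\Ec = 1$ forces $\dim N = 2$ and the torus is the only closed surface that is a nilmanifold, $N = \bbT^2$. Being an AB-system already includes preserving an orientation of $\Ec$, so Theorem~\ref{thm-consAB} applies to $g$ verbatim, and its three conclusions translate into the three alternatives of Theorem~\ref{thm-consthree}: case (1) there gives case (1) here; case (2) gives joint integrability of $\Eu$ and $\Es$ together with a topological conjugacy to $M_B \to M_B$, $(v,t) \mapsto (Av, t+\theta)$ with $A,B$ automorphisms of $\bbT^2$, which is the second alternative of case (2); and case (3) becomes case (3), once one notes that a submanifold homeomorphic to $\bbT^2$ is a $2$-torus and that the sets $p\inv(t)$ are tangent to $\Eu \oplus \Es$.

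In case (b), \cite{RHRHU-nil} shows that $g$ is accessible; since the center bundle is one-dimensional and $g$ is $C^2$ and conservative, accessibility is $C^1$-open and forces ergodicity, so $g$ is stably ergodic and lies in case (1) (by \cite{RHRHU-accessibility} and \cite{BW-annals}). In case (c), \cite{HamUres} (together with \cite{HP2}) yields a dichotomy: either $\Eu$ and $\Es$ are not jointly integrable, in which case $g$ is accessible and hence, exactly as in case (b), stably ergodic, giving case (1); or $\Eu$ and $\Es$ are jointly integrable, in which case \cite{HamUres} shows $g$ is topologically conjugate to an Anosov diffeomorphism of the closed $3$-manifold $\tM$. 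Such a diffeomorphism is codimension one (its stable or unstable bundle is one-dimensional), so by the Franks--Newhouse classification of codimension-one Anosov diffeomorphisms it is topologically conjugate to a linear hyperbolic automorphism of $\bbT^3$; composing conjugacies places $g$ in the first alternative of case (2).

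Assembling the three cases exhausts Theorem~\ref{thm-classthree} and proves Theorem~\ref{thm-consthree} for $g$, hence for $f$ up to finite iterates and covers. The substantive content lives in the four cited theorems, so the points that actually need care are bookkeeping: checking that after the finite-iterate and finite-cover reduction the system still satisfies the strict hypotheses required — a $C^2$ conservative AB-system, orientation-preserving on $\Ec$, for Theorem~\ref{thm-consAB} — and verifying that one genuinely obtains \emph{stable} ergodicity, not merely ergodicity, in cases (b) and (c), both of which rest on the $C^1$-openness of accessibility for one-dimensional center bundle. Isolating the accessible-or-jointly-integrable dichotomy of case (c) cleanly from \cite{HamUres} and \cite{HP2} is the one step that may require more than a routine appeal to the literature.
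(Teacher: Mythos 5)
Your approach — quoting the leaf-conjugacy classification of Theorem~\ref{thm-classthree} and then running each of its three branches through Theorem~\ref{thm-consAB}, \cite{RHRHU-nil}, and \cite{HamUres} respectively — is exactly how the paper itself arrives at Theorem~\ref{thm-consthree}, which is stated there as a synthesis rather than being given a self-contained proof. Cases (a) and (b) of your reduction are handled correctly; in particular, the observation that $\dim\Ec = 1$ in dimension three forces the nilmanifold fiber to be $\bbT^2$ is the right way to read off the second alternative of conclusion (2).

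The one genuine soft spot is the dichotomy you invoke in case (c), and you are right to flag it. You assert that ``either $\Eu$ and $\Es$ are not jointly integrable, in which case $g$ is accessible,'' but for a $C^2$ partially hyperbolic system with one-dimensional center the general theory (Proposition~\ref{prop-nonacc} of this paper, from \cite{RHRHU-accessibility}) only says that non-accessibility forces the non-open accessibility classes to form a \emph{lamination}, not a foliation; case~(3) of Theorem~\ref{thm-consAB} shows explicitly that for AB-systems this lamination need not fill the manifold. So ``not jointly integrable $\Rightarrow$ accessible'' is not a consequence of the one-dimensional-center machinery; it is a feature of the Anosov-leaf-conjugate case specifically, and must be extracted from \cite{HamUres} (together with \cite{HP2}) rather than deduced from general principles. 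That paper supplies the statement you need — a non-accessible system leaf conjugate to an Anosov diffeomorphism on $\bbT^3$ has $\Eu\oplus\Es$ jointly integrable and is topologically conjugate to its linear part — after which the Franks--Newhouse codimension-one classification you cite does close the loop. With that single citation made precise, the argument is complete and agrees with the paper's intended synthesis.
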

If \eqref{conj-torus} is true, then this theorem encapsulates every
possible ergodic decomposition for a 3-dimensional partially hyperbolic
system.

\begin{question}
    Is the condition ``with solvable fundamental group''
    necessary in \eqref{thm-consthree}?
\end{question}
\section{Higher dimensions} \label{sec-higher} %{{{1

We next consider the case of skew products in higher dimension.
%Unfortunately, there seems to be no consensus yet as to which
%dynamical systems may rightly be called ``skew products.''
%We list the following families.
%
%itemize:
%    \item
%    A diffeomorphism $A$ of a manifold $M$ and a function
%    $\omega:M \to \bbS$ define a
%    \emph{rotation extension}
%
%        M times bbS to M times bbS,quad
%        (v,t) mapsto (\A{v}, t + \theta{v}).
%
%    \item
%    If $\bbS$ above is replaced with a Lie group $G$, then the result
%
%        M times G to M times G,quad
%        (v,g) mapsto (\A{v}, g cdot \theta{v}).
%
%    is a \emph{$G$-extension}.
%    \item
%    For manifolds $M_1$ and $M_2$ consider the family of diffeomorphisms
%    of the form
%
%        M_1 times M_2 to M_1 times M_2,quad
%        (v,t) mapsto (\A{v}, \g{v,t})
%
%    for some $f$ and $g$.
%    \item
%    Finally, and perhaps most generally,
%    one can consider bundle maps on arbitrary fiber bundles.
%    That is, let $\pi:M \to X$ be a projection, $A:B \to X$ a homeomorphism
%    and consider all diffeomorphisms $f:M \to M$ such that $\pi \circ f = A$.
%    It is possible for $f$ to be smooth, even if $\pi$ and $A$ are not.
In related work, K.~Burns and A.~Wilkinson
studied
stable ergodicity of rotation extensions and of more general group extensions
over Anosov diffeomorphisms \cite{BW-skew},
and
M.~Field, I.~Melbourne, V.~Ni{\c{t}}ic{\u{a}}, and A.~T\"or\"ok
have analyzed group extensions over 
Axiom A systems,
proving results on transitivity, ergodicity, and rates of
mixing \cite{fmt2005,fmt2007,mnt2012}.

In this paper, we use the following definition
taken from \cite{gogolev2011compact}.
Let $\pi:M \to X$ define a fiber bundle on a compact manifold $M$
over a topological manifold $X$.
If a partially hyperbolic diffeomorphism $f:M \to M$ is such that the
center direction $\Ec_f$ is tangent to the fibers of the bundle and
there is a homeomorphism
$A:X \to X$ satisfying $\pi f = A \pi$,
then $f$ is a \emph{partially hyperbolic skew product}.
We call $A$ the \emph{base map} of the skew product.
While $f$ must be $C^1$, $\pi$ in general will only be continuous.

This definition has the benefit that it is open: any $C^1$-small
perturbation of a partially hyperbolic skew product is again
a partially hyperbolic skew product.
This can be proven using the results in \cite{HPS}
and the fact that the base map is
expansive.
The base map also has the property that it is topologically Anosov
\cite{aoki-hiraide-book}.  As with smooth Anosov systems, it is an open
question if all topologically Anosov systems are algebraic in nature.

\begin{question} \label{question-base}
    If $A$ is a base map of a partially hyperbolic skew product,
    then is $A$ topologically conjugate to a hyperbolic infranilmanifold
    automorphism?
\end{question}
We now consider the case where $\dim \Ec = 1$ in order to relate skew products
to the AB-systems studied in this paper.
The following is easily proved.

\begin{prop}
    Suppose $f$ is a partially hyperbolic skew product where the base map
    is a hyperbolic nilmanifold automorphism and $\Ec$ is
    one-dimensional and has an orientation preserved by $f$.
    Then, $f$ is an AB-system if and only if the fiber bundle defining the
    skew product is trivial.
\end{prop}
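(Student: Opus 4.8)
The plan is to read off the center foliation of $f$ from both sides. The skew-product structure presents it as the fiber foliation of the circle bundle $\pi:M\to N$, hence as a foliation without holonomy. A leaf conjugacy to an AB-prototype presents it as the center foliation of that prototype, whose holonomy around a distinguished compact leaf turns out to be the germ of the monodromy automorphism $B$. Equating the two forces $B=\id$, and that is exactly the condition for the defining bundle to be trivial.

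The reverse implication is easy. Suppose $\pi:M\to N$ is trivial, fix a trivialization $h:M\to N\times\bbS$ whose first coordinate is $\pi$, and let $\fAB$ be the AB-prototype built from the given hyperbolic automorphism $A$ and from $B=\id$, so that $M_B=N\times\bbS$ and $\fAB(v,\theta)=(Av,\theta)$. The center foliation of $\fAB$ is the foliation by the circles $\{v\}\times\bbS$, which are precisely the $h$-images of the center leaves of $f$ (the fibers of $\pi$); moreover $f$ sends the leaf over $v$ to the leaf over $Av$, and so does $\fAB$. Hence $h$ maps center leaves to center leaves and satisfies $h(f(L))=\fAB(h(L))$ for every center leaf $L$, i.e. $h$ is a leaf conjugacy from $f$ to $\fAB$. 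Since $f$ preserves an orientation of $\Ec$ by hypothesis, $f$ is an AB-system.

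For the forward implication, suppose $f$ is an AB-system and fix a leaf conjugacy $h:M\to M_B$ onto an AB-prototype $\fAB$ built from commuting automorphisms $A_0,B$ of a compact nilmanifold $N_0$, with $A_0$ hyperbolic. Because $M$ is compact, the fibers of $\pi$ are circles; and since $\Ec_f$ is tangent to them, they are the center leaves of $f$. Thus the center foliation of $f$ is the fiber foliation of the circle bundle $\pi$, which is a foliation without holonomy, and transporting it by the homeomorphism $h$ shows that the center foliation of $\fAB$ is without holonomy as well. On the other hand, that foliation is the foliation of $M_B=N_0\times\bbR/(v,t)\sim(Bv,t-1)$ by the curves $\{[(v,s)]:s\in\bbR\}$. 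Let $e\Lambda_0$ be the identity coset of $N_0$; since $B$ fixes it, the leaf $L_0$ through $[(e\Lambda_0,0)]$ is a circle of length exactly $1$ in the parameter $s$. Taking the transversal $\{[(v,0)]:v\in U\}$ for a small neighborhood $U$ of $e\Lambda_0$, the holonomy of the center foliation of $\fAB$ around $L_0$ is the first-return map $[(v,0)]\mapsto[(v,1)]=[(Bv,0)]$, that is, the germ of $B$ at $e\Lambda_0$. Since this holonomy is trivial, $B$ is the identity near $e\Lambda_0$; being a nilmanifold automorphism, it is determined by its germ at the identity coset, so $B=\id$. Then $M_B=N_0\times\bbS$, its center foliation is the fiber foliation of the trivial circle bundle $N_0\times\bbS\to N_0$, and $h$ carries the fibers of $\pi$ onto the fibers of this product; hence $\pi:M\to N$ is a pullback of a trivial bundle along the induced homeomorphism $N\to N_0$, and is therefore trivial.

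I expect the crux to be the holonomy computation together with its two ingredients: that the first return must be taken once around the leaf $L_0$ (time $1$ in the parameter $s$), so the return map is $v\mapsto Bv$ rather than some iterate of $B$; and the standard fact that the fiber foliation of a fiber bundle has trivial holonomy, which a homeomorphism onto it preserves, so that the comparison is legitimate. Once these are pinned down, invoking the rigidity of nilmanifold automorphisms finishes the forward implication, and the remaining points — the easy direction, the identification of center leaves with circle fibers on both sides, and the pullback bookkeeping for bundles — are routine.
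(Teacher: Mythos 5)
Your proof is correct. The paper merely asserts that the proposition is "easily proved" and does not give an argument, so there is no reference proof to compare against; but your route is natural and sound. The crux, as you identify, is that the fiber foliation of a circle bundle has trivial holonomy (being locally a product), that holonomy is a topological invariant of a foliated manifold so the leaf conjugacy transports trivial holonomy to the center foliation of the AB-prototype, and that the holonomy of the prototype's center foliation around the compact leaf over the identity coset is exactly the germ of $B$ (the first return to the transversal $U \times \{0\}$ occurs at parameter $t = 1$, and $(v,1) \sim (Bv,0)$). Combined with the rigidity of nilmanifold automorphisms --- a Lie group automorphism of a simply connected nilpotent group is determined by its derivative at the identity via the exponential map --- this forces $B = \id$, which is precisely triviality of $M_B$ as a bundle. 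The easy direction (writing down the leaf conjugacy from a trivialization) and the bundle bookkeeping at the end are handled correctly, and the check that the fibers of $\pi$ coincide with the center leaves (hence are circles, since $\dim \Ec = 1$ and the hyperbolicity of the base map forces $\dim N = \dim \Eu + \dim \Es$) is legitimately routine.
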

If we are interested in the ergodic properties of the system, we can further
relate accessibility to triviality of the fiber bundle.

\begin{thm} \label{thm-trivial}
    Suppose $f$ is a partially hyperbolic skew product where the base map
    is a hyperbolic nilmanifold automorphism and $\Ec$ is
    one-dimensional and orientable.
    If $f$ is not accessible, then the fiber bundle defining the
    skew product is trivial.
\end{thm}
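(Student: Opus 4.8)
The plan is to argue by contrapositive: assuming the fiber bundle $\pi:M \to X$ is non-trivial, I will produce a nontrivial $us$-leaf structure that forces accessibility. The key is to understand how the stable and unstable foliations $\Fu$ and $\Fs$ interact with the bundle projection $\pi$ onto the topologically Anosov base $X$. Since $\Ec_f$ is tangent to the fibers and $\dim \Ec = 1$, each fiber is a circle (using orientability of $\Ec$, the circle bundle has a well-defined orientation). Because $A$ is a hyperbolic nilmanifold automorphism, its stable and unstable foliations on $X$ lift, via the structural stability/plaque-expansivity machinery of \cite{HPS}, to $f$-invariant foliations whose images under $\pi$ are exactly $\Ws_A$ and $\Wu_A$; concretely, $\pi$ maps $\Fs$-leaves to $\Ws_A$-leaves and $\Fu$-leaves to $\Wu_A$-leaves. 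The first step is to make this precise: show that $\pi$ restricted to any stable (resp. unstable) leaf of $f$ is a covering onto the corresponding leaf of $A$, and in fact a homeomorphism since stable/unstable leaves of $A$ are simply connected (they are Euclidean).

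The second step is to study the $us$-saturation. Fix a point $x$ and let $L = AC(x)$ be its accessibility class. By \cite{RHRHU-accessibility} (Theorem \ref{thm-regularity} here), if $f$ is not accessible then the non-open accessibility classes form a $C^1$ lamination by $us$-leaves, each a complete submanifold tangent to $\Eu \oplus \Es$, and the class through $x$ is one of these. Now $\pi(L)$ is a $us$-saturated subset of the base; since $A$ is topologically Anosov with $\dim \Es_A + \dim \Eu_A = \dim X$ (as $\dim \Ec = 1$ and the center is the fiber direction), any $us$-saturated set in $X$ that is also closed must be all of $X$ — this is the standard consequence of $A$ being transitive (indeed minimal on $us$-classes, since stable and unstable manifolds are dense). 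So $\pi(L) = X$ and $\pi|_L : L \to X$ is a surjective local homeomorphism, hence a covering map onto $X$. The third step: since $L$ is tangent to $\Eu \oplus \Es$ and $\pi|_L$ is a covering, the deck group acts on $L$ by translation along the fiber circle direction; thus $L$ is a covering space of $X$ sitting inside the circle bundle $M$, transverse to the fibers. Such a leaf gives a section (up to finite cover) of the circle bundle over $X$, or more precisely trivializes a sub-bundle: the union of translates of $L$ along the fiber, which is all of $M$ since the $us$-leaves foliate $M$, exhibits $M$ as $X \times \bbS$. This contradicts non-triviality of the bundle.

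I expect the main obstacle to be the third step — showing that the existence of a complete $us$-leaf $L$ with $\pi|_L$ a covering actually forces triviality of the bundle, rather than merely finiteness of its structure group. One has to rule out, for instance, the Möbius-type phenomenon where $L$ double-covers $X$ and the fiber circle is "flipped"; this is where orientability of $\Ec$ (hence of the circle bundle) is essential, since an oriented circle bundle with a transverse foliation that is a covering of the base must be a product. A clean way to organize this: push forward the foliation of $M$ by $us$-leaves to a foliation of the associated $\bbR$-bundle (or the universal cover of the fiber), observe it is a foliation by graphs over $X$ since each leaf meets each fiber in at most one point after lifting, and conclude the bundle admits a continuous section, hence is trivial as an oriented circle bundle over the (aspherical, nilmanifold-covered) base $X$. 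The remaining details — that $\pi|_L$ is genuinely a covering and not just a local homeomorphism, and that completeness of $L$ as a $us$-leaf transfers to properness of $\pi|_L$ — follow from compactness of $M$ together with the lamination structure of Theorem \ref{thm-regularity} and should be routine.
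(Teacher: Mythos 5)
Your proposal is correct through the point where you establish that any $us$-leaf $L$ satisfies $\pi(L) = X$ and that $\pi|_L : L \to X$ is a covering map (this uses the global product structure of skew products, which the paper also proves). The gap is in the final passage from this covering to triviality of the bundle, and it is a real one.

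To begin with, the $us$-leaves do \emph{not} foliate $M$ when $f$ is non-accessible; they form only a closed lamination, and the open accessibility classes fill the gaps. So there is no ``foliation of $M$ by $us$-leaves'' to push forward, and the claim that ``the union of translates of $L$ along the fiber ... is all of $M$'' does not hold. More seriously, the route through the associated $\bbR$-bundle is circular: a $\bbZ$-cover of $M$ which is fiberwise the universal cover $\bbR \to S^1$ exists precisely when the central extension $0 \to Z \to \pi_1(M) \to \pi_1(N) \to 0$ splits, which, for an aspherical base, is exactly the triviality you are trying to prove. Concretely, the Heisenberg nilmanifold is a nontrivial circle bundle over $\bbT^2$; its fundamental group $\Gamma = \langle a,b,c : c=[a,b] \text{ central}\rangle$ admits no homomorphism to $\bbZ$ that is nonzero on $c$, so no $\bbZ$-cover unwraps the fiber. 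Finally, even the clean Euler-class version of your multi-section argument only gives $(\pi|_L)^* e(M) = 0$ in $H^2(L;\bbZ)$, which via the transfer map shows $e(M)$ is $k$-torsion (where $k$ is the covering degree), not that $e(M) = 0$; nilmanifolds can have torsion in $H^2$ and orientability of $\Ec$ does not rule out a torsion Euler class.

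The paper's proof is fundamentally group-theoretic and does not attempt to exhibit a section first. It works with the central extension $0 \to Z \to G \to H \to 0$ ($Z = \pi_1(\text{fiber})$, $G = \pi_1(M)$, $H = \pi_1(N)$), observes that $G$ is nilpotent and that the central generator $z$ acts fixed-point freely on the line $\tilde S$, and then applies the Plante measure and translation-number homomorphism $\tau : G \to \bbR$ from \eqref{prop-mu}, \eqref{prop-tau}, \eqref{prop-lamF}. Since $f_*(z)=z$, the scaling factor $\lam$ in \eqref{prop-lamF} must equal $1$; rescaling so that $\tau(Z) = \bbZ$, one gets an induced $\hat\tau : H \to \bbR/\bbZ$ fixed by $A_*$, and the no-fixed-coset result \eqref{prop-nofixcoset} --- this is where hyperbolicity of $A$, not merely topological Anosov-ness, is used --- forces $\hat\tau = 0$. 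Hence $\tau(G) = \bbZ$ and $g \mapsto \tau(g)$ gives a retraction $G \to Z$, so the sequence splits and the bundle is trivial. Only after triviality does the paper produce a compact $us$-leaf, by taking $y = \sup_{g \in H} g(x)$ in the now-available complement $H < G$; that leaf is a genuine section. In short, you tried to find the section first and deduce triviality, whereas the paper proves triviality first and then deduces the section; the order you chose does not close without the group-theoretic input.
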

\begin{cor}
    Suppose $f$ is a conservative $C^2$
    partially hyperbolic skew product where the base map
    is a hyperbolic nilmanifold automorphism and $\Ec$ is
    one-dimensional and has an orientation preserved by $f$.
    Then, $f$ satisfies one of the three cases of \eqref{thm-consAB}
    and if $f$ is not ergodic,
    its ergodic decomposition is given
    by \eqref{thm-decomp}.
\end{cor}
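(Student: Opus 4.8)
The plan is to dispose of the accessible case directly and to reduce the non-accessible case to the AB-system theorems already proved. First suppose $f$ is accessible. Being a conservative $C^2$ partially hyperbolic diffeomorphism with one-dimensional center, $f$ is then stably ergodic by the results of \cite{RHRHU-accessibility} (for one-dimensional center, accessibility is $C^1$-open and implies ergodicity). Hence $f$ falls into case (1) of \eqref{thm-consAB}, and since $f$ is ergodic the assertion about the ergodic decomposition is vacuous. It is worth emphasizing that such an $f$ need not itself be an AB-system — the fiber bundle may be non-trivial — so here one invokes \cite{RHRHU-accessibility} rather than \eqref{thm-consAB}.

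Now suppose $f$ is not accessible. By \eqref{thm-trivial} the fiber bundle defining the skew product is trivial. The preceding proposition, which says that $f$ is an AB-system if and only if the fiber bundle defining the skew product is trivial, then applies: the base map is a hyperbolic nilmanifold automorphism, $\Ec$ is one-dimensional with an orientation preserved by $f$, and the bundle is trivial, so $f$ is an AB-system. Since $f$ is also conservative and $C^2$, \eqref{thm-consAB} shows $f$ satisfies one of its three cases, and as $f$ is not accessible it must be case (2) or case (3). If $f$ is moreover not ergodic, then applying \eqref{thm-decomp} to $f$ with $\mu$ the normalized smooth invariant volume produces $n \ge 1$, a $C^1$ surjection $p:M \to \bbS$, and an open set $U \subsetneq \bbS$ for which \eqref{decomp} is the ergodic decomposition of $(f^n,\mu)$; the ergodic decomposition of $(f,\mu)$ then follows as in the discussion after \eqref{thm-decomp}. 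This is precisely the statement of the corollary.

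No new estimate is required: the argument is a bookkeeping composition of \eqref{thm-trivial}, the proposition on triviality of the bundle, \eqref{thm-consAB}, and \eqref{thm-decomp}. The one point deserving care is the accessible case, where one must remember that accessibility together with a one-dimensional center already forces stable ergodicity, so that case (1) of \eqref{thm-consAB} holds even for those $f$ that are not leaf conjugate to an AB-prototype. A secondary point is to check that the hypothesis ``$\Ec$ has an orientation preserved by $f$'' is at least as strong as the mere orientability of $\Ec$ needed to apply \eqref{thm-trivial}, which it plainly is. I do not expect any substantial obstacle beyond matching these hypotheses correctly.
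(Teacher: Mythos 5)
Your proof is correct and takes essentially the approach the paper intends: the paper states the corollary without proof, and the natural argument is exactly yours — dispose of the accessible case via the known stable ergodicity result from \cite{RHRHU-accessibility} for one-dimensional center, and reduce the non-accessible case to an AB-system via \eqref{thm-trivial} and the preceding proposition, then invoke \eqref{thm-consAB} and \eqref{thm-decomp}. Your observation that case (1) in the accessible branch must come from \cite{RHRHU-accessibility} rather than from \eqref{thm-consAB} itself (since $f$ need not be an AB-system when the bundle is non-trivial) is the right reading of the corollary's phrasing ``satisfies one of the three cases,'' and your hypothesis-matching checks are exactly what one needs.
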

Theorem \eqref{thm-trivial} is proved in Section \ref{sec-products}.
A similar statement, \eqref{prop-infraskew},
still holds when ``nilmanifold'' is replaced by
``infranilmanifold'' and the condition on orientability is dropped.

Every partially hyperbolic skew product has compact center leaves and an open
question, attributed in \cite{RHRHU-survey} to C.~C.~Pugh,
asks if some form of
converse statement holds.

\begin{question} \label{question-cc}
    Is every partially hyperbolic diffeomorphism with compact center
    leaves finitely covered by a partially hyperbolic skew
    product?
\end{question}
This question was studied independently
by D.~Bohnet, P.~Carrasco, and A.~Go\-golev
who gave positive answers under certain assumptions
\cite{bohnet2011thesis, bohnet2013codimension, carrasco2010thesis, gogolev2011compact}.
In relation to the systems studied in the current paper,
the following results are relevant.

\begin{thm}
    [\cite{gogolev2011compact}]
    If $f$ is a partially hyperbolic diffeomorphism
    with compact center leaves, and
    $\dim \Ec = 1$, $\dim \Eu  \le  2$, and $\dim \Es  \le  2$,
    then $f$ is finitely covered by a skew product.
\end{thm}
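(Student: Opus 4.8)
The plan is to reduce the statement to two facts. Since $\Ec$ is one-dimensional and its leaves are compact, the centre foliation $\Wc$ is a foliation of $M$ by circles, and I would try to show: (a) $f$ is dynamically coherent; (b) --- the real content --- $\Wc$ is \emph{uniformly compact}, meaning that the lengths of its leaves are bounded above by a single constant.

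For (a) I would invoke the normal hyperbolicity theory of Hirsch--Pugh--Shub: $\Wc$ is normally hyperbolic and, being a normally hyperbolic foliation all of whose leaves are compact, it is plaque-expansive, so $\Ecs=\Ec\oplus\Es$ and $\Ecu=\Ec\oplus\Eu$ integrate to $f$-invariant foliations $\Wcs$ and $\Wcu$, each subfoliated by $\Wc$ (and by $\Ws$, resp.\ $\Wu$). Granting (b) in addition, the conclusion follows from soft foliation theory: by the Epstein--Millett--Sullivan structure theory every centre leaf then has a finite holonomy group, the leaf space $X:=M/\Wc$ is a closed orbifold, $\pi\colon M\to X$ is a Seifert-type fibration, and $f$ descends to a homeomorphism $\bar f\colon X\to X$ that inherits expansiveness and topological hyperbolicity from $\Eu\oplus\Es$. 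Passing to the finite cover $\hM\to M$ associated with the (finite, uniformly bounded) holonomy data makes the lifted centre foliation holonomy-free, so by the Reeb--Ehresmann theorem $\hM$ fibres over a closed manifold $\hat X$ with circle fibres equal to the centre leaves; the lift $\hat f$ descends to a topologically Anosov map of $\hat X$, and hence $\hat f$ is a partially hyperbolic skew product. The only soft point needing care is that the base orbifold $X$ be good (developable) so that such a cover exists, which one should read off from the local Seifert models together with the Anosov structure of $\bar f$.

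The hard part, and the place where the hypotheses $\dim\Eu\le 2$ and $\dim\Es\le 2$ enter, is (b). Here I would work with Epstein's bad set $B$, the closed, $\Wc$-saturated set of points $x$ at which the leaf-length function $v(y):=\length(\Wc(y))$ is unbounded on every neighbourhood of $\Wc(x)$. Since $f$ permutes centre leaves with length distortion bounded above and below on the compact manifold $M$, the set $B$ is $f$-invariant; and by pushing forward along stable leaves and backward along unstable leaves one wants to show that $B$ is also $\Ws$- and $\Wu$-saturated. The dimension bounds are then meant to be used as follows: inside any leaf $W$ of $\Wcu$ --- a manifold of dimension $\dim\Ec+\dim\Eu\le 3$ --- the trace $\Wc|_W$ is a foliation by circles of a manifold of dimension at most three, so by Epstein's theorem it has empty bad set and its leaves have locally, hence (using compactness of $M$) globally bounded length; and symmetrically inside every $\Wcs$-leaf. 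One then combines the transverse control along $\Wcu$ with that along $\Wcs$, together with the invariance properties of $B$, to conclude $B=\varnothing$. Making this combination rigorous is the main obstacle: one must transfer Epstein's three-dimensional statement to the non-compact $cu$- and $cs$-leaves, reconcile the intrinsic lengths of centre leaves inside those leaves with their ambient lengths, and use the $\Wu$- and $\Ws$-saturation of $B$ to exclude the possibility that the long centre leaves witnessing badness of a point escape the chosen $cu$- or $cs$-leaf --- and it is exactly this step that breaks down without control on $\dim\Eu$ and $\dim\Es$.
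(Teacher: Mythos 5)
This theorem is a cited result in the paper, attributed to \cite{gogolev2011compact}, and the paper gives no proof of it; the comparison is therefore with Gogolev's argument. Your overall strategy --- reduce to uniform compactness of $\Wc$ via Epstein's bad set, use the dimension hypotheses to control the bad set transversally, then invoke the Epstein--Millett--Sullivan structure theory and pass to a finite cover to kill holonomy --- is indeed the strategy of that reference, and you have correctly located where $\dim\Eu\le2$ and $\dim\Es\le2$ enter.

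The specific mechanism you propose for the hard step, however, is not merely lacking in rigour; it cannot work as stated. You want to apply Epstein's theorem on periodic flows to the trace of $\Wc$ inside an individual $\Wcu$- or $\Wcs$-leaf, on the grounds that such a leaf has dimension at most three. But Epstein's theorem is a statement about \emph{compact} three-manifolds; on a non-compact three-manifold a $C^1$ foliation by circles can perfectly well have unbounded leaf lengths, and the $\Wcu$- and $\Wcs$-leaves here are non-compact, so the theorem simply does not apply leaf by leaf, no matter how you finesse the passage from intrinsic to ambient lengths. The cited argument instead keeps the bad set $B\subset M$ ambient (defined from the $M$-length of centre circles on the compact manifold $M$), proves it is closed with empty interior, $f$-invariant, and $\Ws$- and $\Wu$-saturated, and then exploits the low codimension of $B\cap\Wcu(x)$ and $B\cap\Wcs(x)$ relative to the strong foliations, together with recurrence, to derive a contradiction from $B\neq\varnothing$. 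The dimension bounds enter through this local transverse geometry of the ambient bad set, not through a leafwise Epstein statement. You also write that one ``combines the transverse control along $\Wcu$ with that along $\Wcs$'' but give no mechanism for the combination; that interaction, together with the precise saturation argument, is the technical core of the theorem and cannot be treated as a routine synthesis. (The softer front and back ends --- dynamical coherence via normal hyperbolicity, and the passage from finite holonomy to a genuine circle bundle on a finite cover --- are essentially as you describe.)
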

\begin{cor}
    Suppose $f:M \to M$ is a partially hyperbolic diffeomorphism
    with compact center leaves,
    $\dim \Ec = 1$, and $\dim M = 4$.
    If $f$ is not accessible, then $f$ is an infra-AB-system.
\end{cor}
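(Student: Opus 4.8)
The plan is to pass to a finite cover on which $f$ becomes a partially hyperbolic skew product, identify the base map using the rigidity of topologically Anosov homeomorphisms of $3$-manifolds, and then apply Theorem~\ref{thm-trivial} (more precisely its infra-version, Proposition~\ref{prop-infraskew}) to trivialize the fiber bundle. For the first step, observe that $\dim \Eu + \dim \Es = \dim M - \dim \Ec = 3$ and that partial hyperbolicity requires $\Eu$ and $\Es$ to be nontrivial, so $\{\dim \Eu, \dim \Es\} = \{1,2\}$ and in particular both are at most $2$. The theorem of \cite{gogolev2011compact} quoted above then applies: a finite cover of $f$ is a partially hyperbolic skew product $\hat{f} \colon \hM \to \hM$ over a base map $A \colon X \to X$, where $\hM$ is a closed $4$-manifold, $X$ is a closed $3$-manifold, and the fibers are circles tangent to the center bundle. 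This procedure preserves non-accessibility: passing to $f^n$ changes neither $\Es$ nor $\Eu$ and hence not the accessibility classes, and if $\hat{f}$ were accessible then projecting $us$-paths down through the covering would make $f$ accessible.

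For the second step, recall (from the discussion preceding Question~\ref{question-base}) that $A$ is a topologically Anosov homeomorphism; moreover one of its topological invariant bundles is one-dimensional, so $A$ is codimension one. The three-dimensional, codimension-one case of Question~\ref{question-base} is settled: such a map is topologically conjugate to a hyperbolic infranilmanifold automorphism. Composing the bundle projection $\hM \to X$ with this conjugacy, we may assume that $X$ is an infranilmanifold and $A$ a hyperbolic infranilmanifold automorphism, with $\hat{f}$ still a non-accessible partially hyperbolic skew product over $A$. Now Proposition~\ref{prop-infraskew}, the infra-version of Theorem~\ref{thm-trivial} (which drops the orientability hypothesis), shows that the circle bundle defining $\hat{f}$ is trivial, so up to the finite covers and iterates taken so far $\hat{f}$ has the form $X \times \bbS \to X \times \bbS$, $(v,t) \mapsto (Av, \phi_v(t))$ with each $\phi_v$ a circle diffeomorphism depending continuously on $v$.

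It remains to recognise this as an AB-system, again up to a finite cover and an iterate. Pass to a finite cover $N \to X$ with $N$ a nilmanifold, and replace $A$ by an iterate so that it lifts to a hyperbolic nilmanifold automorphism $\bar{A} \colon N \to N$; the pulled-back circle bundle is again trivial, so we obtain a diffeomorphism $N \times \bbS \to N \times \bbS$, $(v,t) \mapsto (\bar{A} v, \psi_v(t))$. Its center bundle is tangent to the fibers $\{v\} \times \bbS$ and hence orientable, and after squaring if necessary we may assume the map preserves an orientation of $\Ec$. Its center foliation is $\{\{v\} \times \bbS : v \in N\}$, and the identity homeomorphism sends fibers to fibers and intertwines the map with $f_{\bar{A},\id} \colon (v,t) \mapsto (\bar{A}v, t)$ on $M_{\id} = N \times \bbS$, which is an AB-prototype; hence the map is leaf conjugate to an AB-prototype and, preserving a center orientation, is an AB-system. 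Composing all the finite covers of $M$ into one and all the iterates into one shows that an iterate of $f$ lifts to an AB-system on a finite cover of $M$; that is, $f$ is an infra-AB-system. I expect the main obstacle to be the second step: both invoking the right rigidity statement for the topological base map --- the corresponding statement in higher dimensions is the open Question~\ref{question-base}, so the argument genuinely uses $\dim X = 3$ together with the codimension-one structure --- and keeping straight the bookkeeping of finite covers versus finite iterates through the reductions to a nilmanifold base and to an orientation-preserving center. The first step and the application of Proposition~\ref{prop-infraskew} are, by contrast, direct appeals to results already quoted.
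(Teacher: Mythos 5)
Your overall plan---pass to a finite cover that is a partially hyperbolic skew product, identify the base as a hyperbolic infranilmanifold automorphism, and then apply Proposition~\eqref{prop-infraskew}---is the right shape, and your observation that $\{\dim\Eu,\dim\Es\}=\{1,2\}$ so that Gogolev's theorem applies is correct. But the second step, which you yourself flag as the danger point, is where the proposal has a real gap. You assert that ``the three-dimensional, codimension-one case of Question~\eqref{question-base} is settled,'' that is, that a topologically Anosov homeomorphism of a closed $3$-manifold with a one-dimensional topological invariant bundle is topologically conjugate to a hyperbolic infranilmanifold automorphism. Nothing in the paper supports this: Question~\eqref{question-base} is posed as an open question with no dimension caveat, and the only codimension-one rigidity result the paper cites (Newhouse, \cite{newhouse1970codimension}) is for \emph{smooth} Anosov diffeomorphisms, not topologically Anosov homeomorphisms. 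Indeed, the discussion right after the two corollaries emphasizes that a positive answer to Question~\eqref{question-base} is exactly what would be needed for the general (\eqref{question-combo}) case, which would make this corollary vacuous if the statement were already available here.

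The intended route avoids base rigidity altogether by combining Gogolev's theorem with Bohnet's, both quoted in this section. Since $\dim M = 4$ and $\dim\Ec = 1$, one of $\dim\Eu$, $\dim\Es$ equals $1$. Replace $f$ by $f^{-1}$ if necessary so that $\dim\Eu = 1$; this preserves non-accessibility (the accessibility classes depend only on the pair $\{\Eu,\Es\}$) and preserves the property of being an infra-AB-system (if an iterate of $f$ lifts to an AB-system $g$ with automorphisms $A,B$, the corresponding iterate of $f^{-1}$ lifts to $g^{-1}$, which is leaf conjugate to the AB-prototype built from $A^{-1},B$). Gogolev's theorem shows a finite cover carries a skew product structure, so the center foliation is a circle bundle and in particular uniformly compact; this descends to $M$. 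Now Bohnet's theorem applies directly and already produces a finite cover which is a partially hyperbolic skew product over a hyperbolic \emph{toral} automorphism---no separate rigidity input is needed. From there, your application of Proposition~\eqref{prop-infraskew} and your final bookkeeping reducing to an AB-system on a nilmanifold cover with orientation-preserving center is correct.
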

A compact foliation is \emph{uniformly compact}
if there is a uniform bound on the volume of the leaves.

\begin{thm}
    [\cite{bohnet2013codimension}]
    If $f$ is a partially hyperbolic diffeomorphism with
    uniformly compact center leaves
    and $\dim \Eu = 1$, then
    $f$ is finitely covered by a partially hyperbolic skew product
    where the base map is a hyperbolic toral automorphism.
\end{thm}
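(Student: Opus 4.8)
The plan is to reconstruct the skew-product structure from the transverse dynamics induced on the space of center leaves, and then to identify that transverse dynamics using the classification of codimension-one Anosov systems. I would begin with the center foliation $\Wc$, whose leaves are compact with uniformly bounded volume by hypothesis. By the structure theory of compact foliations (Epstein; Edwards--Millett--Sullivan), uniform compactness forces every holonomy group to be finite and the leaf space $B := M/\Wc$ to be a closed Hausdorff orbifold, with the projection $\pi : M \to B$ a Seifert-type fibration whose fibers are exactly the center leaves. Since $f$ permutes center leaves, it descends to a homeomorphism $\bar f : B \to B$, and $\dim B = \dim \Eu + \dim \Es = 1 + \dim \Es =: k$.

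The second and main step is to show that $\bar f$ is a topologically Anosov homeomorphism whose expanding foliation is one-dimensional. The subbundle $\Eu \oplus \Es$ is transverse to $\Wc$ and $Tf$-invariant, so the strong foliations $\Wu$ and $\Ws$ push down to $\bar f$-invariant laminations on $B$: a one-dimensional expanding one and a codimension-one contracting one. (Here $\dim \Eu = 1$ is used twice: a one-dimensional $\Eu$ is unconditionally integrable, and it is what makes the transverse expanding object a genuine line foliation.) Together these give $\bar f$ a local product structure, expansiveness, and the shadowing property. The delicate points — and where I expect essentially all of the work to be — are that the hypotheses only furnish dynamical coherence with respect to $\Wc$, so one must check that the strong foliations really descend to honest laminations on the quotient and that two points of $B$ remaining close for all time lift to center leaves in $M$ that are forced to coincide; and that the transverse expanding line field constrains the (finite) holonomy of $\Wc$ severely enough that, after passing to a suitable finite cover of $M$, the center foliation has trivial holonomy and $B$ becomes a closed manifold.

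Having replaced $M$ by this finite cover and $f$ by the lifted iterate, $\pi : M \to B$ is now a locally trivial fiber bundle with fiber a compact $\dim\Ec$-manifold (global Reeb stability / Ehresmann, from trivial holonomy and compact leaves over a manifold base), $\Ec$ is tangent to its fibers, and $\bar f : B \to B$ is topologically Anosov with one-dimensional unstable foliation, hence with codimension-one stable foliation. By the classification of codimension-one Anosov systems — Newhouse in the differentiable case, and Hiraide (also Lewowicz) in the topological/expansive case — $B$ is the torus $\bbT^{k}$ and $\bar f$ is topologically conjugate, via some homeomorphism $h$, to a hyperbolic toral automorphism $A$. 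Replacing $\pi$ by $h \circ \pi$ (still continuous, with the same fibers) yields $(h\pi)\circ f = A \circ (h\pi)$, so $f$ is a partially hyperbolic skew product whose base map is the hyperbolic toral automorphism $A$. The one genuinely hard ingredient is the second step; once the transverse dynamics is known to be topologically Anosov with a one-dimensional expanding foliation and the holonomy is trivialized in a finite cover, the third step is essentially off-the-shelf.
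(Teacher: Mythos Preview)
The paper does not prove this theorem at all: it is stated with the citation \cite{bohnet2013codimension} and no argument is given, since it is quoted as Bohnet's result rather than established here. There is therefore no ``paper's own proof'' to compare your proposal against.

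That said, your outline is a faithful sketch of the strategy actually used in the cited work: uniform compactness of $\Wc$ gives a Hausdorff orbifold leaf space with finite holonomy, the transverse dynamics is topologically Anosov with one-dimensional unstable direction, a finite cover kills the holonomy so the leaf space becomes a manifold, and the codimension-one classification (Newhouse/Franks in the smooth setting, Hiraide in the topological one) identifies the base as a torus with a hyperbolic automorphism. You are right that the second step carries most of the weight; in particular, showing that the strong foliations descend well and that the holonomy can be trivialized in a finite cover is where the one-dimensionality of $\Eu$ is genuinely used. Your proposal is an accurate high-level summary, but for the purposes of this paper the theorem is simply imported as a black box.
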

\begin{cor}
    Suppose $f$ is a partially hyperbolic diffeomorphism
    with uniformly compact center leaves and
    $\dim \Eu = \dim \Ec = 1$.
    If $f$ is not accessible, then $f$ is an infra-AB-system.
      \end{cor}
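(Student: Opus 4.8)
The plan is to derive the corollary from Bohnet's theorem \cite{bohnet2013codimension}, Theorem~\ref{thm-trivial}, and the proposition preceding Theorem~\ref{thm-trivial}: a finite cover trivializes the bundle of center leaves, and a further double cover together with passing to an iterate makes the center bundle orientable and orientation-preserved, after which the cited results apply directly.

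First I would apply Bohnet's theorem. Since $f$ has uniformly compact center leaves and $\dim \Eu = 1$, there is a finite cover $p_1 : M_1 \to M$ and a diffeomorphism $f_1$ of $M_1$ with $p_1 f_1 = f p_1$ such that $f_1$ is a partially hyperbolic skew product whose base map is a hyperbolic toral automorphism; since $\Ec$ pulls back under $p_1$, the center of $f_1$ is still one-dimensional. I would then note that non-accessibility passes to finite covers: any $su$-path for $f_1$ projects under $p_1$ to an $su$-path for $f$, so if $f_1$ were accessible then $f$ would be accessible; hence $f_1$ is not accessible.

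Next I would pass to a finite cover $p_2 : M_2 \to M_1$ on which the pullback of $\Ec$ is orientable (a double cover suffices) and let $f_2$ be the lift of $f_1$. Then $f_2$ is a non-accessible partially hyperbolic skew product over a hyperbolic toral automorphism --- in particular over a hyperbolic nilmanifold automorphism --- with one-dimensional orientable center, so Theorem~\ref{thm-trivial} shows that the fiber bundle defining this skew product is trivial. Since $\Ec$ is now orientable, either $f_2$ or $f_2^2$ preserves an orientation of $\Ec$; let $g$ be whichever one does. The bundle defining $g$ is the same trivial bundle, the base map of $g$ is a (possibly squared) hyperbolic toral automorphism, hence again a hyperbolic nilmanifold automorphism, and $g$ has one-dimensional center carrying a preserved orientation, so the proposition preceding Theorem~\ref{thm-trivial} shows that $g$ is an AB-system.

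Finally, $g$ is a lift of $f^k$ for some $k \in \{1,2\}$ to the finite cover $M_2 \to M$, and $g$ is an AB-system, so by definition $f$ is an infra-AB-system. I do not expect a genuine obstacle here; the only thing requiring care is bookkeeping --- checking that non-accessibility survives both the cover and the iterate, that passing to an iterate leaves the skew-product structure intact (the bundle is unchanged and the base map is merely replaced by a power), and that after the double cover and the possible squaring the hypotheses of Theorem~\ref{thm-trivial} and of the preceding proposition are all in force at once.
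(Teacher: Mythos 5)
Your proof is correct and follows the paper's intended route: combine Bohnet's theorem with the bundle-triviality machinery surrounding Theorem~\eqref{thm-trivial}. The paper actually packages exactly this argument into Proposition~\eqref{prop-infraskew} (stated a few lines below the corollary and proved in Section~\ref{sec-infra}), which says directly that a non-accessible partially hyperbolic skew product over a hyperbolic infranilmanifold automorphism with one-dimensional center is an infra-AB-system. Citing that proposition after Bohnet's theorem gives the corollary in two lines; what you are doing is re-deriving the special toral case of that proposition from scratch, which buys nothing here but is a clean alternative route.

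One step you dismiss as ``bookkeeping'' does deserve a sentence. You need the orientation double cover $M_2 \to M_1$ of $\Ec$ to be compatible with the skew product, i.e.\ $M_2$ should again be a circle bundle over a torus with the lifted base map a hyperbolic toral automorphism, and $f_1$ (not merely $f_1^2$) should lift. This is true, but for a reason: $\Ec$ restricted to a circle fiber is the tangent bundle of that circle, hence trivial, so the orientation character $\pi_1(M_1) \to \bbZ/2$ vanishes on $\pi_1(\text{fiber})$ and factors through $\pi_1(\bbT^d)$; consequently the double cover unrolls the base torus to a (possibly larger) torus while covering the fibers trivially. The base map lifts because $f_{1*}$ preserves the orientation character (as $f_1$ preserves $\Ec$), hence preserves the corresponding subgroup of $\bbZ^d$, and a lift of a hyperbolic toral automorphism to a covering torus is again one. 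In the paper's proof of Proposition~\eqref{prop-infraskew} the analogous bookkeeping is done by explicitly naming the finite-index subgroup of deck transformations that both preserve the orientation and act on the base by translations; in your toral setting the translation condition is automatic, which is why the simpler double-cover argument suffices.
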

In the conservative setting, we may then invoke
the results of the current paper to describe the ergodic properties of
these systems.

\begin{question} \label{question-combo}
    If $f$ is a non-accessible partially hyperbolic diffeomorphism
    with compact one-dimensional center leaves, then is $f$ an
    infra-AB-system?
\end{question}
Positive answers to both \eqref{question-base} and \eqref{question-cc}
would give a positive answer to \eqref{question-combo}.

\medskip

In his study of hyperbolic flows, Anosov established a dichotomy, now known as
the ``Anosov alternative'' which states that every
transitive Anosov flow is either
topologically mixing or the suspension of an Anosov diffeomorphism with
constant roof function \cite{Anosov,fmt2007}.
Ergodic variants of the Anosov alternative have also been studied
and the following holds.

\begin{thm}
    [\cite{plante1972,bpw}]
    For an Anosov flow $\phi_t:M \to M$,
    the following are equivalent:
    \begin{itemize}
        \item the time-one map $\phi_1$ is not accessible,

        \item the strong stable and unstable foliations are jointly integrable,
    \end{itemize}
    and both imply
    the flow is topologically conjugate to
    the suspension of an Anosov diffeomorphism.
\end{thm}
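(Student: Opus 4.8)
The plan is to regard the time-one map $\phi_1$ as a partially hyperbolic diffeomorphism with one-dimensional center bundle $\Ec$, spanned by the generator $X$ of the flow, having stable and unstable bundles $\Es$, $\Eu$ with strong foliations $\Fs$, $\Fu$. Since $\phi_s$ commutes with $\phi_1$ for every $s$, it maps leaves of $\Fs$ and $\Fu$ to such leaves, hence permutes the accessibility classes of $\phi_1$ and preserves the property of a class being open. The easy implication is that joint integrability implies non-accessibility: if $\Es\oplus\Eu$ is tangent to a foliation $\Fus$, then each leaf $L$ contains the strong leaves $\Fs(y)$ and $\Fu(y)$ through every $y\in L$, so any concatenation of paths tangent to $\Es$ or $\Eu$ that starts on $L$ stays on $L$; hence $AC(x)\subseteq\Fus(x)\subsetneq M$ and $\phi_1$ is not accessible. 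Conversely, suppose $\phi_1$ is not accessible. As $M$ is connected and the accessibility classes partition it, some class is not open, so by \cite{RHRHU-accessibility} (and by \eqref{thm-regularity} in the $C^2$ case) the union $\Gamma$ of the non-open accessibility classes is a nonempty closed set laminated by $us$-leaves, each a complete codimension-one submanifold tangent to $\Es\oplus\Eu$; by the opening remark, $\phi_s(\Gamma)=\Gamma$ for all $s$.

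Next I would prove $\Gamma=M$. Fix a leaf $L\subseteq\Gamma$. Because $L$ is tangent to $\Es\oplus\Eu$, the vector field $X$ is nowhere tangent to $L$, so $\phi_s(x)\notin L$ for $x\in L$ and $0<|s|$ small; hence $\phi_s(L)\neq L$ for such $s$, the leaves $\{\phi_s(L):|s|<\ep\}$ are pairwise disjoint for $\ep$ small, and $(y,s)\mapsto\phi_s(y)$ embeds $L\times(-\ep,\ep)$ onto an open subset of $\Gamma$; thus $\interior\Gamma\neq\varnothing$. Since the flow is transitive, a dense orbit must enter $\interior\Gamma$ and then, $\Gamma$ being closed and flow-invariant, remain there; its closure, namely $M$, therefore lies in $\Gamma$. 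So $\Gamma=M$, the $us$-leaves foliate $M$, and $\Es\oplus\Eu$ is integrable, which proves that non-accessibility implies joint integrability and hence the stated equivalence.

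It remains to deduce from joint integrability that $\phi_t$ is topologically conjugate to a suspension. Here $\Fus$ is a codimension-one foliation transverse to the flow and invariant under it, with no flow-invariant leaf (such a leaf would be tangent to $\Es\oplus\Eu$ yet contain $X\in\Ec$). Holonomy along leaves of $\Fus$ preserves the flow-time parametrization of transversals (two points of a leaf joined by a leafwise path have, for every $s$, their $\phi_s$-translates joined by a leafwise path), so $\Fus$ is defined by a nonsingular closed $1$-form $\omega$ with $\omega(X)\equiv1$, which the flow preserves; in particular the induced flow on the one-dimensional leaf space has no fixed point. The main obstacle is to conclude that this foliation is a fibration over the circle — equivalently, that the period group of $\omega$ is cyclic, rather than dense with the leaves accumulating on a limit leaf (which would have to be flow-invariant, a contradiction). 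Excluding this dense-leaf possibility is precisely Plante's theorem \cite{plante1972}, and uses finer properties of Anosov flows such as the density of periodic orbits. Granting it, $M$ fibers over $\bbS$ with fibers the leaves of $\Fus$, transverse to the flow and with constant return time $c$; the first-return map $\phi_c$ of a fiber $N$ preserves $N$ with $TN=\Es\oplus\Eu$, hence uniformly contracts $\Es$ and expands $\Eu$ and is an Anosov diffeomorphism, and after rescaling time $\phi_t$ is the suspension of $\phi_c|_N$ with constant roof. Finally, $\phi_1$ is then an AB-system, so the sharper classification of \eqref{thm-consAB}--\eqref{thm-decomp} applies.
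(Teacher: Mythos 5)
The theorem is cited in the paper (attributed to \cite{plante1972,bpw}) with no proof given there, so there is no internal argument to compare against; I evaluate your proposal on its own.

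Two substantive issues. First, in the direction non-accessible $\Rightarrow$ jointly integrable, the flow-box argument you give actually proves more than you extract from it: applied at every point of $\Gamma$, it shows that $\Gamma$ (the union of non-open accessibility classes) is \emph{open}, and since $\Gamma$ is also closed and $M$ is connected, $\Gamma=M$ at once. Instead you stop at $\interior\Gamma\neq\varnothing$ and then invoke transitivity of the flow. Transitivity is not a hypothesis of the statement and is not automatic --- Franks and Williams constructed non-transitive Anosov flows --- so as written the argument does not cover the generality claimed. The fix is the open-and-closed observation; once made, transitivity plays no role. (A smaller slip in the same paragraph: the map $(y,s)\mapsto\phi_s(y)$ on $L\times(-\ep,\ep)$ is a local homeomorphism but need not be an embedding, and $\phi_s(L)\neq L$ for small $|s|>0$ can fail when $L$ is non-compact and accumulates on itself; applying invariance of domain locally still yields openness, so the conclusion survives, but the claim as phrased is wrong.)

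Second, for the suspension conclusion you correctly set up the closed, flow-invariant transverse $1$-form $\omega$ with $\omega(X)\equiv1$, identify that the remaining point is that the period group of $\omega$ is cyclic rather than dense, and then write that excluding the dense case ``is precisely Plante's theorem'' and stop. Since \cite{plante1972} is one of the two references to which the statement is attributed, this last step is an appeal to (part of) the result under discussion rather than a derivation. That is a reasonable way to apportion the content --- the equivalence is essentially the \cite{bpw} contribution, the suspension conclusion is Plante's --- but you should make explicit that the final and hardest step is being cited, not proved, so the proposal is a reduction rather than a self-contained argument.
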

\begin{cor}
    Suppose every Anosov diffeomorphism is topologically conjugate to an
    infranilmanifold automorphism.
    Then, every non-accessible time-one map of an Anosov flow is an
    infra-AB-system.
\end{cor}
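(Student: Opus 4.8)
The plan is to reduce, via the cited structure theorem, to the suspension of an Anosov diffeomorphism, feed that diffeomorphism into the hypothesis, and then exploit the fact that the time-one map of any flow fixes each of its orbits. So let $\phi_t\colon M\to M$ be an Anosov flow whose time-one map $\phi_1$ is not accessible. I would first record that $\phi_1$ is partially hyperbolic with $\Ec$ spanned by the nowhere-vanishing generator of the flow, so $\Ec$ carries a $\phi_1$-invariant orientation and the notions of AB-system and infra-AB-system apply to $\phi_1$. By the theorem of \cite{plante1972,bpw} quoted above, non-accessibility of $\phi_1$ forces $\phi_t$ to be topologically conjugate, as a flow, to the suspension flow of an Anosov diffeomorphism $g\colon S\to S$. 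The hypothesis then says $g$ is topologically conjugate to an infranilmanifold automorphism $\beta\colon\bar N\to\bar N$; since a topological conjugacy of base maps induces one of the suspension flows, $\phi_t$ is topologically conjugate to the suspension flow of $\beta$, whose time-one map I will denote $f_\beta\colon M_\beta\to M_\beta$, $(w,t)\mapsto(\beta w,t)$.

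Next I would upgrade this flow conjugacy to a leaf conjugacy, which is essentially automatic. For the time-one map of any flow every orbit is invariant; so if $h\colon M\to M_\beta$ is the conjugacy and $L$ is a leaf of the one-dimensional orbit foliation of $\phi_t$ (which is tangent to $\Ec$), then $\phi_1(L)=L$ and $f_\beta(h(L))=h(L)$, whence $h(\phi_1(L))=f_\beta(h(L))$; that is, $h$ is a leaf conjugacy between $\phi_1$ and $f_\beta$ with respect to the orbit foliations. It then remains to pass to a finite cover on which $f_\beta$ becomes an AB-prototype. Writing $\bar N=\Gamma\backslash G$ with $G$ simply connected nilpotent and $\Gamma\cap G$ of finite index, the nilmanifold $N:=(\Gamma\cap G)\backslash G$ finitely covers $\bar N$ by some map $q$, and the automorphism $\Phi$ of $G$ inducing $\beta$ normalizes $\Gamma$ and hence also $\Gamma\cap G$ (conjugation by an automorphism of $G$ carries translations to translations), so $\Phi$ descends to a nilmanifold automorphism $A\colon N\to N$ with $q\circ A=\beta\circ q$. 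As $\beta$ is Anosov, so is its lift $A$, and an Anosov nilmanifold automorphism is hyperbolic. Then $(v,t)\mapsto(qv,t)$ is a finite covering of $M_\beta$ by the suspension $M_A=N\times\bbR/(v,t)\sim(Av,t-1)$, intertwining suspension flows, and its time-one map $(v,t)\mapsto(Av,t)$ is precisely the AB-prototype $f_{AA}$, hence an AB-system. Composing this covering with $h$ exhibits a finite cover of $M$ on which $\phi_1$ lifts to $f_{AA}$; so $\phi_1$ is an infra-AB-system.

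The step I expect to be the main obstacle is the passage from the infranilmanifold automorphism $\beta$ to a \emph{hyperbolic} automorphism of the canonical nilmanifold cover: one must be sure the computation ``$\Phi$ normalizes $\Gamma\cap G$, hence descends'' still works if infranilmanifold automorphisms are allowed to be affine rather than linear, and in that case replace $\beta$ by its hyperbolic linear part up to topological conjugacy, using that an affine Anosov diffeomorphism of a nilmanifold is topologically conjugate to a hyperbolic automorphism — an extra conjugacy that changes the suspension flow only up to conjugacy and so is harmless. The remaining points, namely that base-map conjugacies lift to flow conjugacies, that $M_A\to M_\beta$ is a genuine finite covering, and that the composite cover of $M$ supports a lift of $\phi_1$, are routine bookkeeping with suspensions and covering spaces.
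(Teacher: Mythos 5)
Your argument is correct; the paper leaves this corollary unproved as an immediate consequence of the Plante-type theorem stated just above it, and your proof supplies the natural details. The key observation --- that a flow conjugacy is automatically a leaf conjugacy between the time-one maps because each orbit is itself invariant --- is exactly right, and the affine-versus-linear concern you flag at the end is resolved just as you indicate: an affine Anosov diffeomorphism of a nilmanifold is topologically conjugate to a hyperbolic nilmanifold automorphism (by Franks--Manning, or directly by conjugating a fixed point to the identity), and topological conjugacy is stronger than the leaf conjugacy the definition of an AB-system requires.
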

Thus, if the conjecture about Anosov diffeomorphisms is true,
then the results given in Section \ref{sec-infra}
will classify the ergodic properties of diffeomorphisms
which are perturbations of time-one maps of Anosov flows.
This conjecture is true when the Anosov diffeomorphism has a one
dimensional stable or unstable bundle \cite{newhouse1970codimension}.

\begin{cor}
    Suppose $f$ is the time-one map of an Anosov flow
    with $\dim \Eu_f = 1$.
    If $f$ is not accessible, then it is an AB-system.
\end{cor}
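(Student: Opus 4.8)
\emph{Approach.} Write $f = \phi_1$ for an Anosov flow $\phi_t \colon M \to M$ with generating vector field $X$, so that the partially hyperbolic splitting of $f$ is $\Eu_f = \Eu_\phi$, $\Ec_f = \langle X \rangle$, and $\Es_f = \Es_\phi$. The plan is to feed non-accessibility into the ergodic form of the Anosov alternative \cite{plante1972,bpw}, use the hypothesis $\dim \Eu_f = 1$ together with the codimension-one classification \cite{newhouse1970codimension} to pin down the base dynamics, and then verify that the conjugacy obtained meets the definition of an AB-system.

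\emph{Step 1: reduce to a suspension.} Since $f$ is not accessible, \cite{plante1972,bpw} supplies a topological conjugacy $h$ from the flow $\phi_t$ to the suspension flow $\psi_t$ of an Anosov diffeomorphism $g \colon N \to N$ (with constant roof function).

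\emph{Step 2: identify the base.} A topological conjugacy of Anosov flows carries unstable leaves to unstable leaves bijectively and preserves their topological dimension; comparing with the unstable leaves of the suspension flow of $g$ gives $\dim \Eu_g = \dim \Eu_\phi = \dim \Eu_f = 1$. Hence $g$ is a codimension-one Anosov diffeomorphism, so by \cite{newhouse1970codimension} the manifold $N$ is a torus $\bbT^k$ and $g$ is topologically conjugate to a hyperbolic automorphism $A \colon \bbT^k \to \bbT^k$. This conjugacy induces one between the suspension flows of $g$ and of $A$, and the time-one map of the suspension flow of $A$ is precisely the AB-prototype associated with $A$ (the case $B = A$), with $\bbT^k$ a compact nilmanifold. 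Composing the two conjugacies yields a homeomorphism $H \colon M \to M_A$ conjugating $\phi_t$ to the suspension flow of $A$; in particular $H f = f_{AA} H$.

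\emph{Step 3: recognize an AB-system.} Take $\Wc_f$ to be the foliation of $M$ by $\phi$-orbits (tangent to $\Ec_f = \langle X \rangle$) and $\Wc_{f_{AA}}$ the orbit foliation of the suspension flow of $A$ (tangent to $\Ec_{f_{AA}}$). Since $H$ sends orbits to orbits, it maps leaves of $\Wc_f$ onto leaves of $\Wc_{f_{AA}}$; moreover every center leaf $L$ satisfies $f(L) = L$ and $f_{AA}(H(L)) = H(L)$, so the leaf-conjugacy identity $H(f(L)) = f_{AA}(H(L))$ holds trivially. Thus $f$ is leaf conjugate to $f_{AA}$. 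Finally, $X$ is $\phi_t$-invariant, hence $(\phi_1)_* X = X$ and $f$ preserves the orientation of $\Ec_f$ determined by $X$. Therefore $f$ is an AB-system.

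\emph{Main obstacle.} Once the two cited theorems are in hand the argument is largely bookkeeping; the point that needs genuine care is Step 2 — transferring the value $\dim \Eu_f = 1$ across a merely topological conjugacy of flows, which is what licenses the codimension-one classification — together with checking that \cite{plante1972,bpw} really produces a conjugacy of flows, not just of the time-one maps, so that the flow-line center foliations are matched as required for leaf conjugacy.
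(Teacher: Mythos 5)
Your proof is correct and follows the route the paper clearly has in mind: non-accessibility plus the cited Plante/Barbot--Plante--Wilkinson theorem gives a topological conjugacy of the flow to a suspension, and then $\dim\Eu_f=1$ together with Newhouse's codimension-one classification identifies the base as a hyperbolic toral automorphism, so $f$ is (topologically, hence leaf) conjugate to the prototype $f_{AA}$. The point you flag as needing care — that a topological conjugacy of Anosov flows carries strong unstable leaves to strong unstable leaves and hence preserves $\dim\Eu$ — is indeed the only nontrivial observation, and your treatment of it is right.
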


\section{Outline} \label{sec-outline} %{{{1

Most of the remaining sections focus
on proving the results listed in Section
\ref{sec-results} and we present here an outline of the main ideas.

A partially hyperbolic system
has \emph{global product structure}
if it is dynamically coherent and,
after lifting the foliations to the
universal cover $\tM$,
the following hold for all $x,y \in \tilde M${:}
\begin{enumerate}
    \item $\Wu(x)$ and $\Wcs(y)$ intersect exactly once,

    \item $\Ws(x)$ and $\Wcu(y)$ intersect exactly once,

    \item if $x \in \Wcs(y)$, then $\Wc(x)$ and $\Ws(y)$ intersect exactly
    once, and

    \item if $x \in \Wcu(y)$, then $\Wc(x)$ and $\Wu(y)$ intersect exactly
    once.
\end{enumerate}
\begin{thm} \label{thm-ABGPS}
    Every AB-system has global product structure.
\end{thm}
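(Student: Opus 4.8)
The plan is to prove global product structure first for the prototype maps $\fAB$ by a direct computation, and then to carry it over to a general AB-system by pulling the invariant foliations back through a lift of the leaf conjugacy; the crux will be showing that the lifted strong foliations sit inside the universal cover essentially as those of the prototype do. \emph{The prototype.} Write $N=G/\Gam$ with $G$ a simply connected nilpotent Lie group and lift $A$ to an automorphism of $G$, again written $A$. Since $A$ is hyperbolic, $G$ contains $A$-invariant connected subgroups $G^u$ and $G^s$, those with Lie algebras the expanding and the contracting subspaces of $A$, with $G^u\cap G^s=\{e\}$ and $(g,h)\mapsto gh$ a diffeomorphism $G^u\times G^s\to G$. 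The universal cover of $M_B$ is $G\times\bbR$, on which $\fAB$ lifts to $(v,t)\mapsto(Av,t)$; the lifted leaves are $\Wc(v,t)=\{v\}\times\bbR$, $\Wu(v,t)=G^uv\times\{t\}$, $\Ws(v,t)=G^sv\times\{t\}$, with center saturations $\Wcu(v,t)=G^uv\times\bbR$ and $\Wcs(v,t)=G^sv\times\bbR$. Conditions (1)--(4) then reduce to the single fact that one coset of $G^u$ meets one coset of $G^s$ in exactly one point, so $\fAB$ has global product structure; the same fact also shows that the center leaf space $G$ carries two transverse foliations, by cosets of $G^u$ and of $G^s$, with this product property.

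\emph{Setup.} Let $f$ be an AB-system with leaf conjugacy $h\colon M\to M_B$; since $h$ is a homeomorphism, $M$ is homeomorphic to $M_B$ and $\tM$ to $G\times\bbR$. Passing to a finite cover and an iterate we may assume $\Ec$, $\Eu$, $\Es$ carry $f$-invariant orientations; the Burago--Ivanov branching-foliation construction then supplies $f$-invariant foliations tangent to $\Ecs$ and to $\Ecu$ --- these are genuine, not merely branching, because a branch point projected to the center leaf space would force a self-crossing in the coset foliation of $G^s$ or of $G^u$ --- so $f$ is dynamically coherent. Lift $f$ to $\tf$ and $h$ to an equivariant homeomorphism $\tilde h=(\pi_c,\pi_t)\colon\tM\to G\times\bbR$ sending center leaves to vertical lines; $\tilde h$ is uniformly continuous, $\pi_c$ intertwines $\tf$ with $A$ up to a deck transformation, and, since $f$ preserves an orientation of $\Ec$, the coordinate $\pi_t$ is strictly monotone along each oriented center leaf.

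\emph{The main estimate.} The heart of the proof is to show that every leaf of the lifted foliation $\Ws$, and by symmetry every leaf of $\Wu$, is bounded in the $\pi_t$-coordinate; equivalently, that lifted strong leaves stay within bounded Hausdorff distance of cosets of $G^s$ and $G^u$, just as for the prototype, whose strong leaves lie in the fibers of $G\times\bbR\to\bbR$. This is the only step that genuinely uses the AB-structure rather than partial hyperbolicity alone --- partial hyperbolicity controls strong leaves only infinitesimally and gives no a priori bound on their global shape --- and I expect it to be the main obstacle. One first establishes a leaf-wise quasi-isometry bound for the strong foliations of $f$, available here because through $\tilde h$ the coarse geometry of $\tM$ matches that of $M_B$, whose lifted strong leaves are honest cosets inside a nilpotent group; one then combines it with the uniform contraction of $\tf$ along $\Es$: for a compact arc $\sigma$ in a stable leaf one has $\length(\tf^{n}\sigma)\to0$ while the drift $\pi_t\circ\tf-\pi_t$ descends to a bounded function on the compact manifold $M$, and chaining these estimates controls the variation of $\pi_t$ along $\sigma$ in terms of its intrinsic length; comparing this with the prototype's strong leaves through $\tilde h$, and using the product geometry of $G\times\bbR$, one then upgrades the bound to a uniform one, which is equivalent to the desired boundedness.

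\emph{Conclusion.} Granting the estimate, $\pi_c$ maps each leaf of $\Wcs$ onto a coset of $G^s$ and each leaf of $\Wcu$ onto a coset of $G^u$. Conditions (1) and (2) then follow as in the prototype: a $G^u$-coset and a $G^s$-coset meet at a unique point $g^\ast$, the full center leaf over $g^\ast$ lies in both of the center-saturated leaves involved, and what remains --- the one-point intersection of a center leaf with a strong leaf inside a single center-unstable (respectively center-stable) leaf --- is exactly condition (4) (respectively (3)). Finally, inside a center-stable leaf, which in the $\tilde h$-coordinates is a product $(G^sv)\times\bbR$ foliated by the vertical center leaves and transversally by the stable leaves, any stable leaf lies by the estimate in a bounded slab $G\times[a,b]$ and is properly embedded, so its projection to $G^sv$ is a proper local homeomorphism between simply connected manifolds, hence a homeomorphism; thus the stable leaf is a graph over $G^sv$ and meets each vertical center leaf exactly once, which is (3), and the same argument gives (4). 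Hence every AB-system has global product structure.
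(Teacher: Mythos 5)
Your overall architecture --- verify global product structure for the prototype $\fAB$ directly, then transfer it to a general AB-system by lifting the leaf conjugacy to the universal cover --- matches the paper's. You also correctly identify which step is load-bearing. The routes diverge exactly there, and your route has a gap at that point.

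The paper's existence argument works inside a single $cs$-leaf by using the nilpotent Lie group structure of $\tN$: given $v_0, v_1$ on the same $A$-stable leaf, one selects the unique one-dimensional subgroup $\tN_1$ with $v_0^{-1}v_1 \in \tN_1$, finds a point $y \in \Ws_f(x)$ over a nearby coset point $v$, and then invokes a scaling lemma (Lemma \eqref{lemma-autseq}) which says that once the renormalized iterates $B^{n_k}A^k$ contract \emph{one} pair of points on the coset $v_0\tN_1$, they contract \emph{every} pair on that coset uniformly. That is the mechanism that propagates the local intersection out to $L_1$. Your proposed replacement is topological: show each lifted strong leaf stays in a bounded slab $\tN \times [a,b]$, deduce that $\pi_c$ restricted to the strong leaf is a proper local homeomorphism onto the relevant coset, and conclude it is a global homeomorphism because both spaces are simply connected. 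That is a genuinely different mechanism, and if the boundedness held, the topology would close the argument.

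The gap is that your ``main estimate'' is not established and the sketch you give cannot establish it. The telescoping identity you have in mind is
\[
    \pi_t(y) - \pi_t(x)
    = \sum_{k=0}^{n-1}\bigl[\delta(\tf^k x) - \delta(\tf^k y)\bigr]
      + \bigl[\pi_t(\tf^n y) - \pi_t(\tf^n x)\bigr],
    \qquad \delta := \pi_t\circ\tf - \pi_t .
\]
You are right that $\delta$ descends to a bounded continuous function on $M$, and that $d_s(\tf^k x, \tf^k y) \to 0$. But $\delta$ inherits only \emph{continuity} from the leaf conjugacy $h$, which is merely a homeomorphism; there is no H\"older modulus. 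Uniform continuity gives $|\delta(\tf^k x) - \delta(\tf^k y)| \to 0$ term by term, which does not make the series converge, and the boundary term $\pi_t(\tf^n y) - \pi_t(\tf^n x)$ is not controlled either, since $\pi_t$ is unbounded and $\tf^n x, \tf^n y$ leave every compact set. So the chain does not close. The preceding assertion --- that a ``leaf-wise quasi-isometry bound for the strong foliations'' is ``available through $\tilde h$'' because the strong leaves of the prototype are cosets --- is also unjustified: the leaf conjugacy aligns center leaves only. It gives no information about where the strong leaves of $f$ sit in the $\tilde h$-coordinates, and in particular no reason for them to stay near the horizontal slices that carry the prototype's strong leaves. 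The boundedness of $\pi_t$ along strong leaves is a substantially stronger statement than global product structure itself; the paper's proof never establishes it and does not need it. If you want to keep your topological endgame, you would need to prove the slab bound by some other means; as written it is the missing step, and the paper's one-dimensional-subgroup argument is precisely the device that makes the existence proof go through without it.

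Two smaller points. First, invoking Burago--Ivanov branching foliations to establish dynamical coherence is unnecessary: leaf conjugacy, as defined in the appendix, already presupposes dynamical coherence, so an AB-system is dynamically coherent by definition. Second, your claim that lifted strong leaves are properly embedded in $\tM$ is used but not justified; it is true and standard, but for a complete argument you would want to cite or prove it, and in any case it does not substitute for the missing slab bound in the properness argument.
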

This proof of this theorem is left to Section \ref{sec-openness}.
That section also proves the following.

\begin{thm} \label{thm-openAB}
    AB-systems form a $C^1$-open subset of the space of diffeomorphisms.
\end{thm}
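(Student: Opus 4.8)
To prove Theorem \ref{thm-openAB}, the plan is to deduce $C^1$-openness from the structural stability theorem for normally hyperbolic invariant foliations of \cite{HPS}, using Theorem \ref{thm-ABGPS} to supply its hypotheses. Fix an AB-system $f:M\to M$ and an AB-prototype $\fAB$ to which $f$ is leaf conjugate, so that $f$ carries an invariant center foliation $\Wc_f$; the goal is to produce a $C^1$-neighbourhood of $f$ all of whose elements are AB-systems.

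First I would dispatch the two soft conditions. Partial hyperbolicity is $C^1$-open, so there is a $C^1$-neighbourhood $\mathcal{U}_0$ of $f$ on which every $g$ is partially hyperbolic with invariant bundles $C^0$-close to those of $f$; in particular $\dim\Ec_g=\dim\Ec_f=1$ for every $g\in\mathcal{U}_0$. Since $\Ec_f$ carries an $f$-invariant orientation it is an orientable line bundle, and for $g\in\mathcal{U}_0$ the bundle $\Ec_g$ is isomorphic to $\Ec_f$, hence also orientable; whether $g$ preserves an orientation of $\Ec_g$ is a clopen condition among partially hyperbolic diffeomorphisms with orientable one-dimensional center, and it holds at $f$, so it holds on a connected subneighbourhood $\mathcal{U}_1\subseteq\mathcal{U}_0$.

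The substance is the leaf-conjugacy step. By Theorem \ref{thm-ABGPS}, $f$ has global product structure; in particular $f$ is dynamically coherent and so is normally hyperbolic to $\Wc_f$. I claim $f$ is moreover plaque expansive with respect to $\Wc_f$. Indeed, lifting $f$ and $\Wc_f$ to the universal cover $\tM$, any two pseudo-orbits for $f$ that jump within center plaques and that stay close for all time cannot separate in the stable or unstable directions --- such displacements would be amplified under backward or forward iteration --- so by global product structure they lie, at each step, on a common center leaf, and hence, staying close, in a common center plaque; this is precisely plaque expansiveness. (As a check, the model $\fAB$ is plaque expansive for the cheaper reason that its center foliation, the orbit foliation of the smooth suspension flow, is $C^\infty$.) With $f$ normally hyperbolic and plaque expansive, \cite{HPS} gives a $C^1$-neighbourhood $\mathcal{U}_2$ of $f$ such that every $g\in\mathcal{U}_2$ is normally hyperbolic to an invariant foliation $\Wc_g$ tangent to $\Ec_g$ and is leaf conjugate to $f$.

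Finally, for $g\in\mathcal{U}_1\cap\mathcal{U}_2$ the diffeomorphism $g$ is partially hyperbolic, preserves an orientation of $\Ec_g$, and --- since leaf conjugacy is an equivalence relation and $g$ is leaf conjugate to $f$, which is leaf conjugate to $\fAB$ --- is leaf conjugate to the AB-prototype $\fAB$; hence $g$ is an AB-system, which proves the theorem. The step I expect to be the main obstacle is the plaque expansiveness of $f$: the center foliation of a general $C^1$ AB-system need not be $C^1$, so this does not follow from the usual smoothness criterion and must instead be extracted from the global product structure of Theorem \ref{thm-ABGPS} (or, with more care, transported along the leaf conjugacy from $\fAB$); the remaining steps are routine.
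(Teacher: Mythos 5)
Your high-level plan matches the paper's exactly: establish plaque expansiveness for AB-systems (Theorem~\eqref{thm-ABPE}), invoke \cite{HPS} for $C^1$-openness of the leaf conjugacy class, and observe that the remaining defining conditions (partial hyperbolicity with one-dimensional center, preservation of an orientation of $\Ec$) are $C^1$-open. You even correctly flag plaque expansiveness as the crux. But the argument you give for it is where the proof actually lives, and your sketch does not supply it.

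The difficulty your paragraph glosses over is that the two sequences $\{x_k\}$ and $\{y_k\}$ are $\ep$-$c$-\emph{pseudo}orbits: at each step there is a center jump of size up to $\ep$, and that jump can perturb the stable/unstable displacement between the two sequences. The naive ``displacements would be amplified'' argument is the reason one expects plaque expansiveness, but it is only a proof for genuine orbits; for pseudoorbits, a small unstable displacement at step $k$ could in principle be partially cancelled by the center jump between $f(x_k)$ and $x_{k+1}$, so nothing is directly forced to grow. The paper's proof of Theorem~\eqref{thm-ABPE} works around this by pushing the problem to the algebraic model via the leaf conjugacy $h:\tM\to\tN\times\bbR$ and exploiting the nilpotent Lie group structure of $\tN$: the one-parameter subgroup through $v_x$ and $v_y$ from Lemma~\eqref{lemma-onesub}, the comparison function $d_1$, and the normalization in Lemma~\eqref{lemma-autseq} produce a point $\hat v$ for which the unstable distances $d_u(x_k,\hat y_k)$ have a finite, positive supremum $S\in[1/C,C]$; the definition of $\ep_0$ (which controls how much a $c$-jump of size $\ep_0$ can shrink an unstable distance of order $S$) then yields $d_u(x_{k+1},\hat y_{k+1})>S$ for some $k$, contradicting that $S$ was the supremum. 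None of this quantitative bookkeeping, nor the algebraic apparatus that makes it possible, appears in your sketch. Finally, your parenthetical alternative, ``transported along the leaf conjugacy from $\fAB$,'' is not available either: whether plaque expansiveness is a leaf-conjugacy invariant is a well-known open problem, which is precisely why a direct argument is needed.
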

Now assume $f$ is a non-accessible AB-system.
There is a lamination
consisting of $us$-leaves \cite{RHRHU-accessibility},
and this lamination lifts to the universal cover.
Global product structure implies that for a center leaf $L$ on the
cover, every leaf of the lifted $us$-lamination intersects $L$ exactly once.
Each deck transformation maps the lamination to itself and
this leads to an action of the fundamental group on a closed subset
of $L$ as depicted in Figure \ref{fig-drawing}.

In Section \ref{sec-actions}, we consider an order-preserving
action of a nilpotent group $G$ on a closed subset $\Gam \subset \bbR$.
We also assume there is $f$ acting on $\Gam$ such that $f G f \inv = G$.
Then, $f$ and $G$ generate a solvable group.
Solvable groups acting on the line were studied by Plante
\cite{planteSolv}.
By adapting his results, we prove \eqref{master-lemma} which
(omitting some details for now) states that either $\Fix(G)$ is non-empty
or, up to a common semiconjugacy from $\Gam$ to $\bbR$, each $g \in G$
gives a translation $x \mapsto x + \tau(g)$ and
$f$ gives a scaling $x \mapsto \lam x$.

Instead of applying this result immediately to AB-systems,
Section \ref{sec-AIsys} introduces the notion of an ``AI-system''
which can be thought of as the lift of an AB-system to a covering space
homeomorphic to $N \times \bbR$ where, as always, $N$ is a nilmanifold.
Using \eqref{master-lemma}, Section \ref{sec-AIsys}
gives a classification result,
\eqref{thm-mainAI},
for the accessibility classes of AI-systems.
Section \ref{sec-ABsys}
applies the results for AI-systems to give results about 
AB-systems and gives a proof of \eqref{thm-cmptleaf}.
The higher dimensional dynamics of the AB-system depend on the one-dimensional
dynamics on an invariant circle.
Sections \ref{sec-rat} and \ref{sec-irrat}
consider the cases of rational and irrational rotation respectively
and prove theorems \eqref{thm-ratAB}--\eqref{thm-irratsemi}.

Section \ref{sec-proving} gives the proofs of 
\eqref{thm-consAB}, \eqref{thm-decomp}, and \eqref{thm-ABNW}
based on the other results.
In order to establish the ergodic decomposition, the lamination of $us$-leaves
must be $C^1$.
By \eqref{thm-regularity}, this holds if the diffeomorphism is $C^2$.
The proof requires a highly technical
application of Whitney's extension theorem and is given in
Section \ref{sec-regularity}.
The specific version of this regularity result for AB-systems can be stated as
follows.

\begin{prop} \label{prop-pproj}
    Let $f:M \to M$ be a $C^2$ AB-system.
    Then, there is a $C^1$ surjection $p:M \to \bbS$
    and $U \subset \bbS$
    such that the compact $us$-leaves of $f$ are exactly the sets
    $p \inv(t)$ for
    $t \in \bbS \setminus U$.
    
    If $S$ is a center leaf which intersects each compact $us$-leaf
    exactly once, then $p$ may be defined so that its restriction to $S$ is a
    $C^1$-diffeomorphism.

    If $\mu$ is a probability measure given by a $C^1$ volume form on $M$,
    then $p$ may be chosen so that $p_* \mu$ is Lebesgue measure on
    $\bbS = \bbR / \bbZ$.
\end{prop}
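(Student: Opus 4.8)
The plan is to produce $p$ by promoting to class $C^1$ the continuous projection supplied by the structure theory of the preceding sections, regarding its restriction to the set of compact $us$-leaves as the data for an application of Whitney's extension theorem. If $f$ is accessible it has no compact $us$-leaf (a compact $us$-leaf would be a proper accessibility class), so I would set $U=\bbS$; the remaining assertions then reduce to exhibiting a $C^1$ surjection $M\to\bbS$ that pulls Lebesgue measure back to $\mu$, which is elementary, so I assume henceforth that $f$ is not accessible. By \eqref{thm-cmptleaf} the set $K\subset M$ of all compact $us$-leaves is then non-empty, and by \eqref{thm-ratAB} and \eqref{thm-irratAB} there is a continuous surjection $p_0:M\to\bbS$ that is constant on each compact $us$-leaf, takes distinct values on distinct ones, and satisfies $p_0^{-1}(p_0(K))=K$ (this uses the description of the invariant bands in those theorems, after a harmless modification of $p_0$ inside them). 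In particular $K$ is compact, $M\setminus K$ is a disjoint union of open sets each homeomorphic to $N\times I$ for an interval $I$, and — $p_0$ being a global continuous first integral that separates leaves — the holonomy of the $us$-lamination around every loop is trivial on $K$. Finally, by \eqref{thm-regularity} the $us$-leaves form a $C^1$ lamination, so $K$ carries the structure of a codimension-one $C^1$ lamination $\cL$, transversely orientable since $f$ orients $\Ec$.

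The crux is to manufacture the Whitney data along $K$, and this is the step I expect to be the main obstacle. I would cover $K$ by finitely many $C^1$ foliated charts $V_i$, with transverse $C^1$ submersions $\psi_i:V_i\to\bbR$ whose level sets are the plaques; the transition maps $\psi_i\circ\psi_j^{-1}$ are then $C^1$, and by triviality of the holonomy on $K$ the resulting cocycle is trivialised by $p_0|_K$. Starting from an arbitrary choice on one chart and propagating through the $C^1$ transition maps, I would choose increasing reparametrisations $H_i$ of $\psi_i(V_i\cap K)$, each admitting a $C^1$ extension to an interval, that agree on overlaps; these yield a continuous function $\tilde p_0=H_i\circ\psi_i$ on $K$ (still separating leaves) together with a continuous section $\omega$ of $T^*M|_K$, given locally by $H_i'(\psi_i)\,d\psi_i$, which annihilates $T\cL$ and is positive on the transverse orientation. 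Since the leaves and the charts are $C^1$, one has $\psi_i(y)-\psi_i(x)=d\psi_i|_x(y-x)+o(|y-x|)$ for $x,y\in K$, so the pair $(\tilde p_0,\omega)$ satisfies the $C^1$ compatibility condition of Whitney's extension theorem along $K$; in fact that condition forces $\omega$, which is what makes the local pieces fit together consistently. It is precisely here that the $C^1$ regularity of the lamination from \eqref{thm-regularity} is indispensable — a merely $C^0$ lamination would yield only a $C^0$ extension.

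With the data in hand I would pass to the cyclic cover $\bar M\to M$ dual to $p_0^*[\bbS]\in H^1(M;\bbZ)$, lift $\tilde p_0$ and $\omega$ to $\bbZ$-equivariant data on $\bar K\subset\bar M$, and apply Whitney's extension theorem — equivariantly, which is routine — to obtain $\bar p\in C^1(\bar M)$ with $\bar p|_{\bar K}=\tilde p_0$ and $d\bar p|_{\bar K}=\omega$. As $\omega$ is nowhere zero on $\bar K$, $\bar p$ is a submersion near $\bar K$; on each band $N\times I$ I would redefine $\bar p$ away from a collar to be a submersion onto the corresponding interval, matched $C^1$-ly across the collar and equivariantly, and then descend to a $C^1$ submersion $p:M\to\bbS$. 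By construction $p$ is constant on each compact $us$-leaf with pairwise distinct values, so with $U:=\bbS\setminus p(K)$ (open, as $K$ is compact) the compact $us$-leaves are exactly the fibres $p^{-1}(t)$ for $t\in\bbS\setminus U$. If $S$ is a center leaf meeting each compact $us$-leaf exactly once, then $S$, being tangent to $\Ec$, is transverse to $\cL$; since $S$ is connected and meets each of the two boundary leaves of a band exactly once, it meets the closure of that band in a single arc joining them, and choosing the band-extension of $p$ to be a submersion carrying each such arc diffeomorphically onto its interval makes $p|_S$ a $C^1$ diffeomorphism onto $\bbS$. Lastly, if $\mu$ is a $C^1$ volume form with $\mu(M)=1$, then, $p$ being a $C^1$ submersion of a compact manifold, the coarea formula gives $p_*\mu=h\,dm$ with $h\in C^0(\bbS)$ strictly positive of total mass one; hence $\rho(t):=\int_0^t h\,dm$ is a $C^1$ diffeomorphism of $\bbS$, and replacing $p$ by $\rho\circ p$ — which leaves all previous conclusions intact — achieves $p_*\mu=m$.
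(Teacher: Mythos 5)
Your approach differs from the paper's in an important way, and the difference creates a genuine gap. The paper does not obtain the projection $p$ by quoting \eqref{thm-regularity} and then abstractly trivialising the transverse $C^1$ structure of the lamination; instead, it builds $p$ from the ground up via \eqref{prop-extendg}, where the candidate Whitney derivative along a compact $us$-leaf is taken to be a specific continuous function $D(x)$ given by the derivative of the $us$-holonomy (the infinite product in \eqref{prop-niceholo}). The entire content of the $C^1$ upgrade lives in that formula: it produces, simultaneously, a transverse density that is continuous on the whole lamination, equal on overlapping charts for free, and compatible with Whitney's condition. Your proposal instead treats \eqref{thm-regularity} as a black box and then asserts that, because $p_0$ gives a $C^0$ trivialisation of the holonomy, one can ``propagate'' $C^1$ reparametrisations $H_i$ of $\psi_i(V_i\cap K)$ that agree on overlaps. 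This is exactly the step you flag as the main obstacle, and it is indeed where the argument breaks: a $C^1$ codimension-one lamination admitting a continuous first integral need not admit a $C^1$ one. To close up the $H_i$ around loops you would need the holonomy return maps to be trivial as $C^1$ germs along $K$, not merely as set maps on $K$, and this does not follow from $p_0$ being constant on compact leaves. Moreover your claim that ``the Whitney condition forces $\omega$'' is only correct at points of $K$ that are transversal limits of other points of $K$; at an isolated compact $us$-leaf the condition leaves $\omega$ completely free, so there is no intrinsic constraint making the locally chosen $H_i'$ consistent — you would have to choose $\omega$ there, and nothing in your propagation scheme pins it down globally. In short, the proposal reconstructs the statement of \eqref{prop-pproj} from the statement of \eqref{thm-regularity}, but these are proved in the paper by the same explicit mechanism, and the abstract route actually requires strictly more than \eqref{thm-regularity} says.

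The surrounding scaffolding is sound and parallels the paper: your $p_0$ (from \eqref{lemma-ctsproj}), the passage to the infinite cyclic cover $\bar M$ (which is precisely the $\hM$ on which the paper runs \eqref{prop-pprojAI}), equivariant Whitney extension, the redefinition of $\bar p$ in the bands behind a collar (the analogue of \eqref{lemma-gluing}), and the final reparametrisation $\rho$ via $p_*\mu$ to make the pushforward Lebesgue. The repair is to not take \eqref{thm-regularity} as the input but rather to invoke \eqref{prop-extendg} directly: fix the center circle $S$, take $L_0$ a fundamental segment of its lift, set $g=\phi$ on $L_0$ as in the paper, and use the density $D$ as your $\omega$. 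Then the consistency of the Whitney data across charts and around the deck transformation $\beta$ is a computation with the holonomy derivative rather than something you need to engineer; this is the paper's \eqref{prop-pprojAI}, and the final quotienting step then goes through as in your last paragraph.
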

Section \ref{sec-products} proves \eqref{thm-trivial} concerning the triviality
of non-accessible skew products.
Infra-AB-systems are treated in Section \ref{sec-infra}.
%and an example satisfying case (3) of \eqref{thm-ratAB}
%is given in Section \ref{sec-incoherent}.
\begin{figure}[t]
{\centering
\includegraphics{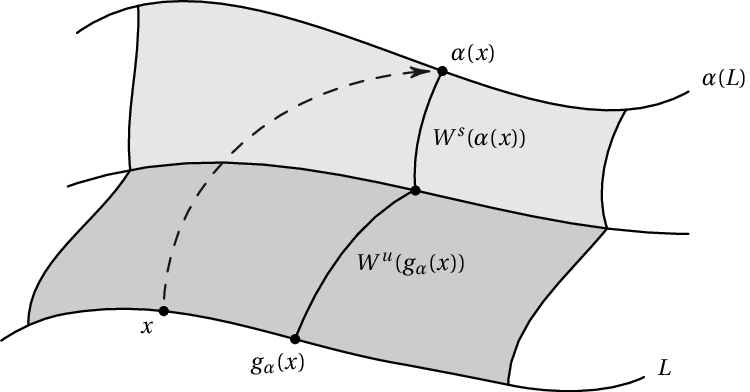}
}
\caption{After lifting to the universal cover, an AB-system has a
center leaf $L$ invariant under the lifted dynamics $f$.
Each deck transformation $\alpha$ then defines a function
$g_\alpha:L \to L$ where $g_\alpha(x)$ is the unique point for which
$\Ws(\alpha(x))$ intersects $\Wu(g_\alpha(x))$.
These functions together with $f$
define
a solvable action on a closed subset of $L$
and this action is semiconjugate to an affine action on $\bbR$.
}
\label{fig-drawing}
\end{figure}

\section{Actions on subsets of the line} \label{sec-actions} %{{{1

\begin{notation}
    To avoid excessive parentheses, if 
    $f$ and $g$ are composable functions,
    we simply write $f g$ for the composition.
    In this section, $\mu$ is a measure on the real line and
    $\mu[x,y)$ denotes the measure of the half-open interval $[x,y)$.
\end{notation}
Let $\HPR$ denote the group of orientation-preserving homeo\-morphisms of the
line.
If $\Gam$ is a non-empty closed subset of $\bbR$, let $\HPG$ denote
the group of all homeomorphisms of $\Gam$ which are restrictions of elements of
$\HPR$.
That is, $g$ is in $\HPG$ if it is a homeomorphism of $\Gam$ and
$g(x)<g(y)$ for $x<y$.

We now adapt results of Plante to this setting.

\pagebreak %To avoid splitting the list.

\begin{prop} \label{prop-mu}
    Suppose $\Gam$ is a non-empty closed subset of $\bbR$ and
    $G$ is a subgroup of $\HPG$ with non-exponential growth.
    Then, there is a measure $\mu$ on\, $\bbR$ such that
    \begin{itemize}
        \item
        $\supp \mu \subset \Gam$,
        \item
        $\mu(X) = \mu(g(X))$ for all $g \in G$ and Borel sets $X \subset \bbR$,
        and
        \item
        if $X \subset \bbR$ is compact, then $\mu(X) < \infty$.
        %\item
        %$\mu(\bbR) < \infty$ only if
        %$\mu(\bbR) = \mu(\{x_0\})$ for some $x_0 \in \Fix G$. 
    \end{itemize}  \end{prop}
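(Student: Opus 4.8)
The plan is to adapt Plante's construction of an invariant measure for groups of non-exponential growth (the Følner-type argument that produces an invariant mean, then an invariant measure via a limit of averaged point masses). Fix a point $x_0 \in \Gam$ and a finite symmetric generating set $\{h_1,\dots,h_k\}$ for a finitely generated subgroup; since any $g \in G$ lies in some finitely generated subgroup and non-exponential growth passes to subgroups, it suffices to build the measure for each finitely generated $G$ and then invoke essential uniqueness to patch — but cleaner is to work directly with the given $G$ and a chosen exhausting sequence of finite subsets. Let $B_n \subset G$ denote the ball of radius $n$ in the word metric (relative to a fixed, possibly infinite but countable, generating set, taking an increasing exhaustion $S_1 \subset S_2 \subset \cdots$ of $G$ by finite sets and letting $B_n$ be words of length $\le n$ in $S_n$). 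Non-exponential growth gives a subsequence along which $|B_{n+1}|/|B_n| \to 1$, so the sets $B_n$ are asymptotically invariant: for each fixed $g$, $|g B_n \triangle B_n|/|B_n| \to 0$ along that subsequence.

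Next I would form, for each $n$ in the subsequence, the measure
\[
    \mu_n := \frac{1}{|B_n|} \sum_{g \in B_n} \delta_{g(x_0)}
\]
on $\bbR$, where $\delta_y$ is the unit point mass at $y$. Each $\mu_n$ is a probability measure supported on the $G$-orbit of $x_0$, hence on $\Gam$ (which is closed and $G$-invariant). To extract a limit I cannot use weak-* compactness of probability measures directly, because mass can escape to infinity; instead I would pass to the one-point compactification or, more concretely, use a diagonal argument over a countable collection of test intervals with endpoints in a countable dense set, to obtain a locally finite limit measure $\mu$ as a vague limit of a further subsequence of the $\mu_n$. The asymptotic invariance of $B_n$ then gives, for each $g \in G$ and each half-open interval $[a,b)$ with $\mu(\{a\}) = \mu(\{b\}) = 0$, that $\mu_n(g^{-1}[a,b)) - \mu_n([a,b)) \to 0$, hence $\mu(g^{-1}(X)) = \mu(X)$ for such $X$, and this extends to all Borel $X$ by the usual generation argument. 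The support is contained in $\overline{G \cdot x_0} \subset \Gam$, giving the first bullet, and the second bullet is exactly $G$-invariance.

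The main obstacle — and the reason the standard compactness argument does not apply verbatim — is ensuring the limit $\mu$ is nonzero and that it assigns finite mass to compact sets (the third bullet) while not being identically zero. Local finiteness of the vague limit is automatic from the construction if one normalizes correctly, but one must rule out $\mu \equiv 0$: the danger is that the orbit $G \cdot x_0$ spreads out so that every bounded window receives asymptotically zero $\mu_n$-mass. Here is where I would use the one-dimensional order structure crucially. If some bounded interval $J$ meets $G \cdot x_0$ in infinitely many points, one rescales: replace the counting construction by pushing forward via a point whose orbit accumulates, or rather, observe that either (i) some point of $\Gam$ has a bounded $G$-orbit, in which case $\mu$ can be taken as a finite sum of point masses on that orbit and all three bullets are trivial, or (ii) every orbit is unbounded, in which case I would instead build $\mu$ by a "renormalized" averaging that fixes a reference interval $[0,1]$ (assuming $0,1 \in \Gam$ after translating) and sets $\mu_n := \frac{1}{N_n}\sum_{g \in B_n} \delta_{g(x_0)}$ where $N_n := \#\{g \in B_n : g(x_0) \in [0,1)\}$, the normalization chosen so that $\mu_n([0,1)) = 1$ for all large $n$; the Følner property then transfers invariance to the limit, $\mu([0,1)) = 1$ prevents degeneration, and order-preservation together with invariance forces $\mu$ of any compact set to be finite (a compact set is covered by finitely many translates of $[0,1)$ under elements of $G$, each of $\mu$-measure $1$). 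Verifying that this renormalization is compatible with the Følner limit — i.e. that $N_{n+1}/N_n \to 1$ along the subsequence — is the delicate point and would occupy the bulk of the argument; it follows from asymptotic invariance applied to the indicator of $[0,1)$ pulled back by generators, but needs care since $N_n$ is itself a moving quantity.
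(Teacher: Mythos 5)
Your strategy (F\o lner averaging of point masses over orbit points, then a vague limit) is exactly the construction the paper is referring to when it cites Plante's results, so you have correctly identified the mechanism. The paper's ``proof'' is essentially a citation---it points to (1.3) of Plante's solvable-groups paper for the case $\Gam = \bbR$ and asserts the techniques transfer to general closed $\Gam$---so a blind reader is being asked to reconstruct Plante's argument, and you are attempting just that.

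However, the place you flag as ``delicate'' is indeed where your sketch breaks, and the proposed fix does not close the gap. Three specific problems. First, normalizing so that $\mu_n\bigl([0,1)\bigr)=1$ does not force $\mu\bigl([0,1)\bigr)=1$ in the vague limit: $\mathbf{1}_{[0,1)}$ is not in $C_c(\bbR)$, and mass can concentrate at and escape through the endpoints (e.g.\ $\mu_n=\delta_{1-1/n}$ has $\mu_n\bigl([0,1)\bigr)=1$ yet converges vaguely to $\delta_1$). So ``$\mu([0,1))=1$ prevents degeneration'' is not established. Second, the F\o lner estimate gives $|hB_n\bigtriangleup B_n|/|B_n|\to 0$, but with your renormalization the invariance error for a generator $h$ is controlled by $|hB_n\bigtriangleup B_n|/N_n$; to make this vanish you need $N_n/|B_n|$ bounded below, i.e.\ a definite proportion of $g\in B_n$ must land $x_0$ in the fixed window---and this is precisely what one cannot assume a priori (it is equivalent to the nondegeneracy you are trying to prove). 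Third, the local-finiteness argument ``a compact set is covered by finitely many $G$-translates of $[0,1)$'' is false in general; for instance if $G$ is generated by a single translation $x\mapsto x+\alpha$ with $\alpha>1$, the translates of $[0,1)$ leave gaps, and even with $\Fix(G)=\varnothing$ the $G$-orbit of a reference interval need not cover $\Gam$. What is actually needed (and what makes Plante's argument work) is a comparison estimate showing that the number of orbit points in a window and in a generator-image of that window are uniformly comparable, which then gives both the lower bound on $N_n/|B_n|$ and uniform local boundedness simultaneously; your sketch asserts the consequence without supplying that comparison. As written, the proposal would not compile into a proof without filling in this central estimate, though the overall architecture is the correct one.
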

\begin{proof}
    In the case $\Gam=\bbR$, this is a restatement of (1.3) in
    \cite{planteSolv}.
    One can check that the techniques in \cite{planteSolv}
    and \cite{plante1975foliations} extend immediately to the case
    $\Gam  \ne  \bbR$.
\end{proof}
\begin{prop} \label{prop-tau}
    Let $\Gam$, $G$, and $\mu$ be as in \eqref{prop-mu}
    and suppose $\Fix(G)$ is empty.
    Then there is a non-zero homomorphism $\tau:G \to \bbR$
    such that for all $x \in \bbR$
    \[
        \tau(g) = \left\{
        \begin{array}{lr}
        \mu [x,g(x)) & \text{if $x < g(x)$}, \\
        0 & \text{if $x = g(x)$}, \\
        -\mu [g(x),x) & \text{if $g(x) < x$}.
        \end{array}
        \right.
    \]  \end{prop}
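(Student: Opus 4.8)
The plan is to construct the ``translation number'' homomorphism in the style of Plante \cite{planteSolv}, working with the finitely additive signed interval function attached to $\mu$. For $a,b\in\bbR$ set $\nu(a,b)=\mu[a,b)$ if $a\le b$ and $\nu(a,b)=-\mu[b,a)$ if $b<a$; this is a finite real number since $\mu$ is finite on bounded sets by \eqref{prop-mu}. The whole argument rests on two properties of $\nu$: the cocycle identity $\nu(a,b)+\nu(b,c)=\nu(a,c)$ for all $a,b,c\in\bbR$, which is a short case check from additivity of $\mu$ on intervals; and $G$-invariance, $\nu(g(a),g(b))=\nu(a,b)$ for $g\in G$. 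For the second I would pass to an extension $\tilde g\in\HPR$ of $g$ --- harmless, since two extensions agree on $\Gam$ and differ only on complementary gaps of $\Gam$, which are $\mu$-null, so the value $\nu(\tilde g(a),\tilde g(b))$ is independent of the choice --- and use that $\tilde g(\Gam)=\Gam$ together with invariance of $\mu$: for $a\le b$ one has $[\tilde g(a),\tilde g(b))\cap\Gam=\tilde g\big([a,b)\cap\Gam\big)$, whence $\mu[\tilde g(a),\tilde g(b))=\mu[a,b)$. Since $\tilde g\,\tilde h$ restricts to $gh$ on $\Gam$, we may treat $G$ as acting on all of $\bbR$ from here on.

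Next I would define $\tau(g):=\nu(x,g(x))$ for $g\in G$ and arbitrary $x\in\bbR$. Independence of $x$ is immediate: applying the cocycle identity along $x\to y\to g(y)\to g(x)$ gives $\nu(x,g(x))=\nu(x,y)+\nu(y,g(y))+\nu(g(y),g(x))$, and $\nu(g(y),g(x))=\nu(y,x)=-\nu(x,y)$ by invariance, so $\nu(x,g(x))=\nu(y,g(y))$. The homomorphism property is the same computation: $\tau(gh)=\nu(x,gh(x))=\nu(x,g(x))+\nu(g(x),g(h(x)))=\tau(g)+\nu(x,h(x))=\tau(g)+\tau(h)$, where invariance is used on the middle term. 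Unwinding the definition of $\nu$ gives precisely the three-case formula in the statement, and $\tau$ is a homomorphism into $(\bbR,+)$.

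It remains to show $\tau$ is non-zero, and this is where $\Fix(G)=\varnothing$ is used; it is the one genuinely delicate point. Fix $x_0\in\bbR$ and put $\Phi(y):=\nu(x_0,y)$, a non-decreasing function with $\Phi(x_0)=0$. From the cocycle identity and invariance, $\Phi(g(y))=\tau(g)+\Phi(y)$, so if $\tau\equiv0$ then $\Phi\circ g=\Phi$ for all $g\in G$ and the level set $J:=\Phi\inv(0)$ is a $G$-invariant interval containing $x_0$. Because $\mu\ne0$ --- the measure produced in \eqref{prop-mu} is non-trivial --- $\Phi$ is not identically zero, so $J\ne\bbR$ and $J$ has a finite endpoint $c$. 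As $G$ acts by orientation-preserving homeomorphisms carrying $J$ onto itself, it fixes $c$. Finally, reading off the behaviour of $\Phi$ near $c$ shows that $c$ is either an atom of $\mu$ or a limit of points of $\supp \mu$, so $c\in\supp \mu\subseteq\Gam$; hence $c\in\Fix(G)$, contradicting the hypothesis, and therefore $\tau\ne0$. The main obstacle is exactly this last step: the bookkeeping that forces the fixed endpoint of $J$ to lie inside $\Gam$ (so that it is a genuine common fixed point of the action on $\Gam$, not merely of an extension to $\bbR$), together with the --- standard, but worth recording --- fact that the measure of \eqref{prop-mu} may be taken non-zero.
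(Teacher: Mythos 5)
Your argument is correct and is in essence the same approach the paper takes: the paper defines $\tau$ by the displayed formula and then defers the verification that it is a well-defined, non-zero homomorphism to (5.3) of Plante \cite{plante1975foliations}. What you have done is reconstruct those details in a self-contained way, via the signed interval cocycle $\nu$, the reduction of $G$-invariance of $\nu$ to invariance of $\mu$ under extensions $\tilde g\in\HPR$, and the argument that $\tau\equiv0$ would force a finite endpoint $c$ of the level set $J=\Phi\inv(0)$ to be a common fixed point lying in $\supp\mu\subset\Gam$, contradicting $\Fix(G)=\varnothing$.

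Two small points worth flagging, both of which you already notice. First, the statement of \eqref{prop-mu} as written does not literally assert $\mu\neq 0$ (the zero measure satisfies all three bullet points), but Plante's construction does produce a non-trivial measure, and non-triviality is essential to your argument; you are right to record it. Second, the handling of the extension ambiguity (two extensions of $g\in\HPG$ to $\HPR$ differ only on complementary gaps of $\Gam$, which are $\mu$-null) and the bookkeeping forcing $c\in\Gam$ (either $c$ is an atom or every one-sided neighbourhood of $c$ has positive mass) are the genuinely delicate steps, and your treatment of both is correct.
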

\begin{proof}
    Choose any $x \in \bbR$ and define $\tau$ as above.
    One can then show that $\tau$ is a non-zero homomorphism and independent of
    the choice of $x$.
    See (5.3) of \cite{plante1975foliations} for details.
          \end{proof}
\begin{prop} \label{prop-lamF}
    Let $\Gam$, $G$, $\mu$, $\tau$ be as in \eqref{prop-tau}
    and suppose $f \in \HPR$ is such that $F:G \to G$ defined by
    $F(g)(x) = f g f \inv(x)$
    is a group automorphism.
    Then, there is $\lam > 0$ such that $\tau(F(g)) = \lam \tau(g)$
    for all $g \in G$.

    Moreover, if $\lam  \ne  1$, then $f_* \mu = \lam \mu$
    and any homeomorphism of\, $\bbR$ which commutes with $f$ has a fixed point.
\end{prop}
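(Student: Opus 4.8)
The plan is to push everything through the distribution function of $\mu$. Fix a base point $x_0 \in \bbR$ and define $h \colon \bbR \to \bbR$ by $h(x) = \mu[x_0, x)$ for $x \ge x_0$ and $h(x) = -\mu[x, x_0)$ for $x < x_0$. By Proposition \ref{prop-mu}, $h$ is non-decreasing and locally bounded, and the formula in Proposition \ref{prop-tau} says precisely that $h$ intertwines the $G$-action with a translation action:
\[
    h(g(x)) = h(x) + \tau(g) \qquad \text{for all } g \in G,\ x \in \bbR .
\]
Everything below comes from rigidity of such monotone semiconjugacies.

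For the scaling relation, set $\tilde h = h \circ f$. Since $F(g) = f g f \inv$, equivariance of $h$ gives, for $g \in G$ and $x \in \bbR$,
\[
    \tilde h(g(x)) = h\bigl(f g f \inv (f(x))\bigr) = h\bigl(F(g)(f(x))\bigr) = h(f(x)) + \tau(F(g)) = \tilde h(x) + \tau(F(g)),
\]
so $\tilde h$ is again a non-decreasing semiconjugacy from the $G$-action to translations, now with homomorphism $\tau \circ F$. I claim $\tau$ and $\tau \circ F$ are proportional. If not, they are linearly independent, so the product map $(h, \tilde h) \colon \bbR \to \bbR^2$ intertwines the $G$-action with translation by the subgroup $\Lambda = \{(\tau(g), \tau(F(g))) : g \in G\}$, which spans $\bbR^2$. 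The closure of the image of $(h, \tilde h)$ is then a monotone curve in $\bbR^2$ --- both coordinates are non-decreasing along it --- invariant under translation by every element of $\overline{\Lambda}$. But a closed subgroup of $\bbR^2$ spanning $\bbR^2$ contains a nonzero vector $v$ whose coordinates have opposite signs, and then the curve contains both some point $P$ and $P + v$, which violates monotonicity. Hence $\tau \circ F = \lambda \tau$ for some $\lambda \in \bbR$; as $\tau \ne 0$ and $F$ is an automorphism, $\tau \circ F \ne 0$, so $\lambda \ne 0$, and since each of $\tau$ and $\tau \circ F$ assigns a positive value to any element of $G$ that moves points rightward, $\lambda > 0$. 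Unwinding the formula one sees $\tau(F(g)) = \mu\bigl(f[x, g(x))\bigr)$, so this proportionality is exactly $\mu(f(A)) = \lambda\, \mu(A)$ on intervals of this kind.

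Now suppose $\lambda \ne 1$. A further, more delicate turn of the rigidity promotes the last identity to $f_* \mu = \lambda \mu$; this is the one genuinely technical step, and it is here that $\lambda \ne 1$ is used essentially. Granting it: from $\mu(\bbR) = \lambda\, \mu(\bbR)$ and $\mu \ne 0$ we get $\mu(\bbR) = \infty$, so $\supp \mu$ is unbounded. Next, $f$ has a fixed point: otherwise, after possibly replacing $f$ by $f \inv$, we may assume $f(x) > x$ for all $x$; choosing $g \in G$ with $\tau(g) > 0$ (so $g(x) > x$ for all $x$) and comparing the partitions of the half-lines $[x_0, \infty)$ and $(-\infty, x_0)$ into fundamental domains of $f$ and into fundamental domains of $g$, the $G$-invariance of $\mu$ forces both half-lines to have infinite mass while the geometric series coming from $f$ make one of them have finite mass --- a contradiction. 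Finally, let $\phi$ be a homeomorphism of $\bbR$ commuting with $f$; we may take $\phi$ orientation-preserving, since otherwise it already has a fixed point. Re-basing $h$ at a fixed point $p$ of $f$, the identity $f_* \mu = \lambda \mu$ gives $h \circ f = c\, h$ for a scalar $c \ne 1$, so $h$ vanishes on $\Fix(f)$. Thus $\mu$ gives measure zero to every interval with endpoints in $\Fix(f)$, so $\supp \mu$ is disjoint from the open interval $(\inf \Fix(f), \sup \Fix(f))$. Since $\supp \mu$ is nonempty and unbounded, $\Fix(f)$ cannot be unbounded on both sides, so it has a finite extremum; as $\phi$ is an orientation-preserving homeomorphism and $\phi f = f \phi$ forces $\phi(\Fix(f)) = \Fix(f)$, it fixes that extremum.

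The main obstacle is precisely the rigidity input. The proportionality of $\tau$ and $\tau \circ F$ is handled above by the observation that a monotone curve in the plane cannot be invariant under a two-dimensional group of translations; but its upgrade, when $\lambda \ne 1$, to the \emph{equality} of measures $f_* \mu = \lambda \mu$ --- rather than merely of their translation homomorphisms --- is harder and needs the essential uniqueness of the invariant measure of a fixed-point-free action. This is where the methods of Plante \cite{planteSolv, plante1975foliations}, already invoked for Proposition \ref{prop-mu}, have to be transferred to the closed set $\Gam$, and where the real work lies: handling atoms of $\mu$, the gaps of $\supp \mu$ and how $f$ moves them, and ruling out the non-uniqueness that would otherwise obstruct the conclusion.
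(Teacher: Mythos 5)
Your proposal is correct where it is complete, and for the first assertion it takes a genuinely more self-contained route than the paper. The paper's proof of $\tau(F(g)) = \lam\,\tau(g)$ simply states it ``follows as an adaptation of \S 4 of \cite{planteSolv}''; your monotone-curve argument --- that the closure of the image of $(h, h\circ f)$ is a monotone subset of $\bbR^2$, which cannot be invariant under a group of translations spanning $\bbR^2$ --- is an elementary alternative and is sound. Two small points to tighten: $\tau(g)$ is only \emph{non-negative} (not necessarily positive) on elements moving $\Gam$ rightward, since $\mu[x,g(x))$ may vanish, but combined with $\tau\circ F \ne 0$ this still forces $\lam>0$; and the claim that every closed subgroup of $\bbR^2$ spanning $\bbR^2$ contains a vector with coordinates of strictly opposite signs requires the quick case check over $\bbR^2$, $\bbR v\oplus\bbZ w$, and a lattice, though it is true in each case.

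For the step $f_*\mu = \lam\mu$ when $\lam\ne 1$, you explicitly grant it as the one genuinely technical step, correctly locating it in the uniqueness theory of the $G$-invariant measure. The paper's own proof does not establish this directly either; it cites (4.2) of \cite{planteSolv}. So your admitted gap is exactly the paper's citation, not a defect unique to your argument. Your fixed-point argument for $f$ (geometric series along the $f$-orbit against the infinite mass of each half-line coming from the $G$-action) is essentially the paper's. For a commuting homeomorphism $\phi$ your route is a slight variant: you observe that intervals with endpoints in $\Fix(f)$ have zero $\mu$-mass while half-lines have infinite mass, so $\Fix(f)$ is nonempty and bounded on one side, and $\phi$ must fix that extremum since $\phi(\Fix(f))=\Fix(f)$. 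The paper instead computes $\mu[x_0,\phi^k(x_0)) = \lam\,\mu[x_0,\phi^k(x_0))$ directly at the $f$-fixed point $x_0$, concluding the $\phi$-orbit has zero $\mu$-span and hence is bounded. Both are correct; your version pinpoints a canonical $\phi$-fixed point (the extremum of $\Fix(f)$) at the cost of a slightly longer setup.
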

\begin{proof}
    The first half of the statement follows as an adaptation of
    \S 4 of \cite{planteSolv}.
    Further, if $\lam  \ne  1$, then $f_* \mu = \lam \mu$ by
    (4.2) of \cite{planteSolv}.
    To prove the final claim, we first show that if $\lam  \ne  1$ then
    $f$ has a fixed point.
    Consider $x \in \Gam$.
    As $\Fix(G)$ is empty by assumption, there is $g \in G$ such that
    $x < g(x)$.
    Then,
    \[
        \mu [x, +\infty)  \ge  \mu [x, g^k(x)) = k\,\tau(g)
    \]
    for all $k  \ge  1$.
    This shows that $\mu [x, +\infty) = \infty$ for any $x \in \bbR$.

    Assume, without loss of generality,
    that $\lam < 1$ and $x < f(x)$ for some $x \in \bbR$.
    Then,
    \[
        \mu [x, \sup_{k \ge 0} f^k(x) )
        = \sum_{k=0}^{\infty} \lam^k \mu [x, f(x))
        < \infty
    \]
    and therefore,
    $x_0 := \sup_{k \ge 0} f^k(x) < \infty$
    is a fixed point for $f$.
    If $h \in \HPR$ commutes with $f$ then for all $k \in \bbZ$
    \[
        \mu [x_0, h^k(x_0) )
        = \mu [f(x_0), fh^k(x_0) )
        = \lam\,\mu [x_0, h^k(x_0) )
    \]
    which is possible only if $\mu [x_0, h^k(x_0) ) = 0$.
    Then $\mu [x_0, \sup_{k \in \bbZ} h^k(x_0) ) = 0$
    and so $\sup h^k(x_0) < \infty$ is a fixed point for $h$.
\end{proof}
We now consider the case where $G$ is a fundamental group of a nilmanifold.

\begin{prop} \label{prop-nofixcoset}
    Let $G$ be a torsion-free, finitely-generated, nilpotent group
    and suppose
    $\phi \in \Aut(G)$ is such that $\phi(g)  \ne  g$ for all non-trivial
    $g \in G$.
    If $H$ is a $\phi$-invariant subgroup, then $\phi(gH)  \ne  gH$ 
    for all non-trivial cosets $gH  \ne  H$.
\end{prop}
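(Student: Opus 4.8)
The plan is to treat the abelian case directly and then induct on the nilpotency class of $G$. Suppose first that $G\cong\bbZ^{n}$, so $\phi$ is given by a matrix in $\operatorname{GL}_{n}(\bbZ)$. The hypothesis that $\phi$ fixes no non-trivial element says exactly that $\phi-\id$ is injective on $\bbZ^{n}$, hence invertible over $\bbQ$. If $H\le G$ is $\phi$-invariant, put $V=\bbQ\otimes H\subseteq\bbQ^{n}$; this is a $\phi$-invariant subspace, and since $\det(\phi-\id)$ on $\bbQ^{n}$ is the product of the determinants of the maps induced on $V$ and on $\bbQ^{n}/V$, both of those maps are invertible as well. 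Now $\phi(gH)=gH$ means precisely $g\inv\phi(g)\in H\subseteq V$; projecting this relation to $\bbQ^{n}/V$ shows that the image of $g$ there is annihilated by the invertible operator $\phi-\id$, hence is $0$, so $g\in V\cap\bbZ^{n}$.

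For the inductive step, with $G$ of class $c>1$, I would pass to a quotient $G/A$ by a non-trivial $\phi$-invariant central subgroup $A$ (so that $G/A$ has smaller nilpotency class), apply the inductive hypothesis to the image of $H$ in $G/A$ to conclude $g\in HA$, write $g=ha$ with $h\in H$ and $a\in A$, and then use centrality of $A$ to obtain $a\inv\phi(a)\in H\cap A$; the abelian case applied to $\phi|_{A}$ together with the $\phi$-invariant subgroup $H\cap A$ of $A$ would then return $a$, and hence $g$, to $H$. Combined with the base case this would complete the induction.

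The step I expect to be the genuine obstacle is controlling torsion in quotients. The linear-algebra argument above only places $g$ in $V\cap\bbZ^{n}$, which is the isolator $\sqrt H=\{x\in G:x^{k}\in H\text{ for some }k\ge1\}$ rather than $H$ itself; likewise the inductive step only returns $a$ to the isolator of $H\cap A$, and arranging the central subgroup $A$ so that $G/A$ stays torsion-free is itself not automatic. To actually land in $H$ one needs $H=\sqrt H$ -- equivalently, that $G/H$ contains no torsion fixed by the induced automorphism -- so the plan rests on checking that each $\phi$-invariant subgroup to which the proposition is applied is isolated in $G$. Since the isolator of a $\phi$-invariant subgroup is again $\phi$-invariant, this is a natural condition, and it is precisely the point on which the whole argument turns; the bulk of the work is verifying it in the situations where the proposition is used.
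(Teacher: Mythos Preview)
Your diagnosis is correct, and the obstacle you identify is not an artifact of your method: the proposition as stated is actually false. Take $G=\bbZ$, $\phi=-\id$, $H=2\bbZ$; then $\phi$ fixes no non-trivial element and $H$ is $\phi$-invariant, yet $\phi(1+2\bbZ)=-1+2\bbZ=1+2\bbZ$. So the fact that your linear-algebra step only places $g$ in the isolator $\sqrt{H}$ rather than in $H$ is exactly the truth of the matter, not a weakness of the approach.

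The paper's proof uses a different device. It sets $\psi(g)=g\inv\phi(g)$ and argues, by induction through the quotient $G/Z$ by the center, that $\psi$ is a \emph{bijection} of $G$; since $\phi(gH)=gH$ is equivalent to $\psi(g)\in H$, and since $\psi(H)=H$, bijectivity would force $\psi(g)\notin H$ whenever $g\notin H$. But the base case of that induction asserts that for abelian $G\cong\bbZ^{d}$ the map $\psi=\phi-\id$ is a bijection of $\bbZ^{d}$, and this is precisely where the same torsion issue bites: $\phi-\id$ is injective, hence invertible over $\bbQ$, but need not be surjective over $\bbZ$ (in the example above it is multiplication by $-2$). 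The inductive step likewise invokes that $\psi|_{Z}$ is an automorphism of the center, which fails for the same reason. So the paper's argument and yours run aground at the same point, just phrased differently.

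Both arguments go through once one adds the hypothesis that $G/H$ is torsion-free, and in the paper's principal application (Lemma~\ref{master-lemma}) the subgroup is $\ker\tau$ for a homomorphism $\tau:G\to\bbR$, so $G/H$ embeds in $\bbR$ and that hypothesis holds automatically. Your instinct to flag the isolator condition as the crux is sound; the cleanest repair is to strengthen the statement to require $H=\sqrt{H}$ and then note that this is satisfied wherever the proposition is invoked.
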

\begin{proof}
    First, we show that the function $\psi:G \to G$
    defined by $\psi(g)= g \inv \phi(g)$
    is a bijection.
    If $G$ is abelian, then
    $G$ is isomorphic to $\bbZ^d$ for some $d$
    and $\psi$ is an invertible linear map, and hence bijective.
    Suppose now that $G$ is non-abelian and let $Z$ be its group-theoretic
    center.
    Pick some element
    $g_0 \in G$.
    As $G/Z$ is of smaller nilpotency class, by induction
    there is $g \in G$ such that
    $\psi(g Z) = g_0 Z$ or equivalently
    $\psi(g) z_0 = g_0$
    for some $z_0 \in Z$.
    As $\psi|_Z$ is an automorphism of $Z$, there is $z \in Z$ such that
    \begin{math}
        \psi(g z) = \psi(g) \psi(z) = \psi(g) z_0 = g_0.
      \end{math}
    As $h$ was arbitrary, this shows
    $\psi$ is onto.

    To prove injectivity,
    suppose $\psi(g)=\psi(g')$.
    By induction, $g' = g z$ for some $z \in Z$.
    Then,
    \[
        \psi(g)=\psi(g')=\psi(g)\psi(z)  \quad \Rightarrow \quad  \psi(z) = 1  \quad \Rightarrow \quad  z = 1  \quad \Rightarrow \quad  g'=g.
    \]
    If $H$ is a $\phi$-invariant subgroup, then
    $\psi(H)=H$ and the bijectivity
    of $\psi$ implies that $\psi(gH)  \ne  H$ for any non-trivial coset.
\end{proof}
The results of J.~Franks and A.~Manning
\cite{Franks1,Franks2,Manning}
show that for any Anosov diffeomorphism
on a nilmanifold, the resulting automorphism on the fundamental group
satisfies the hypotheses of \eqref{prop-nofixcoset}.

\medskip

\begin{lemma} \label{master-lemma}
    Suppose $\Gam \subset \bbR$, $G < \HPG$, and $f \in \HPR$ are such that
    \begin{itemize}
        \item
        $\Gam$ is closed and non-empty,
        \item
        $G$ is finitely generated and nilpotent,
        \item
        $F:G \to G$ defined by $F(g)(x) = f g f \inv(x)$ \\
        is a group automorphism
        with no non-trivial fixed points, and
        \item
        $\Fix(G)$ is empty.  \end{itemize}
    Then, there are
    \begin{itemize}
        \item
        a closed non-empty subset $\Gam_0 \subset \Gam$,
        \item
        a continuous surjection $P: \bbR \to \bbR$,
        \item
        a non-zero homeomorphism $\tau:G \to \bbR$, and
        \item
        $0 < \lam  \ne  1$  \end{itemize}
    such that
    for $x,y \in \bbR$ and $g \in G$
    \begin{itemize}
        \item
        $x \le y$ implies $P(x)  \le  P(y)$,
        \item
        $P g(x) = P(x) + \tau(g)$,
        \item
        $P f(x) = \lam P(x)$,
        \item
        $\Gam_0 = \{ x \in \Gam : g(x) = x \text{ for all } g \in \ker \tau\}$,
        and
        \item
        for each $t \in \bbR$,
        $P \inv(t)$ is either a point $z \in \Gam_0$
        or an interval $[a,b]$ with $a,b \in \Gam_0$.
    \end{itemize}
    Moreover, any homeomorphism which commutes with $f$ has a fixed point
    in $P \inv(0)$.
\end{lemma}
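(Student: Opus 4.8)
The plan is to assemble the four preceding propositions and then spend the real effort on the non-emptiness of $\Gam_0$ and the precise description of the fibres of $P$. First I would record that $G$ is torsion-free: an orientation-preserving homeomorphism of $\Gam$ of finite order must fix every point, since $g(x)>x$ (resp.\ $g(x)<x$) forces $g^k(x)$ strictly monotone and hence never returning to $x$, so such a $g$ has order one. Since $G$ is finitely generated nilpotent it has polynomial, hence non-exponential, growth, so \eqref{prop-mu} produces a locally finite measure $\mu$, invariant under $G$, with $\supp\mu\subset\Gam$. Because $\Fix(G)=\varnothing$, \eqref{prop-tau} gives a non-zero homomorphism $\tau:G\to\bbR$ computed by integrating $\mu$; because $F(g)=fgf\inv$ is an automorphism, \eqref{prop-lamF} produces $\lam>0$ with $\tau\circ F=\lam\,\tau$.

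The first substantive step is to show $\lam\ne 1$. Suppose $\lam=1$. Then $\tau\circ F=\tau$, so $F$ preserves $H:=\ker\tau$ and descends to an automorphism of $G/H$; since $\tau$ identifies $G/H$ with a subgroup of $\bbR$ and intertwines this descended automorphism with the identity, $F$ fixes every coset of $H$. As $G$ is torsion-free, finitely generated and nilpotent and $F$ has no non-trivial fixed point, \eqref{prop-nofixcoset} forbids this unless $H=G$, i.e.\ $\tau\equiv 0$, a contradiction. Hence $\lam\ne 1$, and \eqref{prop-lamF} now also supplies $f_*\mu=\lam\mu$, an $f$-fixed point $x_0$, and the fixed-point property for homeomorphisms commuting with $f$.

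Next I would build $P$. With $x_0$ as basepoint, the signed distribution function $x\mapsto\mu[x_0,x)$ is non-decreasing and satisfies $\Phi g=\Phi+\tau(g)$ (from $G$-invariance of $\mu$ and the formula of \eqref{prop-tau}) and $\Phi f=\lam\Phi$ (from $f_*\mu=\lam\mu$ and $f(x_0)=x_0$); moreover, choosing any $g$ with $\tau(g)>0$ forces $g(x)>x$ for all $x$ and $\mu[x,g^k(x))=k\tau(g)\to\infty$, so $\mu$ has infinite mass on both half-lines and $\Phi$ is unbounded above and below. I would then pass to the continuous monotone surjection $P$ obtained by collapsing the complementary intervals of $\supp\mu$ (and interpolating across atoms of $\mu$), normalized so $P(x_0)=0$; the two equivariance identities descend to $Pg=P+\tau(g)$ and $Pf=\lam P$. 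Setting $\Gam_0=\{x\in\Gam:g(x)=x\text{ for all }g\in\ker\tau\}$ gives a closed set, being an intersection of fixed-point sets of continuous maps with the closed set $\Gam$; since $\tau$ vanishes on $\ker\tau$, that subgroup preserves each fibre of $P$, and the endpoints of a fibre lie in $\supp\mu\subset\Gam$ and are fixed by the order-preserving action of $\ker\tau$ on that fibre, hence lie in $\Gam_0$ — in particular the endpoint of $P\inv(0)$ shows $\Gam_0\ne\varnothing$. The remaining point, that $\Gam_0$ is \emph{exactly} the union of the singleton fibres and the endpoints of the interval fibres, is where the bookkeeping is heaviest and where one must follow Plante most carefully; this is the step I expect to be the main obstacle, as it requires controlling precisely how the nilpotent group $\ker\tau$ fails to move points that $P$ does not separate, and it is also where the presence of atoms of $\mu$ must be dealt with definitively.

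Finally, the ``moreover'' clause is essentially contained in the proof of \eqref{prop-lamF}: if $h\in\HPR$ commutes with $f$, that argument produces a fixed point $y_0=\sup_k h^k(x_0)$ of $h$ and shows $\mu[x_0,y_0)=0$; since $P$ collapses any interval of zero $\mu$-mass containing $x_0$ and $P(x_0)=0$, this gives $P(y_0)=0$, so $y_0\in P\inv(0)$, as claimed.
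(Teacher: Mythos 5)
Your proof follows essentially the same line as the paper's through the construction of $\mu$, $\tau$, $\lambda$, the argument that $\lambda\ne 1$ via \eqref{prop-nofixcoset}, and the production of the $f$-fixed point $x_0$. The genuine gap is in establishing continuity (hence surjectivity) of $P$.

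Having set $\Phi(x)=\mu[x_0,x)$, you acknowledge that $\mu$ may have atoms and propose to ``interpolate across atoms'' to obtain a continuous monotone surjection $P$. This is not a workable step as written: an atom produces a gap in the \emph{range} of $\Phi$, and any device for filling that gap --- re-parametrizing the domain, or locally modifying $\Phi$ --- must then be shown to preserve both equivariances $Pg=P+\tau(g)$ and $Pf=\lambda P$. Since an atom at $x$ forces atoms along the entire $G$-orbit and the entire $f$-orbit of $x$, a consistent equivariant interpolation is far from automatic, and you do not produce one. The paper avoids the issue entirely by proving $\mu$ has no atoms, and this is the step you are missing. Concretely: if $\tau(G)$ were cyclic, then $\tau\circ F=\lambda\tau$ with $F$ a group automorphism gives $\lambda\,\tau(G)=\tau(G)$, forcing $\lambda=1$ (since $\lambda>0$), which you have already excluded; hence $\tau(G)$ is non-cyclic and therefore dense in $\bbR$. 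Since $\tau(g)=\Phi(g(x_0))\in\Phi(\bbR)$ for every $g\in G$, the set $\Phi(\bbR)$ is dense. A monotone function whose image is dense has no jumps, so $\Phi$ is continuous; combined with the two-sided unboundedness you already established, it is surjective. Then $P:=\Phi$ works, with no interpolation needed.

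A secondary point: you flag as ``the main obstacle'' the claim that $\Gam_0$ is \emph{exactly} the union of singleton fibres and endpoints of interval fibres. The lemma does not assert that equality; it only asserts that each fibre boundary lies in $\Gam_0$, together with the defining formula for $\Gam_0$. You already establish this (modulo the continuity just discussed), so the heavy lifting is the density argument above, not the bookkeeping you anticipated.
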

%A couple comments before the proof.
%First, note that $G$ is automatically torsion free as a subgroup of $\HPG$.
%It is possible for $\tau$ to have a non-trivial kernel.
%See \S \ref{sec-tau} for an example. %TODO: Change to appendix?
%If $\tau$ has non-trivial kernel, one can consider $f$ and $g \in G$
%restricted to interior of $P \inv(0)$ and again apply the lemma.
%This can be repeated a finite number of times, bounded by the nilpotency class
%of $G$.

%cor master-cor:
%    Suppose $\Gam \subset \bbR$, $G < \HPG$, and $f,h \in \HPR$ are such that
%    itemize:
%        \item
%        $\Gam$ is closed and non-empty,
%        \item
%        $G$ is finitely generated and nilpotent,
%        \item
%        $F:G \to G$ defined by $F(g)(x) = f g f \inv(x)$ \\
%        is a group automorphism
%        with no non-trivial fixed points,
%        \item
%        $h$ has no fixed points, and
%        \item
%        $f$ and $h$ commute.
%    Then, $\Fix(G)$ is non-empty.

\begin{proof}
    %[Proof of \eqref{master-lemma}.]
    The conditions on $G$ imply that it has non-exponential growth
    \cite{gromov1981groups}.
    Therefore, we are in the setting of the previous propositions.
    In particular, there are $\mu$, $\tau$, and $\lam$ as above.

    First, suppose that the image $\tau(G)$ is a cyclic subgroup
    of $\bbR$
    in order to derive a contradiction.
    In this case, the condition $\tau F = \lam \tau$ in
    \eqref{prop-lamF} implies that
    $\lam \tau(G) = \tau(G)$
    and therefore $\lam = 1$.
    Then, $F$ maps a coset of $\ker \tau$ to itself.
    As $\HPG$ is torsion free, so is $G$, and by \eqref{prop-nofixcoset},
    $F$ has a non-trivial fixed point, in contradiction to the hypotheses of
    the lemma being proved.
    Therefore, $\tau(G)$ is non-cyclic.

    Consequently, $\tau(G)$ is a dense subgroup of $\bbR$.
    Further $\lam  \ne  1$, as otherwise, one could derive a
    contradiction exactly as above.
    By \eqref{prop-lamF},
    $f$ has at least one fixed point, say $x_0 \in \bbR$.
    Define a function $P:\bbR \to \bbR$ by
    \[
        P(x) = \left\{
        \begin{array}{lr}
        \mu [x_0,x) & \text{if $x > x_0$}, \\
        0 & \text{if $x = x_0$}, \\
        -\mu [x,x_0) & \text{if $x < x_0$}.
        \end{array}
        \right.
    \]
    By definition, $P$ is (non-strictly) increasing.
    The density of $\tau(G)$ implies that $P(\bbR)$ is dense.
    Then, as a monotonic function without jumps, $P$ is continuous
    and therefore surjective.
    For each $t \in \bbR$,
    the pre-image $P \inv(t)$ is either a point or a closed interval,
    In either case, one can verify that $g(P \inv(t)) = P \inv(t)$
    for all $g \in \ker \tau$
    and therefore the boundary of $P \inv(t)$ is in $\Gam_0$.
    The other properties of $P$ listed in the lemma are easily verified.

    The statement for homeomorphisms commuting with $f$ follows
    by adapting the proof of \eqref{prop-lamF}.
\end{proof}
\section{AI-systems} \label{sec-AIsys} %{{{1

We now consider partially hyperbolic systems on non-compact manifolds.
Suppose $M$ is compact and $f:M \to M$ is partially hyperbolic.
Then, any lift of $f$ to a covering space of $M$ is also
considered to be partially
hyperbolic.
Also, any restriction of a partially hyperbolic diffeomorphism to an open
invariant subset is still considered to be partially hyperbolic.

Let $A$ be a hyperbolic automorphism of the 
compact nilmanifold $N$
and $I \subset \bbR$ an open interval.
The \emph{AI-prototype} is defined as
\[
    f_{AI} : N \times I \to N \times I,\quad (v,t) \to (Av, t).
\]
A partially hyperbolic diffeomorphism $f$ on a (non-compact)
manifold $\hM$
is an \emph{AI-system}
if it
has global product structure,
preserves the orientation of its center direction, and
is leaf conjugate to an AI-prototype. %$f_{AI}$.

%Note that the manifold $\hM$ supporting $f$ is homeomorphic to $N \times I$
%but may not be diffeomorphic to $N \times I$ in general.

\begin{thm} \label{thm-mainAI}
    Suppose $f:\hM \to \hM$ is an AI-system
    with no invariant compact $us$-leaves.
    Then, either
    \begin{enumerate}
        \item
        $f$ is accessible,
        \item
        there is an open set $V \subset \hM$ such that
        \[
            \overline{f(V)} \subset V, \quad
            \bigcup_{k \in \bbZ} f^k(V) = \hM, \quad
            \bigcap_{k \in \bbZ} f^k(V) = \varnothing,
        \]
        and the boundary of $V$ is a compact $us$-leaf, or
        \item
        there are no compact $us$-leaves in $\hM$,
        uncountably many non-compact $us$-leaves in $\hM$
        and there is $\lam  \ne  1$ such that $f$ is semiconjugate to
        \[
            N \times \bbR \to N \times \bbR, \quad
            (v,t) \mapsto (Av, \lam t).
        \]  \end{enumerate}  \end{thm}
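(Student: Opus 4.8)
The plan is to reduce the statement about the AI-system $f$ to the one-dimensional dynamics governed by \eqref{master-lemma}. First I would set up the order structure: lift everything to the universal cover $\tilde{\hM}$, where the leaf conjugacy to the AI-prototype and global product structure imply that there is an $f$-invariant center leaf $L \cong \bbR$ (coming from a center leaf of $N \times I$ of the form $\{v\} \times I$, or rather its lift), and that every leaf of the lifted $us$-lamination meets $L$ exactly once. Because $f$ preserves the orientation of $\Ec$, the restriction $f|_L$ is an orientation-preserving homeomorphism of $\bbR$. The deck group here is $\pi_1(N)$, which is torsion-free, finitely generated, and nilpotent; each deck transformation $\alpha$ induces, via the $us$-holonomy along $L$ (the map $x \mapsto$ the unique point of $\Wu(\alpha x) \cap \Ws(\cdot)$ landing on $L$, as in Figure \ref{fig-drawing}), an element $g_\alpha \in \HPG$ where $\Gam \subset L \cong \bbR$ is the closed set of base points of $us$-leaves through $L$. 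Set $G = \{g_\alpha\}$. Invariance of the $us$-lamination under $f$ gives $f G f\inv = G$, and the Franks--Manning analysis (cited after \eqref{prop-nofixcoset}) shows that the induced automorphism $F$ of $G$ has no non-trivial fixed point. A compact $us$-leaf corresponds to a point of $\Gam$ fixed by all of $G$ (a global $\pi_1(N)$-fixed point of holonomy), so the hypothesis of no invariant compact $us$-leaf must be translated carefully — see the obstacle below.

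With this dictionary in place, the trichotomy of the theorem matches the outputs of \eqref{master-lemma}. If $\Fix(G)$ is non-empty, then by the correspondence there is a point of $\Gam$ fixed by all deck holonomies, hence a single $us$-leaf invariant under the whole deck group; this leaf is then a compact $us$-leaf invariant under $f$ or, if merely periodic, one handles it by passing to the iterate — but by hypothesis there are no invariant compact $us$-leaves, so I would argue this forces the $us$-lamination to be a foliation, i.e.\ $\Gam = \bbR$, and then global product structure gives accessibility directly, which is case (1). (Alternatively, if $\Fix(G) \ne \varnothing$ but $\Gam \ne \bbR$, the fixed point of holonomy together with $f$-invariance of $\Fix(G)$ produces a compact $us$-leaf whose $f$-orbit is finite; excluding this by hypothesis again lands us in case (1).) If $\Fix(G) = \varnothing$, apply \eqref{master-lemma} to obtain $\Gam_0 \subset \Gam$, the monotone surjection $P:\bbR \to \bbR$, the homomorphism $\tau:G \to \bbR$, and $0 < \lam \ne 1$ with $Pg = P + \tau(g)$ and $Pf = \lam P$. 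The level sets $P\inv(t)$ are single points of $\Gam_0$ or closed intervals with endpoints in $\Gam_0$. Now $P$ descends: since $P$ intertwines the $G$-action with translations and $\tau$ is a homomorphism, $P$ is equivariant with respect to $\pi_1(N)$ acting by $\tau$ on the target $\bbR$, so it pushes down to a continuous surjection on the quotient, yielding (after also quotienting the $N$-direction) a semiconjugacy from $f$ to $(v,t) \mapsto (Av, \lam t)$ on $N \times \bbR$; this is case (3). The dichotomy within case (3)/(2) is whether $\lam = 1$ is possible — but \eqref{master-lemma} already rules that out once $\Fix(G) = \varnothing$ and $\tau(G)$ is non-cyclic, and the non-cyclic case always holds here because $\lam \ne 1$ forces density. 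The remaining possibility, that there \emph{is} an invariant compact $us$-leaf even though $\Fix(G) = \varnothing$ globally, is exactly case (2): a single leaf $W_0$ with $f(W_0) = W_0$ that is not fixed by all holonomies; its two complementary components in $\hM$ then give the attracting/repelling picture, with $V$ the component on the side $f$ contracts toward, so that $\overline{f(V)} \subset V$, $\bigcup f^k(V) = \hM$, $\bigcap f^k(V) = \varnothing$, and $\partial V = W_0$.

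The main obstacle will be the precise translation between ``compact $us$-leaf'' upstairs and ``$G$-fixed point of $\Gam$'' downstairs, and in particular disentangling cases (2) and (3) from the behaviour of a single possibly-non-globally-fixed $us$-leaf. Concretely: a point $x \in \Gam$ whose $us$-leaf is \emph{compact} need not be fixed by every $g_\alpha$ — rather, its full $\pi_1(N)$-orbit is finite, equivalently the stabilizer has finite index, equivalently (since $\pi_1(N)$ is nilpotent) the orbit is a single point after passing to a finite-index subgroup; I would need to show that a finite $G$-orbit in $\Gam$ is in fact a $G$-fixed point, which uses that a nilpotent group acting on $\bbR$ in an order-preserving way cannot permute a finite set nontrivially (order-preservation forces each element to fix the set pointwise). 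Granting that, an $f$-invariant compact $us$-leaf gives an $f|_L$-invariant $G$-fixed point of $\Gam$; if $\Fix(G) = \varnothing$ this is impossible, so no invariant compact $us$-leaf plus $\Fix(G)=\varnothing$ is consistent — one must instead have that the compact $us$-leaves (if any) form a single $f$-orbit that is \emph{not} invariant, which after the standard bookkeeping is case (2), while their total absence together with the $P$ from \eqref{master-lemma} collapsing intervals to points (forced by there being no compact $us$-leaves, since a nondegenerate level set $[a,b]$ with $a,b \in \Gam_0$ but $a \ne b$ would itself bound a compact $us$-leaf) gives uncountably many non-compact $us$-leaves and the clean semiconjugacy of case (3). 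Assembling these pieces — especially verifying the $V$ in case (2) has the three stated properties and that the count of $us$-leaves in case (3) is uncountable (which follows from $\tau(G)$ being a dense non-cyclic subgroup, so $P\inv$ of a dense set of values are distinct leaves) — is routine once the leaf/fixed-point dictionary is nailed down, so that dictionary is where I would concentrate the real work.
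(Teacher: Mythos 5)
Your setup is exactly the paper's: lift to the universal cover, fix the $f$-invariant center line $L$, let $\Lam \subset L$ be the points whose accessibility class is non-open, and build the nilpotent group $G$ of holonomy maps $g_\alpha$ together with the conjugating automorphism $F$ coming from $f$. Your dictionary ``$AC(t)$ compact downstairs $\Leftrightarrow$ $t \in \Fix(G)$'' is also the paper's (its Lemma \eqref{lemma-KfixAI}), and your Case (3) analysis when $\Fix(G) = \varnothing$ via \eqref{master-lemma} (building $P$ and descending to the semiconjugacy to $(v,t) \mapsto (Av, \lam t)$, with uncountability from $P(\Lam) = \bbR$) matches the paper's proof of \eqref{lemma-mainAIthree}.

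The genuine gap is in Case (2). You declare $V$ to be a side of ``a single leaf $W_0$ with $f(W_0) = W_0$,'' so that $\partial V = W_0$. But $f(\partial V) = \partial V$ together with $\overline{f(V)} \subset V$ is impossible for open $V$; and more to the point, an $f$-invariant compact $us$-leaf is exactly what the theorem's hypothesis excludes. You also contradict yourself: in one clause you say this leaf ``is not fixed by all holonomies,'' but a compact $us$-leaf \emph{is} fixed by all of $G$ by your own dictionary. The correct regime for Case (2) is $\Fix(G) \ne \varnothing$ (compact $us$-leaves exist) and $\Fix(f|_L) \cap \Fix(G) = \varnothing$ (none is $f$-invariant). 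The paper's key observation there is that $f|_L$ must be \emph{fixed-point free}: if $f(t) = t$ with $t \notin \Fix(G)$, the connected component $J$ of $L \setminus \Fix(G)$ containing $t$ is $f$-invariant, so its endpoints lie in $\Fix(f) \cap \Fix(G)$, a contradiction. Once $f|_L$ has no fixed points, one picks an arbitrary $t_0 \in \Fix(G)$, sets $L^+ = \{t > t_0\}$, and takes $V$ to be the projection of $AC(L^+)$; the boundary $\partial V$ is the compact $us$-leaf through $t_0$, which moves under $f$ precisely because $f|_L$ is fixed-point free. That intermediate step — deriving fixed-point freeness on $L$ and building $V$ from a half-line rather than from a fixed boundary — is the piece your proposal is missing, and it is where the exclusion of invariant compact $us$-leaves is actually used. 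A related slip earlier: you assert that $\Fix(G) \ne \varnothing$ plus no invariant compact leaf ``forces $\Gam = \bbR$, hence accessibility''; that does not follow and is not how Case (1) is reached (accessibility is assumed at the outset to dispose of Case (1), and the remainder of the proof runs under the standing assumption that $f$ is non-accessible).
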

%This theorem is a major component of the proof of \eqref{thm-ratAB}.

\begin{notation}
    For a point $x$ on a manifold supporting a partially hyperbolic system,
    let $\Ws(x)$ be the stable manifold through $x$,
    and $\Wu(x)$ the unstable manifold.
    Then $AC(x)$, the accessibility class of $x$,
    is the smallest set containing $x$ which satisfies
    \[    
        \Ws(y) \cup \Wu(y) \subset AC(x)
    \]
    for all $y \in AC(x)$.
    For an arbitrary subset $X$ of the manifold, define
    \[
        \Ws(X) = \bigcup_{x \in X} \Ws(x), \quad
        \Wu(X) = \bigcup_{x \in X} \Wu(x), \qandq
        AC(X) = \bigcup_{x \in X} AC(x).
    \]
    Note that $AC(X)$ may or may not be a single accessibility class.
\end{notation}

\begin{prop}
    [\cite{RHRHU-accessibility}] \label{prop-nonacc}
    Suppose $f$ is a partially hyperbolic system
    with one-dimen\-sional center
    on a (not necessarily compact) manifold $M$.
    For $x \in M$, the following are equivalent{:}
    \begin{itemize}
        \item
        $AC(x)$ is not open.
        \item
        $AC(x)$ has empty interior.
        \item
        $AC(x)$ is a complete $C^1$ codimension one submanifold.  \end{itemize}
    If $L$ is a curve through $x$ tangent to the center direction, then the
    following are also equivalent to the above:
    \begin{itemize}
        \item
        $AC(x) \cap L$ is not open in $L$.
        \item
        $AC(x) \cap L$ has empty interior in $L$.  \end{itemize}
    If $f$ is non-accessible, the set of non-open accessibility classes form a
    lamination.
\end{prop}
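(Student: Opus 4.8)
The plan is to dispatch the two chains of equivalences first --- these are soft --- and then to isolate the one substantive implication, that a non-open accessibility class is a complete $C^1$ submanifold, which carries all the real work. The elementary fact behind the soft part is that the $\Ws$- or $\Wu$-saturation of an open set is again open, since one slides along plaques. So if $AC(x)$ contains a ball $B$, then $\Wu(\Ws(B))$ is open and contained in $AC(x)$; iterating the $s$- and $u$-saturations and taking the union of the resulting increasing family of open sets gives an open, $su$-saturated set $O$ with $B \subset O \subset AC(x)$, and any $su$-saturated set containing $B$ contains $AC(B) = AC(x)$, so $O = AC(x)$. Hence $AC(x)$ has non-empty interior if and only if it is open, and since a codimension-one $C^1$ submanifold has empty interior, the third bullet trivially implies the first. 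For a curve $L$ tangent to the center line field through $x$ (unique, $\Ec$ being a line field), if $AC(x)\cap L$ contains a non-degenerate subarc $J$ then $J$ is transverse to $\Ws$, so $\Ws(J)$ is an immersed submanifold, which is in turn transverse to $\Wu$, so $\Wu(\Ws(J)) \subset AC(x)$ has non-empty interior; thus $AC(x)$ is open and $AC(x)\cap L$ is open in $L$. The reverse implications for $L$ are trivial. This leaves only ``$AC(x)$ not open $\Rightarrow$ $C^1$ submanifold''.

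For that implication I would argue locally around an arbitrary $y \in AC(x)$ and then patch the pieces together. In a small neighborhood of $y$ where $\Ws$ and $\Wu$ restrict to $C^1$-plaque foliations, saturating the local stable plaque of $y$ by unstable plaques, and separately the local unstable plaque of $y$ by stable plaques, produces two codimension-one topological discs through $y$, both contained in $AC(x)$ and tangent at $y$ to $\Eu(y)\oplus\Es(y)$. The crucial dichotomy is: \emph{either these two discs agree as germs at $y$, or $AC(x)$ is open}. Indeed, if they disagree one iterates the alternating $s$- and $u$-saturations, and since each step displaces a definite amount in the center direction, after finitely many steps one traps a subarc of the center curve through $y$ inside $AC(x)$, forcing $AC(x)$ open by the previous paragraph. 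This quantitative statement --- failure of local joint integrability at $y$ implies openness of the class --- is the main obstacle; it is the technical core of the argument in \cite{RHRHU-accessibility}. Granting it, when $AC(x)$ is not open the common germ $S_y$ is a topological disc simultaneously foliated by $C^1$ stable plaques and by $C^1$ unstable plaques, and a regularity argument (in the spirit of Journ\'e's lemma, $\Eu\oplus\Es$ being the candidate continuous tangent distribution) promotes $S_y$ to a $C^1$ disc tangent to $\Eu\oplus\Es$. Finally $AC(x) = \bigcup_{y\in AC(x)} S_y$, consecutive discs overlap in open subsets, and distinct accessibility classes are disjoint, so the $C^1$ charts assemble into a single injectively immersed $C^1$ submanifold, complete because it contains every $us$-path through its points. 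Conversely such a submanifold has empty interior, hence is not open, which closes the three-way equivalence.

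For the concluding assertion, when $f$ is non-accessible the non-open locus $\Lambda := \{x : AC(x) \text{ not open}\}$ is closed, its complement being the union of the open accessibility classes. Over $\Lambda$ the partition into accessibility classes is the claimed lamination: near $x \in \Lambda$, in coordinates adapted to $\Eu(x)\oplus\Es(x)$, each nearby class $AC(y)$ with $y\in\Lambda$ is locally the disc $S_y$, a $C^1$ graph over the $\Eu(x)\oplus\Es(x)$-plane, and $S_y$ depends continuously on $y$ because $\Ws$, $\Wu$ and their local saturations do; with disjointness of distinct classes this exhibits $\Lambda$ as a disjoint union of $C^1$ plaques with continuous plaque maps. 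The only points needing care beyond bookkeeping are continuity of the plaques --- which reduces to continuity of the invariant foliations --- and, once more, the dichotomy, which is what guarantees that the local class at $y$ equals $S_y$.
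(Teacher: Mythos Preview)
The paper does not give its own proof of this proposition: it is stated with the attribution \cite{RHRHU-accessibility} and no proof follows, so there is nothing in the present paper to compare your sketch against. Your outline is a reasonable summary of the argument in that reference --- the soft equivalences, the central dichotomy (local $su$- and $us$-saturations either coincide or the class is open), and the $C^1$ regularity plus lamination structure --- and you correctly flag the dichotomy as the substantive step that you are quoting rather than proving.
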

%As they form a lamination, we also refer to non-open
%accessibility classes as \emph{$us$-leaves}.

\begin{assumption}
    For the remainder of the section, assume $f:\hM \to \hM$ is a non-accessible
    AI-system.
\end{assumption}
All of the analysis of this section will be on the universal cover.
Let $\tM$ and $\tN$ be the universal covers of $M$ and $N$.
Then, $f$ and the leaf conjugacy $h$ lift to functions $f:\tM \to \tM$, and $h:\tM
\to \tN \times I$ still denoted by the same letters.
Every lifted center leaf of the lifted $f$ is of the form $h \inv(v \times I)$
for some $v \in \tN$.
In general, the choice of the lifts of $f$ and $h$ are not unique.
They may be chosen, however, so that $h f h \inv(v \times I) = Av \times I$
where
$A:\tN \to \tN$ is a hyperbolic Lie group automorphism.
As $A$ fixes the identity element of the Lie group,
there is a center leaf mapped to itself by $f$.  Let $L$ denote this leaf.
As $L$ is homeomorphic to $\bbR$, assume there is an ordering
on the points of $L$
and define
open intervals $(a,b) \subset L$ for $a,b \in L$ and suprema $\sup X$ for
subsets $X \subset L$ exactly as for $\bbR$.

Define a closed subset
\[
    \Lam = \{ t \in L : AC(t) \text{ is not open} \}.
\]
\begin{lemma}
    $\Lam$ is non-empty.
\end{lemma}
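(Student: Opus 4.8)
The plan is to exhibit a point of $L$ whose accessibility class fails to be open, by producing such a class from the non-accessibility hypothesis and then intersecting it with $L$ via global product structure.

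First I would pass the non-accessibility hypothesis to the universal cover. Any $us$-path in $\tM$ projects to a $us$-path in $\hM$, so accessibility of the lifted system $f:\tM\to\tM$ would force accessibility of $f$ on $\hM$; hence $f:\tM\to\tM$ is non-accessible. It is partially hyperbolic, and, since global product structure is by definition a statement about the lifted foliations on $\tM$, the four intersection properties (1)--(4) hold for it on $\tM$. Applying \eqref{prop-nonacc} on $\tM$, the non-open accessibility classes form a non-empty lamination; fix one of them, $W$. Thus $W$ is a $us$-leaf on $\tM$, it equals $AC(z)$ for every $z\in W$, it is not open, and it is saturated by the stable and unstable leaves through each of its points.

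Next I would show $W\cap L\ne\varnothing$, which is the only substantive step; it is exactly the consequence of global product structure flagged in Section~\ref{sec-outline}, and I would make it explicit as follows. Pick $z\in W$ and $w\in L$. By property (2) of global product structure, $\Ws(z)$ and $\Wcu(w)$ meet in a single point $z_1$, and $z_1\in W$ because $W$ is stable-saturated. Dynamical coherence (part of global product structure) makes $\Wcu$ a foliation, so $\Wcu(z_1)=\Wcu(w)\ni w$; applying property (4) with the two points interchanged, $\Wc(w)$ and $\Wu(z_1)$ meet in a single point $t$. Since $t\in\Wu(z_1)\subset W$ and $t\in\Wc(w)=L$, we get $t\in W\cap L$. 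For this $t$ we have $AC(t)=W$, which is not open, so $t\in\Lam$ and $\Lam\ne\varnothing$.

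The main obstacle is one of care rather than depth: one must be certain that the points $z_1$ and $t$ constructed above genuinely lie in $W$, which relies on the fact (from \eqref{prop-nonacc} together with the definition of an accessibility class) that a non-open accessibility class is both stable- and unstable-saturated, and that the center-unstable distribution really integrates to a foliation so that $\Wcu(z_1)=\Wcu(w)$. Everything else — the lifting of non-accessibility and the identification of a leaf of the lamination with an accessibility class — is routine.
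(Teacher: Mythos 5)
Your proof is correct and follows essentially the same route as the paper: lift non-accessibility to the universal cover, obtain a non-open accessibility class, and conclude via global product structure that it meets $L$. You spell out the intersection step in more detail than the paper (which simply asserts ``by global product structure, this class intersects $L$''), and you are slightly terser on the existence of a non-open class --- the paper makes explicit that connectivity of $\tM$ plus openness of all classes would force accessibility, whereas you fold that into an appeal to \eqref{prop-nonacc}; that appeal is harmless since non-accessibility plus connectivity does yield a non-open class, but it is worth stating.
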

\begin{proof}
    As $\tM$ is connected, if all accessibility classes were open,
    $f$ would be accessible (both on $\tM$ and $\hM$).
    Therefore, there is at least one non-open accessibility class.
    By global product structure, this class intersects $L$.
\end{proof}
\begin{lemma} \label{lemma-ussu}
    If $t \in \Lam$, then
    \begin{math}
        AC(t) = \Ws \Wu(t) = \Wu \Ws(t).
    \end{math}  \end{lemma}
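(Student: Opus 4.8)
The plan is to show the two set inclusions needed for each equality, exploiting the fact that when $t \in \Lam$ the accessibility class $AC(t)$ is not a single open set but, by \eqref{prop-nonacc}, a complete $C^1$ codimension-one submanifold — in particular a $us$-leaf. The inclusions $\Ws\Wu(t) \subset AC(t)$ and $\Wu\Ws(t) \subset AC(t)$ are immediate from the definition of $AC$: an accessibility class is closed under taking stable and unstable manifolds of its points, and $\Wu(t) \subset AC(t)$, so $\Ws\Wu(t) \subset AC(t)$, and symmetrically. The content is the reverse inclusion, i.e. that one already recovers all of $AC(t)$ after just two legs of $su$- (or $us$-) paths.

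First I would fix attention on the set $S := \Ws\Wu(t)$. Since $AC(t)$ is a $C^1$ codimension-one submanifold tangent to $\Eu \oplus \Es$, the leaves $\Wu(t)$ and, for each $y \in \Wu(t)$, the leaf $\Ws(y)$ all lie inside this submanifold and are tangent to it; so $S$ is a union of stable leaves inside $AC(t)$ that sweeps out an open neighbourhood of $\Wu(t)$ within $AC(t)$. The key step is to promote this local statement to a global one: I claim $S$ is both open and closed in $AC(t)$ (with its intrinsic submanifold topology), hence all of $AC(t)$ by connectedness. Openness: given $z \in S$, say $z \in \Ws(y)$ with $y \in \Wu(t)$, a small neighbourhood of $z$ in $AC(t)$ is a union of short stable and unstable plaques; the unstable plaque through $z$ is part of $\Wu(z) \subset AC(t)$, and since $z \in \Ws(y)$ we have $\Wu(z)$ and $\Wu(y)$... — more carefully, one uses that $\Wu(z)$ meets $\Ws(y') $ for $y'$ near $y$ on $\Wu(t)$; here global product structure (Theorem \ref{thm-ABGPS}), lifted appropriately, guarantees these intersections are unique and vary continuously, so a full neighbourhood of $z$ in $AC(t)$ lies in $\Ws\Wu(t)$. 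Closedness follows the same way: if $z_n \to z$ with $z_n \in S$, then $z \in AC(t)$, a neighbourhood of $z$ in $AC(t)$ is foliated by stable and unstable plaques, and using global product structure the stable plaque through some nearby $z_n$ extends to meet $\Wu(t)$, forcing $z \in \Ws\Wu(t)$.

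The main obstacle is making the local-to-global bootstrap rigorous: one must know that within the submanifold $AC(t)$ the two families $\{\Ws(\cdot)\}$ and $\{\Wu(\cdot)\}$ form genuine transverse (topological) foliations whose holonomies are controlled, and that global product structure — which is stated for the foliations $\Wu, \Ws, \Wcs, \Wcu$ on the universal cover $\tM$ — can be read off on the single leaf $AC(t)$ to guarantee that a stable leaf started anywhere in $AC(t)$ actually reaches $\Wu(t)$ rather than escaping to the boundary. Since $AC(t)$ lifts to $\tM$ and is there a closed codimension-one submanifold transverse to the center foliation, global product structure says $\Ws(x)$ meets $\Wcu(\text{any point})$ exactly once; intersecting with $AC(t)$ and using that $\Wu(t)$ is the trace of $\Wcu(t)$ on $AC(t)$ should give precisely the needed reachability. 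Once $\Ws\Wu(t) = AC(t)$ is established, the equality $\Wu\Ws(t) = AC(t)$ is proved by the identical argument with the roles of $\Es$ and $\Eu$ exchanged, completing the proof.
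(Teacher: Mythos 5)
Your open-closed strategy inside $AC(t)$ is a genuinely different route from the paper's. The paper instead constructs, for every $v\in\tN$, a four-leg bracket point $t_v\in L$ (through $x_v\in\Ws(t)$, then $y_v\in\Wu(x_v)$ in the center leaf over $v$, then $z_v\in\Ws(y_v)$, then $t_v\in\Wu(z_v)\cap L$), notes that $v\mapsto t_v$ is continuous by global product structure, and concludes that $\{t_v:v\in\tN\}$ is a connected subset of $L\cap AC(t)$; since this set has empty interior by \eqref{prop-nonacc}, it is the single point $t$. That one observation simultaneously shows $\Ws\Wu(t)$ and $\Wu\Ws(t)$ meet every center leaf in the same point $y_v$, so the two sets coincide, and being $s$- and $u$-saturated they contain $AC(t)$. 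The whole argument exploits the connectedness of $\tN$ and the empty-interior property at once, rather than a local-to-global bootstrap.

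The difficulty in your version is exactly where you flag it, and the proposal does not actually get past it. For openness you need the stable leaf of a nearby point $w^u\in\Wu(z)$ to meet $\Wu(t)$, and for closedness you need the limit of $y_n=\Ws(z_n)\cap\Wcu(t)$ to lie in $\Wu(t)$; global product structure only hands you a unique intersection with $\Wcu(t)$, not with $\Wu(t)$. To bridge this you invoke ``$\Wu(t)$ is the trace of $\Wcu(t)$ on $AC(t)$,'' but that claim is never proved, and given global product structure it is logically equivalent to the lemma itself: if $AC(t)=\Ws\Wu(t)$, then any $q\in\Wcu(t)\cap AC(t)$ satisfies $q\in\Wu(t)$ by uniqueness of $\Ws(q)\cap\Wcu(t)$; conversely the trace claim plus projecting an arbitrary $x\in AC(t)$ along $\Ws(x)$ to $\Wcu(t)$ gives $AC(t)\subset\Ws\Wu(t)$. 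So invoking it here is circular. The bootstrap can be salvaged, but it requires two facts you have not supplied: near $y\in\Wu(t)$ the transverse intersection $\Wcu(t)\cap AC(t)$ is locally a single $u$-plaque (transversality together with unique integrability of $\Eu$), and $\Wu(t)$ is properly embedded, hence closed, in $\Wcu(t)$ (from the fourth property of global product structure). Without those the openness and closedness claims are unproven, and the proposal does not constitute a proof.
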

This is an adaptation to the case of global product structure of local
arguments used in the proof of \eqref{prop-nonacc}.

\begin{proof}
    Each center leaf in $\tM$ is of the form $h \inv(v \times I)$ for some
    $v \in \tN$.
    By global product structure, for each $v \in \tN$, there exist
    unique points $x_v,y_v,z_v,t_v \in \tM$
    \begin{figure}[t]
    {\centering
    \includegraphics{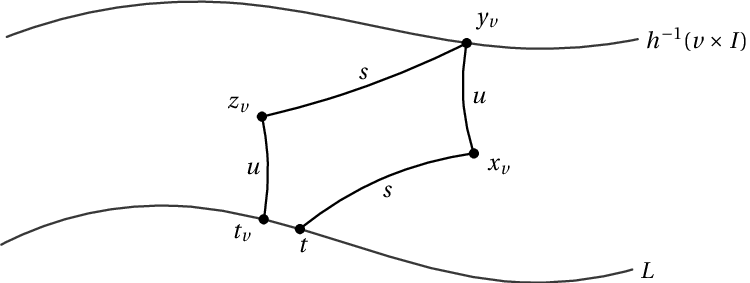}
    }
    \caption{A ``bracket'' of points defined by global product structure.
    The proof of \eqref{lemma-ussu} shows that if $t \in \Lam$, then $t_v = t$.
    }
    \label{fig-bracket}
    \end{figure}
    such that
    \[    
        x_v \in \Ws(t), \quad
        y_v \in \Wu(x_v) \cap h \inv(v \times I), \quad
        z_v \in \Ws(y_v), \quad
        t_v \in \Wu(z_v) \cap L.
    \]
    See Figure \ref{fig-bracket}.
    These points depend continuously on $v$.
    As $\tN$ is connected, the set 
    \[
        \{t_v : v \in \tN\} \subset L \cap AC(t)
    \]
    is connected and, by \eqref{prop-nonacc}, has empty interior as a subset
    of $L$.
    Therefore, it consists of the single point $t$.
    This shows that both
    $W^s W^u(t)$ and $W^u W^s(t)$
    intersect each center leaf $h \inv(v \times I)$ in the same unique
    point $y_v$
    and so the two sets are identical.
    This set is both $s$-saturated and $u$-saturated and so contains
    $AC(t)$.
\end{proof}
By global product structure,
for any $x \in \tM$, there is a unique point $R(x) \in L$
such that $\Wu(x)$ intersects $\Ws(R(x))$.
This defines a retraction, $R: \tM \to L$.
By the previous lemma,
if $t \in \Lam$,
then $R \inv(t) = AC(t)$.

Let $\alpha:\tM \to \tM$ be a deck transformation of the covering $\tM \to \hM$.
Then,
as depicted in Figure \ref{fig-drawing},
$\alpha$ defines a map $g_\alpha \in \Homeo^+(\Lam)$ given by the restriction
of $R \circ \alpha$ to $\Lam$.
%See Figure \ref{fig-drawing} above.
Define
\[    G = \{ g_\alpha: \alpha \in \pi_1(\tM) \}.
\]
\begin{lemma} \label{lemma-Ggood}
    $G$ is a finitely generated, nilpotent subgroup of\, $\Homeo^+(\Lam)$.
\end{lemma}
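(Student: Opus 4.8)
The plan is to realize $G$ as a homomorphic image of the deck group $\Gam$ of the universal cover $\tM \to \hM$. The leaf conjugacy gives a homeomorphism $\hM \cong N \times I$ with $I$ contractible, so $\Gam \cong \pi_1(\hM) \cong \pi_1(N)$, which is finitely generated, torsion-free, and nilpotent since $N$ is a compact nilmanifold. As finite generation and nilpotency pass to quotients, it suffices to prove that $\alpha \mapsto g_\alpha$ is a group homomorphism of $\Gam$ with values in $\HPL$ and image $G$.

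The key input is the behaviour of the retraction $R\colon\tM\to L$ on non-open accessibility classes. A deck transformation covers the identity of $\hM$, hence permutes the leaves of the lifted $us$-lamination and sends non-open accessibility classes to non-open ones. I would first observe that $R$ is constant on each non-open accessibility class $W$, equal to the unique point $t_W$ of $W\cap L$: this follows from \eqref{lemma-ussu} (each such $W$ meets $L$ once, using global product structure, \eqref{thm-ABGPS}) together with $R(x)\in\Ws\Wu(x)\subset AC(x)=W$ for $x\in W$, so that $W=AC(t_W)$ with $t_W\in\Lam$. Combining this with $\alpha(AC(t))=AC(\alpha(t))$ gives $\alpha(AC(t))=AC(g_\alpha(t))$ for $t\in\Lam$, and then the homomorphism identity $g_{\alpha\beta}=g_\alpha g_\beta$ on $\Lam$ drops out by applying this twice and using that $R$ is constant along the relevant non-open class. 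Since $g_{\id}=\id_\Lam$ (as $R$ is the identity on $L$) and $g_\alpha g_{\alpha\inv}=\id_\Lam$, each $g_\alpha$ is a bijection of $\Lam$, and it is a homeomorphism because $R$ and the deck transformations are continuous. Hence $G$ is the image of a homomorphism from $\Gam$, so it is finitely generated and nilpotent.

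What remains, and is the one step needing real care, is to check that $g_\alpha\in\HPL$, i.e.\ that $g_\alpha$ is order-preserving on $\Lam$ rather than merely a homeomorphism of it. For this I would use that an AI-system carries an orientation of the one-dimensional bundle $\Ec$; this orientation lifts to $\tM$, is preserved by every deck transformation, and orients every center leaf. Since the $us$-lamination is transverse to $\Ec$, the restriction of $R$ to a center leaf should be a monotone non-decreasing surjection onto $L$ for the induced orientations (by the local product structure of the lamination, globalized along the leaf), and $\alpha|_L$ is an orientation-preserving homeomorphism onto the center leaf $\alpha(L)$; composing, $g_\alpha$ is the restriction to $\Lam$ of a non-decreasing surjection $L\to L$, hence strictly increasing on $\Lam$. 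The delicate point is the monotonicity of $R$ along center leaves — equivalently, that no deck transformation reverses the transverse orientation of the $us$-lamination; this is implicit in the $C^0$ lamination of \cite{RHRHU-accessibility} but is where the argument must be made carefully. With that in hand, $G<\HPL$ and the lemma follows.
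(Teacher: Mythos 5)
Your proof is correct and follows essentially the same route as the paper's: derive $\alpha(AC(t)) = AC(g_\alpha(t))$, use it to see that $\alpha \mapsto g_\alpha$ is a homomorphism, and then cite that $\pi_1(\hM)\cong\pi_1(N)$ is finitely generated and nilpotent because $N$ is a compact nilmanifold. The one place where you go beyond the paper is in spelling out why $g_\alpha$ is actually in $\Homeo^+(\Lam)$ (i.e.\ order-preserving) rather than merely a homeomorphism of $\Lam$ — the paper takes this for granted. Your sketch there is sound: an AI-system preserves an orientation of $\Ec$ by definition, this orientation lifts $\pi_1$-invariantly to $\tM$, and $R$ restricted to any center leaf is a monotone (indeed, by global product structure, a) homeomorphism onto $L$; continuity of the $us$-lamination forces it to be increasing for the lifted orientation. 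This is a genuine detail worth including, and your instinct that it is the only step requiring real care is right.
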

\begin{proof}
    For $\alpha \in \pi_1(\tM)$ and $t \in \Lam$,
    $g_\alpha(t)$ is given by the
    unique intersection of $\alpha(AC(t))$ and $L$.
    Then,
    \[    
        AC(g_\alpha(g_\beta(t))) =
        \alpha(AC(g_\beta(t))) = 
        \alpha \beta(AC(t)) =
        AC(g_{\alpha \beta}(t))
    \]
    shows that $\pi_1(\hM) \to \Homeo^+(\Lam),\ \alpha \mapsto g_\alpha$
    is a group homomorphism.
    As $\hM$ is homotopy equivalent to the nilmanifold $N$,
    its fundamental group is finitely generated and nilpotent.
\end{proof}
It is necessary to define $G$ with elements in $\Homeo^+(\Lam)$
as, in general, the same
construction on $L$ will define a subset of $\Homeo^+(L)$
but not a subgroup.

\begin{lemma} \label{lemma-KfixAI}
    For a point $t \in \Lam$,
    $AC(t) \subset \tM$
    projects to a compact $us$-leaf in $\hM$
    if and only if
    $t \in \Fix(G)$.
\end{lemma}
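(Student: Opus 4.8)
The plan is to work on the universal cover $\pi:\tM\to\hM$, as in the rest of the section, and to relate $\Fix(G)$ to the stabiliser of the leaf $AC(t)$ under the deck group. For $t\in\Lam$, set
\[
    \Gam_t := \{\,\alpha : \alpha(AC(t)) = AC(t)\,\},
\]
a subgroup of the deck group $\pi_1(\hM)$. I would prove the lemma in three steps: (i) a dictionary identifying $\Fix(G)$ with the set of $t$ for which $\Gam_t = \pi_1(\hM)$; (ii) the forward implication, that compactness of $\pi(AC(t))$ forces $\Gam_t$ to have finite index and hence to be everything; (iii) the converse.

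For (i), each deck transformation $\alpha$ preserves the bundles $\Eu,\Es,\Ec$ (by uniqueness of the partially hyperbolic splitting), so it permutes stable and unstable manifolds and $\alpha(AC(t)) = AC(\alpha(t))$ is again a non-open accessibility class. As noted in the proof of Lemma \ref{lemma-Ggood}, $\alpha(AC(t))$ meets $L$ in the single point $g_\alpha(t)$, which lies in $\Lam$. Since two accessibility classes that share a point coincide, $g_\alpha(t) = g_\beta(t)$ iff $\alpha\Gam_t = \beta\Gam_t$; in particular the $G$-orbit of $t$ is in bijection with $\pi_1(\hM)/\Gam_t$, and $g_\alpha(t) = t$ iff $\alpha\in\Gam_t$. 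Hence $t\in\Fix(G)$ iff $\Gam_t = \pi_1(\hM)$.

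For (ii), suppose $\pi(AC(t))$ is compact. By Lemma \ref{lemma-ussu}, $AC(t) = R\inv(t)$, which is closed in $\tM$ since $R$ is continuous; together with Proposition \ref{prop-nonacc} (which makes $AC(t)$ a complete $C^1$ submanifold tangent to $\Eu\oplus\Es$) and standard covering-space theory this shows that $\pi$ restricts to a covering map $AC(t)\to\pi(AC(t))$ with deck group $\Gam_t$. By Lemma \ref{lemma-ussu} the leaf $AC(t)$ is also a global section of the lifted center foliation, hence homeomorphic to $\tN$, and under this identification $\Gam_t$ acts as a subgroup of the standard action of $\pi_1(N)$ on $\tN$. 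Thus $\Gam_t$ is a discrete group acting freely and cocompactly on the simply connected nilpotent Lie group $\tN$, i.e.\ a cocompact lattice; as $\pi_1(\hM)\cong\pi_1(N)$ is also a cocompact lattice in $\tN$ and $\Gam_t\le\pi_1(\hM)$, the index $[\pi_1(\hM):\Gam_t]$ is finite. Therefore the $G$-orbit of $t$ is finite, so for every $\alpha$ some iterate satisfies $g_\alpha^{\,m}(t) = t$. But $g_\alpha$ is an orientation-preserving homeomorphism of a subset of $\bbR$, hence has no periodic orbit of period larger than one; so $g_\alpha(t)=t$ for all $\alpha$, and $t\in\Fix(G)$ by step (i).

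For (iii), if $t\in\Fix(G)$ then $\Gam_t=\pi_1(\hM)$ by step (i), so $AC(t)$ is invariant under the whole deck group and $\pi$ restricts to the quotient map $AC(t)\to AC(t)/\pi_1(\hM) = \pi(AC(t))$. Under the homeomorphism $AC(t)\cong\tN$ the free deck action corresponds to the standard action of $\pi_1(N)$, so $\pi(AC(t))\cong\tN/\pi_1(N) = N$ is compact; and $\pi(AC(t))$, being the image of the $C^1$ submanifold $AC(t)$ tangent to $\Eu\oplus\Es$ under the local diffeomorphism $\pi$, is a $us$-leaf. This completes the proof. The step I expect to require the most care is (ii): compactness of $\pi(AC(t))$ by itself yields only \emph{finite} index of $\Gam_t$, and it is the one-dimensionality of $L$ — concretely, the fact that finite orbits of an orientation-preserving homeomorphism of the line are fixed points — that upgrades this to $\Gam_t=\pi_1(\hM)$. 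I would also take care over the two soft points used above: that deck transformations carry accessibility classes to accessibility classes, and that $AC(t)$ for $t\in\Lam$ is closed in $\tM$ and homeomorphic to $\tN$ compatibly with the two $\pi_1$-actions.
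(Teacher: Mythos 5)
Your proof is correct, and the direction from $t \in \Fix(G)$ to compactness in step (iii) matches the paper's argument (using the equivariant section $\sigma:\tN\to AC(t)$ to identify the quotient with $N$). The other direction is where you genuinely diverge. The paper argues directly on the line: $\tilde X\cap L$ equals the orbit $Gt$, which is compact since $\hM$'s center leaves are properly embedded; taking $s=\sup Gt$, compactness puts $s$ in $Gt$, invariance of $Gt$ forces $g(s)=s$ for all $g\in G$, and then $\{s\}=Gs=Gt$ gives $t=s\in\Fix(G)$ in a few lines. You instead package $\Fix(G)$ as the condition $\Gam_t=\pi_1(\hM)$ on the stabiliser, show $\Gam_t$ is a cocompact lattice in $\tN$ because $AC(t)/\Gam_t\cong\pi(AC(t))$ is compact, deduce finite index in the cocompact lattice $\pi_1(\hM)$, and then upgrade finite $G$-orbit to a fixed point via the fact that an order-preserving homeomorphism of a subset of $\bbR$ has no periodic points of period $>1$. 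Both routes are valid; yours is more structural and makes the stabiliser subgroup explicit, at the cost of invoking the covering-space and lattice-index machinery, while the paper's supremum argument is shorter and stays within one-dimensional order dynamics. One small point worth making explicit if you wrote this up: your finite-orbit argument uses that $\alpha\mapsto g_\alpha$ is a group homomorphism (so $g_\alpha^m=g_{\alpha^m}$ and $g_\alpha$ permutes the finite set $Gt$); that fact is Lemma \ref{lemma-Ggood} and should be cited at that step.
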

\begin{proof}
    Consider $t \in \Lam$ and let $\hat X \subset \hM$ be the image of $AC(t)$
    by the covering $\tM \to \hM$.
    First, suppose $t \in \Fix(G)$.
    By global product structure, there is a unique map $\sigma:\tN \to AC(t)$
    such that $h \sigma(v) \in v \times I$ for every $v \in \tN$.
    For any deck transformation $\alpha \in \pi_1(\hM)$,
    \[
        \alpha(AC(t))=AC(g_\alpha(t))=AC(t)
    \]
    which implies that $\alpha \sigma = \sigma \alpha_N$
    where $\alpha_N$ is the corresponding deck transformation for the
    covering $\tN \to N$.
    It follows that
    $\sigma$ quotients to a homeomorphism from the compact nilmanifold
    $N$ to $\hat X$
    and therefore $\hat X$ is compact.

    To prove the converse, suppose $\hat X$ is compact.
    From the definition of an AI-system, one can see that
    every center leaf on $\hM$ is properly embedded.
    Therefore, $\hat X$ intersects each center leaf in a compact set.
    If $\tilde X$ is the pre-image of $\hat X$ by
    covering $\tM \to \hM$,
    then $\tilde X$ intersects each center leaf on $\tM$ in a compact set.
    In particular, $\tilde X \cap L$ is compact.
    Note that $\tilde X \cap L$ is exactly equal to the orbit
    $G t = \{ g(t) : g \in G\}$.
    Define $s = \sup G t$.
    Then, $s \in G t$ by compactness and
    $g(G t) = G t$ implies $g(s)=s$ for each
    $g \in G$.  This shows that $\{s\} = G s = G t$ and therefore
    $t = s \in \Fix(G)$.
\end{proof}
\begin{lemma} \label{lemma-AIinAI}
    Suppose $J \subset L$ is an open interval such
    that $\partial J \subset \Fix(f) \cap \Fix(G)$.
    Let $X$ be the image of $AC(J)$ by the covering $\tM \to \hM$.
    Then,
    $f|_X$ is an AI-system.
\end{lemma}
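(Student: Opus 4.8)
The plan is to work on the universal cover $\tM$, describe $AC(J)$ explicitly as the region of $\tM$ trapped between two invariant $us$-leaves, and then straighten that region to a product. Write $\partial J=\{a,b\}$ with $a,b\in\Fix(f)\cap\Fix(G)$ (if $\partial J$ has fewer than two points the argument only simplifies), and recall $\Fix(G)\subset\Lam$ since $G$ acts on $\Lam$, so that by \eqref{lemma-KfixAI} the classes $AC(a),AC(b)$ project to compact $us$-leaves. Each deck transformation $\alpha\in\pi_1(\hM)$ preserves the foliations $\Ws$ and $\Wu$, hence $\alpha(AC(x))=AC(\alpha(x))$ for all $x$; since $a\in\Fix(G)$ says $\alpha(AC(a))\cap L=\{a\}$ while $AC(\alpha(a))$ is again a non-open class, we get $\alpha(AC(a))=R\inv(a)=AC(a)$, and likewise for $b$. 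Thus $AC(a)$ and $AC(b)$ are $\pi_1(\hM)$-invariant, and by \eqref{lemma-ussu} each meets every lifted center leaf $h\inv(v\times I)$ in a single point depending continuously on $v$; so $h(AC(a))$ and $h(AC(b))$ are graphs of continuous maps $\phi_a,\phi_b:\tN\to I$, and $\pi_1(\hM)$-invariance forces $\phi_a,\phi_b$ to be invariant under the deck action on $\tN$, i.e.\ to descend to $N$. The two graphs are disjoint and $\tN$ is connected; relabelling so that $\phi_a<\phi_b$, I put $\Omega=\{(v,t)\in\tN\times I:\phi_a(v)<t<\phi_b(v)\}$, a homeomorphic copy of $\tN\times(0,1)$.

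Next I would prove $AC(J)=h\inv(\Omega)$. Since $h$ carries $L$ onto $\{e\}\times I$ (with $e\in\tN$ the identity) and carries $AC(a),AC(b)$ onto the graphs of $\phi_a,\phi_b$, it carries the interval $J$ onto $\Omega\cap(\{e\}\times I)$. For ``$\subseteq$'': each class $AC(t)$ with $t\in J$ is connected, its $h$-image meets $\Omega$, and it is disjoint from $AC(a)\cup AC(b)$ (distinct classes are disjoint), so its connected $h$-image avoids the two graphs --- which separate $\tN\times I$ --- and hence lies in $\Omega$. For ``$\supseteq$'': if $h(x)\in\Omega$ then the retraction point $R(x)$ lies in $AC(x)\cap L$, so its $h$-image lies in $\Omega\cap(\{e\}\times I)=h(J)$, whence $R(x)\in J$ and $x\in AC(R(x))\subset AC(J)$. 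Therefore $AC(J)=h\inv(\Omega)$ is open, connected, and $\pi_1(\hM)$-invariant (as $\Omega$ is, since $\phi_a,\phi_b$ descend); it is also invariant under the lift $f$, because $f|_L$ is an increasing self-homeomorphism of $L\cong\bbR$ fixing $a$ and $b$, so $f(J)=J$. Consequently $AC(J)$ is the full preimage of $X$ under $\tM\to\hM$, so $X=AC(J)/\pi_1(\hM)$ is open and $f$-invariant in $\hM$; and since $\Omega\cong\tN\times(0,1)$ is simply connected, $AC(J)\to X$ is the universal cover of $X$.

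Now I would straighten: the map $H:\Omega\to\tN\times(0,1)$, $H(v,t)=(v,(t-\phi_a(v))/(\phi_b(v)-\phi_a(v)))$, is a homeomorphism carrying each arc $\{v\}\times(\phi_a(v),\phi_b(v))$ onto $\{v\}\times(0,1)$, and it is equivariant for the standard deck action on $\tN\times(0,1)$ (trivial on the second factor) because $\phi_a,\phi_b$ descend to $N$. As $hfh\inv$ maps $\{v\}\times I$ onto $\{Av\}\times I$, the composite $\Phi:=H\circ h|_{AC(J)}$ is a $\pi_1(\hM)$-equivariant leaf conjugacy between the restriction of the lift $f$ to $AC(J)$ and the map $(v,t)\mapsto(Av,t)$; passing to quotients, $\Phi$ descends to a leaf conjugacy between $f|_X$ and the AI-prototype $f_{AI}$ on $N\times(0,1)$. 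Being an open $f$-invariant set, $X$ inherits a partially hyperbolic structure with $E^c|_X$ oriented and its orientation preserved. It remains to check global product structure: $AC(J)$ is $\Ws$- and $\Wu$-saturated (a union of accessibility classes), and it meets every center leaf, hence every center-stable and every center-unstable leaf, of $\tM$ in a connected set, since such a leaf is saturated by center leaves indexed by a connected subset of $\tN$ and $AC(J)$ meets the center leaf $h\inv(v\times I)$ in the arc over $(\phi_a(v),\phi_b(v))$. Granting this, for $x,y\in AC(J)$ the point $\Wu(x)\cap\Wcs(y)$ furnished by global product structure on $\tM$ lies in $\Wu(x)\subset AC(J)$ and in the unique component of $\Wcs_{\tM}(y)\cap AC(J)$ --- this is condition~(1), and conditions~(2)--(4) follow the same way, using $\Ws$- or $\Wu$-saturation for~(3) and~(4). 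The same connectedness shows the restricted center, center-stable, and center-unstable foliations are genuine foliations, so $f|_X$ is dynamically coherent. Hence $f|_X$ is an AI-system.

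The main obstacle is this last step --- establishing global product structure and dynamical coherence for the restriction --- which rests on knowing that $AC(J)$ meets each center-stable and each center-unstable leaf of $\tM$ in a connected set. That in turn depends on the description of $AC(J)$ as the region pinched between the two $\pi_1(\hM)$-invariant $us$-leaves $AC(a)$ and $AC(b)$, and it is precisely here that the hypothesis $\partial J\subset\Fix(f)\cap\Fix(G)$ is used.
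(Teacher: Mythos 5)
Your proposal is correct and takes essentially the same approach as the paper's own proof: both identify $AC(J)$ on the universal cover with the region trapped between the two deck-invariant $us$-leaves $AC(a)$ and $AC(b)$ (i.e.\ $\tX = \bigcup_v h^{-1}(v\times(a_v,b_v))$), observe it is simply connected and invariant under $\pi_1(\hM)$ so it is the universal cover of $X$, note that global product structure is inherited from $\tM$ because the relevant intersection points stay inside the saturated set, and finally rescale each center fiber $(a_v,b_v)$ to $(0,1)$ to produce the leaf conjugacy to $A\times\id$ which descends to $N\times(0,1)$. Your write-up supplies details the paper leaves implicit (deck-invariance of $AC(a),AC(b)$, both inclusions $AC(J)=\tX$, orientability), so it is a fuller version of the same argument rather than a different route.
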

This lemma is the justification for assuming there are no invariant, compact
leaves in \eqref{thm-mainAI}.  If such leaves exist, the AI-system can be
decomposed into smaller systems.

\begin{proof}
    Assume the subinterval $J$ in the hypothesis is of the form $J=(a,b)$
    with $a,b \in L$.
    Unbounded subintervals of the form $(a,+\infty)$ and $(-\infty,b)$ are
    handled similarly.

    For every center leaf $h \inv(v \times I)$,
    let $a_v, b_v \in I$ be such that
    $v \times a_v \in h(AC(a))$
    and $v \times b_v \in h(AC(b))$.
    The set $\tX = \bigcup_{v \in \tN} h \inv(v \times (a_v, b_v) )$
    is $s$-saturated, $u$-saturated, and contains $J$.
    Therefore, $AC(J) \subset \tX$.
    By global product structure, one can show that $\tX \subset AC(J)$, so the
    two sets are equal.
    By its construction $\tX$ is simply connected, and invariant under deck
    transformations.  Therefore, it is the universal cover for $X$.
    Global product structure is inherited from $\tM$.
    For instance, for $x,y \in AC(J)$, there is a unique point $z \in \tM$
    such that $z \in \Ws(x) \cap \Wcu(y)$.
    Since, $\Ws(x) \subset \tX$, $z$ is in $\tX$.

    Compose $h$ with a homeomorphism which maps each $v \times (a_v,b_v)$
    to $v \times (0,1)$ by rescaling the second coordinate.
    This results in a leaf conjugacy between $f$ on $\tX$ and $A \times \id$
    on $\tN \times (0,1)$ which quotients down to a leaf conjugacy from $X$
    to $N \times (0,1)$.
\end{proof}
We now show that if the AI-system has no fixed compact $us$-leaves,
then 
it satisfies either case (2) or case (3) of \eqref{thm-mainAI}
depending on whether
it has any (non-fixed) compact $us$-leaves.

\begin{lemma}
    If\, $\Fix(G)$ is non-empty and $\Fix(f) \cap \Fix(G)$ is empty,
    then $f$ satisfies case~(2) of \eqref{thm-mainAI}.
\end{lemma}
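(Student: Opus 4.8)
We are in the situation where $\Fix(G) \neq \varnothing$ but $\Fix(f) \cap \Fix(G) = \varnothing$, and we must produce an open set $V \subset \hM$ with $\overline{f(V)} \subset V$, with $\bigcup_k f^k(V) = \hM$, with $\bigcap_k f^k(V) = \varnothing$, and with $\partial V$ a compact $us$-leaf. The plan is to work entirely on the cover $L \cong \bbR$ with the group $G < \Homeo^+(\Lam)$ and the map $f$ acting on $\Lam$, and to locate a single point $t_0 \in \Fix(G)$ that is ``swept monotonically'' by $f$; the set $V$ will then be $AC$ of a half-line determined by $t_0$, pushed down to $\hM$.

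First I would record that $\Fix(G)$ is a closed, $f$-invariant subset of $L$: it is closed because each $g_\alpha$ is continuous, and $f$-invariant because $F(g) = f g f\inv \in G$ for every $g \in G$ (Lemma~\ref{lemma-Ggood} together with the hypothesis that $F$ is an automorphism of $G$), so $g(f(t)) = f(F\inv(g)(t)) = f(t)$ whenever $t \in \Fix(G)$. By Lemma~\ref{lemma-KfixAI}, each $t \in \Fix(G)$ corresponds to a compact $us$-leaf in $\hM$, namely the image of $AC(t)$. Since $\Fix(f)\cap\Fix(G)=\varnothing$, the homeomorphism $f\colon L \to L$ has no fixed point in the closed nonempty set $\Fix(G)$, hence $f$ moves every point of $\Fix(G)$ strictly in one direction; since $f$ is orientation-preserving on $L$ this direction is the same for all of $\Fix(G)$ (say $f(t) > t$ for all $t \in \Fix(G)$ — otherwise replace $f$ by $f\inv$ at the end, which swaps $V$ and its complement). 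Now pick any $t_0 \in \Fix(G)$ and consider the component $(a,b) \ni t_0$ of $L \setminus \Fix(G)$ together with its endpoints: I would argue that $f(t_0)$ cannot lie in the same component's closure unless $f$ had a fixed point in $\Fix(G)$, using that $\Fix(G)$ is $f$-invariant and closed; more carefully, since $f(t_0) \in \Fix(G)$ and $f(t_0) > t_0$, and there is no fixed point of $f$ in $\Fix(G)$, the orbit $\{f^k(t_0)\}_{k\in\bbZ}$ is a bi-infinite strictly increasing sequence in $\Fix(G)$. I need $\sup_k f^k(t_0) = +\infty$ and $\inf_k f^k(t_0) = -\infty$: if $s = \sup_k f^k(t_0) < \infty$ then $s$ is a fixed point of $f$, and $s \in \Fix(G)$ because $\Fix(G)$ is closed and $f$-invariant and the orbit accumulates on $s$ with each $g \in G$ fixing the orbit hence fixing $s$ — contradicting $\Fix(f)\cap\Fix(G)=\varnothing$. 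The same argument at $-\infty$ handles the infimum.

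Next, set $V := $ image in $\hM$ of $\bigcup\{\,AC(t) : t \in L,\ t < f(t_0)\,\}$; equivalently $V$ is the image of $R\inv((-\infty, f(t_0)))$ where $R\colon\tM \to L$ is the retraction defined just before Lemma~\ref{lemma-Ggood}. Since $f(t_0) \in \Fix(G)$, the set $AC(f(t_0))$ is a $us$-leaf (Lemma~\ref{lemma-KfixAI}) and is invariant under all deck transformations up to relabeling within $\Fix(G)$, so the preimage $R\inv((-\infty, f(t_0)))$ in $\tM$ is open, simply connected, and invariant under $\pi_1(\hM)$: indeed each deck transformation $\alpha$ satisfies $R(\alpha(x)) = g_\alpha(R(x))$ on $L \cap AC$-points and $g_\alpha$ fixes $f(t_0) \in \Fix(G)$, so $\alpha$ preserves $R\inv((-\infty, f(t_0)))$. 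Hence $V$ is a well-defined open subset of $\hM$ and $\partial V$ is exactly the image of $AC(f(t_0))$, a compact $us$-leaf, which proves the boundary claim. Because $f$ on the cover satisfies $R\circ f = f \circ R$-type compatibility on the relevant leaves (more precisely $f$ maps $R\inv(t)$ into $R\inv(f(t))$, using global product structure and that $f$ permutes stable/unstable/center leaves), we get $f(V) = $ image of $R\inv((-\infty, f^2(t_0)))$, and since $f^2(t_0) > f(t_0)$ with the closed leaf $AC(f(t_0))$ separating, $\overline{f(V)} \subset V$ follows from $f(t_0) < f^2(t_0)$ and the fact that these two $us$-leaves are disjoint (distinct points of $\Fix(G)$ give disjoint accessibility classes by global product structure). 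Finally, $\bigcup_k f^k(V)$ is the image of $R\inv((-\infty, \sup_k f^k(t_0)))= R\inv((-\infty,+\infty)) = \tM$, so equals $\hM$; and $\bigcap_k f^k(V)$ is the image of $R\inv((-\infty, \inf_k f^k(t_0)))= R\inv(\varnothing)=\varnothing$. This establishes all four conditions, i.e. case~(2) of \eqref{thm-mainAI}.

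The main obstacle I anticipate is the bookkeeping around $f$ acting on the retraction: making precise that $f$ sends $R\inv(t)$ into $R\inv(f(t))$ for $t$ in the relevant range, so that the nesting $\overline{f(V)}\subset V$ genuinely follows. This requires using global product structure to track how $f$ moves unstable leaves and where they re-intersect $\Ws(L)$, together with the fact (from Lemma~\ref{lemma-ussu}) that at points of $\Lam$ the accessibility class is exactly $\Ws\Wu = \Wu\Ws$, so that the leaf $AC(f(t_0))$ really does separate $V$ from $\hM\setminus\overline{V}$. A secondary point needing care is the claim that distinct points of $\Fix(G)$ yield disjoint compact $us$-leaves in $\hM$ (not merely on the cover) — this should follow because two distinct $us$-leaves that project to intersecting sets would, on the cover, force a nontrivial deck transformation identifying points of $L$ lying in different accessibility classes, contradicting that $g_\alpha$ is a well-defined homeomorphism of $\Lam$. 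Once these are nailed down, the rest is the elementary order argument on $\bbR$ sketched above.
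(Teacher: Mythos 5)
Your overall strategy matches the paper's: reduce to a monotone orbit of some $t_0 \in \Fix(G)$ on the invariant center line $L$, take the saturation by accessibility classes of a half-line bounded by a point of $\Fix(G)$, and push this down to $\hM$ to get the open set $V$. The paper shows first that $f|_L$ is fixed-point free (by a connected-component argument on $L \setminus \Fix(G)$) and then takes $L^+ = \{t > t_0\}$; you instead argue directly that $\{f^k(t_0)\}_{k\in\bbZ}$ is a bi-infinite monotone sequence unbounded in both directions, which gives everything you need without proving fixed-point-freeness on all of $L$. That part is fine (and your parenthetical claim that the sign of $f(t)-t$ is constant on all of $\Fix(G)$ is unnecessary for the argument; you only ever use it at the single point $t_0$).

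However, there is a genuine sign error in the choice of $V$. Having normalized so that $f(t) > t$ for $t \in \Fix(G)$, you set $V$ to be the image of $R\inv\bigl((-\infty, f(t_0))\bigr)$, a \emph{left} half-line. Then, exactly as you compute, $f(V)$ is the image of $R\inv\bigl((-\infty, f^2(t_0))\bigr)$, and since $f^2(t_0) > f(t_0)$ this is a \emph{strictly larger} set: one gets $\overline{V} \subsetneq f(V)$, which is the reverse of the required inclusion $\overline{f(V)} \subset V$. The line in your write-up asserting ``$\overline{f(V)}\subset V$ follows from $f(t_0) < f^2(t_0)$'' is therefore false as stated. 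The fix is to take $V$ to be the image of $R\inv\bigl((t_0, +\infty)\bigr)$, the \emph{right} half-line (this is precisely the set $AC(L^+)$ the paper uses): then $f(V)$ corresponds to $(f(t_0), +\infty)$, whose closure $[f(t_0), +\infty)$ sits inside $(t_0, +\infty)$ because $f(t_0) > t_0$, and the union and intersection conditions follow from $\sup_k f^k(t_0) = +\infty$ and $\inf_k f^k(t_0) = -\infty$ exactly as you argue. With that correction, and with the same care you already take regarding descent to $\hM$ and identification of $\partial V$ with the compact $us$-leaf over $t_0$, the proof goes through.
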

\begin{proof}
    We first show that $f$ restricted to $L$ is fixed-point free.
    Suppose, instead, that $f(t)=t \in L$.
    By assumption $t \notin \Fix(G)$, so let
    $J$ be the connected component of $L \setminus \Fix(G)$
    containing $t$.
    As $\Fix(G)$ is $f$-invariant, $f(J)=J$ and each $s \in \partial J$
    is then an element of $\Fix(f) \cap \Fix(G)$, a contradiction.

    Without loss of generality, assume $t < f(t)$ for all $t \in L$.
    Choose some $t_0 \in \Fix(G)$ and define $L^+ = \{ t \in L : t > t_0 \}$.
    Then,
    \[
        \overline{f(L^+)} \subset L^+, \quad
        \bigcup_{k \in \bbZ} f^k(L^+) = L, \qandq
        \bigcap_{k \in \bbZ} f^k(L^+) = \varnothing.
   \]
    One can then show that the covering $\tM \to \hM$
    takes $AC(L^+)$ to an open set
    $V \subset \hM$ 
    which satisfies
    the second case of \eqref{thm-mainAI}.
\end{proof}
\begin{lemma} \label{lemma-mainAIthree}
    If\, $\Fix(G)$ is empty,
    then $f$ satisfies case (3) of \eqref{thm-mainAI}.
\end{lemma}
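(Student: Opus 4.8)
The plan is to apply \eqref{master-lemma} to the data $(\Lam, G, f)$ constructed above and then transport the resulting picture back to $\hM$ via the retraction $R$ and global product structure. First I would verify the hypotheses of \eqref{master-lemma}: $\Lam \subset L \cong \bbR$ is closed and non-empty (by the Lemma preceding \eqref{lemma-ussu}); $G$ is finitely generated and nilpotent by \eqref{lemma-Ggood}; $\Fix(G) = \varnothing$ is the standing assumption of this lemma; and the automorphism $F: G \to G$ given by conjugation by $f$ has no non-trivial fixed points. For this last point I would observe that $f$ acts on $L$ as the lift $A$ of the nilmanifold automorphism acts on the center leaf through the identity, so $F$ is induced by the automorphism of $\pi_1(N)$ coming from $A$, and by the Franks--Manning results cited after \eqref{prop-nofixcoset} this automorphism has no non-trivial fixed points on $\pi_1(N)$ — and hence none on $G$, since $\alpha \mapsto g_\alpha$ is surjective and compatible with the two automorphisms.

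Having invoked \eqref{master-lemma}, I obtain $\Gam_0 \subset \Lam$, a monotone continuous surjection $P: \bbR \to \bbR$ (taking $L$ as the domain), a non-zero homomorphism $\tau: G \to \bbR$, and $0 < \lam \ne 1$ with $P g = P + \tau(g)$, $P f = \lam P$, and the description of $\Gam_0$ and the fibers $P\inv(t)$. Since $\Fix(G)$ is empty, $\Gam_0$ must be empty as well: a point of $\Gam_0$ fixed by all of $\ker\tau$ would, together with the scaling/translation structure, force a global fixed point for $G$ (this is exactly the mechanism in the final paragraph of the proof of \eqref{prop-lamF}, applied with the commuting subgroup $\ker \tau$). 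Wait — I should instead argue directly: if $\Gam_0 \ne \varnothing$ then by \eqref{lemma-KfixAI} applied to $\ker\tau$ one gets a non-fixed compact $us$-leaf, which is the content of case (3)'s first clause, so this needs care. The cleaner route: \eqref{master-lemma} gives that each $P\inv(t)$ is a point of $\Gam_0$ or an interval with endpoints in $\Gam_0$; if $\Gam_0 = \varnothing$ then there are no compact $us$-leaves at all (by \eqref{lemma-KfixAI}, a compact leaf would need a $G$-fixed point, impossible), which matches the first assertion of case (3). For the ``uncountably many non-compact $us$-leaves'' claim I would note that $\Lam$ is uncountable: it surjects via $P$ onto an uncountable set because $\tau(G)$ is dense (shown inside the proof of \eqref{master-lemma}) so $P(L) = \bbR$, and each $P\inv(t)$ meets $\Lam$, giving uncountably many distinct non-open accessibility classes, each of which projects to a non-compact $us$-leaf.

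Finally, to produce the semiconjugacy, I would push $P$ forward through $R: \tM \to L$: define $\tilde\pi = P \circ R: \tM \to \bbR$. Global product structure makes $R$ equivariant for deck transformations in the sense that $R \circ \alpha$ and $g_\alpha \circ R$ agree on the relevant sets, so $\tilde\pi \circ \alpha = \tilde\pi + \tau(g_\alpha)$; combined with $\tilde\pi \circ f = \lam\, \tilde\pi$ (from $Rf = fR$ on $L$ and $Pf = \lam P$) and the leaf conjugacy $h$ identifying center leaves with $\tN \times I$, this descends to a continuous surjection from $\hM$ onto $N \times \bbR$ (quotienting $\bbR$ by the dense subgroup $\tau(G)$ appropriately, or rather composing with $h$ to keep the $N$-coordinate) that intertwines $f$ with $(v,t) \mapsto (Av, \lam t)$. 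The main obstacle I expect is precisely this last descent step: checking that $\tilde\pi$ together with the $N$-coordinate from $h$ glues into a well-defined continuous map on the quotient $\hM$ and is genuinely surjective, i.e. that the fibers of the center-leaf projection interact correctly with the fibers of $P$. This amounts to carefully combining the equivariance of $R$ with the structure of $\tX$-type sets as in \eqref{lemma-AIinAI}, and verifying that the non-strict monotonicity of $P$ causes no discontinuity — routine in spirit but the place where the bookkeeping is heaviest.
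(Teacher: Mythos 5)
The overall plan — apply \eqref{master-lemma} to the triple $(\Lam, G, f)$, then transport the resulting picture back to $\hM$ — is the right one and matches the paper. Your handling of the ``no compact $us$-leaves'' clause (via \eqref{lemma-KfixAI} and $\Fix(G)=\varnothing$) is correct, and the worry about $\Gam_0$ being empty is a red herring: $\Gam_0$ is non-empty by the conclusion of \eqref{master-lemma}, and a point of $\Gam_0$ fixed by $\ker\tau$ is not a $G$-fixed point, so this creates no tension with the absence of compact leaves. The ``uncountably many'' argument is essentially right once you phrase it in terms of $G$-orbits of $\Lam$ rather than individual accessibility classes on the cover (distinct accessibility classes in the same $G$-orbit project to the \emph{same} $us$-leaf on $\hM$).

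The real gap is the semiconjugacy. Your proposed map $\tilde\pi = P\circ R$ satisfies $\tilde\pi\circ\alpha = \tilde\pi + \tau(g_\alpha)$ with $\tau(g_\alpha)\ne 0$ for non-trivial $\alpha$, so it simply does not descend to $\hM$, and ``quotienting $\bbR$ by the dense subgroup $\tau(G)$'' produces a non-Hausdorff space rather than the factor $\bbR$ in $N\times\bbR$. The ingredient you are missing is Mal'cev rigidity: because $N$ is a nilmanifold, the homomorphism $\tau\colon\pi_1(N)\to\bbR$ extends uniquely to a Lie group homomorphism $T\colon\tN\to\bbR$ satisfying $T\alpha_N(v)=T(v)+\tau(g_\alpha)$. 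Setting $H\colon\tM\to\tN$ to be the $\tN$-coordinate of the lifted leaf conjugacy, the correct function is $Q = P R - T H$, whose deck-transformation defects cancel exactly, so $Q$ descends, and $T A = \lam T$ combined with $PRf=\lam PR$ gives $Qf=\lam Q$. This is a genuine new idea, not bookkeeping; you flag the descent step as ``the main obstacle'' but mischaracterize it as routine. There is also a secondary point your proof does not address: for points $x$ with open accessibility class, $R(x)\notin\Lam$, so the equivariance identity $PR\alpha(x)=PR(x)+\tau(g_\alpha)$ is not directly available; the paper handles this by showing $Q$ is constant on each connected component of $\Wc(x)\cap AC(x)$ and passing to boundary points, which do lie in non-open classes. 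Finally, your verification that $F$ has no non-trivial fixed points needs \eqref{prop-nofixcoset} applied to the kernel of $\alpha\mapsto g_\alpha$ (not merely that $A_*$ has no fixed points on $\pi_1(N)$), since a surjective equivariant homomorphism does not by itself propagate the fixed-point-free property to the image.
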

\begin{proof}
    In this case,
    the hypotheses of \eqref{master-lemma} hold with $\Gam = \Lam$.
    Let $P: L \to \bbR$ and $\tau:G \to \bbR$
    be as in \eqref{master-lemma}.

    If $\alpha \in \pi_1(\hM)$ is a deck transformation $\tM \to \tM$, then
    $h \alpha h \inv$ is equal to $\alpha_N \times \id$ on $\tN \times I$
    for some deck transformation $\alpha_N \in \pi_1(N)$.
    As $N$ is a nilmanifold,
    any homomorphism from $\pi_1(N)$ to $\bbR$ defines a unique homomorphism
    from the nilpotent Lie group $\tN$ to $\bbR$ \cite{malcev}.
    This implies that there is a unique Lie group homomorphism $T:\tN \to \bbR$
    such that
    \begin{math}
        T \alpha_N(v) = T(v) + \tau(g_\alpha)
    \end{math}
    for all $v \in \tN$ and $\alpha \in \pi_1(\hM)$.

    Let $R:\tM \to L$ be the retraction defined earlier in this section
    and
    let $H:\tM \to \tN$ be the composition of
    the leaf conjugacy $h:\tM \to \tN \times I$
    with projection onto the first coordinate.
    %The fibers of $H$ are the leaves of $\tilde f$,
    %$H \circ \tilde f = A \circ H$, and
    %$H \circ (\alpha \times \id) = \alpha \circ H$
    %for all $\alpha \in \pi_1(N)$.
    Define
    \[   
        Q : \tM \to \bbR,\quad
        x \mapsto P R(x) - T H(x).
    \]
    We will show that $Q$ quotients to a function $\hM \to \bbR$
    and use this to construct
    the semiconjugacy in the last case of \eqref{thm-mainAI}.

    First, consider a point $x \in \tM$
    which has a non-open accessibility class.
    Then, $R(x) \in \Lam$ and,
    for $\alpha \in \pi_1(\hM)$,
    \[
        P R(\alpha(x)) = P g_\alpha R(x) = P R(x) + \tau(g_\alpha)
    \]
    and
    \[
        T H \alpha(x) = T \alpha_N H(x) = T H(x) + \tau(g_\alpha)
    \]
    which together show $Q \alpha(x) = Q(x)$.

    Now, consider a point $x \in \tM$ which has an open accessibility class,
    and let $J \subset \tM$ be the connected component of
    $\Wc(x) \cap AC(x)$ which contains $x$.
    The set $\Gam_0$ from \eqref{master-lemma} is a subset of $\Gam=\Lam$
    and therefore $P$ is constant on $L \setminus \Lam$.
    Then, $P R$ is constant on $J$ and, by continuity,
    constant on the closure of $J$ as well.
    As 
    $H$ is constant on center leaves,
    $Q = P R - T H$ is also constant on the closure of $J$.
    Let $y$ be a point on the boundary of $J$.  Then, as $AC(y)$ is non-open,
    $Q(x) = Q(y) = Q \alpha(y)=Q \alpha(x)$.
    This shows that $Q$ quotients down to a function
    $\hat Q:\hM \to \bbR$.
    A much simpler argument shows that $H:\tM \to \tN$ quotients down to a
    function $\hat H:\hM \to N$.

    The properties of $F$ and $P$ in \eqref{master-lemma} imply that
    $T A = \lam T$
    and therefore
    $T H f = T A H = \lam T H$.
    As
    $P R f = P f R = \lam P R$,
    this shows that $Q f = \lam Q$.
    Then,
    $\hat H \times \hat Q$ is the desired semiconjugacy in \eqref{thm-mainAI}.
    By \eqref{master-lemma}, $P(\Lam)=\bbR$ and so $\Lam$ is uncountable.
    Each $G$-orbit of $\Lam$ corresponds to a distinct $us$-leaf,
    and so there are uncountably many.
\end{proof}
This concludes the proof of \eqref{thm-mainAI}.
We note one additional fact which will be used in the next section.

\begin{cor} \label{cor-AIcomfix}
    If\, $\Fix(G)$ is empty,
    any homeomorphism of $L$ which commutes with $f$ has a fixed point.
\end{cor}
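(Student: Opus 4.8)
The plan is to deduce this directly from the final (``Moreover'') clause of \eqref{master-lemma}, after lining up the identifications. Since $\Fix(G)$ is empty, the proof of \eqref{lemma-mainAIthree} has already placed us in the setting of \eqref{master-lemma} with $\Gam=\Lam$: identifying $L$ with $\bbR$ via the ordering on its points fixed earlier in the section, the element of $\HPR$ in play is $f|_L$, which is a homeomorphism of $L$ because $f(L)=L$ and is orientation-preserving because an AI-system preserves the orientation of $\Ec$ and hence of the center leaf $L$; the subgroup is $G<\HPG$ (here one uses that $\Lam$ is closed in $L$, so each $g_\alpha\in\Homeo^+(\Lam)$ extends to an element of $\HPR$); and the automorphism $F(g)=f|_L\,g\,f|_L\inv$ has no non-trivial fixed points by the Franks--Manning analysis recorded after \eqref{prop-nofixcoset}.

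With these identifications, a homeomorphism of $L$ that commutes with $f$ is precisely a homeomorphism of the line commuting with the map $f|_L\in\HPR$ to which \eqref{master-lemma} applies. The ``Moreover'' clause of that lemma then says immediately that any such homeomorphism has a fixed point --- in fact one lying in $P\inv(0)$, where $P:L\to\bbR$ is the surjection produced by the lemma --- and this is exactly the statement of the corollary. So the write-up is essentially: quote the hypothesis verification from \eqref{lemma-mainAIthree}, note that $f|_L\in\HPR$, and invoke the last line of \eqref{master-lemma}.

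I do not expect a genuine obstacle here, since all of the substantive work --- constructing the invariant measure $\mu$ and the homomorphism $\tau$, establishing $\lam\ne 1$, and the argument that a homeomorphism commuting with a map with $\lam\ne 1$ must have a bounded orbit and therefore a fixed point --- is already carried out inside \eqref{master-lemma}, and before it in \eqref{prop-lamF}. The only delicate point, and it is bookkeeping rather than an obstacle, is to read ``commutes with $f$'' as ``commutes with $f|_L$'' and to keep the identification $L\cong\bbR$ consistent with the one used to apply \eqref{master-lemma}. As an alternative to the citation, one could reprove the claim in situ by rerunning the final paragraph of the proof of \eqref{prop-lamF} with $\mu$ replaced by the push-forward $P_*\mu$ and $f$ by $f|_L$; I would prefer the citation, as it avoids repeating that argument.
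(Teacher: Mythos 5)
Your argument is correct and is exactly the paper's proof, merely spelled out: the paper's proof of \eqref{cor-AIcomfix} is the one-liner that it ``follows from the use of \eqref{master-lemma} in the previous proof,'' and you have simply made explicit the hypothesis verification borrowed from \eqref{lemma-mainAIthree} and the invocation of the ``Moreover'' clause. No meaningful difference in approach.
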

\begin{proof}
    This follows from the use of \eqref{master-lemma} in the previous proof.
\end{proof}

\section{AB-systems} \label{sec-ABsys} %{{{1

\begin{assumption}
    In this section, assume $f:M \to M$ is a non-accessible AB-system.
\end{assumption}
The AB-prototype $f_{AB}$ has an invariant center leaf which is a circle.
By the leaf conjugacy,
$f$ also has an invariant center leaf.
Call this leaf $S$.
Note that $f$ lifts to an AI-system.
This is because the AB-prototype $f_{AB}$ lifts to the AI-prototype
$A \times \id$
on $N \times \bbR$.
If $h:M \to M_B$ is the leaf conjugacy,
then $h f h \inv$ is homotopic to $f_{AB}$
and therefore also lifts to $N \times \bbR$.

Let $\pi:\tM \to M$ be the universal covering, and choose a lift $\tf:\tM \to \tM$
and $\tS$ a connected component of $\pi \inv (S)$ such that $\tf(\tS)=\tS$.
The universal cover $\tN \times \bbR$ of the manifold $M_B$ has a deck
transformation of the form $(v,t) \mapsto (B v, t-1)$.
Conjugating this by the lifted leaf conjugacy gives a deck
transformation $\beta:\tM \to \tM$ and one can assume that $\beta(\tS) = \tS$.
Then, $\tS$ plays the role of $L$ in the previous section.
%The notation $\tS$ instead of $L$ is to suggest the additional property in this
%section that $\tS$ quotients (via $\beta$) down to a circle $S$.
Define
$\Lam = \{ t \in \tS : AC(t) \text{ is not open} \}$
and
$G$ as a subgroup of $\Homeo^+(\Lam)$
as in the previous section.

\begin{lemma}
    $\Fix(G)$ is non-empty.
\end{lemma}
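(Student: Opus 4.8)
The plan is to use the extra structure that an AB-system has over a general AI-system, namely the deck transformation $\beta$ coming from the $\bbR$-direction of $M_B$, together with the invariant center leaf $\tS = L$ on which $\tf = f$ acts. The key point is that $f$ and $G$ no longer merely satisfy $f G f\inv = G$ as abstract actions; there is a single extra generator $\beta$ such that $\pi_1(M)$ is generated by $\pi_1(N)$-type elements together with $\beta$, and the image of $\beta$ under the map $\alpha \mapsto g_\alpha$ is precisely the dynamics $f$ restricted to $\Lam$. So on the level of the line $\tS$, the group generated by $G$ and $f$ is exactly the image of $\pi_1(M)$, and this image is itself a finitely generated nilpotent group (since $M$ is homotopy equivalent to $M_B$, whose fundamental group is nilpotent — being an extension of $\bbZ$ by the nilpotent $\pi_1(N)$ with the action induced by $B$, which is unipotent-plus-..., in any case $\pi_1(M_B)$ is nilpotent because $M_B$ fibers over the circle with nilmanifold fiber and the monodromy $B$ acts unipotently on homology up to the hyperbolic part — more carefully, $\pi_1(M_B)$ is nilpotent exactly when such suspensions are, which is the hypothesis built into ``AB-prototype'' via $AB = BA$).

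**First I would** argue by contradiction: suppose $\Fix(G)$ is empty. Then the hypotheses of \eqref{master-lemma} are satisfied with $\Gam = \Lam$ and $f = \tf|_{\tS}$ — one must check that $F: G \to G$, $F(g)(x) = f g f\inv(x)$, is a group automorphism with no nontrivial fixed points; this follows from the Franks–Manning results quoted after \eqref{prop-nofixcoset}, since $F$ is conjugate to the automorphism of $\pi_1(N)$ induced by the Anosov map $A$. So \eqref{master-lemma} gives a semiconjugacy $P: \tS \to \bbR$ with $P g = P + \tau(g)$, $P f = \lam P$ for some $\lam \ne 1$. By \eqref{cor-AIcomfix} (or directly by the last sentence of \eqref{master-lemma}), any homeomorphism of $\tS$ commuting with $f$ has a fixed point in $P\inv(0)$. **The crucial step** is then to produce such a commuting homeomorphism and derive a contradiction from $\lam \ne 1$: the deck transformation $\beta$ descends to $g_\beta \in \Homeo^+(\Lam)$, and because $\beta$ was chosen with $\beta(\tS) = \tS$ and conjugating the AB-prototype's $(v,t)\mapsto(Bv,t-1)$, the map $g_\beta$ commutes with $f$ on $\tS$ (since on the prototype $A\times\id$ commutes with $B\times(t\mapsto t-1)$ on the level of center-leaf retractions — $AB = BA$ is exactly what makes this work). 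But $g_\beta \in G$, and $G$ has empty fixed-point set by assumption, whereas a homeomorphism commuting with $f$ must have a fixed point: contradiction. Hence $\Fix(G)$ is non-empty.

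**The main obstacle** I anticipate is not the logical skeleton but verifying carefully that $g_\beta$ genuinely lies in $G$ and genuinely commutes with $f = \tf|_{\tS}$ as a \emph{homeomorphism of $\Lam$} — the subtlety flagged right after \eqref{lemma-Ggood} is that the analogous construction on the full leaf $L = \tS$ gives only a subset of $\Homeo^+(\tS)$, not a subgroup, so one must work on $\Lam$ throughout and check that the retraction identity $R \circ \beta \circ f = R \circ f \circ \beta$ holds on $\Lam$ after restricting. This reduces to: $\beta$ and $\tf$ commute as deck-compatible maps on $\tM$ up to an inner ambiguity that the choices ``$\tf(\tS) = \tS$, $\beta(\tS) = \tS$'' were made precisely to eliminate — concretely, $h\tf h\inv$ and $h\beta h\inv$ commute on $\tN \times \bbR$ because $A$ and $B$ commute, so $\tf\beta$ and $\beta\tf$ differ by a deck transformation fixing $\tS$ pointwise, hence are equal on a neighborhood of $\tS$, which suffices for the retraction computation. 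Once this commutation is in hand, the contradiction with \eqref{cor-AIcomfix} is immediate.
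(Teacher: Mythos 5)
Your skeleton is the paper's: assume $\Fix(G)=\varnothing$, invoke \eqref{cor-AIcomfix} to conclude that every homeomorphism of $L=\tS$ commuting with $f|_{\tS}$ has a fixed point, and contradict this by producing such a homeomorphism that is fixed-point free — namely $\beta$ restricted to $\tS$. The commutation of $\beta$ and $\tf$ on $\tS$ is indeed what one needs, and your worry about whether this genuinely holds is the right thing to check; the choices $\tf(\tS)=\tS$ and $\beta(\tS)=\tS$ were made precisely so that it does, just as you observe.

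The genuine gap is in your derivation of the contradiction. You write that ``$g_\beta \in G$, and $G$ has empty fixed-point set by assumption,'' but this fails on two counts. First, $g_\beta \notin G$: the group $G$ is by definition $\{g_\alpha : \alpha \in \pi_1(\hM)\}$, where $\hM$ is the intermediate $N\times\bbR$-like cover, while $\beta$ is a deck transformation of $\tM \to M$ that translates the fibers of $\hM \to M$ — it does not lie in $\pi_1(\hM)$. Second, even if $g_\beta$ were in $G$, the statement $\Fix(G)=\varnothing$ means only that $G$ has no \emph{common} fixed point; it says nothing about whether an individual element has a fixed point. So the inference ``$\Fix(G)=\varnothing \Rightarrow g_\beta$ is fixed-point free'' is invalid. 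The correct and much simpler reason, which is what the paper uses, is that $\beta$ is a non-identity deck transformation of a covering space and therefore acts freely on $\tM$, hence $\beta|_{\tS}$ has no fixed points. Replacing your faulty step with this observation gives exactly the paper's proof. (Your preamble about $\pi_1(M_B)$ being nilpotent is both unnecessary and generally false — it is polycyclic but typically not nilpotent — but since you do not actually use it, this is harmless.)
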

\begin{proof}
    This follows from \eqref{cor-AIcomfix}
    since $\beta$ and $\tf$ are commuting
    diffeomorphisms when restricted to $\tS$
    and $\beta$ is fixed-point free.
\end{proof}
\begin{lemma} \label{lemma-compactFix}
    For $t \in \Lam$, $AC(\pi(t)) \subset M$ is compact if and only if
    $t \in \Fix(G)$.
\end{lemma}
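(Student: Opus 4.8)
The plan is to transfer the argument of \eqref{lemma-KfixAI} between $M$ and the intermediate cover $\hM := \tM/\pi_1(N)$, on which $f$ lifts to an AI-system; let $\bar\pi : \hM \to M$ be the remaining covering, whose deck group is generated by the descent $\bar\beta$ of $\beta$. Under the natural identifications, $\Lam$ and $G$ are exactly the objects attached to the AI-system on $\hM$, and for $t \in \Lam$ I write $\hat t$ for its image in $\hM$; since a covering map sends accessibility classes onto accessibility classes, $\bar\pi(AC(\hat t)) = AC(\pi(t))$. If $t \in \Fix(G)$, then \eqref{lemma-KfixAI} says $AC(t)$ projects to a compact $us$-leaf $AC(\hat t) \subset \hM$, so $AC(\pi(t)) = \bar\pi(AC(\hat t))$ is a continuous image of a compact set and hence compact. (Equivalently, run the argument of \eqref{lemma-KfixAI} directly: global product structure gives a section $\sigma : \tN \to AC(t)$ which is $\pi_1(N)$-equivariant because $g_\alpha(t) = t$ for every $\alpha \in \pi_1(N)$, so $\pi \circ \sigma$ factors through the compact quotient $N$ and $AC(\pi(t)) = \pi\sigma(\tN)$ is compact.)

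For the converse, suppose $AC(\pi(t))$ is compact. By \eqref{prop-nonacc} it is a compact $C^1$ codimension-one submanifold tangent to $\Eu \oplus \Es$, so at each point of $AC(\pi(t)) \cap S$ the tangent space of the leaf and the center direction together span $TM$; the intersection with the center circle $S$ is therefore transverse, and, both submanifolds being compact, $AC(\pi(t)) \cap S$ is finite. Now $\pi|_{\tS} : \tS \to S$ is a covering with deck group $\langle \beta|_{\tS} \rangle$, and using the retraction $R$ and \eqref{lemma-ussu} one checks that the $\pi|_{\tS}$-preimage of $AC(\pi(t)) \cap S$ is exactly the orbit $\hat G \cdot t$, where $\hat G := \{ g_\alpha : \alpha \in \pi_1(M) \} \supset G$, and that the fibres of $\pi|_{\tS}$ over this set are the orbits of $g_\beta = \beta|_{\Lam}$. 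Hence $\hat G \cdot t$ is a union of finitely many $\langle g_\beta \rangle$-orbits; since $\beta$ is fixed-point free and acts cocompactly on $\tS$, each of these is closed, discrete, and cofinal in both directions, so $\hat G \cdot t$ is a closed discrete doubly-cofinal subset of $\tS$, order-isomorphic to $\bbZ$.

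To finish, observe that each $g \in G$ restricts to an order-preserving bijection of $\hat G \cdot t \cong \bbZ$, hence acts there as a translation. Suppose for contradiction that $g(t) \ne t$ for some $g \in G$; then $g$ restricted to $\hat G \cdot t$ is a nontrivial translation, and after replacing $g$ by $g\inv$ if necessary we may assume $g^{-k}(x) \to -\infty$ in $\tS$ for every $x \in \hat G \cdot t$. This is where the previously established fact $\Fix(G) \ne \varnothing$ is used: pick $s_0 \in \Fix(G)$ and, by double cofinality, some $x \in \hat G \cdot t$ with $x > s_0$; then $g^{-k}(x) > g^{-k}(s_0) = s_0$ for all $k \ge 0$, contradicting $g^{-k}(x) \to -\infty$. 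Thus $g(t) = t$ for all $g \in G$, i.e.\ $t \in \Fix(G)$. The step I would check most carefully — and the only real obstacle — is the identification of $\hat G \cdot t$ with the $\pi|_{\tS}$-preimage of $AC(\pi(t)) \cap S$ and the order-theoretic picture it produces; once that is in hand, the contradiction above is immediate.
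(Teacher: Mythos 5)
Your proof is correct, and the forward direction is essentially the paper's (project the compact $us$-leaf provided by \eqref{lemma-KfixAI}). The converse, however, is a genuinely different argument. The paper uses $\Fix(G) \ne \varnothing$ together with $\beta(\Fix(G)) = \Fix(G)$ to bracket $t$ between fixed points of $G$, so that $Gt$ is bounded; then $s := \sup Gt$ is shown to lie in $\Fix(G)$, and compactness of $AC(\pi(t))$ enters only at the end, to show that $\pi(s) \in AC(\pi(t))$ and hence $t = \beta^k(s) \in \Fix(G)$. You instead use compactness of $AC(\pi(t))$ at the outset, via transversality of the $C^1$ $us$-leaf (\eqref{prop-nonacc}) with the center circle $S$, to conclude that $AC(\pi(t)) \cap S$ is finite; identifying its $\pi|_{\tS}$-preimage with $\hat G \cdot t$ then gives a closed, discrete, doubly-cofinal subset of $\tS$, on which every $g \in G$ acts as a translation of $\bbZ$, and a nontrivial translation is incompatible with the existence of a point of $\Fix(G)$. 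Both arguments hinge on $\Fix(G) \ne \varnothing$, proved just before this lemma. The paper's route is shorter and re-uses the supremum trick from \eqref{lemma-KfixAI}; yours replaces that analytic step with an order-theoretic one, at the cost of introducing the auxiliary group $\hat G$ and verifying the identification of the preimage (which, as you flag, is the delicate point, but it does hold: a lift of any point of $AC(\pi(t)) \cap S$ lies in $\alpha(AC(t)) \cap \tS$ for some $\alpha \in \pi_1(M)$, and \eqref{lemma-ussu} says this intersection is the single point $g_\alpha(t)$). One small remark: you do not actually need the full strength of $\hat G$ being a group for the contradiction — only that $\hat G \cdot t$ is a $G$-invariant, discrete, doubly-cofinal set — but the group structure is what makes the orbit picture clean.
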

\begin{proof}
    If $t \in \Fix(G)$, then, by \eqref{lemma-KfixAI},
    $AC(\pi(t))$ is covered by a compact $us$-leaf
    of the AI-system and is therefore compact
    itself.

    Conversely, suppose $t \in \Lam$ is such that $AC(\pi(t)) \subset M$ 
    is a compact $us$-leaf.
    Note that as $\beta(\Fix(G)) = \Fix(G)$ there are
    $a,b \in \Fix(G)$ such that $a < t < b$ in the ordering on $\tS$.
    Then, $Gt$ is contained in $(a,b)$, a bounded subset of $\tS$.
    Considering the supremum as in \eqref{lemma-KfixAI},
    one shows that $s := \sup G t$ is in $\Fix(G)$.
    Consequently,
    $AC(\pi(t))$ accumulates on $\pi(s)$
    which, as $AC(\pi(t))$ is compact,
    implies $\pi(s) \in AC(\pi(t))$
    and so there is a deck transformation $\alpha:\tM \to \tM$
    such that $\alpha(s) \in AC(t)$.
    This implies there is $k \in \bbZ$ and $g \in G$ such that
    $t = \beta^k g(s) = \beta^k(s) \in \Fix(G)$.
\end{proof}
In this, and the next two sections, define
\[
    K = \{ x \in S: AC(x) \subset M \text{ is compact} \}.
\]
The last lemma shows that 
$K = \pi(\Fix(G))$.

\begin{cor} \label{cor-Kclosed}
    $K$ is closed and non-empty.\qed
\end{cor}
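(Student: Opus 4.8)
The plan is to deduce both claims directly from the identification $K = \pi(\Fix(G))$ established in Lemma \ref{lemma-compactFix}, together with the fact (already proved in this section) that $\Fix(G)$ is non-empty. Non-emptiness is then immediate: since $\Fix(G) \subset \tS$ is non-empty and $\pi$ is a surjection onto $M$, its image $K = \pi(\Fix(G)) \subset S$ is non-empty.

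For closedness, I would work on the covering space and argue that $\Fix(G)$ is a closed subset of $\tS$, then push down. Closedness of $\Fix(G)$ in $\tS$: for each fixed $g \in G \subset \Homeo^+(\Lam)$, the set $\{t \in \Lam : g(t) = t\}$ is closed in $\Lam$ because $g$ is continuous; intersecting over all $g \in G$ keeps it closed in $\Lam$, and $\Lam$ itself is closed in $\tS$ (it was defined as the set of points with non-open accessibility class, which is closed — see the discussion preceding Lemma \ref{lemma-ussu} and Proposition \ref{prop-nonacc}), so $\Fix(G)$ is closed in $\tS$. To descend to $S$, I would use that $\beta$ generates (together with the other deck transformations, but in fact $\beta$ alone suffices on $\tS$) a properly discontinuous, cocompact action on $\tS$ with $\beta(\Fix(G)) = \Fix(G)$: given a point $x \in S \setminus K$, lift it to $\tx \in \tS$, note $\tx \notin \Fix(G)$, and since $\Fix(G)$ is $\beta$-invariant and $\tS / \langle \beta \rangle$ is compact (it is the circle $S$), there is a fundamental domain $D \subset \tS$ for $\langle \beta \rangle$ containing $\tx$; then $\Fix(G) \cap D$ is closed, $\tx$ has a neighborhood in $D$ disjoint from it, and projecting gives a neighborhood of $x$ in $S$ disjoint from $K$. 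Hence $S \setminus K$ is open and $K$ is closed.

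The only place that needs a little care — and hence the main (mild) obstacle — is the descent step: one must make sure that the $\langle \beta \rangle$-action on $\tS$ is free and properly discontinuous so that $S \cong \tS / \langle \beta \rangle$ and a fundamental domain exists. This is exactly the content of $\tS$ playing the role of $L$ with $\beta$ the lifted deck transformation fixing $\tS$, as set up at the start of this section, together with the fact that $\beta$ is fixed-point free on $\tS$. With that in hand the argument is routine. An alternative, avoiding fundamental domains altogether, is simply to observe that $\pi|_{\tS} : \tS \to S$ is a covering map, hence both open and closed onto its image in the sense that saturated closed sets descend to closed sets; since $\Fix(G)$ is closed in $\tS$ and $\langle \beta \rangle$-invariant (so $\pi^{-1}(K) \cap \tS = \Fix(G)$ is a closed saturated set), $K = \pi(\Fix(G))$ is closed in $S$.
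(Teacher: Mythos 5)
Your proposal is correct and supplies exactly the details the paper leaves implicit (the paper states the corollary with a bare $\qed$, taking it as immediate from $K=\pi(\Fix(G))$ together with the non-emptiness of $\Fix(G)$ established just above). Non-emptiness is indeed trivial, $\Fix(G)$ is closed in $\tS$ by the intersection argument you give (using that $\Lam$ is closed by \eqref{prop-nonacc}), and $\beta$-invariance of $\Fix(G)$ — which the paper itself uses in the proof of \eqref{lemma-compactFix} — lets the closedness descend through the covering $\tS \to S$; your second, fundamental-domain-free formulation is the cleanest way to phrase that last step.
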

This also completes the proof of \eqref{thm-cmptleaf}.

\begin{cor} \label{cor-KNW}
    $K \cap NW(f|_S)$ is non-empty.
\end{cor}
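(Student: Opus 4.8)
The plan is to show that $K$, which we already know is closed and non-empty by Corollary~\ref{cor-Kclosed}, must meet the non-wandering set of the circle map $f|_S$. The natural first step is to pass to the one-dimensional dynamical system $f|_S : S \to S$ and recall that $S$ is a circle. Since $K = \pi(\Fix(G))$ and $\Fix(G)$ is $\beta$-invariant and $\tf$-invariant (both $\beta$ and $\tf$ restrict to commuting homeomorphisms of $\tS$, with $\beta$ the deck transformation realizing $\tS/\beta \cong S$), the set $K$ is a closed, non-empty, $f|_S$-invariant subset of the circle. So the task reduces to: \emph{a non-empty closed invariant subset of a circle homeomorphism must contain a non-wandering point}.

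The key step is a compactness/recurrence argument for circle homeomorphisms. First I would observe that $f|_S$ is orientation-preserving on $S$, since $f$ preserves the orientation of $E^c$ by the definition of an AB-system, and $S$ is tangent to $E^c$. An orientation-preserving circle homeomorphism either has a periodic point or has irrational rotation number; in the latter case every point is non-wandering (the minimal set is the whole circle or a Cantor set, and in both cases $NW(f|_S) = S$), so $K \cap NW(f|_S) = K \ne \varnothing$ immediately. In the case where $f|_S$ has a periodic point, I would argue as follows: pick any $x \in K$; its forward orbit $\{f^j(x)\}_{j \ge 0}$ lies in $K$ by invariance and in the compact set $S$, so it has an accumulation point $y \in K$. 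An accumulation point of a forward orbit of a homeomorphism of a compact metric space is always non-wandering --- indeed, if $y$ is a limit of $f^{j_k}(x)$ with $j_k \to \infty$, then for any neighborhood $U \ni y$ there are infinitely many $j_k$ with $f^{j_k}(x) \in U$, hence some pair $j_k < j_l$ with $f^{j_k}(x), f^{j_l}(x) \in U$, giving $U \cap f^{j_l - j_k}(U) \ne \varnothing$ with $j_l - j_k \ge 1$. Therefore $y \in K \cap NW(f|_S)$.

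Actually, the argument via the accumulation point is completely uniform and does not even require the circle structure or orientation: any non-empty closed invariant subset $K$ of a homeomorphism of a compact metric space meets $NW$, by the accumulation-point argument above applied with $x \in K$ arbitrary and using that $K$ is closed (so $y \in K$) and invariant (so the whole orbit stays in $K$). The only subtlety --- and the step I expect to require the most care --- is making sure that the topology on $S$ used to define $NW(f|_S)$ and the topology on $\tS$ used to define $\Fix(G)$ are compatibly related through the covering $\pi$, so that $K = \pi(\Fix(G))$ is genuinely a closed invariant subset of the \emph{compact} space $S$; this is where one uses that $S$ is a compact center leaf (a circle) rather than the line $\tS$. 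Once that identification is in place, the recurrence argument is routine and yields $K \cap NW(f|_S) \ne \varnothing$.
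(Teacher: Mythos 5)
Your proposal is correct and takes essentially the same route as the paper: the paper's proof is the one-line observation that $K$ is non-empty, $f$-invariant, and closed, leaving implicit the standard fact (which you spell out via the accumulation-point argument) that a non-empty closed invariant subset of a compact space must meet the non-wandering set. The extra discussion of orientation and rotation number is unnecessary, as you yourself note at the end.
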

\begin{proof}
    $K$ is non-empty, $f$-invariant, and closed.
\end{proof}
\begin{cor} \label{cor-perper}
    $f$ has a compact periodic $us$-leaf if and only if
    $f|_S$ has rational rotation number.
\end{cor}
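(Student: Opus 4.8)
The plan is to translate the statement entirely into the dynamics of $f|_S$ and the group $G$ acting on $\Lam$. Recall $S$ is the invariant center circle, $\tS \cong \bbR$ is its lift, and $\beta:\tS \to \tS$ is the deck transformation associated to the generator $(v,t) \mapsto (Bv,t-1)$; it acts as a fixed-point-free translation-like homeomorphism of $\tS$, and $\tf$ commutes with $\beta$ on $\tS$. The rotation number $\rho$ of $f|_S$ is, by definition, rational if and only if $f|_S$ has a periodic point, i.e.\ if and only if there exist $p \in \tS$ and integers $n \ge 1$, $k$ with $\tf^n(p) = \beta^k(p)$.

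First I would prove the forward direction. Suppose $f$ has a compact periodic $us$-leaf, so $f^n$ fixes it for some $n \ge 1$. This $us$-leaf meets $S$ in a point $x$ with $AC(x)$ compact, so $x \in K$; moreover $f^n(x) \in K$ as well, and since $f^n$ permutes the (countably many, by the lamination structure of Proposition \ref{prop-nonacc} together with compactness) periodic compact $us$-leaves and fixes this one, we get $f^n(x) = x$ on $S$. Lifting: pick $t \in \Fix(G) \subset \tS$ with $\pi(t) = x$ (possible since $K = \pi(\Fix(G))$ by Lemma \ref{lemma-compactFix}). Then $\tf^n(t)$ also projects to $x$, so $\tf^n(t) = \beta^k(t)$ for some $k \in \bbZ$, which exhibits $t$ as a periodic point of the action generated by $\tf$ and $\beta$; hence $f|_S$ has rational rotation number.

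For the converse, suppose $f|_S$ has rational rotation number $k/n$. Then $g := \tf^n \beta^{-k}$ is a homeomorphism of $\tS$ with a fixed point; say $g(q) = q$. The point of this step is to produce a fixed point of $g$ that also lies in $\Fix(G)$, and here I would appeal to Corollary \ref{cor-AIcomfix}: since $g$ commutes with $\tf$ (because $\beta$ and $\tf$ commute on $\tS$), and since $\Fix(G) \ne \varnothing$ was already established in this section — wait; Corollary \ref{cor-AIcomfix} was stated under the hypothesis $\Fix(G) = \varnothing$. So instead, when $\Fix(G) \ne \varnothing$, I argue directly: $\Fix(G)$ is a nonempty closed $\beta$-invariant and $\tf$-invariant subset of $\tS$ (it is $\beta$-invariant because $\beta$ induces an automorphism of $G$, and $\tf$-invariant because $F$ is an automorphism of $G$), hence $g$-invariant, hence $g$-invariant as a closed set; restricting $g$ to the closed set $\Fix(G)$ and using that $g$ has a fixed point on $\tS$ together with the fact (from $\rho(f|_S) = k/n$) that every $g$-orbit on $\tS$ is bounded, I take $t := \sup\{\,s \in \Fix(G) : s \le q'\,\}$ for a suitable $q'$ in the same $g$-bounded orbit component as a fixed point, so that $g(t) = t$ and $t \in \Fix(G)$. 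Then $AC(\pi(t))$ is compact by Lemma \ref{lemma-compactFix}, and $\tf^n(t) = \beta^k(t)$ shows $f^n$ maps this compact $us$-leaf to itself; so it is a compact periodic $us$-leaf.

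The main obstacle is this last step: extracting a common fixed point of $g = \tf^n\beta^{-k}$ and of the whole group $G$ on $\tS$. The rotation-number hypothesis only gives a fixed point of $g$ somewhere on $\tS$, a priori unrelated to $\Lam$ or $\Fix(G)$; one must use that $\Fix(G)$ is a nonempty closed $g$-invariant set and that, since $\tf^n$ and $\beta^k$ are "at bounded distance" when $\rho = k/n$, the map $g$ moves points a bounded amount, so a supremum/infimum argument over the intersection of $\Fix(G)$ with an appropriate bounded $g$-invariant region produces the desired point. Once that fixed point is found, everything else is bookkeeping with Lemma \ref{lemma-compactFix} and the correspondence $K = \pi(\Fix(G))$.
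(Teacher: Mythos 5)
Your proposal is correct in substance and follows the same overall strategy as the paper, namely exploiting the identification $K = \pi(\Fix(G))$ from \eqref{lemma-compactFix} to pass between compact $us$-leaves in $M$ and fixed points of the circle dynamics. However, both directions are handled less efficiently than in the paper, and the forward direction as written contains a gap.

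In the forward direction you try to get from $f^n(X)=X$ to $f^n(x)=x$ by noting that $f^n$ permutes ``the countably many periodic compact $us$-leaves.'' This does not follow and is not used in the paper: the set of periodic compact $us$-leaves need not be countable, and even if it were, fixing the leaf $X$ would not by itself pin down the point $x$. What actually does the work, and what the paper invokes, is the unique-intersection property: since $X$ lifts to $AC(t)$ with $t\in\Fix(G)$, and $AC(t)\cap\tS = Gt = \{t\}$ by global product structure, one gets $X\cap S = \{\pi(t)\}$, a single point. Then $f^n(X)=X$ together with $f(S)=S$ immediately forces $f^n(x)=x$. (Your subsequent lifting computation $\tf^n(t)=\beta^k(t)$ is essentially a rederivation of this fact, but it is stated as a consequence of $f^n(x)=x$, so there is a circularity unless the unique-intersection property is used first.)

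In the converse direction your supremum argument over $\Fix(G)$ does work: $\Fix(G)$ is closed, nonempty, and invariant under both $\beta$ and $\tf$ (hence under $g=\tf^n\beta^{-k}$), $g$ is order-preserving with bounded displacement on $\tS$, so for any $s_0\in\Fix(G)$ the monotone orbit $\{g^j(s_0)\}$ converges to a fixed point of $g$ inside the closed set $\Fix(G)$. But the paper reaches the same conclusion far more economically via \eqref{cor-KNW}: since $K$ is nonempty, closed, and $f|_S$-invariant, $K\cap NW(f|_S)\ne\varnothing$, and for a circle diffeomorphism of rational rotation number $NW(f|_S)$ consists of periodic points, so $K$ contains a periodic point directly. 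Your argument is not wrong, but it re-proves (by hand, on the lift) exactly the content of \eqref{cor-KNW} plus the standard structure of circle maps with rational rotation number. I would recommend correcting the forward direction to use unique intersection, and replacing the converse with the shorter appeal to \eqref{cor-KNW}.
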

\begin{proof}
    As a consequence of \eqref{lemma-compactFix},
    any compact $us$-leaf $X$ in $M$ intersects $S$ in a unique point
    $t$.
    If $f^n(X)=X$ then $f^n(t)=t$ and $f|_S$ has rational rotation
    number.
    If, conversely, $f|_S$ has rational rotation number,
    its non-wandering set
    consists of periodic points, and a compact periodic leaf exists
    by \eqref{cor-KNW}.
\end{proof}
The following is also from the last proof.

\begin{cor} \label{cor-pern}
    All compact periodic $us$-leaves have the same period.\qed
\end{cor}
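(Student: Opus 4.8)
The plan is to transfer the question about periods of $us$-leaves into a question about periods of points of the circle homeomorphism $f|_S$, where it reduces to a classical fact.

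First I would recall the observation already contained in the proof of \eqref{cor-perper}: by \eqref{lemma-compactFix} every compact $us$-leaf $X \subset M$ meets $S$ in exactly one point $t$, and since $f(S)=S$ the image $f^n(X)$ is again a compact $us$-leaf meeting $S$ in the single point $f^n(t)$. Hence $f^n(X)=X$ if and only if $f^n(t)=t$, so the period of the leaf $X$ equals the period of $t$ as a periodic point of $f|_S$. (Implicitly one also uses that a point of $S$ lies on at most one compact $us$-leaf, namely its own accessibility class, which follows from \eqref{prop-nonacc}.) Thus it suffices to show that all periodic points of $f|_S$ share a common period.

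Next I would invoke the hypothesis: since $f$ has at least one compact periodic $us$-leaf, \eqref{cor-perper} gives that $f|_S$ has rational rotation number, say $\tfrac{p}{q}$ in lowest terms. Because $f$ is an AB-system it preserves an orientation of $\Ec$, so $f|_S$ is an orientation-preserving homeomorphism of the circle $S$. For such a map a classical argument shows every periodic orbit has exactly $q$ points: on a periodic orbit, $f|_S$ permutes the cyclically ordered points as a rigid rotation, and a rigid rotation of a finite cyclically ordered set has rotation number $\tfrac{p}{q}$ (in lowest terms) only when the set has $q$ elements. Consequently every periodic point of $f|_S$ has period exactly $q$, and combining with the previous paragraph, every compact periodic $us$-leaf of $f$ has period $q$.

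I do not expect any serious obstacle here: the one-dimensional input (Poincaré's theorem that periodic points of an orientation-preserving circle homeomorphism with rotation number $\tfrac{p}{q}$ all have period $q$) is standard, and the only point requiring care is the identification of the period of the leaf $X$ with the period of the point $X \cap S$, which is exactly the correspondence established in the proof of \eqref{cor-perper}.
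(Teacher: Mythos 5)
Your proof is correct and follows essentially the same route the paper intends: the paper's remark ``The following is also from the last proof'' refers precisely to the observation in the proof of \eqref{cor-perper} that each compact $us$-leaf meets $S$ in a single point, so the period of the leaf equals the period of its intersection point under $f|_S$, and then the classical Poincar\'e fact about periodic orbits of orientation-preserving circle homeomorphisms with rational rotation number gives the uniform period. You have simply made explicit the details that the paper leaves implicit.
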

\begin{lemma} \label{lemma-KSconj}
    If $K = S$, then $f$ on $M$ is topologically conjugate
    to a function
    \begin{math}
        (v,x) \mapsto (Av, \tilde r(x))
    \end{math}
    defined on the manifold
    \[
        M_B = N \times \bbR / (Bv, t) \sim (v, t+1)
    \]
    where $\tilde r:\bbR \to \bbR$
    is a lift of a homeomorphism $r : \bbR/\bbZ \to \bbR/\bbZ$
    topologically conjugate to $f|_S$.
\end{lemma}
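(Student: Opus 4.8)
The hypothesis $K = S$ means that every accessibility class meeting $S$ is a compact $us$-leaf, so $S$ meets each compact $us$-leaf exactly once (by \eqref{lemma-compactFix} and the analysis of the preceding section). The plan is to build the conjugacy leaf-by-leaf using global product structure. First I would work on the universal cover: by the set-up of this section, $\tf(\tS) = \tS$ and $\tS$ plays the role of $L$, with $\Fix(G) = \tS$ (the lift of $K = S$), so for every $t \in \tS$ the accessibility class $AC(t)$ is a non-open $us$-leaf, and by \eqref{lemma-ussu} it equals $\Ws\Wu(t) = \Wu\Ws(t)$. Using global product structure exactly as in the proof of \eqref{lemma-KfixAI}, for each $t \in \tS$ there is a unique homeomorphism $\sigma_t : \tN \to AC(t)$ with $h\sigma_t(v) \in v \times \bbR$; this gives a homeomorphism $\Phi : \tN \times \tS \to \tM$, $(v,t) \mapsto \sigma_t(v)$, which is exactly the leaf conjugacy $h$ followed by a reparametrization of the $\bbR$-coordinate that straightens out the $us$-leaves.

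Next I would check equivariance. Since $AC(t)$ is $f$-invariant as a family (i.e. $\tf(AC(t)) = AC(\tf(t))$), and $hfh\inv = A \times \id$ on center leaves, one gets $\Phi\inv \tf \Phi(v,t) = (Av, \tilde r(t))$ where $\tilde r = \tf|_{\tS}$, a lift of the circle homeomorphism $r := f|_S$ (here we use that $\tf$ preserves the orientation of $\Ec$, hence of $\tS$). For the deck transformation $\beta$ coming from $(v,t) \mapsto (Bv, t-1)$: since $\beta(AC(t)) = AC(g_\beta(t))$ and $\beta(\tS) = \tS$, $g_\beta = \beta|_{\tS}$ is a translation-like map; reparametrizing $\tS$ by a homeomorphism conjugating $\beta|_{\tS}$ to $t \mapsto t-1$ (possible since $\beta|_{\tS}$ is fixed-point free and commutes with $\tf|_{\tS}$), and transporting $\Phi$ accordingly, one arranges $\Phi\inv \beta \Phi(v,t) = (Bv, t-1)$. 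The remaining deck transformations of $\tM$ are, modulo $\beta$, deck transformations of $\tN$ over $N$, and $\Phi$ intertwines them with the obvious action on $\tN$ (this is again the content of the leaf-conjugacy equivariance, as in \eqref{lemma-mainAIthree}). Quotienting, $\Phi$ descends to a homeomorphism $M \to M_B = N \times \bbR/(Bv,t)\sim(v,t+1)$ conjugating $f$ to $(v,x) \mapsto (Av, \tilde r(x))$, and by construction $\tilde r$ is a lift of $r$ topologically conjugate to $f|_S$ (indeed $r$ itself, up to the reparametrization of $\tS$).

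The main obstacle I anticipate is the simultaneous normalization of the two group actions on $\tS$ — the dynamics $\tf|_{\tS}$ and the deck transformation $\beta|_{\tS}$ — so that $\beta|_{\tS}$ becomes the integer translation \emph{and} $\Phi$ remains a homeomorphism. These two maps commute on $\tS$, and $\beta|_{\tS}$ is fixed-point free, so this is the standard fact that a fixed-point-free orientation-preserving homeomorphism of $\bbR$ commuting with another one can be conjugated to a translation with the conjugacy respecting the second map's structure; one must verify the reparametrization glues correctly with the $us$-leaf straightening $\Phi$ and with the $\tN$-coordinate, i.e. that no choice made on $\tS$ obstructs $\pi_1$-equivariance. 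A secondary point to be careful about is continuity of $\Phi$ at $t$ where the $us$-leaf $AC(t)$ varies: this follows, as in \eqref{lemma-ussu}, from continuous dependence of the global-product-structure intersection points on all data, which is where dynamical coherence and global product structure (Theorem \ref{thm-ABGPS}) are essential.
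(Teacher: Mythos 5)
Your proposal is correct and follows essentially the same route as the paper's own proof. The paper picks a homeomorphism $\phi:\tS\to\bbR$ with $\phi\beta(t)=\phi(t)+1$, extends it to $\tM$ by making it constant on accessibility classes, and shows $H\times\phi:\tM\to\tN\times\bbR$ is an equivariant conjugacy that descends to $M\to M_B$; your map $\Phi(v,t)=\sigma_t(v)$ is the inverse of $H\times\phi$ up to the same reparametrization of $\tS$, and your equivariance checks match the paper's.
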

\begin{proof}
    Let $\phi:\tS \to \bbR$ be
    any homeomorphism such that $\phi \beta(t) = \phi(t)+1$ for all $t$.
    Define $\tilde r$ as $\phi \tf \phi \inv$.
    Extend $\phi$ to all of $\tM$ by making it constant on accessibility
    classes.
    As in the proof of \eqref{lemma-mainAIthree},
    let $H:\tM \to \tN$ be the first coordinate
    of the lifted leaf conjugacy $h:\tM \to \tN \times \bbR$.
    Then, the function $H \times \phi:\tM \to \tN \times \bbR$
    gives a topological conjugacy between
    $\tf$ on $\tM$ and $A \times \tilde r$.

    The fundamental group of $M_B$ is generated by deck transformations of the
    form $(v,t) \mapsto (\alpha_N(v),t)$ or $(v,t) \mapsto (B v, t-1)$.
    Using the fact that $\Fix(G) = \tS$ and the definition of $\tilde r$,
    one can then show that $H \times \phi$
    quotients down to a topological conjugacy defined from $M$ to $M_B$.
      \end{proof}
\begin{lemma} \label{lemma-AIinAB}
    Suppose $J \subset S$ is an open interval such
    that $\partial J \subset \Fix(f) \cap K$.
    Then, $f|_{AC(J)}$ is an AI-system.
\end{lemma}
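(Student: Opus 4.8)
This is the analogue for AB-systems of Lemma~\ref{lemma-AIinAI}, and the plan is to run the same construction, now on the universal cover $\tM$ of $M$ and keeping track of the full deck group $\pi_1(M)$ (an extension of $\bbZ$ by $\pi_1(N)$). Since $\partial J\subset\Fix(f)$ is non-empty, $f|_S$ has rotation number zero, so among the lifts of $f$ preserving $\tS$ we may choose $\tf$ so that $\tf|_{\tS}$ has a fixed point; this changes none of $\Lam$, $G$, $\Fix(G)$, or $K$. Lift $J$ to an open interval $\tJ=(\tilde a,\tilde b)\subset\tS$ bounded by the lifts $\tilde a,\tilde b$ of the two endpoints of $J$. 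Then $\tilde a,\tilde b\in\Fix(\tf)$, and since $\partial J\subset K=\pi(\Fix(G))$ with $\Fix(G)$ being $\beta$-invariant, also $\tilde a,\tilde b\in\Fix(G)\subset\Lam$.

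Let $h:\tM\to\tN\times\bbR$ be the lifted leaf conjugacy and put $\tX=\bigcup_{v\in\tN}h\inv(v\times(a_v,b_v))$, where $a_v,b_v$ are determined by $h\inv(v\times a_v)\in AC(\tilde a)$ and $h\inv(v\times b_v)\in AC(\tilde b)$ --- single points, because $f$ has global product structure (Theorem~\ref{thm-ABGPS}). The arguments in the proof of Lemma~\ref{lemma-AIinAI} then apply almost verbatim: $\tX=AC(\tJ)$; it is $s$- and $u$-saturated, simply connected, and meets $\tS$ exactly in $\tJ$; it is $\tf$-invariant because $\tf$ fixes $\tilde a$ and $\tilde b$; it inherits global product structure; and rescaling the $\bbR$-coordinate fiberwise gives a leaf conjugacy between $\tf|_{\tX}$ and $A'\times\id$ on $\tN\times(0,1)$, where $A'$ is the automorphism by which $\tf$ permutes the center leaves. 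Because $\tilde a,\tilde b\in\Fix(G)$, the set $\tX$ is invariant under the subgroup $\pi_1(N)<\pi_1(M)$ and the leaf conjugacy just constructed is $\pi_1(N)$-equivariant.

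The one new ingredient is the descent to $M$. Since $\tX\cap\tS=\tJ$ and $\tJ$ maps homeomorphically onto $J$ under the covering $\pi|_{\tS}:\tS\to S$ (so the translates $\beta^n(\tJ)$ with $n\neq0$ are disjoint from $\tJ$), one checks that $\beta^n(\tX)\cap\tX=\varnothing$ for all $n\neq0$: an accessibility class lying in $\tX\cap\beta^n(\tX)$ would have to meet $\tS$ inside $\tJ\cap\beta^n(\tJ)$, yet accessibility classes always meet $\tS$. Hence $\pi|_{\tX}:\tX\to AC(J)=\pi(\tX)$ is a covering with deck group exactly $\pi_1(N)$; in particular $AC(J)\subsetneq M$, the restriction $f|_{AC(J)}$ lifts to $\tf|_{\tX}$, and passing the $\pi_1(N)$-equivariant leaf conjugacy above to the quotient shows $f|_{AC(J)}$ is leaf conjugate to $A'\times\id$ on $N\times(0,1)$. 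As $f|_{AC(J)}$ is partially hyperbolic, preserves the orientation of $\Ec$, and inherits global product structure, it remains only to see that $A'$ is hyperbolic; but $A'$ is, up to topological conjugacy, the map induced by $f|_{AC(J)}$ on its center-leaf space, which, by matching each center leaf with its unique point on the $f$-invariant compact $us$-leaf $AC(a)$, is identified with the Anosov diffeomorphism $f|_{AC(a)}$ of $AC(a)\cong N$; the theorems of Franks and Manning then force $A'$ to be hyperbolic. So $f|_{AC(J)}$ is an AI-system. I expect the main difficulty to lie in this last paragraph: getting the two ``levels'' of deck transformations to interact correctly so that the only-$\pi_1(N)$-invariant piece $\tX$ descends to a genuine, properly embedded open subset of $M$, and confirming hyperbolicity of the resulting leaf-space automorphism.
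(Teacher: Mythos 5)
Your proposal follows the same route as the paper's proof: lift $J$ to $\tJ$, adjust the lift $\tf$ so that it preserves $\tJ$, show that $AC(\tJ)$ is the region bounded by $AC(\tilde a)$ and $AC(\tilde b)$, observe that it is $\pi_1(N)$-invariant, and descend to $M$. The paper routes this through the intermediate cover $\hM \cong N\times\bbR$ and cites Lemma~\eqref{lemma-AIinAI} directly, whereas you work on $\tM$ and re-derive the content of that lemma inline; the two are equivalent, and your verification that $\beta^n(\tX)\cap\tX=\varnothing$ for $n\neq 0$ is exactly the paper's ``$\tJ$ lies in a fundamental domain of $\tS\to S$'' step unwound.

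There is one genuine addition in your writeup, and it is worth calling out. After replacing $\tf$ by the lift that fixes $\tJ$, the automorphism $A'$ by which this $\tf$ permutes center leaves is no longer the original $A$ but rather $B^{-k}A$ for the integer $k$ with $\tf_0(\tJ)=\beta^k(\tJ)$. The paper's six-line proof simply invokes Lemma~\eqref{lemma-AIinAI} without addressing whether this modified base map is still hyperbolic (the definition of an AI-prototype requires hyperbolicity), and it is not automatic: commuting with a hyperbolic $A$ does not by itself make $B^{-k}A$ hyperbolic. Your resolution --- identify the center-leaf-space dynamics of $f|_{AC(J)}$ with the Anosov diffeomorphism $f|_{AC(a)}$ on the compact $us$-leaf and invoke Franks--Manning to conclude $A'$ is topologically conjugate to a hyperbolic nilmanifold automorphism, hence hyperbolic --- correctly fills this gap.

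One small imprecision: you say ``matching each center leaf with its unique point on the $f$-invariant compact $us$-leaf $AC(a)$,'' but $AC(a)\cap AC(J)=\varnothing$, so a center leaf of $AC(J)$ does not contain a point of $AC(a)$. What you mean is that the closure of each center segment of $AC(J)$ has one boundary point on $AC(a)$, and that this assignment intertwines the center-leaf-space dynamics with $f|_{AC(a)}$; with that phrasing the argument goes through.
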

\begin{proof}
    Let $\tJ$ be a lift of $J$ to $\tS$.
    Then, as $f(J)=J$, $\tf(\tJ) = \beta^k(\tJ)$
    for some $k \in \bbZ$.
    By replacing the lift $\tf$ by $\tf \beta^k$,
    assume, without loss of generality that $\tf(\tJ)=\tJ$.
    As $K=\pi(\Fix(G))$, $\partial \tJ \subset \Fix(\tf) \cap \Fix(G)$,
    and so by \eqref{lemma-AIinAI},
    $AC(\tJ)$ projects to $X$ on $\hM$
    such that the dynamics on $X$ is an AI-system.
    As $\tJ$ is contained in a fundamental domain of the covering $\tS \to S$,
    one can show that $X$ is contained in a fundamental domain of the covering
    $\hM \to M$.
    Therefore, the dynamics on $\pi(AC(\tJ)) = AC(J)$ is an AI-system.
      \end{proof}
We now give a $C^0$ version of \eqref{prop-pproj}.

\begin{lemma} \label{lemma-ctsproj}
    There is a continuous surjection $p:M \to \bbS$
    such that $p|_S$ is a homeomorphism,
    $p|_{\Wc(x)}$ is a covering for any center leaf $\Wc(x)$ ($x \in M$)
    and $p$ is constant on each compact accessibility class.
\end{lemma}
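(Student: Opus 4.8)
The plan is to construct $p$ on the universal cover $\tM\cong\tN\times\bbR$ and then show that it descends. Recall the data already assembled in this section: by \eqref{cor-Kclosed} and \eqref{lemma-compactFix}, the set $K=\{x\in S:AC(x)\subset M\text{ is compact}\}$ equals $\pi(\Fix(G))$ and is closed, non-empty and $f$-invariant; by \eqref{lemma-ussu}, $AC(t)=R\inv(t)$ for every $t\in\Fix(G)$, where $R:\tM\to\tS$ is the retraction of Section~\ref{sec-AIsys}, with $R(x)$ the unique point of $\tS$ for which $\Wu(x)\cap\Ws(R(x))\ne\varnothing$. First I would fix a homeomorphism $\phi:\tS\to\bbR$ with $\phi\beta=\phi+1$, chosen so that the first coordinate $H$ of the lifted leaf conjugacy identifies $\tS$ with $\{e\}\times\bbR\subset\tN\times\bbR$ via $t\mapsto(e,\phi(t))$, where $e\in\tN$ is the common fixed point of $A$ and $B$. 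It then suffices to produce a continuous map $\tilde p:\tM\to\bbR$ with $\tilde p|_{\tS}=\phi$, with $\tilde p\alpha=\tilde p$ for every deck transformation $\alpha$ with $g_\alpha\in G$, and with $\tilde p\beta=\tilde p+1$; for such a $\tilde p$ descends to a map $p:M\to\bbS$ (these deck transformations together with $\beta$ generate $\pi_1(M)$ and the equivariance is consistent), and $p|_S$, the descent of $\phi$, is then a homeomorphism.

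I would build $\tilde p$ in two steps. First, set $\tilde p\equiv\phi(t)$ on $R\inv(t)=AC(t)$ for each $t\in\Fix(G)$, and $\tilde p=\phi$ on $\tS$; these definitions agree on $\Fix(G)=\tS\cap R\inv(\Fix(G))$. Since each $g_\alpha\in G$ fixes $\Fix(G)$ pointwise, $\alpha$ carries $AC(t)$ to itself when $t\in\Fix(G)$, so $\tilde p\alpha=\tilde p$ on this set; and $\beta(AC(t))=AC(\beta t)$ with $\phi(\beta t)=\phi(t)+1$, so $\tilde p\beta=\tilde p+1$ on it. (As $\Fix(G)$ is $\beta$-invariant and $\beta$ acts on $\tS$ conjugate to a unit translation, every connected component of $\tS\setminus\Fix(G)$ is a bounded interval.) Second, for such a component $J=(a,b)$ I would invoke the construction in the proof of \eqref{lemma-AIinAI}: $R\inv(J)=AC(J)$, and rescaling the leaf-conjugacy parameter leafwise yields a homeomorphism $AC(J)\cong\tN\times\phi(J)$ that carries the center foliation to the foliation by the sets $\{v\}\times\phi(J)$, carries each deck transformation $\alpha$ with $g_\alpha\in G$ to $\alpha_N\times\id$, and, after a choice of the rescaling, restricts on $\tS\cap AC(J)=J$ to $t\mapsto(e,\phi(t))$; then set $\tilde p$ on $AC(J)$ equal to the second coordinate of this homeomorphism. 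Making the choice of rescaling on one component in each $\langle\beta\rangle$-orbit of components of $\tS\setminus\Fix(G)$ and transporting it by $\beta$ (legitimate because $\beta$ fixes $e$ and shifts $\phi$ by $1$) makes the whole assignment $\beta$-equivariant; on each slab $AC(J)$ it is invariant under every $\alpha$ with $g_\alpha\in G$; and $\tilde p$ is automatically continuous along $R\inv(\Fix(G))$, because on $AC(J)$ its values lie in $\overline{\phi(J)}$, which collapses to a point as $J$ shrinks, so no discontinuity can arise even when $\Fix(G)$ is a Cantor set.

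Finally I would read off the three asserted properties of the descended map $p:M\to\bbS$. Continuity and the fact that $p|_S$ is a homeomorphism are built into the construction, and $p$ is constant on each compact accessibility class because $\tilde p$ is constant on every lift $AC(t)$ ($t\in\Fix(G)$) of such a class. For a center leaf $\Wc(x)$, its lift $\ell\subset\tM$ is a line and global product structure forces $R|_\ell:\ell\to\tS$ to be a homeomorphism; since $\tilde p$ equals the constant $\phi(t)$ on $R\inv(t)$ for $t\in\Fix(G)$ and maps $\ell\cap AC(J)$ homeomorphically onto $\phi(J)$ for each component $J$, the restriction $\tilde p|_\ell:\ell\to\bbR$ is a homeomorphism; descending, $p|_{\Wc(x)}$ is a covering, and in particular $p$ is surjective. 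I expect the only genuine work to be in the second step: arranging the leaf-conjugacy product structures on the slabs $AC(J)$ compatibly with both the $\langle\beta\rangle$-action and the values already prescribed on $\tS$, and checking that they glue along the (possibly Cantor) family of compact $us$-leaves --- but this should be routine once \eqref{lemma-AIinAI} and global product structure are in hand.
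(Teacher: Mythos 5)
Your argument is correct but takes a genuinely different route from the paper's. You build $\tilde p$ upstairs on the universal cover, parameterize each slab $AC(J)$ by a rescaled leaf conjugacy as in \eqref{lemma-AIinAI}, and then verify equivariance under $\pi_1(M)$ (generated by $\beta$ together with the $\alpha$'s having $g_\alpha \in G$) so that $\tilde p$ descends. One point worth making explicit is that the leafwise rescaling really does carry each such $\alpha$ to $\alpha_N \times \id$: this holds because the endpoint functions $v\mapsto a_v,b_v$ used in \eqref{lemma-AIinAI} are $\alpha_N$-invariant, since $AC(a)$ and $AC(b)$ are $\alpha$-invariant when $a,b\in\Fix(G)$. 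The paper, by contrast, never leaves $M$: it defines $p$ by constant-speed parameterization on $S$, extends to $AC(K)$ by requiring constancy on compact accessibility classes (well defined since each meets $S$ exactly once), fills each component of $\Wc(x)\setminus AC(K)$ by constant speed between the already-prescribed boundary values, and cites transversality of the center foliation to the $us$-lamination for continuity. Your approach makes the descent and the covering property on center leaves transparent and sets up the equivariant framework that reappears in the $C^1$ version \eqref{prop-pproj}, but it requires the extra bookkeeping of choosing compatible rescalings across $\beta$-translates of the slabs; the paper's downstairs construction is shorter and more elementary, at the modest cost of leaving the final continuity check (via transversality) to the reader.
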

\begin{proof}
    Define $p$ on $S$ so that $p|_S$ maps $S$ to $\bbS$ with constant speed
    along $S$.
    Extend $p$ to $AC(K) \cup S$ by making $p$ constant on accessibility
    classes.
    Then, for any center leaf $\Wc(x)$, let $J$ be a connected component
    of $\Wc(x) \setminus AC(K)$
    and define $p$ on $J$ so that $J$ is mapped at constant speed to $\bbS$
    and extends continuously to the boundary $\partial J \subset AC(K)$.
    Transversality of the center foliation and $us$-lamination
    implies that $p$ is continuous.
    The other properties are easily
    verified.
\end{proof}
Compare this short $C^0$ proof
to the $C^1$ proof in Section \ref{sec-regularity}.

We now consider the cases of rational and irrational rotation
of $f|_S$ separately in the next two sections.

\section{Rational rotation} \label{sec-rat} %{{{1

This section proves \eqref{thm-ratAB}.

\begin{assumption}
    Assume $f$ is a non-accessible AB-system with at least one periodic
    compact $us$-leaf.
\end{assumption}
Let $S$, $K$, and other objects be defined as in Section \ref{sec-ABsys}.
By \eqref{cor-pern}, all compact periodic leaves have the same period.
Call this period $n$.
Define $K_n = K \cap \Fix(f^n) \subset S$.
By \eqref{cor-Kclosed}, $K_n$ is closed.
Let $p:M \to \bbS$ be the projection given by \eqref{lemma-ctsproj} and
define $U \subset \bbS$ as $U = \bbS \setminus p(K_n)$.

Note that if $t \notin U$, then $p \inv(t)$ is an $f^n$-invariant compact
$us$-leaf.  Moreover, every such leaf is of this form.
This proves the first part of \eqref{thm-ratAB}.

To prove the rest of the theorem, replace $f$ by its iterate $f^n$
and assume $n = 1$.
The new $f$ is still an AB-system, albeit with a different ``\!$A$'' than before.
Now $K_n = \Fix(f) \cap K \subset S$.
If $I$ is a connected component of $U \subset \bbS$, then
$p \inv(I) \cap S$ is a connected component of $S \setminus K_1$
and \eqref{lemma-AIinAB} implies that $f$ restricted to
$p \inv(I) = AC(\pi(J))$ is an AI-system.
Since $J \cap K_n$ is empty, $AC(J)$ contains no invariant compact $us$-leaves.
Therefore, the AI-system falls into one of the cases given in
\eqref{thm-mainAI}.
As these cases correspond exactly to those given in \eqref{thm-ratAB},
this concludes the proof.

\section{Irrational rotation} \label{sec-irrat} %{{{1

This section proves \eqref{thm-irratAB} and \eqref{thm-irratsemi}.

\begin{assumption}
    Assume $f$ is a non-accessible AB-system with no periodic
    compact $us$-leaves.
\end{assumption}
Let $S$, $K$ and other objects be defined as in Section \ref{sec-ABsys}.
By \eqref{cor-perper}, $f|_S$ has irrational rotation number.

\begin{lemma}
    $NW(f|_S) \subset K$.
\end{lemma}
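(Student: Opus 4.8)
The goal is to show that every non-wandering point of the circle dynamics $f|_S$ lies in the set $K$ of points whose accessibility class in $M$ is compact. The natural approach is contrapositive: take $t \in S \setminus K$ and produce a wandering neighborhood of $t$ inside $S$. Since $S \setminus K$ is open (by \eqref{cor-Kclosed}, $K$ is closed) and $f$-invariant, $t$ lies in a connected component $J$ of $S \setminus K$, and by \eqref{cor-perper} the rotation number of $f|_S$ is irrational, so no point of $J$ is periodic; in particular $f(J) \neq J$ (were $f(J)=J$, the endpoints of $J$ would be fixed points of $f|_S$, contradicting irrationality). Thus the plan is to pass to the cover, invoke the AI-system structure of $f|_{AC(J)}$ — but here we must be careful, since \eqref{lemma-AIinAB} requires $\partial J \subset \Fix(f)$, which we do not have.

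The cleaner route avoids AI-systems and works directly on $\tilde S$. Lift to the universal cover: let $\tilde J$ be a lift of a neighborhood of $t$ in $S$, and recall $\Fix(G) = \pi^{-1}(K) \cap \tilde S$ is a closed, $\tilde f$-invariant, $\beta$-invariant subset of $\tilde S$ (using that $K = \pi(\Fix(G))$, established just before \eqref{cor-Kclosed}). Since $\tilde t \notin \Fix(G)$, it lies in a complementary interval $(a,b) \subset \tilde S$ with $a,b \in \Fix(G)$ (finiteness of the interval uses $\beta$-invariance of $\Fix(G)$, exactly as in the proof of \eqref{lemma-compactFix}). Now $\tilde f$ and the $\beta^k$ permute these complementary intervals; since $f|_S$ is fixed-point free on the component $J$ and has no periodic points there, the interval $(a,b)$ is moved off itself: there is no $k$ with $\tilde f\big((a,b)\big) = \beta^k\big((a,b)\big)$, for otherwise $\tilde f \beta^{-k}$ would fix $(a,b)$ and hence fix its endpoints, giving a periodic point of $f|_S$. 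Projecting down, this says $f^j(J') \cap J' = \varnothing$ for all $j \geq 1$ for a suitable subinterval $J' \ni t$ of $J$ — that is, $J'$ is a wandering domain of $f|_S$, so $t \notin NW(f|_S)$.

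The one genuine subtlety — and the step I expect to be the main obstacle — is passing from ``$(a,b)$ is moved off itself by $\tilde f$ and all $\beta^k$'' to ``some honest subinterval of $J$ is wandering under $f|_S$ downstairs.'' The covering $\tilde S \to S$ identifies $(a,b)$ with possibly many other complementary intervals of $\Fix(G)$ under the $\beta^k$; one must check that, because $f|_S$ has irrational rotation number, the $f|_S$-orbit of the projected open interval meets itself only if some lift-level identification $\tilde f^j = \beta^k g$ forces a periodic point. This is where the classification must be invoked: either one argues directly using that an orientation-preserving circle homeomorphism with irrational rotation number has $NW(f|_S)$ equal to a Cantor set or all of $S$ on which orbits are dense but the complementary intervals wander — so a complementary interval of any closed invariant set strictly larger than a single orbit is itself wandering — or one notes that $NW(f|_S)$ is the unique minimal set, and $K$, being closed, non-empty and invariant, must contain it. I would pursue the latter: cite that a circle homeomorphism with irrational rotation number has a unique minimal set $\Omega$ with $\Omega = NW(f|_S)$, observe $K \cap S$ is closed, non-empty (by \eqref{cor-Kclosed}) and $f$-invariant, hence contains $\Omega = NW(f|_S)$, which is exactly the claim.
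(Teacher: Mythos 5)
Your final argument—that $NW(f|_S)$ is the unique minimal set of the irrational-rotation circle map $f|_S$, while $K$ is closed, non-empty, and $f$-invariant, hence contains that minimal set—is exactly the paper's proof (the paper routes the non-emptiness through \eqref{cor-KNW}, but this is the same observation). The preceding lift-based detour via wandering intervals is unnecessary once the minimality fact is in hand; you correctly identified the clean route and landed on it.
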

\begin{proof}
    For any $C^1$ circle diffeomorphism with irrational rotation,
    the non-wander\-ing set is minimal.
    The result then follows from \eqref{cor-KNW}.
\end{proof}
\begin{lemma}
    If $I$ is a connected component of $S \setminus NW(f|_S)$,
    then $AC(I)$ is a wandering domain.
    That is, the sets $f^k(AC(I)) = AC(f^k(I))$
    are pairwise disjoint for all $k \in \bbZ$.
\end{lemma}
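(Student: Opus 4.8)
The plan is to reduce the statement to a disjointness fact about "slabs'' on the universal cover. First I would observe that the identity $f^k(AC(I)) = AC(f^k(I))$ is automatic, since $f$ carries stable and unstable leaves to stable and unstable leaves, hence accessibility classes to accessibility classes, while $AC(I) = \bigcup_{x \in I} AC(x)$. So the lemma reduces to two assertions: that $AC(I)$ is open (it is clearly non-empty), and that the sets $AC(f^k(I))$, $k \in \bbZ$, are pairwise disjoint. For the latter I would first note that the intervals $f^k(I)$ are themselves pairwise disjoint: since $NW(f|_S)$ is $f$-invariant, each $f^k(I)$ is a connected component of $S \setminus NW(f|_S)$, and distinct components are disjoint; if $f^k(I) = f^j(I)$ for some $k \ne j$, then $f^{k-j}(I)=I$, and because $f|_S$ preserves orientation each endpoint of $I$ is then a periodic point lying in $NW(f|_S) \subset K$, whose accessibility class is a periodic compact $us$-leaf, contradicting the standing assumption. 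Thus it suffices to prove $AC(I_1) \cap AC(I_2) = \varnothing$ for any two distinct components $I_1, I_2$ of $S \setminus NW(f|_S)$, and the argument that does this will also exhibit $AC(I)$ as an open set.

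The core is a description of $AC(I)$, on the universal cover $\tM$, as the slab bounded by the compact $us$-leaves sitting over the endpoints of $I$. I would lift as in Section \ref{sec-ABsys}, so $\tf(\tS) = \tS$ and $\Lam = \{t \in \tS : AC(t) \text{ is not open}\}$, and fix a lift $h:\tM \to \tN \times \bbR$ of the leaf conjugacy. Each endpoint of a component of $S \setminus NW(f|_S)$ lies in $NW(f|_S) \subset K = \pi(\Fix(G))$, so it has a lift in $\Fix(G) \subset \Lam$; by the proofs of \eqref{lemma-ussu} and \eqref{lemma-KfixAI}, the accessibility class of such a lifted point meets every center leaf of $\tM$ in exactly one point, depending continuously on the leaf, so under $h$ it becomes the graph of a continuous map $\tN \to \bbR$ and is in particular a properly embedded hypersurface separating $\tM$ into two pieces. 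Two such graphs are disjoint, being graphs of distinct accessibility classes, hence comparable over the connected base $\tN$; so the compact $us$-leaves meeting $\tS$ are linearly ordered, consistently with the order of their intersection points along $\tS$.

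Given a component $I$ and a lift $\tilde I = (\tilde a, \tilde b) \subset \tS$, I would set $\Omega(\tilde I)$ to be the open region of $\tM$ lying strictly between the graphs of $AC(\tilde a)$ and $AC(\tilde b)$. A stable or unstable leaf is connected and meets a compact $us$-leaf only if it is contained in it, so it cannot cross either bounding hypersurface; hence $\Omega(\tilde I)$ is both $s$- and $u$-saturated, and since it contains $\tilde I$ we get $AC(\tilde I) \subset \Omega(\tilde I)$. Conversely $\Omega(\tilde I) \cap \tS = \tilde I$, every accessibility class meets $\tS$ by global product structure, and once an accessibility class enters $\Omega(\tilde I)$ it cannot leave it (again by the separation property); so $\Omega(\tilde I) \subset AC(\tilde I)$, giving $AC(\tilde I) = \Omega(\tilde I)$ and $AC(\tilde I) \cap \tS = \tilde I$.

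Finally I would descend to $M$. Each bounding leaf $AC(\tilde a)$ is homeomorphic to the nilmanifold $N$, hence carries the whole fiber subgroup $\pi_1(N) < \pi_1(M)$ and is invariant under it; so each slab $\Omega(\tilde I)$ is $\pi_1(N)$-invariant, while the deck transformation $\beta$ acts on $\tS$ as a fixed-point-free homeomorphism with $\beta^k \Omega(\tilde I) = \Omega(\beta^k \tilde I)$. Since $\pi_1(M)$ is generated by $\pi_1(N)$ and $\beta$, this yields $\pi\inv(AC(I)) = \bigcup_{k \in \bbZ} \Omega(\beta^k \tilde I)$; the intervals $\beta^k \tilde I$ are pairwise disjoint components of the lift of $S \setminus NW(f|_S)$ to $\tS$, so by the linear ordering of the compact $us$-leaves the corresponding slabs are pairwise disjoint. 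Hence $\pi\inv(AC(I))$ is a disjoint union of open sets, $AC(I)$ is open, and running the same computation for $I_1 \ne I_2$ shows $\pi\inv(AC(I_1))$ and $\pi\inv(AC(I_2))$ are disjoint, since an overlap would force two slabs around distinct components of the lift of $S \setminus NW(f|_S)$ to meet; therefore $AC(I_1) \cap AC(I_2) = \varnothing$. The step I expect to be the real work is the slab identification in the third paragraph --- knowing that compact $us$-leaves lift to separating graphs over $\tN$ and that such a slab meets $\tS$ exactly in $\tilde I$; the linear ordering of slabs and the covering-space bookkeeping of the last paragraph are then routine.
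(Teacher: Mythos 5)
Your argument is correct and rests on the same core mechanism as the paper's: $AC(\overline I)$ is sandwiched between the compact $us$-leaves over the two endpoints of $\overline I$, each of which meets $S$ in exactly one point, so pairwise disjointness of the intervals $f^k(\overline I)$ forces pairwise disjointness of the sets $AC(f^k(I))$. The paper runs this tersely inside $M$ by observing that the boundary of $AC(\overline I)\cap AC(f^k(\overline I))$ would consist of compact $us$-leaves and deriving a contradiction; you carry out the equivalent geometry more explicitly by lifting to the universal cover and identifying $AC(\tilde I)$ with the open slab between the two lifted bounding leaves, which fills in the separation argument the paper compresses into a sentence but does not change the underlying idea.
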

\begin{proof}
    Let $J$ be the closure of $I$.
    Note that any compact leaf in $AC(J)$ must be of the form $AC(t)$ for some
    $t \in J$.
    By the properties of circle diffeomorphisms, the sets
    $f^k(J)$ are pairwise disjoint.
    By the last lemma, $\partial J \subset K$.
    If $AC(J)$ intersects $AC(f^k(J))$, then
    this intersection has a boundary consisting of compact $us$-leaves.
    Such a compact leaf would intersect $S$
    in a point $t \in J \cap f^k(J)$,
    a contradiction.
\end{proof}
\begin{lemma}
    $NW(f) = AC(NW(f|_S)).$
\end{lemma}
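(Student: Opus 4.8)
The plan is to prove the two inclusions separately, using the structure of the projection $p:M \to \bbS$ from \eqref{lemma-ctsproj} together with the decomposition of $S$ into $NW(f|_S)$ and its wandering intervals.

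First I would establish $AC(NW(f|_S)) \subset NW(f)$. By the previous lemma, $NW(f|_S) \subset K$, so it suffices to show that points in compact accessibility classes (those over $NW(f|_S)$) are non-wandering for $f$. Fix $x$ in such a class; since $p$ is constant on $AC(x)$ and $p|_S$ conjugates $f|_S$ to a minimal circle rotation $r$, the point $t = p(x) \in p(NW(f|_S)) = NW(r)$ returns close to itself under $r$, and hence the compact $us$-leaf $AC(x) = p\inv(t) \cap (\text{its }\pi\text{-image})$ returns close to itself under $f$. More carefully, I would argue that if $x$ lay in a wandering domain $W$, then shrinking $W$ and applying $p$ would produce a wandering interval for $r$, contradicting minimality. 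Since the non-wandering set is closed and saturated by accessibility classes here, and $us$-leaves vary continuously (the lamination of \eqref{prop-nonacc}), recurrence of the leaf follows from recurrence of its $p$-coordinate.

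For the reverse inclusion $NW(f) \subset AC(NW(f|_S))$, the key point is the wandering lemma just proved: if $I$ is a connected component of $S \setminus NW(f|_S)$, then $AC(I)$ is a wandering domain for $f$. Therefore every point of $\bigcup_I AC(I)$ — which is an open set — is wandering, so $NW(f) \subset M \setminus \bigcup_I AC(I)$. It remains to see that $M \setminus \bigcup_I AC(I) = AC(NW(f|_S))$. Since $p|_{\Wc(x)}$ is a covering and $p$ is constant on compact accessibility classes, every point $x \in M$ with $p(x) \in NW(r)$ lies in $AC(p|_S\inv(p(x)))$, i.e.\ in $AC(NW(f|_S))$; and every point with $p(x)$ in some component $I$ of $\bbS \setminus NW(r)$ lies in $AC(I)$. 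This gives $M = AC(NW(f|_S)) \sqcup \bigcup_I AC(I)$ up to the overlap on compact leaves, yielding the inclusion.

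The main obstacle I expect is the first inclusion: showing that a compact $us$-leaf sitting over a recurrent point of the base rotation is genuinely non-wandering for $f$, not merely that its $p$-coordinate recurs. One must rule out the leaf drifting within the fiber $p\inv(t)$ while $t$ recurs — but here $p\inv(t)$ for $t \in \bbS \setminus U$ is exactly a single compact $us$-leaf (by \eqref{lemma-compactFix} and the definition of $U$), so the fiber over a non-wandering base point \emph{is} the leaf, and there is no room to drift; recurrence of $t$ forces recurrence of the leaf. Making this precise — that $f$ maps the compact leaf over $t$ to the compact leaf over $r(t)$ and that these leaves depend continuously on the base point via the $C^0$ lamination — is the technical heart, but it is routine given the machinery already assembled.
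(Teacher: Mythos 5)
Your second inclusion, $NW(f) \subset AC(NW(f|_S))$, is correct and matches the paper: every point outside $AC(NW(f|_S))$ lies in some wandering set $AC(I)$, so it is not in $NW(f)$.

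The first inclusion, $AC(NW(f|_S)) \subset NW(f)$, has a genuine gap, and you have in fact named the very issue you then wave away. You argue that if $x$ lay in a wandering domain $W$, then ``shrinking $W$ and applying $p$ would produce a wandering interval for $r$.'' This does not follow: $W \cap f^n(W) = \varnothing$ does not imply $p(W) \cap p(f^n(W)) = \varnothing$, since $p$ collapses each compact $us$-leaf to a point. It is entirely possible (a priori) for $W$ and $f^n(W)$ to be disjoint while their $p$-images overlap, because $f^n$ can move $x$ a long way \emph{within} a leaf even as the leaf itself returns close to $AC(t)$ as a set. Your closing remark that ``the fiber over a non-wandering base point is the leaf, and there is no room to drift'' confuses recurrence of the leaf as a subset of $M$ with non-wandering of a point inside the leaf; the latter requires that $f^{n}(V)$ meet $V$ for small open $V \ni x$, which is a strictly stronger statement. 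For instance, $f^n$ restricted to the leaf is topologically an Anosov diffeomorphism of a nilmanifold, and a small ball around $x$ certainly need not return near $x$ after an arbitrary number of steps — some additional dynamical input is required.

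That input is exactly what the paper's proof supplies and your proposal lacks: one picks a sequence $n_k$ with $f^{n_k}(t) \to t$, passes to a subsequence so $f^{n_k}(x) \to y \in AC(t)$, takes a small unstable plaque $D \subset V$ through $x$, and observes that $f^{n_k}(D)$ are unstable plaques of growing inner radius around $f^{n_k}(x) \to y$, so $W^u(y) \subset \overline{\bigcup_k f^{n_k}(D)}$. One then invokes density of unstable leaves of the Anosov map $A$ in $N$ — transported by the leaf conjugacy — to conclude $W^u(y)$ is dense in $AC(t)$, hence some $f^{n_k}(D)$ meets $V$. The uniform expansion along $\Eu$ and the minimality of unstable foliations in the compact leaf are the essential ingredients; a purely one-dimensional argument on $\bbS$ cannot substitute for them.
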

\begin{proof}
    The last lemma shows $NW(f) \subset AC(NW(f|_S))$.

    To prove the other inclusion,
    suppose $t \in NW(f|_S)$, $x \in AC(t)$ and $V \subset M$
    is a neighbourhood of $x$.
    There is a sequence $\{n_k\}$ such that $f^{n_k}(t)$ converges to $t$.
    By taking a further subsequence, assume $f^{n_k}(x)$ converges to
    some point $y \in AC(t)$.
    Let $D \subset V$ be a small unstable plaque containing $x$.
    Then $f^{n_k}(D)$ is a sequence of ever larger unstable plaques, and
    \[
        W^u(y) \subset \overline{\bigcup_k f^{n_k}(D)}.
    \]
    Unstable leaves of the Anosov diffeomorphism $A$ are dense in $N$
    \cite{Franks1}.
    Therefore, by the leaf conjugacy, $W^u(y)$ is dense in $AC(t)$.
    This shows that some iterate $f^{n_k}(V)$ intersects $V$.
\end{proof}
Now, let $p:M \to \bbS$ be as in \eqref{lemma-ctsproj}.
We may assume $p|_S$ is a $C^1$-diffeomorphism.
Define $r:\bbS \to \bbS$ by $r p(t)=p f(t)$ for all $t \in S$.
Then, \eqref{thm-irratAB} can be proved from the above lemmas.
As $r$ has irrational rotation number, it is semiconjugate to a rigid
rotation $t \mapsto t + \theta$.  Using this and the leaf conjugacy,
one can prove \eqref{thm-irratsemi} using an argument similar to the proof of
\eqref{lemma-KSconj}.

\section{Proving theorems %{{{1
\eqref{thm-consAB}, \eqref{thm-decomp}, and \eqref{thm-ABNW}} \label{sec-proving}

This section gives the proofs of several of the theorems stated in Section
\ref{sec-results}
based on results proved in other sections.

The proof of \eqref{thm-ABNW} makes use of a result of Brin regarding
transitivity \cite{brin1975trans}.
The following is an extension of this result to the non-compact case,
though the proof is in essence the same.

\begin{prop}
    [Brin] \label{prop-brintrans}
    Suppose $f$ is a partially hyperbolic diffeomorphism
    of a (not necessarily compact)
    manifold $M$.
    If $V$ is open and $f(V)=V \subset NW(f)$,
    then $\overline V = AC(\overline V)$.

    In particular, if $f$ is accessible and $NW(f)=M$, then
    $f$ is transitive.
\end{prop}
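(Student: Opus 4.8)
The plan is to follow Brin's original argument \cite{brin1975trans} and check that it survives the loss of compactness. The heart of the matter is to show that $\overline V$ is both $s$-saturated and $u$-saturated, i.e.\ $\Ws(y) \cup \Wu(y) \subset \overline V$ for every $y \in \overline V$. Granting this, if $x \in \overline V$ then $\overline V$ is a set containing $x$ which satisfies the defining property of $AC(x)$, so $AC(x) \subset \overline V$; taking the union over $x \in \overline V$ gives $AC(\overline V) \subset \overline V$, and the reverse inclusion is immediate, so $\overline V = AC(\overline V)$.

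To prove $u$-saturation I would first establish $\Wu(x) \subset \overline V$ for every $x \in V$ and then pass to $x \in \overline V$ by continuity of the unstable foliation: choosing $x_j \in V$ with $x_j \to x$, the intrinsic $R$-balls $\Wu_R(x_j)$ converge to $\Wu_R(x)$ for each $R>0$, so $\Wu_R(x) \subset \overline V$, and letting $R \to \infty$ exhausts $\Wu(x)$. For $x \in V$ itself, pick $\rho>0$ with $\Wu_\rho(x) \subset V$, using that $V$ is open and that local unstable plaques shrink to their base point. Since $x \in NW(f)$, there are points $q_j \to x$ and integers $n_j \ge 1$ with $f^{n_j}(q_j) \to x$. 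If the $n_j$ stay bounded, a constant subsequence forces $f^m(x)=x$ for some $m \ge 1$, and then $f^{jm}(\Wu_{\rho/2}(x))$, being contained in $f^{jm}(V)=V$ and containing arbitrarily large intrinsic balls about $f^{jm}(x)=x$, already gives $\Wu(x) \subset V$. Otherwise we may assume $n_j \to \infty$. For $j$ large, $q_j$ is close enough to $x$ that $\Wu_{\rho/2}(q_j) \subset V$, hence $f^{n_j}(\Wu_{\rho/2}(q_j)) \subset f^{n_j}(V)=V$. Partial hyperbolicity expands $\Eu$ at a uniform rate, so $f^{n_j}(\Wu_{\rho/2}(q_j))$ contains the intrinsic $R$-ball about $f^{n_j}(q_j)$ as soon as $n_j$ is large, for any fixed $R$; since $f^{n_j}(q_j) \to x$, continuity of unstable manifolds shows $\Wu_R(x)$ is a limit of subsets of $V$, so $\Wu_R(x) \subset \overline V$. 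As $R$ is arbitrary, $\Wu(x) \subset \overline V$.

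Applying the same argument to $f^{-1}$ --- which has the same non-wandering set, for which $V$ is still open and invariant, and whose unstable manifolds are the stable manifolds of $f$ --- yields $\Ws(x) \subset \overline V$ for all $x \in \overline V$. This gives both saturations and hence the first assertion. For the ``in particular'' part, suppose $NW(f)=M$ and $f$ is accessible, and let $U_1,U_2$ be nonempty and open. Apply the first part to $V = \bigcup_{n \in \bbZ} f^n(U_1)$, which is open, $f$-invariant, and contained in $NW(f)=M$: then $\overline V = AC(\overline V)$ is a union of accessibility classes, each equal to $M$ by accessibility, so $\overline V = M$. Thus $\bigcup_{n} f^n(U_1)$ is dense and meets $U_2$, so $f^n(U_1) \cap U_2 \ne \varnothing$ for some $n$; since $f$ is a homeomorphism this is exactly topological transitivity.

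The step I expect to require the most care is the uniform-hyperbolicity input: on the possibly non-compact $M$ one needs a uniform expansion rate on $\Eu$, unstable manifolds depending continuously on the base point over compact pieces, and the fact that forward iteration blows up the size of a fixed unstable plaque. These are automatic for the systems considered here because $M$ is always a lift, or an open invariant subset, of a compact partially hyperbolic system and so inherits all the relevant constants; once they are in hand, the rest of the argument is soft, just as in Brin's compact proof.
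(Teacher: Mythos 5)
Your proof is correct and follows essentially the same approach as the paper's: use non-wandering to push a small unstable plaque forward through large iterates so it sweeps out all of $\Wu(x)$ in $\overline V$, then apply the same to $f^{-1}$ for stable saturation. You spell out a couple of steps the paper leaves implicit (the bounded-$n_j$ periodic case, the passage from $x \in V$ to $x \in \overline V$ by continuity of the foliation, and the derivation of transitivity from the ``in particular''), but the argument is the same.
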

\begin{proof}
    For $\ep > 0$ and $y \in M$, let $\Wu_\ep(y)$ be the set of all points
    reachable from $y$ by a path tangent to $\Eu$ of length less than $\ep$. 

    If $x \in V$, then $x \in NW(f)$ implies there are sequences
    $\{x_k\}$ and $\{y_k\}$ both converging to $x$
    and such that $y_k = f^{j_k}(x_k)$ for some non-zero $j_k \in \bbZ$.
    By swapping $x_k$ with $y_k$ if necessary, assume every $j_k$
    is positive.
    If $j_k$ is bounded, then $x$ is periodic, so we may freely
    assume that $j_k \to +\infty$.
    As $V$ is open, there is $\ep>0$ such that $\Wu_\ep(x_k) \subset V$
    for all large $k$.  The uniform expansion of $\Eu$ implies there is
    $r_k \to \infty$ such that
    \begin{math}
        \Wu_{r_k}(y_k) \subset
        f^{j_k}(\Wu_\ep(x_k)) \subset
        f^{j_k}(V) = V
    \end{math}
    and therefore the entire unstable manifold $\Wu(x)$ lies in the closure
    of $V$.
    This proves $\Wu(\overline V)=\overline V$.
    Similarly, $\Ws(\overline V)=\overline V$ and so
    $AC(\overline V) = \overline V$.
\end{proof}
\begin{proof}
    [Proof of \eqref{thm-ABNW}.]
    By \eqref{prop-brintrans},
    any accessible $f$ satisfies case (1) of \eqref{thm-ABNW}.
    Therefore, assume that $f$ is non-accessible.

    For now, assume $f$ has no periodic compact $us$-leaves,
    so that \eqref{thm-irratAB} holds.
    That theorem, with the assumption $NW(f)=M$,
    implies that $NW(r)=\bbS$ and that every point in $M$
    lies in a compact $us$-leaf.
    This shows that \eqref{lemma-KSconj} holds and
    the $r$ in that lemma can be taken as the same $r$ in \eqref{thm-irratAB}.
    As $NW(r)=\bbS$, $r$ is
    topologically conjugate to a rigid rotation $t \mapsto t + \theta$
    and therefore $f$ satisfies case (2) of \eqref{thm-ABNW}.

    For the remainder of the proof,
    assume $f$ has a periodic compact $us$-leaf,
    so that \eqref{thm-ratAB} holds.
    Let $I$ be a connected component of $U$ and $g: p \inv(I) \to p \inv(I)$
    be as in \eqref{thm-ratAB}.
    The condition $NW(f)=M$ implies $NW(g)=p \inv(I)$.
    This is only possible in the first of the three cases in \eqref{thm-ratAB},
    where $g$ is accessible.
    Then,
    $g$ is transitive by \eqref{prop-brintrans}.

    If $t \in \bbS \setminus U$, then $f^n$ restricted to $p \inv(I)$ is
    topologically conjugate to a hyperbolic nilmanifold automorphism
    and is therefore transitive \cite{Franks1}.
    Hence, if $U$ is non-empty, the third case of \eqref{thm-ABNW} is
    satisfied.

    If $U$ is empty, then every $p \inv(t)$ is an $f^n$-invariant compact
    $us$-leaf and
    \eqref{lemma-KSconj} holds with $r:\bbS \to \bbS$
    topologically conjugate to a rigid rational rotation $t \mapsto t + \theta$.
    This shows that $f$ is in case (2) of \eqref{thm-ABNW}.
      \end{proof}
To prove ergodicity of the components of the decomposition given in
\eqref{thm-decomp},
we use results given in \cite{BW-annals},
\cite{RHRHU-accessibility}, and in the classical work of Birkhoff and Hopf.
These results were formulated for systems on compact manifolds,
but the proofs are local in nature, involving short holonomies along stable
and unstable manifolds.
The results, therefore, generalize to the non-compact
case so long as the measure is still finite.

\begin{prop} \label{prop-ergprops}
    Let $f$ be a homeomorphism of a (not necessarily compact)
    manifold $M$ and
    let $C_0(M)$ be the space of continuous functions $M \to \bbR$
    with compact support.
    Suppose $\mu$ is an invariant measure with $\mu(M)=1$ and
    there is an invariant closed submanifold $S$
    such that $\mu$ is equivalent to Lebesgue measure on $S$.
    \begin{enumerate}
        \item
        For $\phi \in C_0(M)$
        the limits
        \[        
            \phi^s(x) = \lim_{n \to \infty} \frac{1}{n} \sum_{k=1}^n \phi f^k(x) \qandq
            \phi^u(x) = \lim_{n \to \infty} \frac{1}{n} \sum_{k=1}^n \phi f^{-k}(x)
        \]
        exist and are equal $\mu$-almost everywhere.

        \item
        There is a countable set $\{\phi_j\}_{j=1}^\infty \subset C_0(M)$
        (depending only on $M$)
        such that $(f,\mu)$
        is ergodic if and only if
        $\phi_j^s$ and $\phi_j^u$ are constant
        $\mu$-almost everywhere
        for every $j$.
    \end{enumerate}
    Further, suppose $f$ is a $C^2$ partially hyperbolic diffeomorphism
    with one dimensional center.
    \saveenum
    \begin{enumerate}
        \resetenum
        \item
        If $\phi \in C_0(M)$,
        then $\phi^s$ is constant on stable leaves and
        $\phi^u$ is constant on unstable leaves.

        \item
        If $S = M$, $X^s, X^u \subset M$ are measurable,
        and
        \[        
            W^s(X^s)=X^s, \quad
            W^u(X^u)=X^u \qandq
            \mu(X^s \triangle X^u) = 0,
              \]
        then there is $X \subset M$ measurable
        such that
        \[        
            AC(X)=X \qandq \mu(X^s \triangle X) = 0 = \mu(X^u \triangle X).
        \]
        \item
        If $S = M$ and $f$ is accessible, then $(f,\mu)$ is ergodic.
    \end{enumerate}  \end{prop}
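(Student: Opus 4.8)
The plan is to treat the five parts in turn, with parts (1)--(3) following from soft, purely local arguments and part (4) carrying the weight. Part~(1) is Birkhoff's pointwise ergodic theorem, which requires only that $\mu$ be a finite invariant measure and $\phi$ integrable; since $\phi \in C_0(M)$ and $\mu(M)=1$ this applies, and both $\phi^s$ and $\phi^u$ are versions of the conditional expectation of $\phi$ onto the $\sigma$-algebra of $f$-invariant sets (a set being $f$-invariant exactly when it is $f\inv$-invariant), so they agree $\mu$-a.e. For part~(2), I would fix once and for all a countable set $\{\phi_j\} \subset C_0(M)$ dense in the supremum norm; this exists because $M$ is second countable, and it depends only on $M$. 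If $(f,\mu)$ is ergodic then each $\phi_j^s = \int \phi_j\, d\mu$ is constant and $\phi_j^u = \phi_j^s$ a.e. by part~(1). Conversely, the operator $\phi \mapsto \phi^s$ is an $L^1(\mu)$-contraction and $\mu$ is finite, so constancy of the $\phi_j^s$ propagates first to all $\phi \in C_0(M)$ and then, since $C_0(M)$ is dense in $L^1(\mu)$, to the whole invariant $\sigma$-algebra, which gives ergodicity. Part~(3) is the continuity step of the Hopf argument: replacing $\phi^s$ and $\phi^u$ by the everywhere-defined bounded functions $\limsup_n \frac1n \sum_{k=1}^n \phi f^{k}$ and $\limsup_n \frac1n \sum_{k=1}^n \phi f^{-k}$, which agree with them $\mu$-a.e., uniform continuity of $\phi$ together with $\dist(f^k x, f^k y) \to 0$ for $y \in \Ws(x)$ (and the analogous statement on unstable leaves) shows these are constant along the respective leaves. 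None of this uses compactness of $M$.

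Part~(4) is the main point. It is the measurable-saturation lemma underlying the Hopf argument in the partially hyperbolic setting: a set that is (essentially) saturated by both the stable and the unstable foliation is (essentially) saturated by accessibility classes. The argument, as in \cite{BW-annals} and, in the one-dimensional-center case, \cite{RHRHU-accessibility}, starts from $X^s \cap X^u$ and saturates it alternately along stable and unstable leaves; absolute continuity of the stable and unstable foliations --- available since $f$ is $C^2$ --- is what guarantees that each saturation step changes the set only by a $\mu$-null amount, and the hypothesis $\mu(X^s \triangle X^u)=0$ is exactly what makes this bookkeeping close up, so that the limiting accessibility-saturation $X$ is $\mu$-equal to both $X^s$ and $X^u$. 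This is the step where care is needed, and I expect the main obstacle is confirming that nothing global has been invoked: the absolute-continuity constants come from the uniform hyperbolicity bounds, which persist on the non-compact manifolds at hand, and every remaining ingredient (Fubini along the foliations, Lebesgue density points) is local and needs only that the restriction of $\mu$ to a chart be finite. Since part~(4) assumes $S = M$, so that $\mu$ is equivalent to Lebesgue, this is in force and the compact-case proof transcribes directly.

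Part~(5) then follows formally from (1)--(4). For $\phi \in C_0(M)$ and $c \in \bbR$, part~(3) shows that $X^s := \{\limsup_n \tfrac1n\sum_{k=1}^n \phi f^k > c\}$ is stable-saturated and $X^u := \{\limsup_n \tfrac1n \sum_{k=1}^n \phi f^{-k} > c\}$ is unstable-saturated, and by parts~(1) and~(3) these two sets differ by a $\mu$-null set, so $\mu(X^s \triangle X^u)=0$. Part~(4) then produces $X$ with $AC(X) = X$ and $\mu(X \triangle X^s)=0$, and accessibility of $f$ forces $X \in \{\varnothing, M\}$, hence $\mu(X^s) \in \{0,1\}$. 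As $c$ is arbitrary, $\phi^s$ is $\mu$-a.e. constant for every $\phi \in C_0(M)$, in particular for the family $\{\phi_j\}$ of part~(2), and therefore $(f,\mu)$ is ergodic.
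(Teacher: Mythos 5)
Your proposal is correct and follows the same approach as the paper for parts (1) and (2): Birkhoff's theorem and a density argument in $L^1(\mu)$. For parts (3)--(5) the paper is terser than you are; it simply cites \cite{BW-annals} and \cite{RHRHU-accessibility} and notes, as you also correctly observe, that the arguments in those references are local in nature and hence carry over from the compact setting to a finite measure on a non-compact manifold. Your derivation of (5) from (1)--(4) is the standard Hopf argument and is fine. The one place where your expository sketch is imprecise is part (4): the Burns--Wilkinson lemma is not established by literally saturating $X^s \cap X^u$ alternately along stable and unstable leaves, with absolute continuity of $\Ws$ and $\Wu$ making each step null. That naive approach is precisely what fails in the partially hyperbolic setting --- it effectively requires absolute continuity of $\Wcs$ and $\Wcu$, which need not hold --- and the actual argument in \cite{BW-annals} works instead with juliennes and a delicate density-point analysis that never performs a global saturation. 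Since you are citing the lemma rather than re-deriving it, this mischaracterization is not a gap in the proposal, but it is worth knowing that this is where the difficulty of the entire Pugh--Shub program resides.
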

\begin{proof}
    Item (1) is a re-statement of the classic Birkhoff Ergodic Theorem.

    To prove (2), let $\{\phi_j\}$ be a countable set whose
    linear span is dense in $C_0(M)$ with respect to the supremum norm.
    As any function in $C_0(S)$ may be extended to a function in $C_0(M)$,
    the linear span of $\{\phi_j\}$ is dense in $L^1(\mu)$.
    Suppose
    the bounded linear operator $\phi \mapsto \phi^s$ on $L^1(\mu)$
    takes every element of $\{\phi_j\}$ to the
    subspace of constant functions.
    By density, every $\phi \in L^1(\mu)$ is mapped to the same subspace.
    Therefore $(f,\mu)$ is ergodic.
    The converse statement in (2) follows directly from the properties of
    ergodicity.

    Proofs of (3)--(5) can be found in both \cite{BW-annals} and
    \cite{RHRHU-accessibility}.
\end{proof}
\begin{proof}
    [Proof of \eqref{thm-decomp}]
    As $\mu$ is a finite, $f$-invariant measure
    which is equivalent to Lebesgue, $NW(f) = M$
    by Poincar\'e recurrence.
    Let $p$, $n$, and $U$ then be given as in \eqref{thm-ABNW}.
    By \eqref{prop-pproj}, assume
    $p_* \mu = m$
    where $m$ is Lebesgue measure on $\bbS$.
    Without loss of generality, assume $n = 1$.

    For each connected component $I$ of $U$, the set
    $p \inv (I)$ is an accessibility class
    and therefore $(f, \mu_I)$ is ergodic by \eqref{prop-ergprops}
    where $\mu_I$ is as in \eqref{decomp}.

    Let $\{\phi_j\}_{j=1}^{\infty}$ be as in \eqref{prop-ergprops} and for
    $j \in \bbN$ and $q \in \bbQ$
    define
    \begin{math}
        X^s_{j,q} = \{x \in M: \phi_j^s(x) < q \}.
    \end{math}
    Define $X^u_{j,q}$ similarly.
    By items (3) and (4) of \eqref{prop-ergprops},
    there is $X_{j,q} = AC(X_{j,q})$
    equal mod zero to both $X^s_{j,q}$ and
    $X^u_{j,q}$.
    Define a ``bad'' set $Y$
    by
    \[    
        Y = \bigcup_{j,q} \bigl(
        X^s_{j,q} \triangle X_{j,q}
        \cup
        X^u_{j,q} \triangle X_{j,q}
        \bigr)
    \]
    and note that $\mu(Y) = 0$.
    Equation \eqref{decomp} implies that
    there is a ``good'' set $Z \subset \bbS \setminus U$
    such that $U \cup Z$ has full measure in $\bbS$ and
    $\mu_t(Y \cap p \inv(t)) = 0$ for all $t \in Z$
    where $\mu_t$ is given by the decomposition in \eqref{decomp}.
    By \eqref{prop-pproj}, we may further assume that $\mu_t$ is equivalent to
    Lebesgue measure on $p \inv(t)$ for all $t \in Z$.

    As $p \inv(t)$ is an accessibility class,
    every $X_{j,q} \cap p \inv(t)$ is either empty or all of $p \inv(t)$.
    Therefore for $t \in Z$,
    every $X^s_{j,q}$ and $X^u_{j,q}$
    either has $\mu_t$-measure equal to zero or one,
    and item (2) of \eqref{prop-ergprops} implies that $(f, \mu_t)$ is ergodic.
    Thus, modulo a set of measures whose combined support
    has $\mu$-measure zero, every measure in \eqref{decomp} is ergodic.
    This shows that \eqref{decomp} is the ergodic decomposition of $\mu$.
\end{proof}
One might be tempted to prove \eqref{thm-decomp} by arguing that for $t \notin
U$, $f$ restricted to $p \inv(t)$ is an Anosov diffeomorphism and therefore
the invariant measure $\mu_t$ is ergodic.
The problem is that we have only shown that $p \inv(t)$ is a $C^1$ submanifold
of $M$,
which is not enough regularity to conclude ergodicity for an Anosov system.
Hence, the above proof.

\begin{proof}
    [Proof of \eqref{thm-consAB}]
    If $f$ is in case (1) or (3) of \eqref{thm-ABNW},
    it is fairly easy to show that $f$ is also in the corresponding case of
    \eqref{thm-consAB}.
    Therefore, assume $f$ is in case (2) of \eqref{thm-ABNW}.

    If $\theta$ is rational, then $(v,t) \mapsto (Av, t + \theta)$
    is non-transitive and therefore $f$ is not ergodic.

    Suppose $\theta$ is irrational and $f$ is not ergodic.
    Then there are $j \in \bbN$ and $q \in \bbQ$
    such that the sets
    $X^s_{j,q},$
    $X^u_{j,q},$ and
    $X_{j,q}$,
    defined as in the last proof,
    have neither zero measure nor full
    measure with respect to the $f$-invariant measure $\mu$.
    Write $X = X_{j,q}$.
    As $X = AC(X)$, there is $Y \subset \bbS$ such that $X = p \inv(Y)$
    and $p_* \mu = m$ implies that $m(Y)$ is neither zero nor one.
    The condition $p_* \mu = m$ further implies that $p$ gives a semiconjugacy
    from $f$ to a rigid irrational rotation $R_\theta(x) = x + \theta$ on $\bbS$.
    Then, $f(X)=X$ implies $R_\theta(Y)=Y$ which contradicts the ergodicity
    of $(R_\theta, m)$.
      \end{proof}
%The last theorem to prove is \eqref{thm-cons-skew}
%which follows immediately from \eqref{thm-consAB} and \eqref{thm-trivial}.

\section{Regularity} \label{sec-regularity} %{{{1

This section proves \eqref{thm-regularity},
showing that the $us$-lamination of a partially hyperbolic
diffeomorphism is $C^1$ if the center is one-dimensional
and the diffeomorphism is $C^2$.

%TODO: Explain find derivative of holonomies, for isolated and non-isolated
%$us$-leaves.

\begin{prop} \label{prop-niceholo}
    Suppose $f:M \to M$ is a $C^2$
    dynamically coherent partially hyperbolic diffeomorphism
    with one-dimensional center.
    Then any unstable holonomy $h$ inside a $cu$-leaf is $C^1$.
    Moreover, the derivative of $h$ tends uniformly to one as the unstable
    distance between the point $x$ and its image $h(x)$ tends to zero.
\end{prop}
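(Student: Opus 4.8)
The plan is to reduce the statement to a standard bunching/regularity estimate for holonomies between one-dimensional fibers of a $C^2$ dominated splitting. Work inside a single $cu$-leaf $\Wcu(x)$, which is a $C^1$ (indeed $C^2$ along unstable directions, by smoothness of the strong unstable foliation inside $cu$) immersed submanifold foliated by the one-dimensional center leaves and, transversally, by the unstable leaves. The unstable holonomy $h$ between two center curves $\Wc(y)$ and $\Wc(z)$ lying in $\Wcu(x)$ is the map sliding along unstable leaves. Since $\Ec$ is one-dimensional, each center leaf is a $C^1$ curve and $h$ is a homeomorphism between two such curves; the content is its differentiability and the uniform estimate on $Dh$.

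First I would set up coordinates: parametrize $\Wcu(x)$ locally so that center leaves are the vertical curves $\{t\}\times\bbR$ and unstable leaves are graphs over the first coordinate, using that the strong unstable foliation restricted to a $cu$-leaf is a $C^1$ foliation (this is classical — strong foliations are $C^1$ inside the corresponding weak leaf when $f$ is $C^2$, because one only needs regularity of the holonomy along a single invariant direction, not across the whole manifold). Under the dynamics, $f$ maps $\Wcu(x)$ to $\Wcu(f(x))$, carries center leaves to center leaves and unstable leaves to unstable leaves, and the unstable holonomy is equivariant: $h_{f(y),f(z)} = f \circ h_{y,z} \circ f\inv$. The derivative $Dh$ at a point $p$, measured using the center parametrization, satisfies the cocycle relation obtained by differentiating this conjugacy: $Dh(p)$ equals the product over $n$ iterates of ratios of the center Jacobians $\|Df^{-1}|_{\Ec}\|$ evaluated along the two orbits, in the limit. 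Concretely, writing $J^c(w) = \|Df|_{\Ec(w)}\|$, one gets
\[
    Dh(p) = \prod_{n=0}^{\infty} \frac{J^c(f^{-n}(p))}{J^c(f^{-n}(h(p)))}.
\]
The key step is then to show this infinite product converges and tends to $1$ as $\dist_u(p,h(p))\to 0$. Convergence follows from domination: the unstable contraction of $f\inv$ forces $\dist_u(f^{-n}(p), f^{-n}(h(p))) \to 0$ exponentially, while $J^c$ is $C^1$ (here I use $f\in C^2$, so $\log J^c$ is Lipschitz along unstable leaves inside a $cu$-leaf), so $|\log J^c(f^{-n}(p)) - \log J^c(f^{-n}(h(p)))| \le C\,\dist_u(f^{-n}(p),f^{-n}(h(p))) \le C\, \theta^{n}\,\dist_u(p,h(p))$ for some $\theta<1$ and uniform $C$. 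Summing the geometric series gives $|\log Dh(p)| \le \frac{C}{1-\theta}\,\dist_u(p,h(p))$, which yields both the $C^1$ bound and the stated uniform convergence of $Dh$ to $1$. Differentiability of $h$ itself (not just the formal derivative) comes from the same telescoping estimate applied to difference quotients, a routine $C^1$-section argument.

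The main obstacle is justifying the uniform Lipschitz control of $\log J^c$ along unstable leaves inside a $cu$-leaf and the uniform exponential rate $\theta$ — i.e., making precise the "bunching" that $C^2$-regularity plus one-dimensionality of $\Ec$ provides. One has to be careful that the relevant distances are measured intrinsically along unstable leaves within $\Wcu$ (not ambient distances), that the constants are uniform over $M$ by compactness and continuity of the splitting, and that the strong unstable foliation is genuinely $C^1$ within $cu$-leaves so that the coordinate change above is legitimate; this last point is where dynamical coherence is used. Once those uniformities are in hand, the convergence of the product and the estimate $|Dh(x)-1| \to 0$ as the unstable distance between $x$ and $h(x)$ tends to zero are immediate.
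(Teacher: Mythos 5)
Your strategy is the same as the paper's: derive the telescoping product formula for the holonomy Jacobian,
\[
    Dh(p) = \prod_{n=0}^{\infty} \frac{\|T^c_{f^{-n}(h(p))}f\|}{\|T^c_{f^{-n}(p)}f\|},
\]
and estimate its logarithm by a summable series. (The paper cites \S 3 of Pugh--Shub \cite{pughshub1972} for the product formula and the erratum \cite{PSWc} to \cite{PSW} for the $C^1$ regularity of the holonomy itself; you try to get both out of the same estimate, which is a reasonable route.)

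The concrete gap is your claim that $J^c(z)=\|T_zf|_{\Ec_z}\|$, equivalently $\log J^c$, is Lipschitz along unstable leaves inside a $cu$-leaf, which you assert follows from $f\in C^2$ together with $\dim\Ec=1$. That inference is not correct. While $z\mapsto T_z f$ is Lipschitz because $f$ is $C^2$, the center subspace $z\mapsto\Ec_z$ is in general only H\"older continuous --- even when $f$ is $C^\infty$, even restricted to a single $cu$-leaf, and even when $\Ec$ is one-dimensional --- and the H\"older factor in $\Ec$ destroys the Lipschitz modulus of $J^c$. One-dimensionality of the center does not automatically supply the bunching you would need for $\Ec$ to be $C^1$ along unstable leaves; you flag this as ``the main obstacle'' but then assert it is handled by $C^2$ regularity alone, which is not so. The paper sidesteps this by invoking H\"older continuity of $\Ec$ from \cite{HPS}, giving
\[
    \bigl|\log J^c(u) - \log J^c(v)\bigr| \le L\,\dist(u,v)^{\theta}
\]
for some $\theta\in(0,1)$, whence $|\log J_{xy}| \le \sum_n L\bigl[C\mu^{-n}\bigr]^{\theta}\bigl[\dist(x,y)\bigr]^{\theta}$, which is still summable and still tends to zero uniformly as $\dist(x,y)\to 0$. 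That H\"older version is all the proposition needs, so your argument does yield the correct conclusion once the unjustified Lipschitz claim is replaced by the H\"older one; as written, that intermediate step is false.
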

\begin{proof}
    That such a holonomy is $C^1$ is proved in an erratum \cite{PSWc} to the
    paper \cite{PSW}.
    If $y \in \Wu(x)$ and $h$ is the holonomy taking $x$ to $y$,
    then adapting the argument in
    \S3 of \cite{pughshub1972}
    one can show that the norm of the derivative of $h$ at $x$
    is given by
    \[
        J_{x y} =
        \prod_{n=0}^\infty \frac{\|T^c_{f^{-n}(y)}f\|}{\|T^c_{f^{-n}(x)}f\|}
    \]
    where $T^c_z f : \Ec_z \to \Ec_{f(z)}$ is the restriction of the derivative
    $T_z f: T_z M \to T_{f(z)} M$. 
    As $f$ is $C^2$, the derivative $T_z f$ is Lipschitz in $z$ and
    the center bundle $\Ec$ is H\"older by \cite{HPS}.
    Therefore,
    \[
        \log J_{x y}  \le  \sum_{n=0}^\infty L \bigl[\dist(f^{-n}(x), f^{-n}(y))\bigr]^\theta
                     \le  \sum_{n=0}^\infty L \bigl[C \mu^{-n}\bigr]^\theta \bigl[\dist(x,y)\bigr]^\theta
    \]
    for appropriate constants $L, C, \mu > 1$ and $0 < \theta < 1$.
    This shows that $J_{x y}$ tends uniformly to one
    as $\dist(x,y)$ tends to zero.
\end{proof}
\begin{prop} \label{prop-extendg}
    Suppose $f:M \to M$ is a $C^2$
    dynamically coherent partially hyperbolic diffeomorphism
    with one-dimensional center.
    Suppose $L_0 \subset M$ is a compact interval inside a center leaf and
    $g:L_0 \to \bbR$ is $C^1$.
    Then $g$ extends to a $C^1$ function defined on a neighbourhood of\, $L_0$
    which is constant on $us$-leaves.
\end{prop}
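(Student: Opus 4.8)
The plan is to obtain $\tilde g$ by Whitney's $C^1$ extension theorem, with the $us$-leaves built into the closed set on which the $1$-jet of $\tilde g$ is prescribed. First note that if $f$ is accessible there is nothing to prove beyond extending $g$ in some $C^1$ way: a complete submanifold tangent to $\Eu\oplus\Es$ is saturated by stable and unstable leaves, hence contains an accessibility class with empty interior, which is impossible under accessibility, so ``constant on $us$-leaves'' is then vacuous. Thus assume $f$ is non-accessible. By \eqref{prop-nonacc} the non-open accessibility classes --- equivalently the local $us$-leaves --- form a lamination $\mathcal{L}$ with $C^1$ codimension-one leaves tangent to $\Eu\oplus\Es$, and since $L_0$ is transverse to $\mathcal{L}$, by compactness of $L_0$ I can fix a tubular neighbourhood $U$ of $L_0$ that is a flow box for $\mathcal{L}$ with transversal $L_0$: inside $U$ each leaf of $\mathcal{L}$ meets $L_0$ in exactly one point $s(x)$, depending continuously on $x$.

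Let $E\subset U$ be the relatively closed set $L_0\cup\bigcup\{W\cap U:W\in\mathcal{L}\}$ and prescribe on $E$ the value $\bar g=g$ on $L_0$ and $\bar g\equiv g(s(x))$ on the leaf through $x$, together with the derivative $\bar L_x$ equal to the covector that vanishes on $\Eu_x\oplus\Es_x$ and restricts along the (locally oriented) center line $\Ec_x$ to $g'(s(x))\,H_x'(x)$, where $H_x$ is the holonomy of $\mathcal{L}$ carrying the center curve $\Wc(x)$ near $L_0$ onto $L_0\subset\Wc(x_0)$. The point of this choice is that $H_x$ is the restriction to $\mathcal{L}$ of a composition of an unstable holonomy inside a $cu$-leaf and a stable holonomy inside a $cs$-leaf (via the intermediate center curve $\Wcu(x)\cap\Wcs(x_0)$); by \eqref{prop-niceholo}, applied to $f$ and to $f\inv$, each factor is $C^1$ on the full center curves, so $H_x$ extends to a $C^1$ map of center curves, and the ``derivative tends to one'' half of \eqref{prop-niceholo} makes $x\mapsto H_x'(x)$ continuous on $E$ (it is a uniformly convergent product of continuous terms, equal to $1$ on $L_0$). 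Hence $\bar L$ is continuous on $E$ and, on $L_0$, reduces to $g'$ along $L_0$ and $0$ on $\Eu\oplus\Es$, consistently.

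It then remains to verify Whitney's first-order condition in charts, namely $\bar g(y)-\bar g(x)-\bar L_x(y-x)=o(\dist(x,y))$ uniformly for $x,y\in E$. When $x,y$ lie on one leaf of $\mathcal{L}$, or both on $L_0$, the secant $y-x$ is, up to $o(\dist(x,y))$, tangent to $\Eu_x\oplus\Es_x$ (respectively to $TL_0=\Ec_x$), and the estimate follows at once from $\bar g$ being constant on the leaf (respectively from $g\in C^1$). The essential case is $x$ on a leaf through $s$ and $y$ on a leaf through $t\ne s$: using the flow-box structure I slide $y$ along its leaf to the point $y'$ on the center curve $\Wc(x)$, which changes $\bar g$ not at all and $\bar L_x(\cdot)$ by $o(\dist(x,y))$, so it suffices to compare $x$ and $y'$ on $\Wc(x)$. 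There, writing $H_x(x)=s$ and $H_x(y')=t\in L_0$ and using that $H_x$ is $C^1$ on $\Wc(x)$ and $g$ is $C^1$ on $L_0$, I get $\bar g(y')-\bar g(x)=g(t)-g(s)=g'(s)\,H_x'(x)\,\ell+o(\dist(x,y))=\bar L_x(y'-x)+o(\dist(x,y))$, where $\ell$ is the signed center arclength from $x$ to $y'$. Whitney's theorem then produces a $C^1$ function $\tilde g$ on a (possibly smaller) neighbourhood of $L_0$ with $\tilde g|_E=\bar g$; in particular $\tilde g$ extends $g$ and is constant on each $us$-leaf meeting the neighbourhood.

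The step I expect to be the genuine obstacle is this last one, in two respects. First, the naive candidate derivative --- vanishing on $\Eu\oplus\Es$ and equal to $g'(s(x))$ on $\Ec_x$ \emph{without} the Jacobian $H_x'(x)$ --- fails: controlling how the leaves ``spread'' away from $L_0$ then requires more than the continuity of $\Eu\oplus\Es$ and the bare $C^0$ transverse regularity of $\mathcal{L}$, whose modulus can be too weak to dominate $\dist(x,y)$. Incorporating $H_x'(x)$ and using that \eqref{prop-niceholo} gives $H_x'(x)=1+O(\dist(x,\Wc(x_0))^\theta)$, not merely continuity, is exactly what yields the uniform first-order control needed for Whitney. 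Second, all of this must be made uniform in the leaf, across the ``gaps'' $L_0\setminus\mathcal{L}$, and in ambient charts where $\Wcu$, $\Wcs$ and the strong foliations are only H\"older; organizing this bookkeeping so that every $o(\dist(x,y))$ is genuinely uniform is the technical core of the proof.
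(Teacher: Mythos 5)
Your proposal is correct and takes essentially the same route as the paper: both prescribe on the union of $L_0$ with the nearby $us$-plaques a candidate $1$-jet that vanishes on $\Eu\oplus\Es$ and equals $g'$ times the transverse holonomy Jacobian along $\Ec$ (your $H_x'(x)$ is exactly the paper's $D(x)$), use Proposition \eqref{prop-niceholo} for the continuity of this Jacobian and the uniform first-order estimate, and invoke Whitney's extension theorem. The paper's only cosmetic simplification is to normalize $g$ to center arclength at the outset so that $g'\equiv 1$ and the candidate derivative is just $D(x)$.
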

\begin{proof}
    Without loss of generality, assume $g$ is defined so that $|g(x)-g(y)|$ is
    the arc length of the center segment between $x$ and $y$.
    Any other $C^1$ function on $L_0$ can be constructed by composition
    with this specific $g$.

    By local product structure and the compactness of $L_0$,
    one may construct a compact set $C \subset M$ containing $L_0$
    with the following properties{:}
    \begin{itemize}
        \item
        The interior of $C$ contains the (one-dimensional) interior
        of $L_0$.
        \item
        If $\Wc(x)$ is a center leaf, then every connected component of
        $\Wc(x) \cap C$ is a compact interval, called a ``center segment.''
        \item
        If $AC(y)$ is a $us$-leaf, then every connected component of
        $AC(y) \cap C$ is a compact set homeomorphic to a closed ball and
        called a ``$us$-plaque.''
        \item
        Each center segment intersects each $us$-plaque in exactly one point.
        \item
        $L_0$ is a center segment.
    \end{itemize}
    By a $C^1$ change of coordinates, assume that $C \subset \bbR^d$.

    Let $\Sigma \subset C$ be the union of all $us$-plaques,
    and $\Sigma' \subset \Sigma$ the union of all $us$-plaques which
    are accumulated on by other $us$-plaques.
    If $x \in \Sigma'$,
    define
    \[
        D(x) =
        \lim_{n \to \infty} \frac{\| \sigma_n \cap L_0 - \sigma \cap L_0\|}
        {\| \sigma_n \cap L - \sigma \cap L\|}
    \]
    where $L$ is the center segment through $x$,
    $\sigma$ is the $us$-plaque through $x$,
    and $\sigma_n$ are $us$-plaques converging to $\sigma$.
    By \eqref{prop-niceholo},
    this limit exists, is independent of the sequence $\sigma_n$ tending to
    $\sigma$, and is non-zero.
    The $C^1$ regularity of the holonomies also implies that
    if $\rho_n$ is another sequence of $us$-plaques converging to $\sigma$,
    then
    \[
        D(x) =
        \lim_{n \to \infty} \frac{\| \sigma_n \cap L_0 - \rho_n \cap L_0\|}
        {\| \sigma_n \cap L - \rho_n \cap L\|}
    \]
    so long as $\sigma_n  \ne  \rho_n$ for large $n$.
    Further, by \eqref{prop-niceholo}, the ratio $D(L_1 \cap \sigma)/D(L_2 \cap \sigma)$
    tends
    uniformly to one as $\dist(L_1,L_2)$ tends to zero.
    As $D$ is continuous when restricted to each center segment and uniformly
    continuous on each $us$-plaque $\sigma$,
    it is therefore continuous on all of $\Sigma'$.
    Define $D(x)=1$ for all $x \in L_0$ and note that this agrees with the
    above definition on the intersection $\Sigma' \cap L_0$.
    Then, choose a continuous positive extension $D:\Sigma \cup L_0 \to \bbR$.

    Also extend $g:L_0 \to \bbR$ to a function
    $g:\Sigma \cup L_0 \to \bbR$ by making it constant on each $us$-plaque.
    To further extend $g$ to a $C^1$ function on all of $C$,
    we will define for each point $x \in \Sigma \cup L_0$ a candidate derivative
    $dg_x:\bbR^d \to \bbR$
    and show that Whitney's extension theorem applies.
    Choose an orientation for $\Ec$ and
    for each $x \in \Sigma \cup L_0$,
    let $v^c_x$ be the unique oriented unit vector in $\Ec_x$.
    Define $dg_x$ as the unique linear map such that
    $dg_x(v^c_x) = D(x)$ and $\ker dg_x = \Eu_x \oplus \Es_x$.
    As both $D(x)$ and the splitting $\Eu_x \oplus \Ec_x \oplus \Es_x$
    are continuous in $x$, the linear map $dg_x$ is continuous in $x$.

    Define the function $R: C \times C \to \bbR$ by
    \[
        R(x_n,y_n) = \frac{1}{\|y_n-x_n\|}{\Bigl(g(y_n)-g(x_n) - dg_{x_n}(y_n-x_n)\Bigr)}.
    \]
    To apply Whitney's extension theorem, one needs to show that for any two
    sequences $\{x_n\}_{n=1}^\infty$ and $\{y_n\}_{n=1}^\infty$ with $\|x_n-y_n\|$ converging to zero,
    the sequence $R(x_n,y_n)$
    also converges to zero.
    If this does not hold, there are sequences $\{x_n\}$ and $\{y_n\}$
    so that $R(x_n,y_n)$ is bounded away from zero.
    Therefore, without loss of generality, one may replace these sequences by
    subsequences and assume $x_n$ and $y_n$ both converge
    to a point $q \in C$.
    We will also restrict to further subsequences as necessary later in the
    proof.

    We prove the convergence in progressively more general cases.

    \textbf{Case 1.\ }
    First, assume $x_n$, $y_n$, and $q$ are all on the same center segment $L  \ne  L_0$.
    Let $\sigma_n$, $\rho_n$ and $\sigma$ be such that
    \[
        \sigma_n \cap L = x_n, \quad
        \rho_n \cap L = y_n, \quad \text{and} \quad
        \sigma \cap L = q.
    \]
    If $\sigma \notin \Sigma'$, then $x_n=y_n=q$ for large $n$.
    Therefore, assume $\sigma \in \Sigma'$.
    Then,
    \[
        \lim_{n \to \infty} \frac{g(y_n)-g(x_n)}{\|x_n-y_n\|} =
        \lim_{n \to \infty} \frac{\| \sigma_n \cap L_0 - \rho_n \cap L_0\|}
        {\| \sigma_n \cap L - \rho_n \cap L\|} =
        D(q).
    \]
    As both the candidate derivative $dg_x$
    and the center direction $v^c_x$ are continuous in $x$,
    \begin{align*}
        \lim_{n \to \infty} \frac{1}{\|y_n-x_n\|}\,dg_{x_n}(y_n-x_n)
        &= \bigl(\lim_{n \to \infty} dg_{x_n}\bigr)
        \bigl(\lim_{n \to \infty} \frac{y_n-x_n}{\|y_n-x_n\|}\bigr) \\
        &= dg_{q}(v^c_q)
        = D(q).
    \end{align*}
    Therefore, $\lim_{n \to \infty} R(x_n, y_n) = D(q)-D(q)=0$.

    \textbf{Case 2.\ }
    Now, consider the case where
    $x_n$ and $y_n$ are on the same center segment $L_n$ for each $n$.
    Define $x^c_n$ to be on the same $us$-plaque as $x_n$
    and the same center segment
    as $q$.  Define $y^c_n$ similarly.
    Then,
    \[
        g(x_n) - g(y_n) = g(x^c_n) - g(y^c_n).
    \]
    By \eqref{prop-niceholo},
    \[
        \lim_{n \to \infty} \frac{\|y_n - x_n\|}{\|y^c_n - x^c_n\|} = 1.
    \]
    Thus,
    \[
        \lim_{n \to \infty} \frac{g(y_n)-g(x_n)}{\|y_n-x_n\|} =
        \lim_{n \to \infty} \frac{g(y^c_n)-g(x^c_n)}{\|y^c_n-x^c_n\|} =
        D(q)
    \]
    where the last equality is by the previous case.  As before,
    \[    
        \lim_{n \to \infty} \frac{1}{\|y_n-x_n\|}\,dg_{x_n}(y_n-x_n)
        = dg_{q}(v^c_q)
        = D(q)
    \]
    and therefore $\lim_{n \to \infty} R(x_n, y_n) = 0$.

    \textbf{Case 3.\ }
    Now consider $x_n$ and $z_n$ as general sequences in $\Sigma$ converging to
    $q$.  Define $y_n$ as the unique point lying on the same center segment as $x_n$
    and the same $us$-plaque as $z_n$.
    By taking subsequences, assume
    \[
        \lim_{n \to \infty} \frac{z_n - y_n}{\|z_n - y_n\|}
    \]
    exists.
    By continuity of the partially hyperbolic splitting, this limit is in
    $\Eu_q \oplus \Es_q$.
    Therefore,
    \[
        \lim_{n \to \infty} \frac{1}{\|z_n-y_n\|}\,dg_{x_n}(z_n-y_n)
        = \bigl(\lim_{n \to \infty} dg_{x_n}\bigr)
        \bigl(\lim_{n \to \infty} \frac{z_n-y_n}{\|z_n-y_n\|}\bigr)
        = 0
    \]
    implying, with $g(z_n)=g(y_n)$, that
    \[
        \lim_{n \to \infty}
        \frac{1}{\|z_n-y_n\|} \bigl(g(z_n)-g(y_n)-dg_{x_n}(z_n-y_n)\bigr)
        = 0.
    \]
    By transversality of the foliations, there is a constant $c_1 > 0$ such
    that
    \begin{math}
        \|z_n - x_n\|  \ge  c_1 \|z_n - y_n\|
    \end{math}
    and therefore
    \[
        \lim_{n \to \infty}
        \frac{1}{\|z_n-x_n\|} \bigl(g(z_n)-g(y_n)-dg_{x_n}(z_n-y_n)\bigr)
        = 0
    \]
    as well.
    Again by transversality, there is $c_2>0$ such that
    \begin{math}
        \|z_n - x_n\|  \ge  c_2 \|y_n - x_n\|
    \end{math}
    and therefore by the previous case
    \[
        \lim_{n \to \infty}
        \frac{1}{\|z_n-x_n\|} \bigl(g(y_n)-g(x_n)-dg_{x_n}(y_n-x_n)\bigr)
        = 0.
    \]
    Added together, these limits show that
    $\lim_{n \to \infty} R(x_n,z_n) = 0$.

    \textbf{Case 4.\ }
    Now consider the case where $x_n \in L_0$ and $z_n \in \Sigma$ for all $n$.
    Define $y_n$ from $x_n$ and $z_n$ exactly as in the last case.
    Then,
    \begin{align*}
        R(x_n,z_n) =
        &\frac{1}{\|z_n-x_n\|} \bigl(g(z_n)-g(y_n)-dg_{x_n}(z_n-y_n)\bigr)
        + \\
        &\frac{1}{\|z_n-x_n\|} \bigl(g(y_n)-g(x_n)-dg_{x_n}(y_n-x_n)\bigr)
    \end{align*}
    and, similar to the previous case, both summands can be shown to converge
    to zero.
    The case $x_n \in \Sigma$ and $z_n \in L_0$ is almost identical.

    \textbf{Case 5.\ }
    If both $\{x_n\}$ and $\{z_n\}$ are in $L_0$, then
    $\lim_{n \to \infty} R(x_n,z_n) = 0$
    simply by the fact that $g$ is $C^1$ when restricted to $L_0$.

    \textbf{The general case.\ }
    The final case to consider is where $\{x_n\}$ and $\{z_n\}$ are general
    sequences in $X = \Sigma \cup L_0$.
    By taking subsequences, one can assume each sequence lies either 
    entirely in $L_0$ or entirely in $\Sigma$ and therefore reduce to a
    previous case.
\end{proof}
We now prove the following restatement of \eqref{thm-regularity}.

\begin{cor}
    If $f:M \to M$ is a non-accessible, partially hyperbolic $C^2$
    diffeomorphism with
    one-dimensional center, the non-open accessibility classes form a $C^1$
    lamination.
    That is, around any point $x \in M$ there is a neighbourhood $V$
    and functions $g:V \to \bbR$ and $\psi:V \to \bbR^{d-1}$
    such that $g \times \psi$ is a $C^1$ embedding 
    and if $AC(y)$ is a $us$-leaf
    and $\sigma$ a connected component of $AC(y) \cap V$,
    then $\sigma = g \inv(t)$ for some $t \in \bbR$.
\end{cor}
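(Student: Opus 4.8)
The plan is to reduce the corollary almost entirely to Proposition~\eqref{prop-extendg}, which is the technical core. Fix $x \in M$. First I would take a small compact center segment $L_0$ through $x$ (an integral curve of $\Ec$, transverse to the $us$-lamination of \eqref{prop-nonacc}) and let $g_0 : L_0 \to \bbR$ be arc length measured from $x$, so that $g_0$ is $C^1$ with non-vanishing derivative along $L_0$. Applying \eqref{prop-extendg} produces a $C^1$ function $g$ defined on a neighbourhood of $L_0$, constant on each $us$-leaf, and restricting to $g_0$ on $L_0$. This $g$ will be the first coordinate of the desired chart; note that the construction makes no use of whether $AC(x)$ itself is open or not, so the conclusion will hold (vacuously, where there are no $us$-leaves nearby) for every $x \in M$.

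Next I would promote $g$ to a $C^1$ submersion near $x$ and build the transverse coordinates. By the construction in the proof of \eqref{prop-extendg}, the candidate derivative satisfies $dg_x(v^c_x) = D(x) = 1$ since $x \in L_0$, and $\ker dg_x = \Eu_x \oplus \Es_x$; in particular $dg_x \neq 0$, so by continuity of the derivative the $C^1$ function $g$ is a submersion on a neighbourhood of $x$. Choosing any $C^1$ map $\psi$ into $\bbR^{d-1}$ whose differential at $x$ is an isomorphism onto a complement of $\Eu_x \oplus \Es_x$, the map $g \times \psi$ is an immersion at $x$ between manifolds of the same dimension, hence a $C^1$ embedding on a sufficiently small neighbourhood $V$; shrinking $V$ further, each level set $g\inv(t) \cap V$ is a connected $C^1$ submanifold of codimension one.

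Finally I would identify these level sets with the accessibility classes. Since $g$ is constant on $us$-leaves, any connected component $\sigma$ of $AC(y) \cap V$ with $AC(y)$ a $us$-leaf satisfies $\sigma \subset g\inv(t)$ for $t = g(\sigma)$. By \eqref{prop-nonacc}, $AC(y)$ is a complete codimension-one $C^1$ submanifold, hence $\sigma$ is a codimension-one submanifold which is closed in $V$; therefore $\sigma$ is simultaneously open (being a codimension-one submanifold contained in another) and closed in the connected codimension-one submanifold $g\inv(t) \cap V$, which forces $\sigma = g\inv(t)$. This is exactly the chart $g \times \psi$ asserted in the statement.

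The main obstacle is entirely contained in \eqref{prop-extendg}: producing a $C^1$ — not merely $C^0$ — function constant on $us$-leaves requires the holonomy estimate of \eqref{prop-niceholo} together with the case-by-case verification of the hypotheses of Whitney's extension theorem along pairs of sequences approaching the center segment from different directions. Granting that, everything above is soft point-set topology and linear algebra; the only genuine point to watch is that the fibres of $g$ near $x$ are \emph{entire} $us$-plaques rather than proper pieces of them, which is precisely where the completeness of the $us$-leaves from \eqref{prop-nonacc} and their transversality to $\Ec$ enter.
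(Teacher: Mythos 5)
Your proposal is correct and follows essentially the same route as the paper's proof: both invoke \eqref{prop-extendg} to get a $C^1$ function constant on $us$-plaques whose derivative at $x$ has kernel $\Eu_x\oplus\Es_x$, then pair it with a complementary $\psi$ and shrink to get an embedding. The only cosmetic difference is that the paper starts from a full coordinate chart $\phi\times\psi$ and matches $g$ to $\phi$ on a center segment, whereas you start from arc length on $L_0$ and build $\psi$ afterwards; your extra open-and-closed argument identifying the level sets of $g$ with the $us$-plaques is a point the paper leaves implicit.
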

\begin{proof}
    Define a coordinate chart $\phi \times \psi:V \to \bbR \times \bbR^{d-1}$
    such that the kernel of 
    the derivative $d \phi : T_x M \to \bbR$ at
    $x$ is equal to $\Eu_x \oplus \Es_x$.  
    By \eqref{prop-extendg}, after replacing $V$ by a subset, there is a $C^1$
    function $g:V \to \bbR$ constant on $us$-plaques and such that $g$ and $\phi$
    are equal on a center segment through $x$.
    Then, the derivative of $g \times \psi$ is invertible at $x$ and so, after
    again replacing $V$ by a subset, $g \times \psi$ is the desired $C^1$
    embedding.
\end{proof}
We now proceed to prove \eqref{prop-pproj}.
Recall the definition of an AI-system from Section \ref{sec-AIsys}.

\begin{prop} \label{prop-leafnbhd}
    Let $f:\hM \to \hM$ be a $C^2$ AI-system
    and $X \subset \hM$ a compact $us$-leaf.
    Then, there is a neighbourhood $V$ of $X$,
    an open subset $U \subset (0,1)$ and
    function $p:V \to (0,1)$ and $\psi:V \to X$
    such that $p \times \psi$ is a
    $C^1$
    diffeomorphism
    and the compact $us$-leaves in $V$ are exactly of the form
    $p \inv(t)$ for $t \notin U$.

    Moreover, $p$ restricted to each center segment
    \,$L \subset V$ is a $C^1$ diffeomorphism.
\end{prop}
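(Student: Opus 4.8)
The plan is to produce $p$ by applying \eqref{prop-extendg} along center segments covering $X$ and patching the results, while the topology of the product neighbourhood comes from global product structure together with the leaf conjugacy. First I would fix the $C^0$ picture. Since $f$ is an AI-system it has global product structure and is leaf conjugate to $A \times \id$ on $N \times I$, and the leaf conjugacy carries center leaves to the fibres $v \times I$; hence every $us$-leaf is a section of the center foliation and the compact leaf $X$ is homeomorphic to $N$. Two disjoint $us$-leaves that are $C^0$-close to $X$ are graphs over $N$ and are therefore ordered, so a sufficiently small neighbourhood $V$ of $X$ is transversally ordered by the $us$-leaves it meets; shrinking $V$, I may assume that each center leaf crosses $V$ in an interval and that every compact $us$-leaf meeting $V$ lies in $V$. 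By \eqref{prop-nonacc} the non-open accessibility classes form a lamination $\mathcal L$ of $V$, the components of $V \setminus \mathcal L$ being open accessibility classes. This already gives a continuous surjection $V \to (0,1)$ whose fibres over the closed set of parameters meeting $\mathcal L$ are exactly the $us$-leaves in $V$, and which restricts to a homeomorphism on each center segment; let $U \subset (0,1)$ be the open set of parameters whose fibre is not a $us$-leaf.

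Next I would turn a center segment into a $C^1$ transversal. Fix $q_0 \in X$, a center segment $L_0$ through $q_0$ crossing $V$, and a $C^1$ diffeomorphism $g_0 : L_0 \to (0,1)$ with $g_0(q_0) = \frac12$; by \eqref{prop-extendg} it extends to a $C^1$ function, constant on $us$-leaves, on a neighbourhood of $L_0$. To propagate over all of $X$ I would cover $X$ by finitely many center segments $L_1,\dots,L_m$ and transport $g_0$ to $C^1$ parametrisations $g_i : L_i \to (0,1)$ along the holonomy of the plaque structure. The point that makes this work is that this holonomy between center segments is $C^1$: by \eqref{thm-regularity} the lamination $\mathcal L$ has $C^1$ charts $g \times \psi$ with $\ker dg = \Eu \oplus \Es$, so $g$ restricts to a $C^1$ diffeomorphism on any center segment, and chaining finitely many such charts across the compact leaf $X$ expresses the holonomy $L_i \to L_j$ as a finite composition of $C^1$ diffeomorphisms of intervals. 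Applying \eqref{prop-extendg} to each pair $(L_i, g_i)$ then yields $C^1$ functions $p_i$ on neighbourhoods $W_i$ which cover $V$, are constant on $us$-leaves, and agree on each common $us$-plaque.

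Then I would patch. Choosing the extensions so that the $p_i$ also agree on the open accessibility classes — done by transporting, within each component of $V \setminus \mathcal L$, one transverse coordinate off $L_0$ before invoking \eqref{prop-extendg} — the $p_i$ glue to a single $C^1$ function $p : V \to (0,1)$, constant on $us$-leaves, with $p|_L = g_0 \circ (\text{holonomy } L \to L_0)$ on each center segment $L$, hence a $C^1$ diffeomorphism there. In particular $dp$ never vanishes, so $p$ is a $C^1$ submersion whose fibres are compact $C^1$ submanifolds (a $us$-leaf when the parameter lies outside $U$, a section of the center foliation otherwise — both compact since $V$ was chosen saturated by its compact $us$-leaves). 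By the $C^1$ version of the Ehresmann fibration theorem over the contractible base $(0,1)$, $p$ is a trivial $C^1$ bundle, so for a suitable $C^1$ map $\psi$ the pair $p \times \psi : V \to (0,1) \times X$ is a $C^1$ diffeomorphism, with $X = p\inv(\frac12)$. The compact $us$-leaves in $V$ are exactly the $p\inv(t)$ with $t \notin U$, and $p$ restricts to a $C^1$ diffeomorphism on each center segment, which is the assertion.

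The hard part will be the patching just above: \eqref{prop-extendg} controls its output only on the $us$-plaques and on the single center segment it starts from, so the local functions $p_i$ automatically agree only along the $us$-lamination, not across the open accessibility classes sitting between its leaves. One must therefore either organise the Whitney extensions inside \eqref{prop-extendg} so that within each component of $V \setminus \mathcal L$ all the $p_i$ arise from transporting one transverse coordinate off $L_0$, or instead glue by a $C^1$ partition of unity $\{\lambda_i\}$ subordinate to $\{W_i\}$ and check that $\sum_i \lambda_i p_i$ — a convex combination of functions that restrict to strictly increasing maps of each center segment and agree on the $us$-plaques — is still strictly monotone along every center segment. The rest is a routine assembly of \eqref{prop-nonacc}, \eqref{prop-extendg}, \eqref{thm-regularity}, and the leaf conjugacy.
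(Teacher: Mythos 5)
Your outline reaches for the right tools --- \eqref{prop-extendg}, the Whitney extension, global product structure --- but the paper's proof is both shorter and avoids the step you correctly flag as the hard one. The key observation in the paper is that you never need to cover $X$ by finitely many center segments and then patch local extensions. Near the compact $us$-leaf $X$, one may take $V$ small enough that each center segment meets each compact $us$-leaf exactly once, and then re-run the entire construction from the proof of \eqref{prop-extendg} with the compact $us$-leaves of the AI-system playing the role of the $us$-plaques. The function $D$, the candidate derivative $dg_x$, and Whitney's extension theorem then apply verbatim, producing a single $C^1$ function $p$ on $V$ globally, constant on $us$-leaves, with no gluing required. The second paragraph of the paper's proof then notes that $D>0$ forces $dp_x(v^c)\neq 0$ at every $x\in X$, and by continuity on a shrunk $V$ this gives the non-vanishing derivative along every center segment.

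Your proposed repairs for the patching both have gaps. The partition-of-unity idea does not obviously preserve monotonicity: if $p=\sum_i\lambda_i p_i$ with position-dependent $\lambda_i$, the derivative of $p$ along a center segment picks up cross-terms of the form $(\partial_c\lambda_i)(p_i-p_j)$, which vanish only where the $p_i$ agree (i.e.\ on the lamination), not on the open accessibility classes in between; convexity of the $\lambda_i$ is not enough to control this without an a priori bound on $p_i-p_j$. Your other suggestion --- organising the Whitney data so that every local $p_i$ arises by transporting one transverse coordinate off $L_0$ --- is closer in spirit to what the paper does, but the concrete way to achieve it is exactly the observation above: dispense with the finite cover of $X$ and use the compact $us$-leaves themselves as the plaques in a single application of the Whitney argument. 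Also worth noting: you invoke \eqref{thm-regularity} to get $C^1$ holonomy between center segments, but that theorem is itself the corollary of \eqref{prop-extendg}, so leaning on it here is circular unless you are careful to only use \eqref{prop-niceholo} (which is what the paper's argument actually rests on).
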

In this context,
a center segment is a connected component of the intersection of $V$ with
a center leaf.

\begin{proof}
    There is a neighbourhood $V$ of $X$ such that inside $V$
    each center segment intersects each compact $us$-leaf in a unique point.
    Therefore, the proofs of the previous results of this section
    hold as before with compact
    $us$-leaves now filling the role of $us$-plaques.
    This gives the existence of $p$ and $\psi$.

    As the function $D$ is positive in the proof of
    \eqref{prop-extendg}, for $x \in X$ and unit vector $v^c \in \Ec_x$
    the derivative $d p_x$ of $p$ satisfies $d p_x(v^c)  \ne  0$.
    By continuity, this property holds for all $x$ in a neighbourhood
    of $X$
    and so, by replacing $V$ by a subset, the restriction of $p$ to any
    center segment $L$ has non-zero derivative along all of $L$.
\end{proof}
As it is a local result, \eqref{prop-leafnbhd} also holds for a compact
$us$-leaf in an AB-system instead of an AI-system.
To go from the local to the global requires a technical lemma which ``fills in
the gaps'' between compact $us$-leaves.

\begin{lemma} \label{lemma-gluing}
    Let $N$ be a $C^1$ manifold, and for $0<\ep<\tfrac{1}{2}$ define
    \[
        V_\ep =  N \times ([0,\ep) \cup (1-\ep,1]) \subset N \times [0,1].
    \]
    If there are
    $\ep>0$ and a $C^1$ function $g:V_\ep \to [0,1]$ such that
    \begin{itemize}
        \item
        %$\partial_t g(x,t) > 0$ for all $(x,t) \in V_\ep$, and
        $\frac {\partial g}{\partial t}\big|_{(x,t)} > 0$
        for all $(x,t) \in V_\ep$, and
        \item
        $g(x,0) = 0$ and $g(x,1) = 1$ for all $x \in N$  \end{itemize}
    then there are $\delta>0$ and a $C^1$ function
    $h:N \times [0,1] \to [0,1]$ such that
    \begin{itemize}
        \item
        $h(x,t)=g(x,t)$ for all $(x,t) \in V_\delta$,
        \item
        $(x,t) \mapsto (x,h(x,t))$ is a $C^1$ diffeomorphism of
        $N \times [0,1]$, and
        \item
        if $x \in N$ satisfies $g(x,t) = t$ for all $(x,t) \in V_\delta$,
        then $h(x,t) = t$ for all $t \in [0,1]$.
    \end{itemize}  \end{lemma}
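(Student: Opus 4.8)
The plan is to prescribe the fibre-wise $t$-derivative of $h$ as a convex interpolation, cut off in the $t$-variable, between the given derivative $\partial_t g$ near the two ends of $[0,1]$ and a positive ``filler'' slope in the interior, and then to recover $h$ by integrating in $t$. Prescribing $\partial_t h$ this way makes the positivity $\partial_t h>0$ automatic, so each fibre map $t\mapsto h(x,t)$ will be an increasing $C^1$ diffeomorphism of $[0,1]$ as soon as its endpoint values are $0$ and $1$. Rewriting the resulting integral using integration by parts is what makes $h$ a genuine $C^1$ function of $(x,t)$ even though $g$ is only $C^1$, so that $\partial_t g$ is merely continuous with no control on $\partial_x\partial_t g$: after the integration by parts only $g$ itself, multiplied by smooth weights, survives in the formula.

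Concretely, I would fix $0<\delta<\delta_1<\ep$ and, using $g(\cdot,0)\equiv 0$, $g(\cdot,1)\equiv 1$, the continuity of $g$, and compactness of $N$, shrink $\delta$ and $\delta_1$ so that $g(x,\delta_1)<g(x,1-\delta_1)$ for every $x$, with a gap bounded below independently of $x$. Then fix a $C^\infty$ cutoff $\rho:[0,1]\to[0,1]$ with $\rho\equiv 1$ on $[0,\delta]\cup[1-\delta,1]$ and $\rho\equiv 0$ on $[\delta_1,1-\delta_1]$, so that $\rho'$ is supported in $[0,\ep)\cup(1-\ep,1]$ where $g$ is defined, put
\[
    c(x)=\frac{\int_0^1\rho'(s)\,g(x,s)\,ds}{\int_0^1\bigl(1-\rho(s)\bigr)\,ds},
\]
which is $C^1$ in $x$ and, by the gap estimate, positive and bounded away from zero, and define
\[
    h(x,t)=\rho(t)\,g(x,t)-\int_0^t\rho'(s)\,g(x,s)\,ds+c(x)\int_0^t\bigl(1-\rho(s)\bigr)\,ds .
\]
Integration by parts shows $h(x,t)=\int_0^t\bigl[\rho(s)\,\partial_s g(x,s)+(1-\rho(s))\,c(x)\bigr]\,ds$. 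I would then verify, in order: $h$ is $C^1$ on $N\times[0,1]$ (each term of the displayed formula is, since $g$ is $C^1$ and $\rho,c$ are smooth resp. $C^1$); on $V_\delta$ we have $\rho\equiv 1$, $\rho'\equiv 0$ and $\int_0^t(1-\rho)=0$, so $h=g$ there; $\partial_t h=\rho\,\partial_t g+(1-\rho)\,c>0$ everywhere, while $h(x,0)=0$ and $h(x,1)=1$ by the choice of $c(x)$, so $(x,t)\mapsto(x,h(x,t))$ is a $C^1$ diffeomorphism of $N\times[0,1]$.

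The step I expect to be the main obstacle is the last clause — that $g(x_0,\cdot)=\mathrm{id}$ on $V_\delta$ should force $h(x_0,\cdot)=\mathrm{id}$ on all of $[0,1]$. The difficulty is that any cutoff-based extension necessarily transitions on a set lying just outside $V_\delta$, over which the hypothesis gives no information about $g$, so one must arrange that the construction reads $g$ only through (a neighbourhood of) $V_\delta$ and reduces there to the identity. When $g(x_0,\cdot)$ is the identity throughout $\supp\rho'$, the identity $\int_0^1\rho'(s)\,s\,ds=\int_0^1(1-\rho(s))\,ds$ forces $c(x_0)=1$ and hence $\partial_t h(x_0,\cdot)\equiv 1$, giving $h(x_0,\cdot)=\mathrm{id}$; the delicate bookkeeping is to make this control of $g$ on $\supp\rho'$ compatible with the requirement $h=g$ on all of $V_\delta$, by a careful choice of $\delta$, $\delta_1$ and $\rho$ relative to $\ep$. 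An alternative I would fall back on is to produce the $C^1$ extension agreeing with $g$ and $Dg$ on $\overline{V_\delta}$ from Whitney's extension theorem — exactly as in the proof of \eqref{prop-extendg} — and then apply a fibre-preserving correction in the interior to enforce monotonicity of $t\mapsto h(x,t)$ and the mass condition $h(x,1)=1$, chosen so as to vanish identically whenever the prescribed data is the identity.
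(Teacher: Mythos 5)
Your construction is genuinely different from the paper's. You prescribe the $t$-derivative of $h$ as a cutoff-weighted convex combination of $\partial_t g$ and a fibrewise constant filler slope $c(x)$, then recover $h$ by integration, and you use integration by parts to transfer all $t$-differentiation from $g$ onto the smooth cutoff $\rho$ so that $C^1$ regularity of $h$ in $x$ follows from that of $g$ alone. This is a clean and self-contained idea, and your verification of the first two bullets (that $h$ is $C^1$, that $h=g$ on $V_\delta$, that $\partial_t h>0$, and that $h(x,0)=0$, $h(x,1)=1$) is correct.

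The obstacle you flag in the final paragraph is, however, a genuine gap and your proposal does not close it. The third bullet fails because $c(x)$, and hence $h(x,\cdot)$ on the interior, reads $g(x,s)$ over $\supp\rho'\subset(\delta,\delta_1)\cup(1-\delta_1,1-\delta)$, which is disjoint from $V_\delta$. The hypothesis $g(x_0,\cdot)=\mathrm{id}$ on $V_\delta$ therefore leaves $g(x_0,s)$ for $s\in\supp\rho'$ entirely unconstrained, and nothing forces $c(x_0)=1$. Moreover this cannot be repaired by repositioning the cutoff: your second bullet requires $\rho\equiv 1$ on the $t$-projection of $V_\delta$, which forces $\supp\rho'$ to lie outside $V_\delta$; conversely, pushing the transition zone inside $V_\delta$ destroys $h=g$ there, since $\int_0^t(1-\rho)(\partial_s g - c)\,ds$ does not vanish in the transition region. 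So the conflict is structural to your formula, not a matter of bookkeeping $\delta$, $\delta_1$, and $\rho$. The fallback you sketch (Whitney extension of the jet on $\overline{V_\delta}$ followed by a monotonicity-restoring correction) is the right kind of fix, because a Whitney extension depends on $g$ only through the jet on the closed set $\overline{V_\delta}$, which the hypothesis does pin down; but as stated it is only a plan, and the correction step would need the same care to avoid reintroducing dependence on $g$ off $\overline{V_\delta}$.

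For comparison, the paper avoids the problem by building the interior extension $h_0$ to depend on $g$ only through the boundary $1$-jets at $t=\delta$ and $t=1-\delta$ (value and $\partial_t$, both determined by $g|_{V_\delta}$ via continuity on $V_\ep$): $h_0$ is taken piecewise linear in $t$ on $[\delta,1-\delta]$, matched in value and derivative at the two endpoints, and then mollified in $t$ to recover $C^1$. When $g(x_0,\cdot)=\mathrm{id}$ on $V_\delta$ the boundary jets are the identity jets, the piecewise linear interpolation is the identity, and the mollification preserves it, giving the third bullet for free. If you want to stay close to your own scheme, the analogous repair is to let the interior slope $c$ be an explicit function of the boundary jet data $g(x,\delta)$, $g(x,1-\delta)$, $\partial_t g(x,\delta)$, $\partial_t g(x,1-\delta)$ only, rather than of an average of $g$ over the transition zone.
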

\begin{proof}
    Pick $\delta>0$ small enough that there is a continuous function
    $h_0:N \times [0,1] \to [0,1]$
    which for each $x \in N$ satisfies the following properties{:}
    \begin{itemize}
        \item
        $t \mapsto h_0(x,t)$ is strictly increasing and linear on each of the
        intervals
        $[\delta,3 \delta]$,
        $[3 \delta, 1 - 3\delta]$, and
        $[1 - 3 \delta, 1 - \delta]$; and
        \item
        $h_0$ agrees with $g$ and
        $\frac{\partial h_0}{\partial t}$ agrees with
        $\frac{\partial g}{\partial t}$
        at the points of the form $(x,\delta)$
        and $(x,1-\delta)$.
    \end{itemize}
    Then, define $h$ by $h(x,t)=g(x,t)$ for $(x,t) \in V_\delta$,
    $h(x,t)=h_0(x,t)$ for $(x,t) \in V_{2 \delta} \setminus V_\delta$, and
    $h(x,t)=\frac{1}{2 \delta} \int_{t-\delta}^{t+\delta} h_0(x,s) ds$ otherwise.
\end{proof}
\begin{prop} \label{prop-pprojAI}
    Let $f:\hM \to \hM$ be a $C^2$ AI-system,
    and $J$ a compact interval inside a center leaf
    such that its endpoints $x_0$ and $x_1$
    lie inside compact $us$-leaves.
    Then there are $r:AC(J) \to AC(x_0)$
    and $p:AC(J) \to [0,1]$
    such that $r \times p$ is a $C^1$ diffeomorphism and
    every compact $us$-leaf in
    $AC(J)$ is of the form $p \inv(t)$ for some $t \in [0,1]$.
\end{prop}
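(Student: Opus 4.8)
The plan is to build $r$ and $p$ first on a neighbourhood of the set of compact $us$-leaves lying inside $AC(J)$, where the local product charts of \eqref{prop-leafnbhd} are available, and then to interpolate across the slabs between consecutive compact $us$-leaves by means of the gluing lemma \eqref{lemma-gluing}.

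\emph{Topological model.} First I would argue as in the proof of \eqref{lemma-AIinAI}: global product structure and the leaf conjugacy show that the center segments $L_y$ joining $AC(x_0)$ to $AC(x_1)$, as $y$ ranges over $AC(x_0)$, are pairwise disjoint, vary continuously, and fill $AC(J)$; that each compact $us$-leaf in $AC(J)$ meets each $L_y$ in exactly one point; and hence that $AC(J)$ is homeomorphic to $AC(x_0)\times[0,1]$, with $AC(x_0)$ and $AC(x_1)$ going to the two boundary copies, center segments to the verticals $\{y\}\times[0,1]$, and the compact $us$-leaves to $AC(x_0)\times C$ for a closed set $C\subset[0,1]$ with $0,1\in C$ (closedness of $C$ follows from \eqref{lemma-compactFix} together with the closedness of $\Fix(G)$). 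Each compact $us$-leaf is a $C^1$ submanifold by \eqref{prop-nonacc}, and since $AC(x_0)$ is homeomorphic to the compact nilmanifold $N$, the set $AC(J)$ is compact; consequently the union $\mathcal K$ of all compact $us$-leaves in $AC(J)$ is compact.

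\emph{Regularity near $\mathcal K$.} By \eqref{prop-leafnbhd}, every point of $\mathcal K$ has a neighbourhood carrying $C^1$ coordinates whose first coordinate is constant on compact $us$-leaves and a $C^1$-diffeomorphism along each center segment, and whose remaining coordinates are constant on center segments. Cover $\mathcal K$ by finitely many such charts. On overlaps the transition maps preserve the transverse pair (compact $us$-leaves, center segments), and, because $[0,1]$ is simply connected, the identifications of nearby compact $us$-leaves obtained by sliding along center segments fit together coherently; a patching argument — using also the $C^1$ lamination of \eqref{thm-regularity} — then produces, on a neighbourhood $V$ of $\mathcal K$ in $AC(J)$, a $C^1$ map $r:V\to AC(x_0)$ that is constant on center segments and restricts to a diffeomorphism on each compact $us$-leaf, together with a $C^1$ function $p:V\to[0,1]$ that is constant on compact $us$-leaves and a diffeomorphism along each center segment, normalised so that $p\equiv 0$ on $AC(x_0)$ and $p\equiv 1$ on $AC(x_1)$, such that $r\times p$ is a $C^1$ diffeomorphism of $V$ onto $AC(x_0)\times W$ for an open $W\supset C$, and the compact $us$-leaves in $V$ are exactly the sets $p\inv(t)$ with $t\in C$. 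After shrinking $W$, I may assume that for each component $(a,b)$ of $[0,1]\setminus C$ not contained in $W$, the set $W\cap(a,b)$ consists of two one-sided collar intervals, one at $a$ and one at $b$.

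\emph{Filling the gaps.} Since $[0,1]\setminus W$ is compact and contained in the union of the components of $[0,1]\setminus C$, it meets only finitely many of them, say $(a_1,b_1),\dots,(a_m,b_m)$; every other component lies in $W$, where $r\times p$ is already defined. For each $\ell$ let $Y_\ell\subset AC(J)$ be the slab corresponding to $AC(x_0)\times[a_\ell,b_\ell]$ under the homeomorphism of the first step. On a neighbourhood of its two boundary leaves $p\inv(a_\ell)$ and $p\inv(b_\ell)$, which lie in $\mathcal K\subset V$, the maps $r$ and $p$ (with $p$ affinely rescaled to land in $[0,1]$) give a $C^1$ identification of a collar of $\partial Y_\ell$ with $AC(x_0)\times\big([0,\ep)\cup(1-\ep,1]\big)$ turning $p$ into a function $g$ satisfying the hypotheses of \eqref{lemma-gluing}. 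Applying \eqref{lemma-gluing}, with the role of $N$ played by $AC(x_0)$, extends $g$ to $h:AC(x_0)\times[0,1]\to[0,1]$ with $(x,t)\mapsto(x,h(x,t))$ a $C^1$ diffeomorphism agreeing with $g$ on a smaller collar; transporting $h$ back defines $r\times p$ on $Y_\ell$ consistently with its definition on $V$. As $[0,1]=W\cup\bigcup_\ell[a_\ell,b_\ell]$, the maps $r$ and $p$ are now defined on all of $AC(J)$; they are $C^1$ because every seam lies in the open set $V$ on which the two formulas coincide, and $r\times p$ is then a $C^1$ diffeomorphism of $AC(J)$ onto $AC(x_0)\times[0,1]$. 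No compact $us$-leaf lies in the interior of any $Y_\ell$, so the compact $us$-leaves of $AC(J)$ are exactly the sets $p\inv(t)$ with $t\in C$, which proves the proposition.

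\emph{Main obstacle.} The delicate point is the patching in the second step: the first coordinates of the local charts of \eqref{prop-leafnbhd} agree only up to fibrewise reparametrisation, so one must assemble them into a single $p$ that is simultaneously $C^1$ on the $d$-dimensional set $V$ and exactly constant on each compact $us$-leaf, while producing a compatible transverse coordinate $r$; the simple-connectivity of the interval and the $C^1$ lamination of \eqref{thm-regularity} are what make this work. By comparison the first step is a routine adaptation of \eqref{lemma-AIinAI}, and in the third step \eqref{lemma-gluing} is carrying the real content.
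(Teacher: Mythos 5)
Your overall blueprint --- set up a product model $AC(x_0)\times[0,1]$, obtain a transverse $C^1$ coordinate near the union $\mathcal K$ of compact $us$-leaves, and interpolate across the finitely many remaining gap slabs with \eqref{lemma-gluing} --- matches the structure of the paper's argument, and your first and third steps are essentially what the paper does. The gap lies in your second step.

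You cover $\mathcal K$ by finitely many charts from \eqref{prop-leafnbhd} and assert that "a patching argument --- using simple connectivity of $[0,1]$ and the $C^1$ lamination --- then produces" a single $C^1$ function $p$ near $\mathcal K$ that is exactly constant on compact $us$-leaves and has nonvanishing derivative along center segments. That is precisely the step that requires a construction, and the natural attempt fails: if one tries $p=\sum_\alpha\rho_\alpha p_\alpha$ with a partition of unity, then for $p$ to be constant on a compact $us$-leaf $L$ one needs each $\rho_\alpha$ to be constant on $L$ (otherwise $\sum\rho_\alpha|_L\cdot c_\alpha$ varies). Even after arranging that --- say by pulling $\rho_\alpha$ back through $p_\alpha$ --- the derivative of $p$ along a center segment is
\[
  \partial_c p \;=\; \sum_\alpha \rho_\alpha\,\partial_c p_\alpha \;+\; \sum_\alpha (\partial_c\rho_\alpha)\,\bigl(p_\alpha - p_{\alpha_0}\bigr),
\]
and since the local charts from \eqref{prop-leafnbhd} carry independent normalisations, the differences $p_\alpha - p_{\alpha_0}$ on overlaps are of order one while $\partial_c\rho_\alpha$ may be large; nothing in your sketch controls the second sum. (Making the $\rho_\alpha$ constant on center segments instead does not help either: a function supported in a single tube $V_\alpha$ and constant on each full center segment vanishes on every segment not wholly inside $V_\alpha$, hence identically.) So the patching is a genuine hole, not a routine step.

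The paper avoids patching altogether. It applies the Whitney-extension argument of \eqref{prop-extendg} once, globally, with $L_0=J$ (after the $C^1$-flow change of coordinates renders $J=\{x_0\}\times[0,1]$), treating the compact $us$-leaves as the ``$us$-plaques''; since the Whitney hypotheses are verified pointwise in the proof of \eqref{prop-extendg}, this directly yields a single $C^1$ function $g$ on all of $N\times[0,1]$ that is constant on every compact $us$-leaf and equals $t$ along $J$. It then replaces the flow field $v$ by a $C^1$ field $\hat v$ approximating $\Ec$ so that $\partial g/\partial t=dg(\hat v)>0$ on $\Sigma$, whence by uniform continuity $\partial g/\partial t>0$ in an $\ep$-neighbourhood of $\Sigma$, confining possible failures of monotonicity to finitely many gap slabs bounded by compact $us$-leaves --- exactly the input needed for \eqref{lemma-gluing}. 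Your instinct that \eqref{prop-leafnbhd}, the $C^1$ lamination, and \eqref{lemma-gluing} are the relevant ingredients is right, but the way to make the middle step rigorous is a single global application of \eqref{prop-extendg}, not a chart-by-chart assembly.
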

\begin{proof}
    By approximating the center bundle $\Ec$ by a $C^1$ vector field $v$,
    one may define a $C^1$ flow taking points in $AC(x_0)$ to points in
    $AC(x_1)$.
    By rescaling $v$, assume the flow takes each point in $AC(x_0)$ to a point
    in $AC(x_1)$
    in exactly one unit of time.
    This flow then defines a $C^1$ diffeomorphism between $AC(J)$ and
    $AC(x_0) \times [0,1]$.
    Therefore, we may assume our system is defined on a space of the
    form $N \times [0,1]$ where $N$ is a manifold
    $C^1$-diffeomorphic to $AC(x_0)$
    and that $r:N \times [0,1] \to N$
    is given by projection onto the first coordinate.
    Further assume that the flow $v$ is tangent to $\Ec$ on the center leaf
    containing $J$.
    Then, when viewed as a subset of $N \times [0,1]$, $J$ is of the form
    $J = \{x_0\} \times [0,1]$.

    By adapting the arguments in the proofs of \eqref{prop-extendg}
    and \eqref{prop-leafnbhd}, there is a $C^1$ function
    $g:N \times [0,1] \to [0,1]$ which is constant on compact $us$-leaves
    and such that \mbox{$g(x_0,t) = t$} for all $t \in [0,1]$.

    Let $\Sigma \subset N \times [0,1]$
    be the union of all compact $us$-leaves.
    For a point $z \in N \times [0,1]$, let $v^c_z$ be the oriented unit
    vector in $\Ec_z$.
    Then, due to the construction of $g$ as in the proof of
    \eqref{prop-extendg},
    $dg_z(v^c_z)$ is positive for all $z \in \Sigma$.
    As $dg$ is continuous,
    there is a $C^1$ vector field $\hat v$ approximating $v^c$ such that
    $dg_z(\hat v(z))$ is positive for all $z \in \Sigma$.
    By another $C^1$ change of coordinates, assume $v$ is equal to $\hat v$
    and therefore
    $\frac{\partial g}{\partial t}\big|_{(x,t)} = dg_{(x,t)}(v(x,t))$
    for all $(x,t) \in N \times [0,1]$.
    By uniform continuity, there is $\ep > 0$ such that
    $dg_z(v(z)) > 0$ for all $z$ at distance at most $\ep$
    from $\Sigma$.
    Hence, there are at most a finite number of regions $X_i \subset N \times
    [0,1]$ such that
    \begin{itemize}
        \item
        the boundary of $X_i$ is given by two compact $us$-leaves,
        \item
        there are no compact leaves in the interior of $X_i$, and
        \item
        $\frac{\partial g}{\partial t}\big|_{(x,t)}  \le  0$
        for some $(x,t) \in X_i$.
    \end{itemize}
    By \eqref{lemma-gluing},
    define a $C^1$ function $p:N \times [0,1] \to [0,1]$ which is equal to $g$
    everywhere outside of $\cup_i X_i$ and such that
    $\frac{\partial p}{\partial t}\big|_{(x,t)} > 0$
    for all $(x,t) \in N \times [0,1]$.

    Since both $r$ and $p$ are submersions, $r \times p$ has an invertible
    derivative at every point and is therefore a $C^1$ diffeomorphism.
\end{proof}
\begin{cor}
    In the setting of \eqref{prop-pprojAI},
    if $L \subset \hM$ is a center leaf,
    then $p$ and $r$ may be chosen so that $p$ restricted to $L \cap AC(J)$ is
    a $C^1$ diffeomorphism onto $[0,1]$.
\end{cor}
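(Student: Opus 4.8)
The plan is to apply \eqref{prop-pprojAI} with the center segment $J$ replaced by the segment that $L$ cuts out of $AC(J)$, and then to check that this substitution changes nothing essential since the two segments have the same accessibility-class saturation.

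The first --- and, I expect, only delicate --- step is to show that $L\cap AC(J)$ is a single center segment with well-defined endpoints on the two boundary $us$-leaves of $AC(J)$. By global product structure the center leaf $L$ meets the compact $us$-leaf $AC(x_0)\subset AC(J)$; indeed, the argument in the proof of \eqref{lemma-KfixAI} shows that a compact $us$-leaf meets every center leaf. Using the leaf-conjugacy model of the AI-system as in the proof of \eqref{lemma-AIinAI}, the intersection $J':=L\cap AC(J)$ is then a single center segment, with endpoints $x_0',x_1'$ lying on the compact $us$-leaves $AC(x_0)$ and $AC(x_1)$; in particular $J'$ satisfies the hypotheses of \eqref{prop-pprojAI}.

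Next I would use the identity $AC(J')=AC(J)$. This holds because, in the description from the proof of \eqref{lemma-AIinAI}, the saturation of any center segment running between the two boundary leaves is the union $\bigcup_v h^{-1}(v\times(a_v,b_v))$ of all such segments, and this depends only on the ordered pair of compact $us$-leaves, not on the chosen segment. Therefore \eqref{prop-pprojAI} applied to $J'$ yields $r:AC(J')\to AC(x_0)$ and $p:AC(J')\to[0,1]$ with $r\times p$ a $C^1$ diffeomorphism whose level sets $p^{-1}(t)$, $t\notin U$, are precisely the compact $us$-leaves; since $AC(J')=AC(J)$, this is a legitimate choice of the pair $(r,p)$ for the original $J$.

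It remains to note that this $p$ restricts to a $C^1$ diffeomorphism of $J'=L\cap AC(J)$ onto $[0,1]$, and this is already part of the construction in \eqref{prop-pprojAI}: there one works in coordinates in which the distinguished center segment appears as $\{x_0'\}\times[0,1]$ and the auxiliary function satisfies $g(x_0',t)=t$, so that, this restriction being the identity on each slab $V_\delta$ passed to \eqref{lemma-gluing}, the third conclusion of that lemma forces $p(x_0',t)=t$ for every $t$ (while $p=g$ off the finitely many regions $X_i$). So one re-runs that proof with $J'$ in the role of $J$; the one point to confirm along the way is that the auxiliary $C^1$ coordinate change there (replacing the approximating vector field $v$ by $\hat v$) can be taken to fix the center leaf containing $J'$, so that the normalization $g(x_0',t)=t$ is preserved.
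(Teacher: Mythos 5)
Your proof is correct and takes essentially the same route as the paper, which simply says ``Take $J \subset L$ in the previous proof''; you are re-running Proposition \eqref{prop-pprojAI} on the segment $J' = L \cap AC(J)$ and verifying that $AC(J') = AC(J)$, which is exactly what makes that terse instruction work. The additional details you supply --- that $L \cap AC(J)$ is a single segment joining the two boundary leaves, that the normalization $g(x_0',t)=t$ survives the coordinate change to $\hat v$, and that the third bullet of \eqref{lemma-gluing} then forces $p$ to be the identity on $J'$ --- are all checks that the paper leaves implicit.
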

\begin{proof}
    Take $J \subset L$ in the previous proof.
\end{proof}
\begin{cor}
    In the setting of \eqref{prop-pprojAI},
    if $\mu$ is a probability measure
    given by a continuous volume form on $AC(J)$,
    then $p$ may be chosen so that $p_* \mu$ is Lebesgue measure on $[0,1]$.
      \end{cor}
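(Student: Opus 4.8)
The plan is to leave the map $r$ and the diffeomorphism supplied by \eqref{prop-pprojAI} essentially untouched and merely post-compose the $[0,1]$-valued coordinate with a well-chosen $C^1$ diffeomorphism of $[0,1]$; this automatically preserves every conclusion of \eqref{prop-pprojAI} while normalizing the pushforward of $\mu$. So let $p_0$ and $r_0$ be the maps given by \eqref{prop-pprojAI}, and write $\Psi = r_0 \times p_0 : AC(J) \to N \times [0,1]$ for the resulting $C^1$ diffeomorphism, where $N = AC(x_0)$ is a \emph{compact} manifold.

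First I would push $\mu$ forward through $\Psi$. Since $\mu$ comes from a continuous volume form and the Jacobian $|\det D\Psi\inv|$ is continuous and positive (the fact that $\Psi$ is only $C^1$ is no obstacle, as a continuous Jacobian is all that is needed), $\Psi_*\mu$ has a continuous, strictly positive density $\rho$ with respect to $\lambda_N \times \mathrm{Leb}$, where $\lambda_N$ is a fixed smooth volume on $N$ and $\mathrm{Leb}$ is Lebesgue measure on $[0,1]$. Letting $\pi : N \times [0,1] \to [0,1]$ be projection onto the second coordinate, we have $(p_0)_*\mu = \pi_* \Psi_* \mu =: \nu$, and Fubini's theorem gives that $\nu$ has density
\[
    \bar\rho(t) = \int_N \rho(x,t)\, d\lambda_N(x)
\]
with respect to $\mathrm{Leb}$. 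Continuity of $\rho$ together with compactness of $N$ shows that $\bar\rho$ is continuous on $[0,1]$; positivity of $\rho$ gives $\bar\rho > 0$; and $\mu(AC(J)) = 1$ gives $\int_0^1 \bar\rho(s)\,ds = 1$. Hence
\[
    \Phi : [0,1] \to [0,1], \qquad \Phi(t) = \int_0^t \bar\rho(s)\,ds
\]
is a $C^1$ diffeomorphism with $\Phi(0) = 0$ and $\Phi(1) = 1$.

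Finally, set $p := \Phi \circ p_0$ and $r := r_0$. Since $\mathrm{id}_N \times \Phi$ is a $C^1$ diffeomorphism of $N \times [0,1]$ and $r \times p = (\mathrm{id}_N \times \Phi)\circ \Psi$, the map $r \times p$ is again a $C^1$ diffeomorphism, so in particular $p$ restricted to any center segment is still a $C^1$ diffeomorphism onto $[0,1]$. Because $\Phi$ is a bijection, the fibers of $p$ are exactly the reindexed fibers of $p_0$, so the compact $us$-leaves in $AC(J)$ are still precisely the sets $p\inv(t)$. And for every $t \in [0,1]$,
\[
    (p_*\mu)\bigl([0,t]\bigr) = \mu\bigl(p_0\inv(\Phi\inv([0,t]))\bigr) = \nu\bigl([0,\Phi\inv(t)]\bigr) = \Phi\bigl(\Phi\inv(t)\bigr) = t,
\]
so $p_*\mu$ is Lebesgue measure on $[0,1]$. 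The one point that needs genuine care is the verification that $\nu$ has a continuous positive density, i.e.\ that the fiber integral $\bar\rho$ varies continuously in $t$; this is exactly where compactness of the $us$-leaf $N$ and continuity of the volume form enter, and everything else is bookkeeping.
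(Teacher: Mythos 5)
Your proof is correct and takes essentially the same approach as the paper: both identify $AC(J)$ with $N\times[0,1]$ via $r_0\times p_0$, note the pushed-forward measure has a continuous positive density, and post-compose $p_0$ with the cumulative distribution function (your $\Phi$ is the paper's $h(t)=\mu(p_0^{-1}([0,t]))$). Your write-up spells out why the density and its fiber integral are continuous, which the paper simply asserts.
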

\begin{proof}
    Assume $\rho:N \times [0,1] \to \bbR$ is a positive density function
    such that
    \[    
        \mu(X) = \int_X \rho\,dm_N \times dm
    \]
    where $m_N \times m$ is the product of the Lebesgue measures on $N$ and
    $[0,1]$.

    If $h:[0,1] \to [0,1]$ is defined by
    $h(t) = \mu \bigl( p \inv([0,t]) \bigr)$, then
    \[
        \frac{d h}{d t} =
        \int_{N \times \{t\}} \rho\, dm_N  \]
    is continuous and positive, showing that $h$ is a $C^1$ diffeomorphism.
    Replacing $p$ with the composition $h p$, the result is proved.
\end{proof}
\begin{proof}
    [Proof of \eqref{prop-pproj}]
    As noted in Section \ref{sec-ABsys}, every AB-system $f:M \to M$ lifts to
    an AI-system $\hat f:\hM \to \hM$.
    Moreover, if the AB-system has a compact $us$-leaf,
    the covering $\hM \to M$ has a fundamental domain which is bounded between
    two compact leaves $AC(x)$ and $\beta(AC(x))$ where $\beta$ is the deck
    transformation defined in Section \ref{sec-ABsys}.
    Then,
    \eqref{prop-pprojAI} applies where the region $AC(J)$ is exactly this
    fundamental domain and therefore, there is a $C^1$ surjection
    $p:AC(J) \to [0,1]$.
    Moreover, the candidate derivative in the application of Whitney's
    extension theorem may be chosen so that it agrees on $AC(x)$
    and $\beta(AC(x))$.  Then, $p$ quotients down to a $C^1$ function
    $M \to \bbS$ as desired.

    The other statements in \eqref{prop-pproj} follow from the above two
    corollaries.
\end{proof}

\section{Skew products} \label{sec-products} %{{{1

This sections proves \eqref{thm-trivial}
showing that non-accessible skew products have trivial fiber bundles.

\begin{proof}
    [Proof of \eqref{thm-trivial}]
    As the base map $A$ has a fixed point,
    there is a fiber $S$ such that $f(S)=S$.
    By replacing $f$ by $f^2$ if necessary, assume $f$
    preserves the orientation of $S$.
    As $\pi_2(N)$ is trivial (see, for instance, \cite{Franks1}),
    the long exact sequence of fiber bundles gives a short exact sequence
    $0 \rightarrow Z \rightarrow G \rightarrow H \rightarrow 0$
    where $Z = \pi_1(S)$, $G = \pi_1(M)$, and $H = \pi_1(N)$.
    By naturality,
    $f$ induces the commutative diagram
    \[
        \begin{CD}
        0  @>>>  Z  @>>>  G  @>>>  H  @>>>  0\\
        @.  @VV{\id}V  @VV{f_*}V  @VV{A_*}V  @.\\
        0  @>>>  Z  @>>>  G  @>>>  H  @>>>  0.
        \end{CD}
    \]
    As can be shown for any circle bundle with oriented fibers,
    the subgroup $Z$ is contained in the center of
    $G$.
    In this case, as $H=G/Z$ is nilpotent, $G$ is then also nilpotent.

    Skew products have global product structure.
    The proof is similar to that given for AB-systems in Section
    \ref{sec-openness}
    and we leave the details to
    the reader.
    Similar to the case for AB-systems,
    we may then consider the universal cover $\tM$ of $M$,
    a topological line $\tS \subset \tM$ which covers $S$,
    and a lift $\tf:\tM \to \tM$
    such that $\tf(\tS)=\tS$.
    Let $\Lam \subset \tS$ be the set of all points $t \in \tM$
    such that $AC(t)$ is not open.
    Then $G$ induces an action on $\Lam$.
    
    Let $z$ be a non-trivial element of $Z$.
    Then $z$ may be regarded as a fixed-point free homeomorphism of
    $\tS$.
    By \eqref{prop-mu} and \eqref{prop-tau},
    there is a homomorphism $\tau:G \to \bbR$ such that
    $\tau(z)$ is non-zero.
    By \eqref{prop-lamF},
    there is $\lam > 0$ such that $\tau f_*(g) = \lam \, \tau(g)$
    for all $g \in G$.
    Since, $f_*(z) = z$, this implies that $\lam$ equals one.
    By rescaling $\tau$, assume $\tau(Z) = \bbZ$.
    Then, $\tau:G \to \bbR$ quotients to a homomorphism
    $\hat \tau:H \to \bbR/\bbZ$
    and $\hat \tau A_* = \hat \tau$.

    As $A$ is hyperbolic, $A_*$ has no non-trivial fixed points
    and, by \eqref{prop-nofixcoset}, no non-trivial fixed cosets.
    As all of the cosets of $\ker \hat \tau$ are fixed by $A_*$,
    it follows that $\hat \tau = 0$.
    That is, $\tau(G) = \bbZ$.
    One can then define a map which takes each
    $g \in G$ to the unique $z \in Z$ such that
    $\tau(g) = \tau(z)$.
    This shows that the exact sequence
    $0 \rightarrow Z \rightarrow G \rightarrow H \rightarrow 0$ splits.
    Then, $G$ is isomorphic to $H \times Z$ and the bundle is trivial.

    In fact, one can find a compact $us$-leaf directly.
    Viewing $H$ now as a subgroup of $G$
    equal to the kernel of $\tau$,
    choose a point $x \in \tS$ and define $y = \sup_{g \in H} g(x)$.
    Then, with $\mu$ as in \eqref{prop-mu},
    $\mu [x,y) = 0$ which implies $y < +\infty$.
    In other words, $y$ is a well-defined point in $\tS$.
    Since $y$ is in $\Fix(H)$ it projects to a point in $M$ contained in a
    compact $us$-leaf. 
\end{proof}
\section{Infra-AB-systems} \label{sec-infra} %{{{1

We now consider infra-AB-systems as defined in Section \ref{sec-results}.

First, recall the definition of an infranilmanifold.
Let $\tN$ be a simply connected nilpotent Lie group.
A diffeomorphism $\phi:\tN \to \tN$ is a \emph{(right) translation}
if there is $v \in \tN$ such that $\phi(u) = u \cdot v$ for
all $u \in \tN$.
Let $\Trans(\tN)$ be the group of all translations (which is canonically
isomorphic to $\tN$ itself).
Let $\Aut(\tN)$ be the group of all automorphisms of $\tN$.
Then
the group of \emph{affine} diffeomorphisms, $\Aff(\tN)$,
is the smallest group containing both $\Trans(\tN)$ and $\Aut(\tN)$.
Equivalently,
$\psi \in \Aff(\tN)$ if and only if
there is $\phi \in \Aut(\tN)$ and $v \in \tN$ such that
$\psi(u) = \phi(u) \cdot v$ for all $u \in \tN$.

If a subgroup
$\Gamma < \Aff(\tN)$ is such that
$\Gamma \cap \Trans(\tN)$ has finite index in $\Gamma$ and
$N_0 := \tN / \Gamma$ is a compact manifold,
then $N_0$ is a (compact) \emph{infranilmanifold}.
If $A \in \Aff(\tN)$ quotients to a function $A_0:N_0 \to N_0$
then $A_0$ is also called \emph{affine}.

%    Suppose $f_0:M_0 \to M_0$ is a conservative $C^2$ diffeomorphism,
%    $\pi : M \to M_0$ is a finite covering,
%    and $f:M \to M$ is an AB-system
%    such that
%    $\pi f = f_0^m \pi$ for some $m  \ge  1$.
%    Then, $f_0$ is partially hyperbolic and either
\begin{thm} \label{thm-infraAB}
    Suppose $f_0$ is a conservative $C^2$ infra-AB-system.
    Then, either
    \begin{enumerate}
        \item $f_0$ is accessible and stably ergodic,
        \item
        $\Eu$ and $\Es$ are jointly integrable and $f_0$ is topologically
        conjugate to an algebraic map, or
        \item
        there are $n  \ge  1$, a $C^1$ surjection $p_0$ from $M_0$ to
        either $\bbS$ or $\orbi$,
        and a non-empty open subset $U \subsetneq p_0(M_0)$ with the following
        properties.
        
        If $t \notin U$ then $p_0 \inv(t)$ is
        an $f_0^n$-invariant compact $us$-leaf
        homeomorphic to an infranilmanifold.
        Moreover, every $f_0$-periodic compact $us$-leaf is of this form.

        If $I$ is a connected component of $U$, then $p_0 \inv(I)$
        is $f_0^n$-invariant and homeomorphic to a (possibly twisted)
        I-bundle over an infranilmanifold.
    \end{enumerate}  \end{thm}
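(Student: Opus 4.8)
The plan is to reduce to \eqref{thm-consAB} by lifting $f_0$ to an AB-system on a finite cover, applying that theorem there, and pushing the resulting structure back down while tracking the deck group. By hypothesis there are a finite cover $\pi:M\to M_0$ and $k\ge 1$ such that $f_0^k$ lifts to an AB-system $f:M\to M$. First I would replace $M$ by the cover corresponding to the normal core of $\pi_1(M)<\pi_1(M_0)$: this subgroup is characteristic, so $f_0^k$ still lifts, and since a finite cover of an AB-prototype is an AB-prototype, the lift stays an AB-system; thus we may take $\pi$ regular with finite deck group $D$. With the pulled-back metric every $d\in D$ is an isometry satisfying $\pi\circ d=\pi$, so $D$ preserves the partially hyperbolic splitting pulled back from $f_0^k$; in particular it preserves the stable, unstable and center foliations, permutes the compact $us$-leaves of $f$, and is normalized by conjugation by $f$. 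The volume form on $M_0$ pulls back and normalizes to a smooth $f$-invariant probability $\mu$, so $f$ is a conservative $C^2$ AB-system. Finally, accessibility classes, joint integrability, and the family of compact $us$-leaves push forward under $\pi$ to the corresponding objects for $f_0^k$, whose $\Ws$ and $\Wu$ coincide with those of $f_0$; hence $f_0$ is accessible, resp.\ jointly integrable, exactly when $f$ is, and the case of \eqref{thm-consAB} realized by $f$ is determined by $f_0$.

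If $f$ is in case (1), then $f$, hence $f_0^k$, hence $f_0$, is accessible; since $f_0$ is conservative $C^2$ partially hyperbolic with one-dimensional center, accessibility yields stable ergodicity \cite{RHRHU-accessibility}, and $f_0$ is in case (1). If $f$ is in case (2), then $\Eu$ and $\Es$ are jointly integrable for $f$, hence for $f_0$; and $f_0$, which preserves the resulting $us$-foliation and acts on its one-dimensional leaf space by an isometry, is topologically conjugate to an algebraic map---by an argument parallel to the AB-system case, using the rigidity of hyperbolic infranilmanifold automorphisms \cite{Franks1,Manning}. So $f_0$ is in case (2).

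The substance is case (3), so suppose $f$ is in it. By \eqref{prop-pproj} fix a $C^1$ surjection $p:M\to\bbS$ with $p_*\mu=m$, Lebesgue measure, whose fibers over a non-empty open $U'\subsetneq\bbS$ are exactly the compact $us$-leaves of $f$. Each $d\in D$ permutes these leaves and the components of their complement, so it induces a homeomorphism $\phi_d$ of $\bbS$ with $p\circ d=\phi_d\circ p$; since $(\phi_d)_*m=(p\circ d)_*\mu=m$, every $\phi_d$ is an isometry, a rotation or reflection, so $d\mapsto\phi_d$ is a homomorphism onto a finite group $\bar D<O(2)$, and $\bbS/\bar D$ is $\bbS$ if $\bar D$ consists of rotations and $\orbi$ otherwise---equivalently according as the transverse orientation of the $us$-lamination of $f_0$ (an orientation of $\Ec$) is preserved or not. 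I would then build $p_0$ directly on $M_0$: since $f_0$ is $C^2$ with one-dimensional center, the Whitney-extension and gluing arguments of Section \ref{sec-regularity} (the analogues of \eqref{prop-pprojAI}, \eqref{lemma-gluing} and \eqref{prop-pproj}, allowing the boundary identification of a fundamental domain to reverse orientation) produce a $C^1$ surjection $p_0:M_0\to\bbS$ or $\orbi$ onto the $1$-orbifold $\bbS/\bar D$ whose fibers over the complement of the nonempty open set $U:=U'/\bar D\subsetneq p_0(M_0)$ are exactly the compact $us$-leaves of $f_0$. For $t\notin U$, picking $\tilde t\in\bbS\setminus U'$ over $t$, one has $p_0\inv(t)=\pi(p\inv(\tilde t))$, where $p\inv(\tilde t)$ is a nilmanifold (homeomorphic to $N$, as in the proof of \eqref{lemma-KfixAI}) and $\pi$ restricts to its quotient by a finite group acting freely; thus $p_0\inv(t)$ is a compact $us$-leaf of $f_0$, covered by a nilmanifold and carrying an Anosov diffeomorphism (a suitable iterate of $f_0$), hence homeomorphic to an infranilmanifold by \cite{Franks1,Manning}. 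Conversely the $\pi$-preimage of any compact $us$-leaf of $f_0$ is a $D$-orbit of compact $us$-leaves of $f$, so every compact $us$-leaf of $f_0$---in particular every periodic one---arises this way. For a component $I$ of $U$ lying under a component $\tilde I$ of $U'$, $p_0\inv(I)$ is the quotient of $p\inv(\tilde I)\cong N\times\tilde I$ by a finite group acting freely, which is an $I$-bundle over an infranilmanifold, twisted precisely when some group element reverses the $\tilde I$-direction. Lastly $f_0$ permutes the compact $us$-leaves of $f_0$ and thus induces $\rho_0$ on $\bbS/\bar D$ with $p_0\circ f_0=\rho_0\circ p_0$; $\rho_0$ preserves the Lebesgue-type measure, hence is an isometry of the orbifold, and $\rho_0^k$ equals the descent of the induced map $\phi_f$ of $\bbS$, which has finite order because some iterate of $\phi_f$ fixes the nonempty closed set $\bbS\setminus U'$ pointwise by \eqref{thm-consAB}. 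Any $n$ with $\rho_0^n=\id$ then gives $p_0\circ f_0^n=p_0$, so $f_0^n$ preserves each $p_0\inv(t)$ and each $p_0\inv(I)$; this completes case (3).

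The step I expect to be the main obstacle is the bookkeeping in case (3). The crucial point is that once $p$ is normalized so that $p_*\mu$ is Lebesgue, the deck group is forced to act on the leaf circle through a finite group of isometries---this is precisely what makes the quotient $1$-orbifold either $\bbS$ or $\orbi$. Granting that, identifying $p_0\inv(t)$ with an infranilmanifold and $p_0\inv(I)$ with a (possibly twisted) $I$-bundle over one is elementary quotient topology, and the $C^1$ regularity of $p_0$ is obtained by rerunning the construction of Section \ref{sec-regularity} for $f_0$ directly rather than attempting to descend $p$ equivariantly.
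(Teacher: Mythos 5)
Your route is genuinely different from the paper's. You regularize the cover first and then act by the deck group, whereas the paper works directly with the (possibly non-normal) finite cover $\pi:M\to M_0$ and passes to $M_0$ by the averaging device $q(x_0)=\sum_{y\in\pi\inv(x_0)}p(y)$, which is automatically $\pi_1(M_0)$-equivariant and inherits $C^1$ regularity from $p$ without rerunning Whitney extension on $M_0$. Your plan is natural, but as written it has several gaps.

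First, a small but real error: the normal core of $\pi_1(M)$ in $\pi_1(M_0)$ is normal, not characteristic, so you cannot conclude that $(f_0^k)_*$ preserves it and hence that $f_0^k$ lifts. The standard fix (pass to the intersection of all subgroups of index at most $[\pi_1(M_0):\pi_1(M)]$, which is characteristic since automorphisms permute subgroups of a given index, and still finite index because $\pi_1(M_0)$ is finitely generated) repairs this, but the statement as given does not.

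Second, and this is the main gap: the identity $p\circ d=\phi_d\circ p$ does not hold for the $p$ of \eqref{prop-pproj}. That $p$ is built for $f$ alone; over $U'$ its fibers are auxiliary transversal slices, not geometrically canonical objects, and there is no reason a deck transformation $d$ carries $p\inv(t)$ to another $p$-fiber when $t\in U'$. So $\phi_d$ is not defined as a homeomorphism of $\bbS$ conjugating $p$. Your measure-theoretic argument $(\phi_d)_*m=m\Rightarrow\phi_d$ is a rigid rotation/reflection is nice, but it presupposes exactly the equivariance you do not have. This is precisely what the paper's sum $q$ buys: equivariance is built in, and, since $f$ preserves $\mu$ and $p$ is $C^1$ with $p_*\mu=m$, the resulting $q$ is $C^1$ and satisfies $q\circ f_0(x)=q(x)+c$ on compact $us$-leaves. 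The alternative---building $p_0$ from scratch on $M_0$ by Whitney extension as you suggest at the end---is viable in principle, but then the $\phi_d$ step is both unnecessary and unjustified; you should decide which route you are taking.

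Third, your case (2) is asserted rather than proved; the paper's \eqref{prop-infraalg} is a separate proposition precisely because orientability and the $m>1$ case require specific care (the constructions of $r$ and $s$ from $H$ and $\tq$). Finally, the identification of $p_0\inv(t)$ with an infranilmanifold via Franks--Manning applied to the $C^1$ restriction of $f_0^n$ is plausible but delicate; the paper instead proves directly (Lemma \eqref{lemma-allaff}) that all the relevant deck maps $B_\gam$ are affine, via the filtration $K_3<K_2<K_1<\pi_1(M_0)$ and \eqref{lemma-hypcom}, which makes the quotient an infranilmanifold by definition. You should at least note the regularity you are invoking from Franks--Manning and that the restricted map is indeed a $C^1$ Anosov diffeomorphism of a $C^1$ manifold.
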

This theorem is proved at the end of the section and
the exact nature of the ``algebraic map'' in case (2) is given in the proof.
Also, as will be evident from the proof, if $\Ec$ is orientable then
$p_0(M_0) = \bbS$.  Otherwise, $p_0(M_0)=\orbi$ which is the 
1-dimensional orbifold constructed by quotienting $\bbR$ by both $\bbZ$ and the
involution $t \mapsto -t$.
This orbifold is homeomorphic to a compact interval.
A set $p \inv(I)$ will be twisted (as an I-bundle) if and only if
$I$ is homeomorphic to a half-open interval.

The ergodic decomposition given in \eqref{thm-decomp} also generalizes.

\begin{thm}
    Let $f_0:M_0 \to M_0$ be a $C^2$ infra-AB-system
    and suppose there is a smooth, $f_0$-invariant, non-ergodic measure
    $\zeta$ supported on $M_0$.
    Then,
    there are $n  \ge  1$, a $C^1$ surjection $p_0$ from $M_0$ to
    either\, $\bbS$ or\, $\orbi$,
    and an open subset $U \subsetneq p_0(M_0)$ such that
    \begin{equation} \label{infradecomp}
        \zeta = \sum_I m(I)\,\zeta_{I} + \int_{t \notin U} \zeta_t\,dm(t)  \end{equation}
    is the ergodic decomposition for $(f_0^n, \zeta)$.
\end{thm}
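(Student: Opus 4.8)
The plan is to pass to the finite cover on which an iterate of $f_0$ is an AB-system, apply \eqref{thm-decomp} and \eqref{prop-pproj} there, and then push the resulting ergodic decomposition back down through the (finite) deck group. First I would reduce to the non-accessible case: since stable and unstable manifolds are unchanged by passing to iterates and project under covering maps, accessibility of any lift of $f_0^k$ would force $f_0$ to be accessible, and then item (5) of \eqref{prop-ergprops} would make $(f_0,\zeta)$ ergodic; so the hypothesis rules this out. Now, replacing $f_0$ by an iterate and the cover by its Galois closure if necessary, take a regular finite cover $\pi:M\to M_0$ with finite deck group $\Delta$ and a lift $f:M\to M$ of $f_0^k$ which is an AB-system, and set $\mu=\tfrac1{\deg\pi}\pi^*\zeta$. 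Then $\mu$ is a smooth, $f$-invariant, $\Delta$-invariant probability measure, and it is non-ergodic for $f$: otherwise, $\pi$ being a semiconjugacy from $f$ to $f_0^k$, the measure $\zeta$ would be ergodic for $f_0^k$, and a fortiori for $f_0$. In particular $f$ is non-accessible, so \eqref{thm-decomp} and \eqref{prop-pproj} apply to $(f,\mu)$: there are $n_1\ge 1$, a $C^1$ surjection $p:M\to\bbS$ with $p_*\mu=m$, and an open $U_1\subsetneq\bbS$ such that the sets $p\inv(t)$ with $t\notin U_1$ are exactly the compact $us$-leaves of $f$, and \eqref{decomp} is the ergodic decomposition of $(f^{n_1},\mu)$.

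Next I would descend $p$. The distribution $\Eu\oplus\Es$ on $M$ is the lift of its counterpart on $M_0$, so every deck transformation preserves the $us$-lamination of $f$ and permutes its compact leaves. Running the Whitney-extension construction of Section \ref{sec-regularity} with all auxiliary choices (the $C^1$ flow approximating $\Ec$, the candidate derivatives, the gluing of \eqref{lemma-gluing}) averaged over the finite group $\Delta$, one may take $p$ to be $\Delta$-equivariant, $p\circ\delta=\bar\delta\circ p$, for a finite group $\bar\Delta$ of $C^1$ circle maps which is orientation-preserving exactly when the corresponding deck transformation preserves the orientation of $\Ec$. Then $p$ factors through a $C^1$ surjection $p_0:M_0\to\bbS/\bar\Delta$, the quotient being a circle when $\Ec$ is orientable and the orbifold $\orbi$ otherwise; since $\bar\delta_* m=\bar\delta_* p_*\mu=p_*\delta_*\mu=p_*\mu=m$, the pushforward $(p_0)_*\zeta$ is the natural Lebesgue measure $m$ on $p_0(M_0)$. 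Put $U=U_1/\bar\Delta$, which is open and proper because $\bbS\setminus U_1$ is the $\bar\Delta$-invariant image of the compact $us$-leaves, and put $n=kn_1$; then $p_0\inv(t)$ is an $f_0^n$-invariant compact $us$-leaf for $t\notin U$, and for each component $I$ of $U$ the set $p_0\inv(I)$ is the $\pi$-image of a single connected accessibility class of $f$, hence a single accessibility class of $f_0$, equivalently of $f_0^n$. These are the data appearing in case (3) of \eqref{thm-infraAB}.

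Finally I would push the ergodic decomposition down and regroup. Applying $\pi_*$ to \eqref{decomp} gives $\zeta=\sum_{\tilde I}m(\tilde I)\,\pi_*\mu_{\tilde I}+\int_{\bbS\setminus U_1}\pi_*\mu_t\,dm(t)$. Uniqueness of the Rokhlin disintegration together with $\delta_*\mu=\mu$ and $\bar\delta_* m=m$ gives $\delta_*\mu_t=\mu_{\bar\delta(t)}$ for $m$-a.e.\ $t$, so $\pi_*\mu_t=\pi_*\mu_{\bar\delta(t)}$; grouping the components of $U_1$ and the points of $\bbS\setminus U_1$ into $\bar\Delta$-orbits collapses the expression to $\zeta=\sum_I m(I)\,\zeta_I+\int_{t\notin U}\zeta_t\,dm(t)$, where $\zeta_I=\pi_*\mu_{\tilde I}$ is the normalized restriction of $\zeta$ to $p_0\inv(I)$ and $\zeta_t=\pi_*\mu_s$ for any lift $s$ of $t$; this is exactly \eqref{infradecomp}. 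Each summand is the $\pi$-image of an ergodic component of $(f^{n_1},\mu)$, and $\pi$ semiconjugates $f^{n_1}$ to $f_0^n$, so each summand is ergodic for $f_0^n$ (for $m$-a.e.\ $t\notin U$, which suffices since the remaining fibers carry $\zeta$-measure zero). Since the measures $\zeta_I$ and $\zeta_t$ are supported on the pairwise disjoint fibers of $p_0$, the assignment of a measure to each fiber is essentially injective, and therefore, by uniqueness of the ergodic decomposition, \eqref{infradecomp} is the ergodic decomposition of $(f_0^n,\zeta)$.

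The main obstacle is the two points where the earlier sections must be re-entered rather than quoted wholesale: producing the transverse coordinate $p$ $\Delta$-equivariantly (so that it descends and $p_0(M_0)$ comes out as $\bbS$ or $\orbi$ according to the orientability of $\Ec$), and verifying, as in the proof of \eqref{thm-decomp}, that the atomic pieces $\zeta_t$ supported on the compact $us$-leaves really are ergodic. The $C^1$ regularity of the $us$-lamination from \eqref{thm-regularity} is too weak to invoke ergodicity of Anosov systems directly, so this ergodicity must be obtained from \eqref{prop-ergprops} applied on the cover; this is precisely why one works with the AB-system on $M$ rather than arguing on $M_0$ directly via \eqref{thm-infraAB}.
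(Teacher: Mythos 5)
Your overall strategy — lift $\zeta$ to $\mu$ on the covering AB-system, invoke Theorems~\eqref{thm-decomp} and \eqref{prop-pproj} there, and push the resulting ergodic decomposition back down through $\pi$ — is exactly the route the paper takes. The ergodicity argument (pushforward of an ergodic measure under the semiconjugacy $\pi f^{n_1}=f_0^n\pi$ is ergodic) is also the paper's argument, and your reduction to the non-accessible case via item (5) of \eqref{prop-ergprops} is sound.

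The genuine gap is the construction of the descended projection $p_0$. You assert that one may "run the Whitney-extension construction with all auxiliary choices averaged over the finite group $\Delta$" to obtain a $\Delta$-equivariant $C^1$ map $p:M\to\bbS$. This does not go through as stated. First, the target $\bbS=\bbR/\bbZ$ is not a vector space, so there is no canonical way to average a finite family of circle-valued functions, and the various auxiliary ingredients in Section~\ref{sec-regularity} (the approximating $C^1$ vector field, the candidate derivative $dg_x$ obtained through Whitney, the local-to-global gluing of Lemma~\eqref{lemma-gluing}) are constructed inside local charts rather than equivariantly; it is not clear they can be made simultaneously invariant. Second, even if $p$ were $\Delta$-equivariant, the resulting relation $p\circ\delta=\bar\delta\circ p$ forces $p\circ\delta$ to be constant on fibers of $p$, which fails on the open set $p^{-1}(U_1)$: inside an open accessibility class the fibers of $p$ are artificial hypersurfaces carrying no dynamical meaning, and a deck transformation has no reason to respect them. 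The paper sidesteps all of this in the proof of Theorem~\eqref{thm-infraAB} (specifically Proposition~\eqref{prop-inflastassume}) by defining $q(x_0)=\sum_{y\in\pi^{-1}(x_0)}p(y)$, a circle-valued sum over the fiber of the (possibly non-normal) covering $\pi:M\to M_0$, and then rescaling $\tq$ by the index $\ell$ of the image of $q_*$; the non-orientable case is then handled with $p_0(x)-p_0\tau(x)$. This construction is automatically well-defined on $M_0$ with no equivariance hypothesis and no need to pass to a Galois closure, whereas your regularization step adds a cover you would then have to relate back to $M$. Finally, note that the paper's own proof of this decomposition theorem does not reconstruct $p_0$ at all: it simply takes $n$, $p_0$, $U$ from the already-proved Theorem~\eqref{thm-infraAB} and devotes the argument entirely to verifying that each component of \eqref{infradecomp} is ergodic, by matching each $\zeta_t$ or $\zeta_I$ with a component $\mu_t$ or $\mu_{\tilde I}$ on the cover and pushing forward. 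Your proof would be correct if you replaced the equivariant-averaging step with a citation of (or the sum-over-fibers reconstruction from) Theorem~\eqref{thm-infraAB}.
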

Here, the components $\zeta_I$ and $\zeta_t$
of the decomposition are defined analogously to
\eqref{decomp}.

\begin{proof}
    Let $\pi:M_0 \to M$ be the finite covering and
    $f$ an AB-system such that $\pi f = f_0^m \pi$ for some $m  \ge  1$.
    Then, $\zeta$ lifts to a measure $\mu$ on $M$ which (up to rescaling the
    measure so that $\mu(M)=1$)
    satisfies the hypotheses of \eqref{thm-decomp}.
    If $\zeta_t$ is a component of the decomposition \eqref{infradecomp}, then
    its support is a single accessibility class $X_0$.
    If $X$ is a connected component of $\pi \inv(X_0) \subset M$, then there is
    an ergodic component $(f^n,\mu_t)$ of $(f^n,\mu)$
    where $\mu_t$ is supported on $X$ and
    such that $\pi^* \mu_t$ (up to
    rescaling) is equal to $\zeta_t$.
    Ergodicity of $(f_0^{m n}, \zeta_t)$ then follows from the ergodicity of
    $(f^n, \mu_t)$.
    Ergodicity of components of the form $\zeta_I$ can be proven similarly.
\end{proof}
The theorems in Section \ref{sec-results} concerning non-conservative
AB-systems may also be generalized using
techniques similar to those in the proof of \eqref{thm-infraAB} below.
In the interests of brevity, we leave the statements and proofs of
these other results to the reader.
The following two known results about functions on infranilmanifolds
will be useful.

\begin{lemma} \label{lemma-liftN}
    If $N_0$ is an infranilmanifold, there is a nilmanifold $N$
    finitely covering $N_0$
    such that every homeomorphism of $N_0$ lifts to $N$.
\end{lemma}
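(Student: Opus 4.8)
The plan is to realize $N$ as a \emph{characteristic} finite-index cover of $N_0$ lying inside the translational part. Write $N_0 = \tN/\Gam$ with $\Gam < \Aff(\tN)$ as in the definition, and put $\Lam = \Gam \cap \Trans(\tN)$. By hypothesis $\Lam$ has finite index $n := [\Gam:\Lam]$ in $\Gam$, and identifying $\Trans(\tN)$ with $\tN$ exhibits $\Lam$ as a discrete cocompact subgroup of the simply connected nilpotent Lie group $\tN$ (cocompact since $\tN/\Lam$ finitely covers the compact manifold $N_0$); in particular $\Lam$, and hence also $\Gam = \pi_1(N_0)$, is finitely generated, and $\Lam$ is torsion-free.

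First I would take the characteristic core of $\Lam$ in $\Gam$. By the classical theorem of M.~Hall a finitely generated group has only finitely many subgroups of any fixed finite index, so
\[
    H \;:=\; \bigcap\,\{\,K < \Gam \;:\; [\Gam:K]\le n\,\}
\]
is an intersection of finitely many finite-index subgroups and therefore has finite index in $\Gam$. Every automorphism of $\Gam$ preserves the index of a subgroup, hence permutes this finite collection, so $H$ is characteristic --- in particular normal --- in $\Gam$. Since $\Lam$ itself appears among the intersected subgroups, $H < \Lam$; thus $H$ is torsion-free, consists of translations, and is of finite index in the cocompact lattice $\Lam$, hence is itself a discrete cocompact subgroup of $\tN$. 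Consequently $N := \tN/H$ is a compact nilmanifold and the natural projection $p : N \to N_0$ is a finite covering.

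Next I would check the lifting property. As $H$ is normal, $p$ is a regular covering. Given a homeomorphism $\phi$ of $N_0$, a choice of path from a basepoint to its image identifies $\phi_*$ with an automorphism of $\Gam$ defined up to an inner automorphism; since $H$ is characteristic it is fixed by every automorphism of $\Gam$, so $\phi_*(H) = H$ unambiguously. Then $(\phi\circ p)_*\pi_1(N) = \phi_*(H) = H = p_*\pi_1(N)$, and the covering-space lifting criterion yields a continuous $\tilde\phi : N \to N$ with $p\,\tilde\phi = \phi\,p$. Running the same argument for $\phi\inv$ and composing the two lifts (the composites are deck transformations, hence homeomorphisms) shows $\tilde\phi$ is a homeomorphism. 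Thus every homeomorphism of $N_0$ lifts to $N$.

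I do not expect any serious obstacle: all of this is classical covering-space and nilpotent-group theory. The only point that needs a little care is choosing the level of the characteristic core fine enough --- namely $n = [\Gam:\Lam]$ --- so that $H$ is forced inside $\Lam$; this is exactly what makes $\tN/H$ a genuine nilmanifold rather than merely another infranilmanifold.
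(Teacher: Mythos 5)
Your proof is correct, but it takes a genuinely different route from the paper's. The paper invokes Auslander's theorem that $\Lam := \Gam \cap \Trans(\tN)$ is the \emph{unique maximal normal nilpotent subgroup} of $\Gam = \pi_1(N_0)$; uniqueness plus a group-theoretic characterization makes $\Lam$ itself characteristic, so every automorphism of $\Gam$ preserves $\Lam$ and one lifts directly to $N = \tN/\Lam$. You sidestep Auslander entirely: you take the characteristic core $H$ of $\Lam$ by intersecting the (finitely many, via M.~Hall) subgroups of index $\le [\Gam:\Lam]$ in $\Gam$, observe $H$ is characteristic, of finite index, torsion-free, and contained in the translation lattice, and then lift to $N = \tN/H$. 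Your route is more elementary and self-contained — which is arguably a virtue here, since the paper itself flags that the Auslander reference is ``infamously incorrect'' elsewhere and that the fact it uses is only ``widely held to be correct.'' The trade-off is that the paper's $N$ is the minimal nilmanifold cover with the lifting property (it is $\tN/\Lam$), whereas your $H$ may be a proper subgroup of $\Lam$, giving a larger cover; for the purposes of this lemma that is immaterial. One could even view your argument as a proof that a characteristic finite-index nilpotent normal subgroup exists without first knowing that $\Lam$ itself is one.

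Your basepoint remark is handled correctly: $\phi_*$ is only defined up to inner automorphism, but since $H$ is characteristic (in particular normal), $\phi_*(H) = H$ is unambiguous, and the standard lifting criterion and the argument with $\phi\inv$ give a homeomorphism lift. No gaps.
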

\begin{proof}
    This follows from the fact that $\Gamma \cap \Trans(\tN)$
    is the unique maximal normal nilpotent subgroup of $\pi_1(M)$.
    A proof of this is given in \cite{Auslander}, a paper which also contains
    an infamously incorrect result about maps between infranilmanifolds.
    (See the discussion in \cite{lee-raymond}.)
    However, the proof of the above fact about $\Gamma \cap \Trans(\tN)$
    is widely held to be correct.
\end{proof}
%THIS PROOF IS INCORRECT
%proof:
%    A simple direct way to show this is to observe that
%    $\Trans(\tN)$ consists exactly of those $\gamma \in \Aff(\tN)$
%    such that
%    math:
%        sup_{v in tN} \d{v, \gamma{v}} < infty
%
%    in a metric $d$ lifted from $N_0 = \tN / \Gamma$.
%    Then any homeomorphism $A_0:N_0 \to N_0$ lifts to $A:\tN \to \tN$
%    and
%
%        sup_{v in tN} \d{v, \gamma{v}} < infty <==>
%        sup_{v in tN} \d{A v, A gamma (v)} =
%        sup_{v in tN} \d{v, A gamma A \inv{v}} < infty
%
%    so $N := \tN / (\Gamma \cap \Trans(\tN))$ is the appropriate nilmanifold.

\begin{lemma} \label{lemma-hypcom}
    If a homeomorphism $B$ on a
    compact infranilmanifold $N_0$ commutes with a
    hyperbolic affine diffeomorphism $A$, then $B$ itself is affine.
\end{lemma}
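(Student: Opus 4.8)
\emph{Plan.} Lift $A$ and $B$ to the universal cover $\tN$ of $N_0 = \tN/\Gamma$ and show that the lift of $B$ is affine. Write $\Lambda := \Gamma \cap \Trans(\tN)$ for the maximal normal nilpotent subgroup of $\Gamma$; it is characteristic in $\Gamma$ and is a lattice in $\tN$, which is exactly the fact used in the proof of \eqref{lemma-liftN}. Lift $A$ to $\tilde A \in \Aff(\tN)$, $\tilde A(u) = \phi_A(u)\cdot a$ with $\phi_A \in \Aut(\tN)$. As $A$ is hyperbolic, $d\phi_A$ has no eigenvalue of modulus one, so $\phi_A$ has no nontrivial fixed point and $g \mapsto g\inv\phi_A(g)$ is a bijection of $\tN$ (same argument as in the proof of \eqref{prop-nofixcoset}); hence $\tilde A$ has a unique fixed point. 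Conjugating the entire picture by the corresponding right translation, which is affine and hence irrelevant to affineness, I may assume $\tilde A = \phi \in \Aut(\tN)$ is a hyperbolic automorphism. Lift $B$ to $\tilde B \in \Homeo(\tN)$; then $\tilde B$ normalizes $\Gamma$ and hence $\Lambda$, its conjugation action on $\Lambda$ is an automorphism, and by Malcev rigidity \cite{malcev} this extends uniquely to $\bar B \in \Aut(\tN)$, the candidate affine model. The relation $AB = BA$ lifts to $\phi\tilde B = \gamma_0 \tilde B \phi$ for some $\gamma_0 \in \Gamma$; decompose $\gamma_0 = R_{w_0}\circ\theta_0$, where $R_{w_0}$ is right translation by $w_0$ and $\theta_0$ is the image of $\gamma_0$ under the projection $\Aff(\tN) = \Aut(\tN)\ltimes\Trans(\tN) \to \Aut(\tN)$, an element of the finite group which is the image of $\Gamma$ under this projection.

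The heart of the proof is a bounded-displacement estimate together with hyperbolicity of $\phi$. The map $\tilde B\bar B\inv$ is equivariant for the right $\Lambda$-action, hence descends to a self-map of the compact nilmanifold $\tN/\Lambda$ inducing the identity on $\pi_1$; since nilmanifolds are aspherical this self-map is homotopic to the identity, so $\tilde B(u) = \bar B(u)\cdot\rho(u)$ with $\rho$ valued in a fixed precompact set $D\subset\tN$. Restricting $\phi\tilde B = \gamma_0\tilde B\phi$ to the conjugation action on $\Lambda$ and extending by Malcev rigidity gives an identity of automorphisms $\phi\bar B\phi\inv = \iota_{w_0\inv}\circ\theta_0\circ\bar B$, where $\iota_w$ denotes conjugation by $w$. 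Substituting $\tilde B = \bar B\cdot\rho$ into $\phi\tilde B = \gamma_0\tilde B\phi$ and using this identity reduces the relation to one of the form $[Z,w_0] = d(Z)$ for all $Z\in\tN$, with $d$ taking values in a fixed bounded subset of $\tN$; since the commutator map $Z\mapsto[Z,w_0]$ has unbounded image unless $w_0$ is central, this forces $w_0\in Z(\tN)$, so $\iota_{w_0\inv} = \id$ and $\phi\bar B\phi\inv = \theta_0\bar B$. Because $\phi$ normalizes $\Gamma$ it normalizes the finite group above, so pigeonhole applied to the conjugates $\phi^n\theta_0\phi^{-n}$ yields $m\ge1$ with $\phi^m\bar B\phi^{-m} = \bar B$ and $\phi^m\tilde B\phi^{-m}\tilde B\inv = R_{w_m}$ a pure translation; comparing with the conjugation-on-$\Lambda$ identity for $\phi^m$ forces $w_m$ central. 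Finally, substituting $\tilde B = \bar B\cdot\rho$ into $\phi^m\tilde B = R_{w_m}\tilde B\phi^m$ (now $\phi^m\bar B = \bar B\phi^m$ and $w_m$ central) gives $\rho(\phi^m(u)) = \phi^m(\rho(u))\cdot w_m\inv$; pushing forward to $\tN/Z(\tN)$, where $\phi^m$ is still hyperbolic and $w_m$ dies, shows $\rho$ is $Z(\tN)$-valued, and on the vector group $Z(\tN)$, where $\phi^m$ has no eigenvalue one, boundedness forces $\rho$ to be the constant equal to the unique fixed point in $Z(\tN)$ of $v\mapsto \phi^m(v) w_m\inv$. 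Hence $\tilde B = R_\rho\circ\bar B \in \Aff(\tN)$, and descending, $B$ is affine.

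I expect the main obstacle to be the bookkeeping forced by the genuinely infranilmanifold case. Over a nilmanifold one may take the lifts of $A$ and $B$ to commute exactly, $\theta_0$ and $w_0$ vanish, and the argument collapses to the clean statement ``bounded displacement plus hyperbolicity implies zero displacement''; over an infranilmanifold one must instead carry the translation error $w_0$ and the finite-order automorphism error $\theta_0$, show hyperbolicity pushes both into $Z(\tN)$, and pass to a finite-order power of $\phi$ to restore exact commutation, getting the order of these reductions right. An alternative to the pigeonhole step would be to absorb $\gamma_0$ at the outset by replacing $\tilde B$ with $\eta\tilde B$ for suitable $\eta\in\Gamma$, using a twisted-conjugacy bijectivity as in the proof of \eqref{prop-nofixcoset} (the relevant map has no nontrivial fixed point since $\tilde A$ has a unique fixed point and $\Gamma$ acts freely), but surjectivity of that map is delicate for the merely virtually nilpotent group $\Gamma$, which is why I would route through the finite quotient instead.
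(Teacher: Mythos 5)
Your proof is correct, but it routes around the paper's argument rather than reproducing it. The paper proves the nilmanifold case very quickly: pick a fixed point $x$ of $A$, note $y=B(x)$ is also fixed, invoke Mal'cev to produce the unique affine $\phi:(N,x)\to(N,y)$ with $\phi_*=B_*$, observe by uniqueness that $\phi$ commutes with $A$, then conclude $\phi=B$ by citing Franks' $\pi_1$-diffeomorphism property of Anosov automorphisms --- which is precisely the ``bounded displacement plus hyperbolicity forces zero displacement'' step that you prove from scratch on the universal cover. For the infranilmanifold case, the paper lifts $A$ and $B$ to the nilmanifold cover $N$ given by \eqref{lemma-liftN}, uses finiteness of the deck group to find $j,k\ge1$ with $A^{jk}$ commuting exactly with the lifted $B$, and reduces to the previous case; you instead stay on $\tilde N$ and carry the translation error $w_0$ and the finite-order automorphism error $\theta_0$ through the calculation, showing hyperbolicity forces $w_0$ into $Z(\tilde N)$, pigeonholing in the finite quotient $\Theta=\Gamma/\Lambda$ to kill $\theta_0$ after an iterate, and then again using hyperbolicity (on $\tilde N/Z(\tilde N)$ and then on $Z(\tilde N)$) to force the displacement $\rho$ to be constant. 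What your argument buys is self-containedness: you do not need the $\pi_1$-diffeomorphism machinery from Franks, only Mal'cev rigidity plus the elementary fact that a hyperbolic automorphism of a simply connected nilpotent Lie group has no nontrivial bounded orbits. What the paper buys is brevity: the finite-cover-and-iterate trick collapses all the $\theta_0$/$w_0$ bookkeeping into two lines, and Franks takes care of the analytical core. Both are sound; your version is a legitimate alternative proof.
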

\begin{proof}
    This follows by a combination of the results of Mal'cev and Franks.
    First, consider the case where $N=N_0$ is a nilmanifold.
    Let $x$ be a fixed point of $A$. Then $y := B(x)$ is also a fixed point of
    $A$.  Using the standard definition of the fundamental group for based
    spaces, the diagram
    \[
        \begin{CD}
        \pi_1(N,x) @>B_*>> \pi_1(N,y) \\
        @VV{A_*}V @VV{A_*}V \\
        \pi_1(N,x) @>B_*>> \pi_1(N,y) \\
        \end{CD}
    \]
    commutes. By \cite{malcev}, there is a unique affine map
    $\phi:(N,x) \to (N,y)$ such that $\phi_*=B_*$.  (If $x  \ne  y$ one shows this
    by considering
    two distinct lattices of the form
    $\tx \Gamma \tx \inv$ and $\ty \Gamma \ty \inv$
    on the Lie group $\tN$ in order to construct a Lie group homomorphism
    which quotients down to $\phi$.)

    As $\phi_*A_* = A_*\phi_*$, the uniqueness given
    in \cite{malcev} entails that $\phi A = A \phi$ as functions on $N$.
    As $N$ is aspherical, $\phi$ is homotopic to $B$.
    Then, using that $A$
    is a $\pi_1$-diffeomorphism as defined in \cite{Franks1}, it follows
    that $\phi$ and $B$ are equal.

    Now suppose $N_0$ is an infranilmanifold.  
    By \eqref{lemma-liftN}, there is a nilmanifold $N$ and
    a normal finite covering $N \to N_0$ such that both $A$ and $B$
    lift to functions $N \to N$.
    By abuse of notation, we still call these functions $A$ and $B$.
    As the covering is finite, there is $j  \ge  1$ such that
    $A^j \gam = \gam A^j$
    for every deck transformation $\gamma$.
    In particular, there is a deck transformation
    $\gam:N \to N$ such that $A^j B = B A^j \gam$.
    Then,
    $A^{j k} B = B (A^j \gam)^k = B A^{j k} \gam^k$ for all $k \in \bbZ$,
    and,
    taking $k  \ge  1$ such that $\gam^k$ is the identity,
    $A^{j k}$ commutes with $B$ and the problem reduces to the previous case.
\end{proof}
\begin{prop} \label{prop-infraskew}
    Suppose $f_0$ is a partially hyperbolic skew product where the base map
    is a hyperbolic infranilmanifold automorphism and $\Ec$ is
    one-dimensional.
    If $f_0$ is not accessible, it is an infra-AB-system.
\end{prop}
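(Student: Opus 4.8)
The plan is to reduce to the nilmanifold case handled by Theorem~\eqref{thm-trivial} by passing to a finite cover. Write $\pi_0\colon M_0\to N_0$ for the fiber bundle defining the skew product and $A_0$ for its hyperbolic affine base map. First I would use Lemma~\eqref{lemma-liftN} to obtain a nilmanifold $N$ and a finite covering $q\colon N\to N_0$ through which $A_0$ lifts to an affine map $\hat A\colon N\to N$; this $\hat A$ is again hyperbolic, its linear part being that of $A_0$. Then I would pull back the bundle, forming $M=\{(x,v)\in M_0\times N:\pi_0(x)=q(v)\}$ with the induced finite covering $\rho\colon M\to M_0$ and fiber bundle $\pi\colon M\to N$ (with the same fibers as $\pi_0$), and lift $f_0$ to $f\colon M\to M$, $(x,v)\mapsto(f_0(x),\hat A(v))$ --- this is well defined because $\pi_0 f_0=A_0\pi_0$ and $q\hat A=A_0 q$, and it is a partially hyperbolic skew product over $\hat A$ since partial hyperbolicity and the skew-product structure are local. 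After replacing $f$ by an iterate fixing a connected component of $M$, passing to a further finite cover on which $\Ec$ is orientable, and replacing $f$ by $f^2$ to preserve that orientation, I may assume $M$ is connected, $\Ec$ is orientable, and $f$ preserves its orientation.

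Next I would apply Theorem~\eqref{thm-trivial} to $f$. For this I need two observations. First, $f$ is non-accessible: were it accessible, $AC(x)=M$ for all $x$ would project under $\rho$ onto an accessibility class of $f_0$ equal to all of $M_0$, contradicting non-accessibility of $f_0$ (accessibility classes being unchanged under iteration and mapped forward by coverings). Second, although Theorem~\eqref{thm-trivial} is stated for a base map that is a hyperbolic nilmanifold automorphism, its proof uses the base map only through the fact that the induced automorphism on $\pi_1$ of the fiber nilmanifold has no non-trivial fixed elements and, via Proposition~\eqref{prop-nofixcoset}, no non-trivial fixed cosets; this persists for the affine hyperbolic $\hat A$, the translation part of an affine map not affecting the induced map on the fundamental group. (Alternatively, one may invoke the results of Franks and Manning to pass to yet another finite cover and iterate after which $\hat A$ is genuinely a hyperbolic nilmanifold automorphism, Lemma~\eqref{lemma-hypcom} guaranteeing that the conjugacies involved are affine.) Theorem~\eqref{thm-trivial} then yields that the bundle $\pi\colon M\to N$ is trivial, so $M\cong N\times\bbS$.

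With the bundle trivial, $\Ec$ one-dimensional and orientation preserved, and the base map (after the above reduction) a hyperbolic nilmanifold automorphism, the proposition characterizing AB-systems among skew products --- stated just before Theorem~\eqref{thm-trivial} --- shows that $f$ is an AB-system; and since $f$ is a lift of an iterate of $f_0$ to the finite cover $M$ of $M_0$, this means $f_0$ is an infra-AB-system by definition. The step I expect to require the most care is precisely the reduction of the affine base map $\hat A$ to the hyperbolic nilmanifold automorphism demanded by the hypotheses of Theorem~\eqref{thm-trivial} and of the proposition preceding it; the rest is routine bookkeeping about finite covers, connectivity, and orientation of $\Ec$.
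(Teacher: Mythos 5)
Your proposal is correct and follows essentially the same strategy as the paper: pass to a finite cover on which the base is a nilmanifold, the bundle pulls back, and $\Ec$ has a preserved orientation, then invoke Theorem~\eqref{thm-trivial} and the preceding proposition to conclude. The paper builds this cover in a single step, defining $G < \pi_1(M_0)$ to consist of deck transformations that both preserve the orientation of the lifted center bundle and project into $\Trans(\tN)$ under $\tilde\pi_*$, and then taking $M=\tM/G$; your proposal constructs it in stages (pull back along $q:N\to N_0$, then pass to a further cover and iterate to fix a connected component, orient $\Ec$, and preserve that orientation), which forces you to check along the way that connectivity and the skew-product structure survive each stage --- checks that the one-step construction sidesteps. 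Your worry about the lifted base map $\hat A$ being merely affine rather than a genuine Lie group automorphism is resolved by the paper's convention that an ``infranilmanifold automorphism'' (like a nilmanifold automorphism) is one that descends from a Lie group automorphism of $\tN$, so $\hat A$ is already of the required form and no appeal to Franks--Manning is needed; your alternative argument inspecting the proof of Theorem~\eqref{thm-trivial} would nevertheless handle the more general affine case. The non-accessibility of the lift, which you argue via the projection of accessibility classes, is asserted without comment in the paper; your justification is the right one.
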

\begin{proof}
    Lift
    the fiber bundle projection $\pi:M_0 \to N_0$
    to $\tilde \pi:\tM \to \tN$
    where $\tM$ and $\tN$ are the universal covers.
    Let $G$ consist of those deck transformations
    $\alpha \in \pi_1(M_0)$
    which preserve the orientation of the lifted center bundle and for
    which $\tilde \pi \alpha = \gam \tilde \pi$
    for some $\gam \in \Trans(\tN)$.
    Then, $G$ is a finite index subgroup of $\pi_1(M_0)$
    defining a finite cover $M = \tM / G$
    and
    one can show that $f_0:M_0 \to M_0$ lifts to $f:M \to M$
    where the base map $A_0:N_0 \to N_0$ lifts to the nilmanifold
    $\tN / \tilde \pi(G)$.
    If $f_0$ is not accessible, then $f$ is not accessible.
    The fiber bundle on $M$ is then trivial by \eqref{thm-trivial},
    implying that $f^2$, which preserves the orientation of $\Ec$,
    is an AB-system.
\end{proof}
We now prove \eqref{thm-infraAB}.

\begin{assumption}
    For the remainder of the section,
    assume $f:M \to M$ is a non-accessible conservative $C^2$ AB-system,
    $\pi:M \to M_0$ is a (not-necessarily normal) finite covering map and
    that $f_0:M_0 \to M_0$ and $m  \ge  1$ are such that
    $\pi f = f_0^m \pi$.
\end{assumption}
Note this implies that $f_0$ is partially hyperbolic
and the splitting on the tangent
bundle $T M_0$ lifts to
the splitting for $f$ on $TM$.

For now, make the following additional assumptions,
which will be removed later.
\begin{assumption} \label{assume-infra}
    Assume until the end of the proof of \eqref{prop-inflastassume} that
    \begin{itemize}
        \item
        $\Ec$ on $M_0$ is orientable;
        \item
        $f_0$ preserves the orientation of $\Ec$; and
        \item
        $m=1$, that is, $\pi f = f_0 \pi$.
    \end{itemize}  \end{assumption}
By the assumption $m=1$, both $f_0$ and $f$ can be lifted to the same map
$\tf$ on the universal cover $\tM$.

As $f$ is an AB-system defined by nilmanifold automorphisms $A,B:N \to N$,
there is a map $H : \tM \to \tN$ whose fibers are the center leaves of $f$
and where $\tN$ is the universal cover of $N$ and therefore a nilpotent Lie
group.
Further, $A$ lifts to a hyperbolic automorphism of $\tN$, which we also denote
by $A$, and the leaf conjugacy implies that $H \tf = A H$.

%Let $\Aut(\tN)$ be the group of all automorphisms of $\tN$ and $\Aff(\tN)$ the
%group of \emph{affine} maps on $\tN$.
%That is, every $\psi \in \Aff(\tN)$ is of the form
%$\psi(u) = \phi(u) \cdot v$ for some $\phi \in \Aut(\tN)$ and $v \in \tN$.

Define $\tS = H \inv(\{0\})$ where $0$ is the identity element of the Lie group.
Then $\tS$ is an $\tf$-invariant center leaf which covers a circle
$S \subset M$ and $S$ further covers a circle $S_0 \subset M_0$.
By \eqref{thm-consAB},
there is a $C^1$ surjection $p:M \to \bbS$ and a constant $\theta \in \bbS$ 
such that if
$x \in M$ has non-open accessibility class $AC(x)$
then 
$p$ is constant on $AC(x)$ and $p f(x)=p(x) + \theta$.
By \eqref{prop-pproj},
assume $p$ restricted to $S$ is a $C^1$ diffeomorphism.
Using $p$ and the covering $\pi:M \to M_0$, define a map
\[
    q : M_0 \to \bbS, \quad x_0 \mapsto \sum_{y \in \pi \inv(x_0)} p(y).
\]
It follows that if
$x_0 \in M_0$ has non-open accessibility class $AC(x_0)$
then 
$q$ is constant on $AC(x_0)$ and $q f_0(x_0)=q(x_0) + \theta d$
where $d$ is degree of the covering.
Further, $q$ restricted to $S_0$ is a $C^1$ covering from $S_0$ to $\bbS$
(though not necessarily of degree $d$).
After lifting $q$ to a map $\tq:\tM \to \bbR$,
there is a homomorphism $q_*:\pi_1(M_0) \to \bbZ$ such that
$\tq \gam(\tx) = \tq(\tx) + q_*(\gam)$ for every $\tx \in \tM$
and deck transformation $\gam \in \pi_1(M_0)$.

As the deck transformations preserve the lifted center foliation,
for each $\gam \in \pi_1(M_0)$, there is a unique homeomorphism
$B_\gam:\tN \to \tN$ such that
$H \gam = B_\gam H$.

\begin{lemma} \label{lemma-allaff}
    $B_\gam \in \Aff(\tN)$ for all $\gam \in \pi_1(M_0)$. 
\end{lemma}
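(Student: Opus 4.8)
The plan is to handle first the case $\gamma\in\pi_1(M)$, where affinity is essentially part of the definition of an AB-system, and then to bootstrap to all of $\pi_1(M_0)$ using that $\pi_1(M)$ has finite index and that the base automorphism $A$ is hyperbolic. For $\gamma\in\pi_1(M)$, the lifted leaf conjugacy identifies $\tM$ with the universal cover $\tN\times\bbR$ of the mapping torus $M_B$, carrying $H$ to projection onto $\tN$ and each deck transformation of $\tM\to M$ to one of $\tN\times\bbR\to M_B$; as recalled in Section~\ref{sec-ABsys} these act on $\tN\times\bbR$ as $\alpha_N\times\mathrm{id}$ or as $(v,t)\mapsto(Bv,t-1)$, and products thereof, so $B_\gamma$ lies in the subgroup $\langle\,\pi_1(N),\,\tilde B\,\rangle\le\Aff(\tN)$ generated by the cocompact translation lattice $\Gamma:=\pi_1(N)\le\Trans(\tN)$ and the affine lift $\tilde B$ of $B$. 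Thus $B_\gamma\in\Aff(\tN)$ for every $\gamma\in\pi_1(M)$.

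For general $\gamma\in\pi_1(M_0)$, set $d=[\pi_1(M_0):\pi_1(M)]$; then $\gamma^{\,d!}\in\pi_1(M)$, so $B_\gamma^{\,d!}\in\Aff(\tN)$. To upgrade this to $B_\gamma\in\Aff(\tN)$ I would pass to a compact quotient. Write $G_0=\{B_\alpha:\alpha\in\pi_1(M_0)\}$, a group of homeomorphisms of $\tN$, and $G=\{B_\alpha:\alpha\in\pi_1(M)\}$, so $[G_0:G]\le d$. The subgroup $\Gamma_0:=G_0\cap\Trans(\tN)$ contains $\Gamma$ with finite index, so a common power of every element of $\Gamma_0$ lies in $\Gamma$; since the $d!$-th power map is a homeomorphism of $\tN$, the group $\Gamma_0$ sits inside the preimage of the discrete set $\Gamma$ under such a map, hence is discrete, and being cocompact it is a lattice, so $N_0:=\tN/\Gamma_0$ is a compact nilmanifold finitely covered by $N$. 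Because the common lift $\tf$ of $f$ and $f_0$ normalizes $\pi_1(M_0)$ and satisfies $H\tf=AH$, one has $B_{\tf\alpha\tf^{-1}}=AB_\alpha A^{-1}$, so $A$ normalizes $G_0$; as $A$ preserves $\Trans(\tN)$ and $\Gamma$ it also preserves $\Gamma_0$ and descends to a hyperbolic affine automorphism $\bar A$ of $N_0$. Granting $\Gamma_0\lhd G_0$ (discussed below), $G_0$ descends to a group $\bar G_0=G_0/\Gamma_0$ of homeomorphisms of $N_0$; the homomorphism $q_*:\pi_1(M_0)\to\bbZ$ from the preceding setup is preserved by $\tf$-conjugation and descends to $\bar G_0$ with finite kernel, so $\bar G_0$ is virtually cyclic and conjugation by $\bar A$ induces a finite-order automorphism of it. Hence a power $\bar A^{\,j}$ centralizes $\bar G_0$, and, being again hyperbolic affine, \eqref{lemma-hypcom} applies to show each $\bar B_\gamma$ is affine. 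A lift to $\tN$ of an affine diffeomorphism of $N_0$ differs from an element of $\Aff(\tN)$ by a translation in $\Gamma_0$, so $B_\gamma\in\Aff(\tN)$, completing the bootstrap.

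The step I expect to be the main obstacle is exactly the normality $\Gamma_0\lhd G_0$: a priori each $B_\gamma$ is only a homeomorphism, and the conjugate of a translation by it need not be a translation. I would handle this by pinning $\Gamma$ down intrinsically. By \eqref{thm-cmptleaf} there is a compact $us$-leaf, and by global product structure $H$ restricts to a homeomorphism of its universal cover onto $\tN$; therefore $\ker q_*$ acts on $\tN$ via $B_\bullet$ as the fundamental group of that (infra)nilmanifold $us$-leaf, and $\Gamma_0$ is precisely its ``translation part'', i.e. its unique maximal normal nilpotent subgroup, whence $\Gamma_0\lhd B_\bullet(\ker q_*)$; normality in all of $G_0$ then follows since $\ker q_*$ is normal in $\pi_1(M_0)$ and the complementary ``$\tilde B$-direction'' consists of affine maps normalizing $\Trans(\tN)$. (When $B$ is Anosov one may instead invoke that $\pi_1(N)$ is the Fitting subgroup of $\pi_1(M)$; when $B=\mathrm{id}$, a skew product with trivial bundle, the conclusion is direct.) Combined with \eqref{lemma-liftN} and the standard facts about finite extensions of lattices in simply connected nilpotent Lie groups used above, this finishes the proof.
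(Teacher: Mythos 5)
The overall strategy---reduce to $\pi_1(M)$ where affinity is built into the definition, then bootstrap to $\pi_1(M_0)$ by quotienting $\tN$ to a compact (infra)nilmanifold and invoking \eqref{lemma-hypcom} once some iterate of $A$ descends to a hyperbolic affine map commuting with $B_\gamma$---is the right shape and is indeed what the paper does. The first paragraph (affinity for $\gamma\in\pi_1(M)$) is correct and matches the paper. The gap is exactly where you flag it: the normality $\Gamma_0\lhd G_0$, and your proposed fix does not close it.

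The circularity is this. You want $\Gamma_0:=G_0\cap\Trans(\tN)$ to be the Fitting subgroup of $B_\bullet(\ker q_*)$, so that it is characteristic there and hence normal in $G_0$. But the identification ``Fitting subgroup $=$ translation part'' is a statement about an \emph{affine} crystallographic-type action on $\tN$; for a group of bare homeomorphisms it is meaningless. For $\gamma\in\ker q_*\setminus\pi_1(M)$ you do not yet know $B_\gamma$ is affine---that is the conclusion you are after---so you cannot yet say what the translations of $B_\bullet(\ker q_*)$ are, nor that conjugating a translation by $B_\gamma$ stays a translation. There is also a secondary issue: $\Gamma_0$ is defined as an intersection over all of $G_0$, and a priori there may be $\gamma$ with $B_\gamma$ a genuine translation but $q_*(\gamma)\neq 0$, so $\Gamma_0\subseteq B_\bullet(\ker q_*)$ is itself unjustified. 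Finally, the claim that $\Gamma_0$ is discrete relies on the $d!$-power map being a homeomorphism whose preimage of the discrete set $\Gamma$ is discrete; the power map on a nonabelian nilpotent Lie group is a homeomorphism, but the preimage of a lattice under it need not be discrete without more care, so this too would need an argument.

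The paper avoids all of this by never leaving $\pi_1(M)$ when constructing the compact quotient. It sets $K_1=\ker q_*$, $K_2=K_1\cap\pi_1(M)$, and $K_3$ the normal core of $K_2$ in $K_1$. Then $K_3$ is automatically normal and finite index in $K_1$, and crucially $K_3\subset\pi_1(M)$, so $B_\bullet(K_3)$ consists of honest translations by the already-proved first step; thus $N_3=\tN/B_\bullet(K_3)$ is a genuine nilmanifold with no circularity. Since $f_*$ preserves $K_3$ and permutes the cosets of $K_3$ in $K_1$, some $A^j$ and each $B_\gamma$ ($\gamma\in K_1$) descend to commuting diffeomorphisms of $N_3$, and \eqref{lemma-hypcom} gives affinity on $K_1$. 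This upgrades $N_1=\tN/B_\bullet(K_1)$ to an infranilmanifold, and then the $q_*$-invariance under $f_*$ (namely $\tq\tf\gamma\tf\inv\gamma\inv=\tq$ on the lamination) shows $f_*(\gamma)K_1=\gamma K_1$ for \emph{every} $\gamma\in\pi_1(M_0)$, so $A$ and $B_\gamma$ descend to commuting maps on $N_1$ and \eqref{lemma-hypcom} finishes. If you replace your $\Gamma_0$ with the paper's $K_3$-construction, your bootstrap would go through; as written, the normality step assumes what it needs to prove.
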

\begin{proof}
    We may view $\pi_1(M)$ as a finite index subgroup of $\pi_1(M_0)$.
    The definition of an AB-system implies that
    $B_\gam \in \Aff(\tN)$ for all $\gam \in \pi_1(M)$.

    Now consider the subgroups $K_3 < K_2 < K_1 < \pi_1(M_0)$
    defined as follows{:}
    \begin{align*}
        &K_1 \text{ is the kernel of $q_*$}, \\
        &K_2 = K_1 \cap \pi_1(M),\ \ \text{and}\\
        &K_3 = \{\alpha \in K_2 : \alpha \beta K_2 = \beta K_2
        \text{ for all } \beta \in K_1 \}.
    \end{align*}
    By its definition, $K_3$ is a normal finite index subgroup of
    $K_1$.
    The lift $\tf$ of $f_0$ induces a homomorphism
    $f_*: \pi_1(M_0) \to \pi_1(M_0)$
    given by $f_*(\gam) = \tf \gam \tf \inv$.
    There is a constant $c \in \bbR$ such that
    \[
        \tq \tf(\tx) = \tq(\tx) + c
    \]
    for all $\tx \in \tM$ with non-open accessibility class.
    This implies that $f_*(K_1) = K_1$.
    From this, one can show that $f_*(K_2) = K_2$
    and therefore $f_*(K_3) = K_3$.

    %Define $B_{\Sig'} = \{B_\gam : \gam \in \Sig'\}$.
    Note that $N_3 := \tN / \{B_\gam : \gam \in K_3\}$ is a nilmanifold
    (which finitely covers the original nilmanifold $N$),
    and the hyperbolic Lie group automorphism $A:\tN \to \tN$ descends to an
    Anosov diffeomorphism on $N_3$.

    Suppose $\gam \in K_1$.
    As $f_*$ permutes the cosets of $K_3$,
    there is $j \ge 1$ such that $f_*^j(\gam) K_3 = \gam K_3$.
    This implies that $A^j$ and $B_\gam$ descend to commuting diffeomorphisms
    on $N_3$.
    Then, by \eqref{lemma-hypcom}, $B_\gam$ is affine.
    Thus, we have established the desired result for all $\gam \in K_1$,
    and further shown that
    $N_1 := \tN / \{ B_\gam : \gam \in K_1 \}$
    is an infranilmanifold
    (finitely covered by the original nilmanifold $N$).

    Now suppose $\gam \in \pi_1(M_0)$ is an arbitrary deck transformation.
    Then
    \[
        \tq \tf \gam \tf \inv \gam \inv (\tx) = \tq(\tx)
    \]
    for all $\tx \in \tM$ with non-open accessibility class.
    This implies that $f_*(\gam) K_1 = \gam K_1$.
    and so $A$ and $B_\gam$ descend to commuting diffeomorphisms
    on $N_1$.
    As $A$ is hyperbolic, $B_\gam \in \Aff(\tN)$ by \eqref{lemma-hypcom}.
\end{proof}
If $f$ is accessible, then clearly $f_0$ is accessible.
Therefore to prove
\eqref{thm-infraAB}, it is enough to consider $f$ in cases (2) and (3)
of \eqref{thm-consAB}.

\begin{prop} \label{prop-inflastassume}
    If $f$ is in case (3) of \eqref{thm-consAB}
    and $f_0$ satisfies assumption \eqref{assume-infra},
    then $f_0$ is in case (3) of \eqref{thm-infraAB}.
\end{prop}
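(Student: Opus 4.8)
The plan is to push the case-(3) structure of $f$ on $M$ down to $M_0$ by means of the covering $\pi$ and the $C^1$ map $q:M_0\to\bbS$ constructed above. After replacing $M$ by a further finite cover of $M_0$ which still covers $M$ — this does not affect whether $f_0$ lies in case (3) of \eqref{thm-infraAB}, since AB-systems and case (3) of \eqref{thm-consAB} are inherited by finite covers — we may assume $\pi$ is normal with finite deck group $\Delta$ and $M_0=M/\Delta$. Since the partially hyperbolic splitting on $M$ is pulled back from $M_0$, each $\gamma\in\Delta$ preserves $\Eu\oplus\Es$, hence maps the $us$-lamination of $f$ (which is $C^1$ by \eqref{thm-regularity}) to itself and permutes the compact $us$-leaves; as $f$ normalizes $\Delta$ it also permutes the $f$-periodic ones. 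The compact $us$-leaves of $f_0$ are exactly the images $\pi(X)$ of compact $us$-leaves $X$ of $f$, and $\pi\inv(\pi(X))=\bigcup_{\gamma\in\Delta}\gamma(X)$.

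First I would normalize the map $p:M\to\bbS$ produced by \eqref{prop-pproj} so that, for an orientation of $\Ec$ on $M_0$ pulled back to $M$, the derivative of $p$ along every center leaf is positive; this is available from the construction in the proof of \eqref{prop-pprojAI}. Then $q$, which locally is a finite sum of compositions of $p$ with inverse branches of $\pi$, is $C^1$ with positive derivative along every center leaf, hence a $C^1$ submersion, and it is onto $\bbS$ because $q|_{S_0}$ is a covering. For a compact $us$-leaf $X_0=\pi(X)$ of $f_0$ the components of $\pi\inv(X_0)$ are finitely many pairwise disjoint compact $us$-leaves $p\inv(t_1),\dots,p\inv(t_r)$ of $f$, and since each point of a fiber $\pi\inv(x_0)$ stays in a fixed one of these while $x_0$ ranges over $X_0$, the value of $q$ on $X_0$ — the $t_j$ weighted by the number of fiber points over each — is constant. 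Let $\Sigma_0\subset M_0$ be the union of the compact $us$-leaves, put $U=\bbS\setminus q(\Sigma_0)$, and take $p_0:=q$ (rescaled so $p_0(M_0)=\bbS$, which is possible as $\Ec$ is orientable by \eqref{assume-infra}).

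It remains to identify the fibers of $p_0$. A compact $us$-leaf $X$ of the AB-system $f$ is, by global product structure \eqref{thm-ABGPS}, parametrized by $\tN$, and the stabilizer of its lift in $\pi_1(M_0)$ acts on $\tN$ through the maps $B_\gamma$, which are affine by \eqref{lemma-allaff}. Moreover, any $\gamma\in\Delta$ that fixes one compact $us$-leaf fixes all of them, since a finite-order orientation-preserving homeomorphism of the parameter circle with a fixed point is the identity. Hence each $p_0\inv(t)$ with $t\notin U$ is finitely covered by the nilmanifold $X\cong N$ with deck group acting affinely on $\tN$, so it is an infranilmanifold; and for a connected component $I$ of $U$, $p_0\inv(I)$ is the $\Delta$-quotient of a disjoint union $\bigsqcup_j p\inv(I_j)$, with each $p\inv(I_j)$ homeomorphic to $N\times I_j$, hence an I-bundle — untwisted here, $\Ec$ being orientable — over an infranilmanifold. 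Finally, the relation $q f_0=q+\theta d$ on non-open accessibility classes ($d$ the degree of $\pi$, $\theta$ as in case (3) of \eqref{thm-consAB}) shows, as $\theta$ is rational, that $f_0$ acts on $\bbS\setminus U$ by the rational rotation $t\mapsto t+\theta d$; its order is the $n$ in the statement, $p_0\inv(t)$ and $p_0\inv(I)$ are $f_0^n$-invariant, and every $f_0$-periodic compact $us$-leaf lifts to an $f$-periodic compact $us$-leaf $p\inv(t')$, so it is of the stated form.

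The hard part will be showing that $p_0\inv(t)$ is a single compact $us$-leaf for $t\notin U$, equivalently that $q$ separates the compact $us$-leaves of $f_0$. This reduces to understanding the $\Delta$-action on the circle parametrizing the compact $us$-leaves of $f$: each $\gamma\in\Delta$ acts there by a finite-order orientation-preserving homeomorphism (here the orientability of $\Ec$ is used), so by averaging a metric the whole $\Delta$-action on that circle is topologically conjugate to an action by rotations, which is free once one divides by the subgroup fixing all leaves; the averaging map $t\mapsto\sum_{\gamma}\bar\gamma(t)$ is then strictly increasing of degree one and descends to a homeomorphism from the orbit circle onto $\bbS$, giving the separation (and allowing $p_0$ to be taken so that $p_0|_{S_0}$ is a covering). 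I expect this step, together with the reduction to a normal cover and the tracking of leaf stabilizers, to be the delicate point; the remaining $C^1$-regularity of $p_0$ then follows from \eqref{thm-regularity} and the Whitney-extension and gluing arguments of Section \ref{sec-regularity}, just as in the proof of \eqref{prop-pproj}.
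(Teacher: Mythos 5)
Your proposal takes a genuinely different route from the paper's, and the two disagree at exactly the point you flag as delicate. The paper's proof works directly with the (not-necessarily-normal) cover $\pi:M\to M_0$: it considers the lift $\tq:\tM\to\bbR$ and the homomorphism $q_*:\pi_1(M_0)\to\bbZ$, notes the image is $\ell\bbZ$ for some $\ell\ge 1$, and sets $\tp_0:=\tfrac{1}{\ell}\tq$, which descends to $p_0:M_0\to\bbS$. Separation of compact $us$-leaves is then almost formal: if $X_0,Y_0\subset p_0\inv(t)$ lift to $\tX,\tY$, then $\tp_0(\tX)-\tp_0(\tY)\in\bbZ$, so there is $\gamma\in\pi_1(M_0)$ with $\frac1\ell q_*(\gamma)$ equal to that integer, hence $\gamma\tY$ and $\tX$ have the same $\tp_0$-value and therefore coincide, giving $X_0=Y_0$. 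The infranilmanifold identification is then done via $K_1=\ker q_*$ and $N_1=\tN/\{B_\gamma:\gamma\in K_1\}$, which is the stabilizer picture you had in mind.

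Two concrete places where your version needs repair. First, you reduce to a normal cover at the outset on the grounds that ``AB-systems and case (3) of \eqref{thm-consAB} are inherited by finite covers.'' This is not established anywhere in the paper, and it is not free: one must check both that the lift $\hat f$ to the normal core exists (this part is fine, as $f_{0*}$ preserves the core), and that $\hat f$ is an AB-system in case (3) with the same structural data. The paper's proof is designed precisely to avoid this reduction --- the assumption before the proposition explicitly allows non-normal $\pi$ --- so this is a real extra lemma in your approach. Second, your averaging map $t\mapsto\sum_\gamma\bar\gamma(t)$ on the parameter circle is not degree one; lifted to $\bbR$ it has degree $|\Delta/\Delta_0|$ (equivalently, sums of $|\Delta/\Delta_0|$ lifts of degree-one increasing maps), and it is precisely that positive degree which makes it descend to a homeomorphism of the orbit circle. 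Your phrase ``rescaled so $p_0(M_0)=\bbS$'' is also the wrong normalization: $q$ is already surjective since $q|_{S_0}$ is a covering, and what actually matters is the $\tfrac1\ell$ rescaling that makes $\tp_0\gamma-\tp_0$ integral for \emph{every} $\gamma\in\pi_1(M_0)$, so that $p_0$ both descends and is fine enough to detect the full $\pi_1(M_0)$-orbit structure. With those two points fixed, your circle-action picture does carry through and is a reasonable geometric reinterpretation of the paper's algebraic argument; but as written the argument has a gap at the normal-cover reduction and an error in the degree count.
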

\begin{proof}
    By replacing $f_0$, $f$, and $\tf$ by iterates, assume $n = 1$ in
    \eqref{thm-consAB} and that the lift $\tf$ was chosen so that $\tf(\tX)=\tX$
    for every accessibility class $\tX \subset \tM$.

    The image of $q_*$ is equal to $\ell \bbZ$ for some $\ell  \ge  1$.
    Then $\tp_0 := \tfrac{1}{\ell} \tq$ quotients to a function $p_0:M_0 \to \bbS$.
    As the original $p:M \to \bbS$ was $C^1$, the functions
    $q$, $\tq$, $\tp_0$, and $p$ are also $C^1$.
    Also, $p_0$ is constant on compact $us$-leaves and
    its restriction to $S_0$ is a $C^1$ covering.
    If, for some $t \in \bbS$,  $X_0$ and $Y_0$ are compact $us$-leaves in the
    pre-image $p_0 \inv(t)$, then they lift to closed $us$-leaves
    $\tX, \tY \subset \tM$ such that $\tp_0(\tX) - \tp_0(\tY)$ is an integer.
    By the definition of $\tp_0$, there is then a deck transformation taking
    $\tX$ to $\tY$ and so $X_0 = Y_0$.
    This shows that every compact $us$-leaf in $M_0$ is of the form
    $p_0 \inv(t)$ for some $t$.

    If $X_0$ is instead an open accessibility class, then its boundary
    consists of two compact $us$-leaves and from this one can show that
    $p_0 \inv(p_0(X_0)) = X_0$.

    Note that every accessibility class $X_0$ on $M_0$ is the projection
    of an accessibility class $\tX$ on $\tM$.
    As $\tf$ fixes accessibility classes, so does $f_0$.
    Further, using $K_1$ and $N_1$ as in the proof of the lemma above,
    $X_0$ is homeomorphic to $\tX / K_1$.
    If $\tX$ is non-open, then $\tX / K_1$ is homeomorphic to the infranilmanifold
    $N_1$. If $\tX$ is open, then $\tX / K_1$ is an I-bundle over $N_1$
    where the fibers of the I-bundle are segments of center leaves.

    This shows that $f_0$ satisfies case (3) of \eqref{thm-infraAB}.
\end{proof}
\medskip

We now remove the additional assumptions above and show that this result still
holds.

\begin{prop}
    If $f$ is in case (3) of \eqref{thm-consAB}
    and $f_0$ does not satisfy assumption \eqref{assume-infra},
    then $f_0$ is in case (3) of \eqref{thm-infraAB}.
\end{prop}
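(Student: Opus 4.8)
The plan is to reduce to the situation already treated in \eqref{prop-inflastassume} by passing to appropriate iterates and to the orientation cover of $\Ec$, and then to push the resulting structure back down to $M_0$. First I would dispose of the trivial reductions. If $\Ec$ on $M_0$ is orientable but $f_0$ reverses its orientation, replace $f_0$ by $f_0^2$; and in every case replace $f_0$ by $f_0^m$ so that the covering relation becomes $\pi\tilde f=\tilde f_0\pi$, i.e.\ the ``$m=1$'' clause of \eqref{assume-infra}. Being in case (3) of \eqref{thm-infraAB} is inherited from any iterate: the sets $p_0\inv(t)$ and $p_0\inv(I)$ are then invariant under a correspondingly higher power of $f_0$, and the collection of $f_0$-periodic compact $us$-leaves coincides with that of any power of $f_0$. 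Hence it suffices to prove the conclusion after these changes. If at this stage $\Ec$ is orientable, then \eqref{assume-infra} holds and \eqref{prop-inflastassume} gives case (3) of \eqref{thm-infraAB} with $p_0(M_0)=\bbS$, and we are done.

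So assume $\Ec$ on $M_0$ is non-orientable. Since $Df_0$ preserves the splitting, $f_0^\ast w_1(\Ec)=w_1(\Ec)$, so $f_0$ lifts to the orientation double cover $\rho:\hM\to M_0$ of $\Ec$; call a lift $\hat f$. Its deck involution $\iota$ satisfies $\hat f\iota\hat f\inv=\iota$ (the only nontrivial deck transformation), so $\hat f$ and $\iota$ commute, and $\Ec$ on $\hM$ is orientable with $\hat f$ preserving the tautological orientation (here we use that $\hM$ is connected). Because $\Ec$ on $M$ is orientable and $f$ preserves its orientation, the pullback of $\hM$ to $M$ is the trivial double cover, which yields an $f$-equivariant covering $M\to\hM$; after replacing $\hat f$ by a further iterate so that this becomes a ``$\pi'f=\hat f\pi'$'' relation, we are exactly in the setting of \eqref{prop-inflastassume} with $\hM$ playing the role of $M_0$. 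Thus $\hat f$ is in case (3) of \eqref{thm-infraAB}: there is a $C^1$ surjection $\hat p:\hM\to\bbS$ and open $\hat U\subsetneq\bbS$ with the stated properties, and the compact $us$-leaves of $\hat f$ are precisely the fibres $\hat p\inv(t)$, $t\notin\hat U$.

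It remains to descend along $\iota$. Since $\iota$ permutes compact $us$-leaves and reverses the orientation of $\Ec$, while $\hat p$ is constant on compact $us$-leaves and has positive derivative along oriented center leaves, the map $x\mapsto-\hat p(\iota(x))$ has all the structural properties characterizing $\hat p$; equivalently, the Whitney-extension construction of $\eqref{prop-pproj}$ (via $\eqref{prop-pprojAI}$) may be carried out $\iota$-equivariantly from the outset. In either case we may take $\hat p\circ\iota=-\hat p$, so the induced involution on $\bbS$ is $t\mapsto-t$ and $\hat p$ descends to a $C^1$ surjection $p_0:M_0=\hM/\iota\to\orbi$, with $U\subset\orbi$ the image of $\hat U$. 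For $s\in\orbi$ away from the two orbifold points, $\iota$ carries the fibre $\hat p\inv(t)$ homeomorphically onto a distinct fibre, so $p_0\inv(s)$ is homeomorphic to that fibre, hence to an infranilmanifold (when $s\notin U$) or to a (twisted) I-bundle over one (when $s\in U$), exactly as for $\hat f$. At the two orbifold points, corresponding to the $\iota$-fixed points $t=0,\tfrac12$, the involution $\iota$ preserves the fibre $\hat p\inv(t)$ and acts freely on it commuting with the Anosov restriction of a power of $\hat f$; then $\hat p\inv(t)/\iota$ is again an infranilmanifold, since its fundamental group is a finite extension of the maximal normal nilpotent subgroup of $\pi_1(\hat p\inv(t))$ (the subgroup $\Gamma\cap\Trans(\tN)$ discussed for \eqref{lemma-liftN}, which is characteristic, hence normal upstairs), and similarly the component $p_0\inv(I)$ over an orbifold endpoint is the twisted I-bundle $\hat p\inv(\hat I)/\iota$. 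Undoing the iterate reductions of the first paragraph yields case (3) of \eqref{thm-infraAB} for $f_0$.

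The step I expect to be the main obstacle is this descent: making precise that $\hat p$ can be taken $\iota$-equivariant (ultimately a uniqueness statement for the regular projection produced by \eqref{prop-pproj}) and checking that a free involution commuting with an Anosov diffeomorphism on an infranilmanifold has infranilmanifold quotient, so that the two exceptional fibres and the twisted I-bundles really have the required topology. The remaining work — passing to the orientation cover, adjusting iterates, and transferring case (3) between a map and its powers — is routine bookkeeping.
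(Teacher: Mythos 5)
Your overall strategy is the same as the paper's: pass to iterates to kill the orientation-reversal and the $m>1$ issue, lift to the orientation double cover of $\Ec$ where \eqref{prop-inflastassume} applies, and descend the resulting $C^1$ projection along the deck involution to get a map to $\orbi$. You correctly flag the two genuine difficulties, and it is exactly there that the paper does something more concrete than what you propose.

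For the anti-equivariance of the projection, the paper does not try to make the Whitney construction equivariant or argue that $x\mapsto-\hat p(\iota(x))$ must agree with $\hat p$; instead it simply manufactures a new function by averaging: choose $p_1$ with $2p_1(x)=p_0(x)-p_0\tau(x)$. This is automatically $C^1$, constant on accessibility classes (both $p_0$ and $p_0\tau$ are), and satisfies $p_1\tau=-p_1$ by construction, so it descends to $p_2:M_0/\tau\to\orbi$ with no uniqueness argument at all. This cleanly disposes of what you identify as the main obstacle.

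For the two $\iota$-fixed fibres, your suggestion to use the group-theoretic characterization (torsion-free finite extension of a lattice in a nilpotent Lie group) would require the Lee--Raymond/Auslander almost-Bieberbach theorem and is not spelled out. The paper instead lifts $\tau$ and the fixed accessibility class to the universal cover and applies (an adaptation of) \eqref{lemma-allaff}, which via \eqref{lemma-hypcom} shows that $B_{\tilde\tau}\in\Aff(\tN)$; the quotient is then an infranilmanifold directly by definition, with no appeal to abstract realization theorems. This is precisely the lemma you said would need ``checking,'' and it is already available in the paper; you should invoke it rather than re-derive the quotient structure from scratch. With those two steps filled in as the paper does, the rest of your argument (the trivial reductions, pulling back the cover to $M$, and the twisted I-bundle structure at the orbifold endpoints) matches the paper's proof.
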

\begin{proof}
    In case (3) of \eqref{thm-infraAB},
    we are free to replace $f_0$ by an iterate.
    By replacing $f_0$ by $f_0^m$, one can assume $m=1$.
    That is, $\pi f = f_0 \pi$.
    By replacing $f_0$ by $f_0^2$, one can assume $f_0$ preserves the
    orientation of any orientable bundle.
    Thus, the only condition to test is when $\Ec$ is non-orientable.

    Any non-orientable bundle on a manifold lifts to an orientable bundle on a
    double cover and any
    bundle-preserving diffeomorphism lifts as well.
    Therefore, we are free to consider the following situation.
    As before, $\Ec$ is orientable and $f_0$ preserves the orientation,
    but now there is an involution $\tau:M_0 \to M_0$,
    such that $\tau$ reverses the orientation of $\Ec$ and $\tau$ commutes with
    $f_0$.
    As a consequence of this commutativity, $\tau$ preserves the partially
    hyperbolic splitting of $f_0$.
    Choose a continuous function $p_1:M_0 \to \bbS$
    which satisfies $2 p_1(x) = p_0(x) - p_0 \tau(x)$.
    As $\tau^2$ is the identity,
    $p_1 \tau(x) = -p_1(x)$ and so $p_1$ descends to a function
    $p_2:M_0/\tau \to \orbi$.

    Since $\bbS \to \bbS,\ x \mapsto -x$ has two fixed points,
    one can show that $\tau$ fixes exactly two accessibility classes on $M_0$.
    Let $X_0$ be one of these two classes,
    and lift $\tau$ and $X_0$ to the universal cover to get
    $\tX$ and $\tilde \tau$ such that $\tilde \tau(\tX) = \tX$.
    As $f$ and $\tau$ commute, it follows from
    an adaptation of \eqref{lemma-allaff}
    that $B_{\tilde \tau} \in \Aff(\tN)$.
    If $X_0$ is compact, then $X_0 / \tau$ is homeomorphic to an
    infranilmanifold.
    If instead $X_0$ is open, then $X_0$ is an I-bundle over $N_0$ where the
    fibers are center segments, and $\tau$ reverses the orientation of these
    fibers.
    Therefore, $X_0 / \tau$ is a twisted I-bundle over an infranilmanifold.

    This shows that case (3) holds for the quotient of $f_0$ to
    $M_0/\tau$ where $p_0$ and $U \subset \bbS$
    are replaced by $p_2$ and $U/\bbZ_2 \subset \orbi$.
\end{proof}
%As an aside, if $f_0$ reverses the orientation of $\Ec$
%and is in case (3),
%then one can show that $f_0(X_0)=X_0$ for exact two accessibility classes
%and otherwise $f_0^2(X_0)=X_0$.

\medskip

Now consider the situation where $f$ is in case (2) of \eqref{thm-consAB}.
The following proposition shows that
$f_0$ is ``algebraic'' as stated in case (2)
and concludes the proof of \eqref{thm-infraAB}.

\begin{prop} \label{prop-infraalg}
    Suppose $f_0$ is an infra-AB-system and
    $\Eu \oplus \Es$ is integrable.  Then there is a lift $\tf_0$
    of $f_0$ to the universal cover $\tM$ and a homeomorphism
    $h : \tM \to \tN \times \bbR$ such that
    \[
        h \tf_0 h \inv \in \Aff(\tN) \times \Isom(\bbR)
    \]
    and
    \[
        h \gam h \inv \in \Aff(\tN) \times \Isom(\bbR)
    \]
    for every deck transformation $\gam \in \pi_1(M_0)$.
\end{prop}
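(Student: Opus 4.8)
The plan is to reduce the statement to the rigidity already proved for AB-systems in case~(2) of \ref{thm-consAB}, and then to feed the covering group together with the dynamics into the line-action results of Section~\ref{sec-actions}. By definition of an infra-AB-system there are an AB-system $f:M\to M$, a finite covering $\pi:M\to M_0$, and $m\ge 1$ with $\pi f=f_0^m\pi$; the splitting of $f$ lifts that of $f_0$, so $\Eu\oplus\Es$ remains jointly integrable for $f$. Joint integrability rules out accessibility and rules out case~(3) (whose pieces $p\inv(I)$ are accessible), so $f$ falls into case~(2): after a topological conjugacy it equals $(v,t)\mapsto(Av,t+\theta)$ on $M_B=N\times\bbR/(Bv,t)\sim(v,t+1)$. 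Passing to universal covers, this conjugacy lifts to a homeomorphism $h_1:\tM\to\tN\times\bbR$ with $h_1\tf h_1\inv=A\times T_\theta$ ($A$ the hyperbolic Lie automorphism, $T_\theta$ translation by $\theta$) and with every deck transformation of $\tM\to M$ carried into $\Aff(\tN)\times\Trans(\bbR)$. Fix any lift $\tf_0$ of $f_0$, so $\tf_0^m=\tf\delta$ for a deck transformation $\delta$ of $\tM\to M_0$. Since $\tf_0$ and every $\gam\in\pi_1(M_0)$ preserve both the center foliation and the $us$-foliation of $f$, each of $h_1\tf_0 h_1\inv$ and $h_1\gam h_1\inv$ is a \emph{product} homeomorphism $(v,t)\mapsto(\Phi(v),\phi(t))$ of $\tN\times\bbR$; so it suffices to post-compose $h_1$ with a map $\id_{\tN}\times\psi$ making all $\tN$-factors affine and all $\bbR$-factors isometries of $\bbR$.

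For the $\tN$-factors I would run the argument of \ref{lemma-allaff}: each $\gam$ induces $B_\gam:\tN\to\tN$ with $H\gam=B_\gam H$ (with $H:\tN\times\bbR\to\tN$ the projection), and since $\tf$ normalizes $\pi_1(M_0)$ and permutes the finitely many cosets of the normal core of $\pi_1(M)$, a power $A^j$ commutes with $B_\gam$ after descending to a suitable nilmanifold finitely covering $N$; then \ref{lemma-hypcom} gives $B_\gam\in\Aff(\tN)$, and likewise for the $\tN$-factor of $\tf_0$, whose $m$-th power is a hyperbolic Lie automorphism. This step leaves the $\bbR$-coordinate untouched.

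The essential work — and the step I expect to be the main obstacle — is the $\bbR$-factor. The homomorphism $\gam\mapsto\bar\gam$ (the $\bbR$-part of $h_1\gam h_1\inv$) sends $\pi_1(M)$ onto $\langle T_{-1}\rangle$, so its image $D$ on $\pi_1(M_0)$ is a virtually-cyclic group of homeomorphisms of $\bbR$, and $\bar{\tf}_0$ normalizes $D$ with $\bar{\tf}_0^{\,m}\in T_\theta D$. Because $D$ has polynomial growth and $\Fix(D)=\varnothing$ (it contains the fixed-point-free $T_{-1}$), \ref{prop-mu} and \ref{prop-tau} furnish a $D$-invariant locally finite measure $\mu$ and a nonzero homomorphism $\tau:D\to\bbR$; applying \ref{prop-lamF} to the orientation-preserving element $\bar{\tf}_0^{\,2}$, which normalizes $D$, forces the scaling constant to be $1$ since $\tau(D)$ is cyclic, and a standard bookkeeping in this Plante-type correspondence then yields a $\langle D,\bar{\tf}_0\rangle$-invariant measure $\mu$. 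The signed function $x\mapsto\mu[x_0,x)$ semiconjugates the whole action to an affine one — orientation-preserving elements of $D$ to translations, orientation-reversing ones to affine reflections, $\bar{\tf}_0$ to an isometry. Upgrading this semiconjugacy to a genuine homeomorphism $\psi$ is the delicate part: the orientation-preserving finite-index subgroup $D^+$ is torsion-free virtually cyclic, hence infinite cyclic and topologically conjugate to a translation group, after which the at most one orientation-reversing coset is straightened using that a decreasing involution of an interval is topologically conjugate to the affine reflection. This last conjugacy must be chosen to commute with $\bar{\tf}_0$; one arranges that by separating the case in which the translation length of $\bar{\tf}_0$ is commensurable with the lattice of $D^+$ (so every relevant map already lies in a single finite-index group of translations and affine reflections, where straightening is immediate) from the incommensurable case, in which a short continuity argument shows that $D$ has no orientation-reversing elements at all.

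Finally, when $\Ec$ is not orientable, or $f_0$ does not preserve its orientation, or $m>1$, one reduces to the above by the iterate-and-orientation-double-cover devices already used in the propositions preceding \ref{prop-inflastassume}, the reflections appearing in $\Isom(\bbR)$ being exactly the images under $\psi$ of the orientation-reversing deck transformations. Then $h=(\id_{\tN}\times\psi)\circ h_1$ has the required properties.
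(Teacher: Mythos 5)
Your approach is genuinely different from the paper's. You start from the topological conjugacy $h_1$ of case~(2) of \eqref{thm-consAB} and then try to \emph{straighten} its $\bbR$-factor by a post-composition, running the Plante machinery of Section~\ref{sec-actions} afresh on the image of $\pi_1(M_0)$. The paper does none of that: it reuses the map $H\times\tq$ built earlier in Section~\ref{sec-infra}, where $\tq$ is the lift of $q(x_0)=\sum_{y\in\pi\inv(x_0)}p(y)$ and $p$ is the $C^1$ surjection from \eqref{prop-pproj}. The averaging over the fibers of the covering is the crucial device: it forces $q$ to descend to $M_0$, so the identity $\tq\gam=\tq+q_*(\gam)$ holds automatically for \emph{every} $\gam\in\pi_1(M_0)$ (not just $\pi_1(M)$), and the $C^1$ regularity of $p$ together with global product structure and integrability of $\Eu\oplus\Es$ makes $H\times\tq$ a homeomorphism directly. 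With that one map in hand, the $\bbR$-parts of all deck transformations and of $\tf_0$ are translations by construction, and nothing needs to be straightened. Your reduction to case~(2) and the observation that $h_1\gam h_1\inv$ and $h_1\tf_0 h_1\inv$ are product maps are correct and are a reasonable entry point, but they lead to a much harder problem.

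The gaps are concentrated precisely where you say the obstacle lies. First, when you invoke \eqref{prop-lamF} and conclude $\lam=1$, you lose the conclusion $f_*\mu=\lam\mu$ — that statement is asserted \emph{only for} $\lam\neq 1$ — so there is no ``standard bookkeeping'' that produces a $\langle D,\bar\tf_0\rangle$-invariant measure in the $\lam=1$ case. Second, even with an invariant measure, $x\mapsto\mu[x_0,x)$ gives only a semiconjugacy; the support of $\mu$ can have gaps (exactly as in \eqref{master-lemma}, where $P$ is a monotone surjection but not injective), and nothing in your case split about commensurability of translation lengths resolves this. What actually makes the $\bbR$-factor tractable here is not Plante theory but the elementary observations that $T_\theta$ (the $\bbR$-part of $\tf$) commutes with $D^+$, so for irrational $\theta$ every element of $D^+$ commutes with a dense translation group and is already a translation, while for rational $\theta$ the group $D^+$ descends to a \emph{finite} abelian group of circle homeomorphisms, which is conjugate to rotations by averaging a measure over the finite group — none of which appears in your sketch. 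Third, on the $\tN$-side, the proof of \eqref{lemma-allaff} hinges on $K_1=\ker q_*$; your replacement by the normal core of $\pi_1(M)$ includes elements $\beta^j$ whose $B$-parts are hyperbolic automorphisms, so $\{B_\gam:\gam\in K\}$ is not a cocompact lattice, $\tN/\{B_\gam\}$ is not a nilmanifold, and \eqref{lemma-hypcom} does not apply. Since you do not have $q_*$ without $\tq$, and once you have $\tq$ the paper's one-line proof is available, the detour through Plante theory is both unnecessary and, as written, incomplete.
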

Here, $\Isom(\bbR)$ is the group of
functions of the form
$t \mapsto t+c$ or $t \mapsto -t+c$.

\begin{proof}
    First consider the case where $f_0$ satisfies assumption \eqref{assume-infra}
    and
    recall the functions $H: \tM \to \tN$ and $\tq:\tM \to \bbR$ defined earlier in
    this section.
    By global product structure and the integrability of $\Eu \oplus \Es$,
    $H \times \tq$ is a homeomorphism.
    The results already given in this section then show that
    $h = H \times \tq$ satisfies
    the conclusions of the lemma.

    If $f_0$ does not satisfy \eqref{assume-infra} and $\Ec$ is orientable on
    $M_0$, then there is $m>1$ such that $f_0^m$ satisfies \eqref{assume-infra}.
    Let $H$ and $\tq$ be given for $f_0^m$.
    Define $a = +1$ if $\tf_0$ preserves the orientation of $\Ec$
    and $a = -1$ if $\tf_0$ reverses the orientation.
    Define $r:\tM \to \bbR$
    by
    \begin{math}
        r(x) = \sum_{k = 0}^{m-1}a^k\ \tq \tf_0^k(x)
    \end{math}
    and take
    $h = H \times r$.

    If $\Ec$ is non-orientable on $M_0$, then $f_0$ lifts to a double cover
    on which $\Ec$ is orientable.
    Then, let $H$ and $r$ be defined as in the previous case.
    Choose a deck transformation $\tilde \tau:\tM \to \tM$
    which reverses the orientation of $\Ec$ on $\tM$
    and
    define a function $s:\tM \to \bbR$ by
    $s(x) = r(x)-r \tilde \tau(x)$
    and take
    $h = H \times s$.
\end{proof}

\section{Openness} \label{sec-openness} %{{{1

This section establishes that AB-systems have global product structure
and form an open subset of the space of $C^1$ diffeomorphisms.

\begin{lemma} \label{lemma-onesub}
    Suppose $G$ is a simply connected nilpotent Lie group.
    For any distinct $u,v \in G$, there is a unique one-dimensional Lie
    subgroup $G_{u,v}$ such that $v \inv u \in G_{u,v}$.
    (That is, $u$ lies in the coset $v G_{u,v}$.)
\end{lemma}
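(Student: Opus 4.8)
The plan is to invoke the classical structural fact that for a simply connected nilpotent Lie group $G$ the exponential map $\exp:\mathfrak g\to G$ from its Lie algebra is a diffeomorphism (see, e.g., \cite{malcev}). Throughout I read ``one-dimensional Lie subgroup'' as a connected one, equivalently a one-parameter subgroup $\{\exp(tY):t\in\bbR\}$ with $0\ne Y\in\mathfrak g$; this caveat appears to be necessary, since for instance in $\bbR^2$ the element $(0,1)$ lies both in $\{0\}\times\bbR$ and in the disconnected one-dimensional subgroup $\bbR\times\bbZ$.

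For existence I would set $w=v\inv u$, which is not the identity because $u\ne v$, and write $w=\exp(X)$ for the unique nonzero $X\in\mathfrak g$ mapping to $w$. Since $\bbR X$ is a (necessarily abelian) one-dimensional subalgebra, $G_{u,v}:=\{\exp(tX):t\in\bbR\}$ is a subgroup, using $\exp(sX)\exp(tX)=\exp((s+t)X)$; and because $\exp$ is a global diffeomorphism it is the embedded image of the line $\bbR X$, hence a closed one-dimensional Lie subgroup. Taking $t=1$ shows $v\inv u=w\in G_{u,v}$, i.e.\ $u\in v\,G_{u,v}$.

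For uniqueness, suppose $H$ is a one-dimensional Lie subgroup containing $w$. Its Lie algebra is a line $\bbR Y\subset\mathfrak g$, and since $H$ is connected it coincides with the one-parameter subgroup $\{\exp(tY):t\in\bbR\}$. From $w\in H$ I obtain $t_0\in\bbR$ with $\exp(X)=\exp(t_0Y)$; since $w$ is not the identity, $t_0\ne0$, and injectivity of $\exp$ yields $X=t_0Y$, so $\bbR X=\bbR Y$ and $H=\exp(\bbR Y)=\exp(\bbR X)=G_{u,v}$.

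The only real input is the diffeomorphism property of $\exp$ on simply connected nilpotent groups; granting that, both the construction of $G_{u,v}$ and its uniqueness are immediate, so I do not anticipate a genuine obstacle — the one point demanding care is precisely the connectedness convention flagged in the first paragraph.
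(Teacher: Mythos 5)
Your proof is correct and takes essentially the same route as the paper, which simply cites the exponential map's properties on simply connected nilpotent Lie groups (referencing \cite{malcev}) and leaves the details to the reader. One small point worth noting: the paper's one-line proof mentions only \emph{surjectivity} of $\exp$, but you correctly observe that \emph{injectivity} is what drives the uniqueness half, so in fact the full diffeomorphism property is what is really being used; your caveat about reading ``one-dimensional Lie subgroup'' as connected is also the right convention to adopt.
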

\begin{proof}
    This follows from the fact that for such groups, the exponential map from
    the Lie algebra to the Lie group is surjective \cite{malcev}.
\end{proof}
A right-invariant metric on such a group $G$ is a metric
$d:G \times G \to [0,\infty)$ such that
$d(u,v)=d(u \cdot w, v \cdot w)$ for all $u,v,w \in G$.
For such a metric, we define a function $d_1:G \times G \to [0,\infty)$
where $d_1(u,v)$ is the length of the path from $u$ to $v$
which lies in the coset
$v G_{u,v}$ given by \eqref{lemma-onesub}.
Clearly, $d(u,v)  \le  d_1(u,v)$ for all $u,v \in G$.
Further, $d_1$ is continuous and the ratio
$d_1(u,v)/d(u,v)$ tends uniformly to one as $d(u,v)$ tends to zero.
Note that $d_1$ is not a metric on $G$ in general.
(If $G = \bbR^d$ is abelian, however, the coset $u G_1$ is simply the line
through $u$ and $v$ and $d=d_1$.)

If $\phi:G \to G$ is an automorphism and $G_1$ is a one dimensional subgroup,
then there is $\lam$ such that $d_1(\phi(u),\phi(v)) = \lam d_1(u,v)$
for all $u,v \in G$ with $u \in v G_1$.
This follows because both $G_1$ and $\phi(G_1)$ are Lie groups isomorphic to
$\bbR$ and $d_1$ restricts to a right-invariant metric on either of $G_1$
or $\phi(G_1)$.

\begin{lemma} \label{lemma-autseq}
    Suppose $G$ is a simply connected nilpotent Lie group,
    $d$ is a right-invariant metric,
    $\{\phi_k\}$ is a sequence of Lie group automorphisms of $G$,
    $G_1 \subset G$ is a one-dimensional Lie subgroup,
    $u_0 \in G$, and $v_0 \in u_0 G_1$ with $u_0  \ne  v_0$.
    \begin{enumerate}
        \item If
        \,$\lim_{k \to \infty} d(\phi_k(u_0),\phi_k(v_0)) = 0$, then \\
        $\lim_{k \to \infty} d(\phi_k(u),\phi_k(v)) = 0$
        for all $u \in G$ and $v \in u G_1$.

        \item If $a  \ge  1$ and
        \,$\lim_{k \to \infty} a^k d(\phi_k(u_0),\phi_k(v_0)) = 0$,
        then \\
        \,$\lim_{k \to \infty} a^k d(\phi_k(u),\phi_k(v)) = 0$
        for all $u \in G$ and $v \in u G_1$.

        \item If
        \,$\sup_k d(\phi_k(u_0),\phi_k(v_0)) < \infty$,
        then \\
        \,$\sup_k d_1(\phi_k(u_0),\phi_k(\hat v)) = 1$
        for some $\hat v \in u_0 G_1$.
    \end{enumerate}  \end{lemma}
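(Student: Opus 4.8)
The plan is to exploit the scaling behaviour of each $\phi_k$ on cosets of $G_1$ isolated in the paragraph preceding the lemma: for each $k$ there is $\lam_k$ with $d_1(\phi_k(u),\phi_k(v)) = \lam_k\, d_1(u,v)$ whenever $v \in u G_1$, and $\lam_k > 0$ because $\phi_k$ is injective. I will use this together with the three facts already recorded — $d(u,v) \le d_1(u,v)$, the ratio $d_1/d$ tends uniformly to $1$ as $d \to 0$, and $d_1 : G \times G \to [0,\infty)$ is continuous — and with the observation that both $d$ and $d_1$ are right-invariant (for $d_1$, the coset $v\,G_{u,v}$ is carried to $vw\,G_{uw,vw}$ by $x \mapsto xw$, so path lengths are preserved), so that $d(\phi_k(u_0),\phi_k(v_0)) = d(\phi_k(u_0 v_0\inv), e)$ and likewise $d_1(\phi_k(u_0),\phi_k(v_0)) = d_1(\phi_k(u_0 v_0\inv), e)$.

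For (1): if $d(\phi_k(u_0),\phi_k(v_0)) \to 0$ then, since $d_1/d \to 1$ uniformly near $0$, also $d_1(\phi_k(u_0),\phi_k(v_0)) \to 0$; as this equals $\lam_k\, d_1(u_0,v_0)$ with $d_1(u_0,v_0) > 0$ fixed, we get $\lam_k \to 0$, whence $d(\phi_k(u),\phi_k(v)) \le \lam_k\, d_1(u,v) \to 0$ for every $u \in G$ and $v \in uG_1$. For (2): if $a^k d(\phi_k(u_0),\phi_k(v_0)) \to 0$ with $a \ge 1$, then in particular $d(\phi_k(u_0),\phi_k(v_0)) \to 0$, so the same uniform ratio estimate gives $a^k d_1(\phi_k(u_0),\phi_k(v_0)) \to 0$; hence $a^k \lam_k \to 0$ and $a^k d(\phi_k(u),\phi_k(v)) \le a^k \lam_k\, d_1(u,v) \to 0$ for all $u \in G$, $v \in uG_1$.

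For (3), set $M = \sup_k d(\phi_k(u_0),\phi_k(v_0)) < \infty$. By right-invariance the point $\phi_k(u_0 v_0\inv)$, and also $e$, lies in the closed $d$-ball $B$ of radius $M$ about $e$. A right-invariant length metric on $G$ is complete, so $B$ is compact, and continuity of $d_1$ gives $M' := \sup_{B \times B} d_1 < \infty$; therefore $\lam_k\, d_1(u_0,v_0) = d_1(\phi_k(u_0 v_0\inv), e) \le M'$ for all $k$, so $\Lam := \sup_k \lam_k$ is finite, and positive since each $\lam_k > 0$. The coset $u_0 G_1$ is a properly embedded line on which $d_1(u_0,\cdot)$ is continuous and unbounded, hence onto $[0,\infty)$, so I may choose $\hat v \in u_0 G_1$ with $d_1(u_0,\hat v) = 1/\Lam$. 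Then $d_1(\phi_k(u_0),\phi_k(\hat v)) = \lam_k/\Lam \le 1$ for every $k$, with supremum over $k$ equal to $\Lam/\Lam = 1$, as required.

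The main obstacle is part (3): one cannot bound $d_1$ by $d$ pointwise on all of $G$, only where $d$ is small. The resolution is the right-invariance step, which moves the relevant pair of points into the fixed compact ball $B$, where continuity of $d_1$ supplies a uniform bound; the remaining ingredients — that cosets of $G_1$ are isometric copies of $\bbR$ under $d_1$, and the right-invariance of $d_1$ — are routine consequences of the material already developed.
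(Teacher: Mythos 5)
Your proposal is correct and follows the same approach as the paper's proof: introduce the scaling factor $\lam_k$ of $\phi_k$ on $d_1$ along $G_1$-cosets and reduce each statement to the behaviour of $\lam_k$. The paper's proof is very terse (three sentences), so the only substantive thing to say is that you have filled in the one nontrivial gap it leaves: in (3) the paper simply asserts $\Lam := \sup_k \lam_k < \infty$ without justification, whereas you derive it by noting that right-invariance of both $d$ and $d_1$ places $\phi_k(u_0 v_0\inv)$ in a fixed compact ball about $e$ on which the continuous function $d_1(\cdot,e)$ is bounded. Your choice of $\hat v$ (normalizing $d_1(u_0,\hat v) = 1/\Lam$) also matches the statement more precisely than the paper's own proof, which writes $d_1(\hat v, v_0)$ where $d_1(u_0,\hat v)$ is what is needed; this is harmless since $u_0 G_1 = v_0 G_1$, but your version is the cleaner one.
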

\begin{proof}
    Let $\lam_k$ be such that $d_1(\phi^k(u),\phi^k(v)) = \lam_k d_1(u,v)$
    when $u \in v G_1$.
    Then in the first item, the two limits hold if and only if
    $\lam_k \rightarrow 0$ and so one implies the other.
    For the second item, consider $a^k \lam_k$.
    For the final item, if the first supremum is finite, then
    $\Lam := \sup_k \lam_k < \infty$ and one can
    take $\hat v \in v_0 G_1$ such that $d_1(\hat v, v_0) = 1/\Lam$.
\end{proof}
We now show that every AB-system has global product structure.

\begin{proof}
    [Proof of \eqref{thm-ABGPS}]
    Let $f:\tM \to \tM$ be the lift of the AB-system to the universal cover
    and $h:\tM \to \tM_B$ the lifted leaf conjugacy to the AB-prototype.
    The functions $f$ and $h$ are written without tildes as all the analysis
    will be on the universal covers.

    Measuring distances on the manifold $\tM_B$ requires care.
    The metric $d_{\tM_B}$ on $\tM_B$ is defined by lifting a metric from $M_B$.
    If $p_k = (u_k, s_k)$, and $q_k = (v_k, t_k)$ are sequences in
    $\tM_B = \tN \times \bbR$,
    then $d_{\tM_B}(p_k,q_k)$ may not converge to zero, even if both
    $d_{\tN}(u_k,v_k) \rightarrow 0$ on $\tN$ and $|s_k-t_k| \rightarrow 0$ on
    $\bbR$.
    The convergence depends on the exact nature of the automorphism $B$.
    If $s_k$ and $t_k$ are \emph{bounded}
    sequences in $\bbR$, however, then one can show in this special case
    that $d_{\tM_B}(p_k,q_k) \rightarrow 0$ if and only if both
    $d_{\tN}(u_k,v_k) \rightarrow 0$ on $\tN$ and $|s_k-t_k| \rightarrow 0$ on
    $\bbR$.

    There is a deck transformation $\beta:\tM_B \to \tM_B$
    defined by
    $\beta(v,t) = (Bv, t-1)$ which is an isometry with respect to $d_{\tM_B}$.
    For general $\{p_k\}$ and $\{q_k\}$, let $\{n_k\}$
    be the unique sequence of integers
    such that $0  \le  |s_k - n_k| < 1$
    for all $k$.
    Then,
    $\beta^{n_k}(p_k) \in \tN \times [0,1)$ for all $k$ and
    \[    
        d_{\tM_B}(p_k,q_k) =
        d_{\tM_B}(\beta^{n_k}(p_k), \beta^{n_k}(q_k)) \rightarrow 0
    \]
    if and only if both
    \[    
        d_{\tN}(B^{n_k}(u_k),B^{n_k}(v_k)) \rightarrow 0
        \qandq |s_k-t_k| \rightarrow 0.
    \]

    In what follows, we write $d$ without a subscript for the metrics on $\tM$,
    $\tM_B$, and $\tN$.
    There is no ambiguity as they are all treated as distinct manifolds.
    If $Y$ is a subset of one of these three manifolds, then
    \[
        \dist(x,Y) := \inf_{y \in Y} d(x,y).
    \]
    Also let $d_s(x,y)$ denote distance measured along
    the corresponding stable foliation: $W^s_f$ if $x,y \in \tM$, $W^s_{A}$ if
    $x,y \in \tN$, and $W^s_{A \times \id}$ if $x,y \in \tN \times \bbR$.
    Similarly for $d_u$ and $d_c$.

    The leaf conjugacy implies that every 
    $cs$-leaf of $f$ intersects a $cu$-leaf
    in a unique center leaf.
    Therefore, establishing global product structure
    reduces to showing existence and uniqueness
    of intersections inside the $cs$ and $cu$ leaves.

    \emph{Uniqueness.}\quad
    Suppose $x \in \tM$ 
    and $x  \ne  y \in \Wc_f(x) \cap \Ws_f(x)$.
    Then as $k \to \infty$,
    \[
        d_s(f^k(x), f^k(y)) \rightarrow 0 \qandq
        d_c(f^k(x), f^k(y)) \nrightarrow 0
      \]
    since if both sequences tended to zero,
    local product structure would imply that $x$ and $y$ were equal.
    Define $p_k = h f^k(x)$ and $q_k = h f^k(y)$.
    As the leaf conjugacy is uniformly continuous,
    $d(p_k, q_k) \rightarrow 0$
    and $d_c(p_k, q_k) \nrightarrow 0$.
    If $p_k=(u_k,s_k)$ and $q_k=(v_k,t_k)$,
    then, as noted above,
    \[
        d(p_k,q_k) \rightarrow 0  \quad \Rightarrow \quad 
        |s_k-t_k| \rightarrow 0  \quad \Rightarrow \quad 
        d_c(p_k,q_k) \rightarrow 0,
    \]
    a contradiction.

    \emph{Existence.}\quad
    Suppose $x \in \tM$ lies on a center leaf $L_0$ and
    $L_1 \subset \Wcs_f(x)$ is a distinct center leaf.
    Then $h(L_0) = \{v_0\} \times \bbR$ and
    $h(L_1) = \{v_1\} \times \bbR$ for distinct points
    $v_0,v_1 \in \tN$.
    As $L_0$ and $L_1$ are subsets of the same $cs$-leaf of $f$,
    $v_0$ and $v_1$ lie on the same stable leaf of $A$.
    By \eqref{lemma-onesub},
    there is a one-dimensional subgroup $\tN_1 \subset \tN$
    such that $v_0 \inv \cdot v_1 \in \tN_1$.
    By item (2) of \eqref{lemma-autseq}, the coset $v_0 \tN_1$ is a subset of
    $\Ws_A(v_0)$.

    If $U^s_f$ is a small neighbourhood of $x$ in $W^s_f(x)$,
    then $h(U^s_f) \subset W^s_A(v_0) \times \bbR$ and
    the set $h(W^c_f(U^s_f)) = W^c_{A \times \id}(h(U^s_f))$
    is a neighbourhood of $h(x)$ in $W^s_A(v_0) \times \bbR$.
    Therefore, if $v \in \Ws_A(v_0)$ is sufficiently close to $v_0$,
    then
    there is $y \in \Ws_f(x)$ such that $h(y) \in \{v\} \times \bbR$.
    \begin{figure}[t]
    {\centering
    \includegraphics{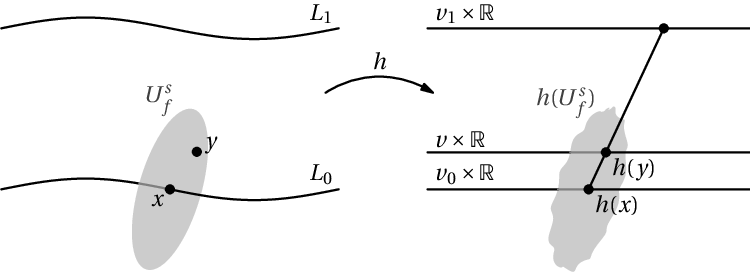}
    }
    \caption{A depiction of points and leaves occuring in the proof of
    global product structure.
    In this figure, the stable direction $\Es_f$ is shown as if it were
    two-dimensional
    and $U^s_f$ is drawn as a small plaque tangent
    to $\Es_f$.
    The entire left side of the figure lies inside a three-dimensional
    $cs$-leaf of $f$ and the right side lies inside a $cs$-leaf
    of $A \times \id$.
    }
    \label{fig-gpsproof}
    \end{figure}
    In particular, let $v$ be such that $v \in v_0 \tN_1$
    and fix such a point $y$.
    See Figure \ref{fig-gpsproof}.
    Let $\{n_k\}$ be such that $\beta^{n_k} h f^k(x) \in \tN \times [0,1)$
    for all $k$.
    Then,
    \begin{align*}
        d(f^k(x),f^k(y)) &\rightarrow 0  \quad \Rightarrow \\  
        d(\beta^{n_k} h f^k(x),\beta^{n_k} h f^k(y)) &\rightarrow 0  \quad \Rightarrow \\  
        d(B^{n_k}A^k(v_0), B^{n_k}A^k(v)) &\rightarrow 0
    \end{align*}
    which by \eqref{lemma-autseq} implies
    \begin{math}
        d(B^{n_k}A^k(v_0), B^{n_k}A^k(v_1)) \rightarrow 0.
    \end{math}

    Then, as $h f^k(L_1) = \{A^k(v_1)\} \times \bbR$,
    \begin{align*}
        \dist(\beta^{n_k}hf^k(x), \beta^{n_k}hf^k(L_1)) &\rightarrow 0  \quad \Rightarrow \\  
        \dist(hf^k(x), hf^k(L_1)) &\rightarrow 0  \quad \Rightarrow \\  
        \dist(f^k(x), f^k(L_1)) &\rightarrow 0.
    \end{align*}
    Thus, for sufficiently large $k$, $\Ws_f(f^k(x))$ intersects
    $f^k(L_1)$ showing that $\Ws_f(x)$ intersects $L_1$.
\end{proof}
A sequence $\{x_k\}$ is an $\ep$-$c$-pseudoorbit
if for each $k \in \bbZ$ the points $f(x_k)$ and $x_{k+1}$ lie $\ep$-close
on the same center leaf.
A partially hyperbolic system is \emph{plaque expansive} if there is $\ep>0$
such that if $\{x_k\}$ and $\{y_k\}$ are $\ep$-$c$-pseudoorbits
and $d(x_k,y_k)<\ep$ for all $k \in \bbZ$, then $x_0$ and $y_0$ are on the same
local center leaf.

\begin{thm} \label{thm-ABPE}
    Every AB-system is plaque expansive.
\end{thm}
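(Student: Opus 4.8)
The plan is to prove plaque expansivity directly on the universal cover, running the pseudoorbits through the leaf conjugacy and reusing the metric bookkeeping developed in the proof of Theorem \ref{thm-ABGPS}. Since AB-systems are dynamically coherent by Theorem \ref{thm-ABGPS}, the center foliation $\Wc$ is well defined, so the notion of $\ep$-$c$-pseudoorbit makes sense. Given $\ep$-$c$-pseudoorbits $\{x_k\}$ and $\{y_k\}$ in $M$ with $d(x_k,y_k)<\ep$ for all $k$, for $\ep$ below the injectivity radius they lift to $\ep$-$c$-pseudoorbits $\{\tx_k\}$, $\{\ty_k\}$ in $\tM$ for the lifted $f$ which are still $\ep$-close, and it suffices to place $\tx_0$ and $\ty_0$ on a common center plaque. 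Let $h:\tM\to\tM_B$ be the lifted leaf conjugacy to the AB-prototype $\fAB$; both $h$ and $h\inv$ are uniformly continuous. Since $h$ carries center leaves of $f$ to center leaves of $\fAB$, the sequences $\{h\tx_k\}$ and $\{h\ty_k\}$ are $\delta$-$c$-pseudoorbits for $\fAB$ with $d(h\tx_k,h\ty_k)<\delta$, where $\delta\to 0$ as $\ep\to 0$.

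Next I would exploit the rigidity of $\fAB$ on $\tM_B=\tN\times\bbR$. Write $h\tx_k=(u_k,s_k)$ and $h\ty_k=(w_k,r_k)$. Because $\fAB(v,t)=(Av,t)$ fixes the $\bbR$-coordinate and its center leaves are exactly the sets $\{v\}\times\bbR$, the $c$-pseudoorbit condition forces $u_{k+1}=Au_k$ and $w_{k+1}=Aw_k$, hence $u_k=A^k u_0$ and $w_k=A^k w_0$ for all $k\in\bbZ$. Since the projection $\tM_B\to\bbR$ is Lipschitz, $d(h\tx_k,h\ty_k)<\delta$ gives $|s_k-r_k|<C\delta$, and the $c$-pseudoorbit condition gives $|s_{k+1}-s_k|<C\delta$, so the $\bbR$-coordinates of the two orbits drift slowly and stay $C\delta$-close. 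Applying the isometry $\beta(v,t)=(Bv,t-1)$ exactly as in the proof of Theorem \ref{thm-ABGPS} — choosing integers $n_k$ with $|s_k-n_k|\le\tfrac12$ and translating by $\beta^{n_k}$ — moves both orbits into $\tN\times[-1,1]$ while preserving all distances, and there the special case of the metric comparison from that proof applies, yielding that the distance in $\tN$ between $B^{n_k}A^k u_0$ and $B^{n_k}A^k w_0$ is less than $\eta$ for all $k\in\bbZ$, with $\eta\to 0$ as $\ep\to 0$. Note $|n_{k+1}-n_k|\le 1$, so the ``slope'' of $n_k$ in $k$ is at most $C\delta$.

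The conclusion then comes from hyperbolicity of $A$ together with Lemma \ref{lemma-autseq}. Suppose $u_0\ne w_0$ and let $\tN_1$ be the one-dimensional subgroup with $u_0\inv w_0\in\tN_1$ (Lemma \ref{lemma-onesub}). Item (3) of Lemma \ref{lemma-autseq}, applied to the automorphisms $\phi_k=B^{n_k}A^k$ (automorphisms since $A,B$ are), shows that the scaling factors of $\phi_k$ along $\tN_1$ are uniformly bounded. Since $AB=BA$, these factors are comparable to $\sum_i|\mu_i|^k|\nu_i|^{n_k}$ taken over the common (generalized) eigenspaces of $A$ and $B$ in which $u_0\inv w_0$ has nonzero component, where $|\mu_i|\ne 1$ for every $i$ because $A$ is hyperbolic; uniform boundedness therefore forces a nonzero affine relation between $n_k$ and $k$, which is incompatible with the slow drift $|n_{k+1}-n_k|\le 1$ once $\ep$ is small enough that $C\delta$ is below the slope dictated by $A$. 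This contradiction gives $u_0=w_0$. Then $h\tx_0$ and $h\ty_0$ lie on the common leaf $\{u_0\}\times\bbR$ of $\fAB$ within distance $C\delta$; since this leaf is Lipschitz in its $\bbR$-coordinate in both directions, they are $C'\delta$-close \emph{along} the leaf, and applying the uniformly continuous $h\inv$ places $\tx_0$ and $\ty_0$ on a small center plaque of $f$, i.e. on the same local center leaf.

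The main obstacle is the second paragraph: converting ``$d(h\tx_k,h\ty_k)<\delta$ for all $k$'' into a uniform bound on the $\tN$-distance between $B^{n_k}A^k u_0$ and $B^{n_k}A^k w_0$, which requires controlling both the leaf conjugacy and the twisted metric on $\tM_B$. This is precisely the difficulty already resolved in the proof of Theorem \ref{thm-ABGPS} — the $\beta^{n_k}$-reduction to a bounded band of $\bbR$-coordinates, together with the fact that on such a band $d_{\tM_B}$ and $d_{\tN}$ are uniformly comparable — so the step is to quote and lightly adapt that machinery rather than to develop anything new. A secondary point, handled by first passing to the universal cover where every center leaf is a properly embedded copy of $\bbR$, is the degenerate behaviour of very short compact center leaves, which would otherwise make lifting the pseudoorbits delicate.
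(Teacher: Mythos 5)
Your outline correctly reuses the $\beta^{n_k}$-translation trick and Lemma~\ref{lemma-autseq} from the global-product-structure proof, but the heart of your argument — ``the $c$-pseudoorbit condition gives $|s_{k+1}-s_k|<C\delta$, so the slope of $n_k$ is at most $C\delta$'' — does not hold, and the rest of the argument collapses with it. The leaf conjugacy $h$ is not a topological conjugacy: $hf(\tx_k)$ and $\fAB h(\tx_k)$ lie on the same center leaf $\{Au_k\}\times\bbR$ but are not the same point, and their $\bbR$-coordinates can differ by an amount that is bounded by a \emph{fixed} constant $K$ (uniform continuity of $hf$ and $\fAB h$ on the compact $M_B$), not by something that shrinks with $\ep$. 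So $|s_{k+1}-s_k|\le K+C\delta$, the slope of $n_k$ is bounded only by this fixed $K$, and your concluding ``incompatible with the slow drift'' step has no content: for a given AB-system the eigenvalue slope $\alpha=-\log|\mu_i|/\log|\nu_i|$ may well be smaller than $K$. In other words, you cannot make the drift arbitrarily small by shrinking $\ep$; that would require $f$ to actually be conjugate, not merely leaf conjugate, to $\fAB$. Secondary gaps — Jordan blocks, possible cancellation between generalized eigenspaces, and the fact that $\tN_1$ is an arbitrary one-parameter subgroup rather than a common eigenspace — would also need to be handled even if the slope estimate held.

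The paper takes a different, purely dynamical route that sidesteps both problems: it first ``brackets'' the two pseudoorbits into a pair $\{x_k\},\{y_k\}$ connected by a short $u$-segment and a short $s$-segment, applies Lemma~\ref{lemma-autseq}(3) to produce a witness point $\hat v\in v_x\tN_1$, and then uses global product structure to push $\hat v$ back to a sequence $\hat y_k\in\Wu_f(x_k)$. A compactness bound on the auxiliary function $D$ forces $\sup_k d_u(x_k,\hat y_k)$ into a fixed interval $[1/C,C]$, and uniform unstable expansion of $f$ (the constant $\ep_0$) shows this supremum cannot actually be attained, which is the contradiction. The key point is that all quantitative control happens in $\tM$ along the invariant foliations of $f$, where the expansion is genuine and uniform, rather than in $\tM_B$ via the leaf conjugacy, where the jump in the $\bbR$-coordinate is of order one. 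If you want to salvage your line of attack, you would have to replace the $s_k$-drift estimate by an argument that is insensitive to this order-one slack — at which point you are essentially forced back into the paper's witness-plus-expansion scheme.
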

Since plaque expansive systems are open in the $C^1$ topology
\cite{HPS},
this also proves \eqref{thm-openAB}.

\begin{proof}
    Let $f:M \to M$ be an AB-system.
    Let $C>1$ be a constant to be defined shortly.
    Since $f$ expands in the unstable direction,
    there is $\ep_0 > 0$ such that if points $x,y,x',y' \in M$
    satisfy
    \[    
        \frac{1}{C}  \le  d_u(x,y)  \le  C, \quad
        d_c(f(x),x') < \ep_0, \quad \text{and} \quad
        y' \in \Wc(f(y)) \cap \Wu(x')
    \]
    then $d_u(x,y)<(1-\ep_0)d_u(x',y')$.
    This result then also holds for points on the universal cover $\tM$ where
    $f$ for the remainder of the proof denotes the lift $f:\tM \to \tM$.

    Let $h:\tM \to \tN \times \bbR$ be the lifted leaf conjugacy.
    Define sets
    \begin{align*}
        X =
        \{(v,w) \in \tN \times \tN :
        \ \ v \in \Wu_A(w),
        \ \ d(v,w) \le 1\}  \end{align*}
    and
    \[        X_1 =
        \{(v,w) \in \tN \times \tN :
        \ \ v \in \Wu_A(w),
        \ \ \tfrac{1}{2}  \le  d_1(v,w)  \le  1\}.
    \]
    and a function
    \[    
        D: X \times [-1,1] \to \bbR,\quad
        (v,w,t) \mapsto d_u(h \inv(v \times \bbR),\ h \inv(w \times t)).
    \]
    That is, $D(v,w,t)$ is the distance, measured along an unstable leaf of
    $f$, between the center leaf $h \inv(v \times \bbR)$
    and the point $h \inv(w \times t)$.
    Such a function is well-defined and continuous by global product
    structure.

    If $\alpha:\tN \to \tN$ is a deck transformation for the covering $\tN \to N$,
    then $\alpha \times \id$ is a deck transformation for the covering
    $\tM_B \to M_B$ and
    one can verify that $D(\alpha(v), \alpha(w), t) = D(v,w,t)$.
    Using the compactness of $N$ and $[-1,1]$, there is $C>1$
    such that
    \[
        D(X \times [-1,1]) \subset [0, C]
        \quad \text{and}\quad
        D(X_1 \times [-1,1]) \subset [\frac{1}{C}, C].
    \]
    This defines the constant $C$ used above.

    For some $\ep>0$ let $\{x_k\}$ and $\{z_k\}$ be $\ep$-$c$-pseudoorbits
    such that $d(x_k,z_k) < \ep$.
    By increasing $\ep$ and by sliding the points $z_k$ along center leaves,
    assume, without loss of generality, that there is a point $y_k$ for
    each $k$ such that $x_k$ and $y_k$ are connected by a short unstable segment
    and $y_k$ and $z_k$ are connected by a short stable segment.
    By again increasing $\ep$, one can show that
    $\{y_k\}$ is a $\ep$-$c$-pseudoorbit.
    We may freely assume that the original $\ep$ was chosen small enough that
    $d_c(f(x_k), x_{k+1}) < \ep_0$ for all $k$.
    We will show that $x_0$ and $y_0$ lie on the same center leaf.
    An analogous argument holds for $y_0$ and $z_0$ which will complete the
    proof.

    Suppose $x_0$ and $y_0$ lie on distinct center leaves.
    Then, using $\beta$ as in the previous proof,
    there are $v_x  \ne  v_y \in \tN$ and $\{n_k\}$ such that
    $\beta^{n_k}h(x_k) \in \{B^{n_k}A^k v_x\} \times (-1,1)$
    and
    $\beta^{n_k}h(y_k) \in \{B^{n_k}A^k v_y\} \times (-1,1)$
    for all $k \in \bbZ$.
    This implies that
    \[    
        \sup_k d(B^{n_k}A^k v_x, B^{n_k}A^k v_y) < \infty.
    \]
    Let $\tN_1 \subset \tN$ be a one-dimensional subgroup such that
    $v_y \in v_x \tN_1$.
    By \eqref{lemma-autseq}, there is
    $\hat v \in v_x \tN_1$
    such that
    \[    
        \sup_{k \in \bbZ}
        d_1(B^{n_k}A^k v_x, \, B^{n_k}A^k \hat v) = 1.
    \]
    By the global product structure of $f$,
    there is a unique sequence
    $\{\hat y_k\}$ in $\tM$ such that
    $h(\hat y_k) \in \{A^k \hat v\} \times \bbR$ and
    $\hat y_k \in \Wu_f(x_k)$.
    Then,
    \begin{math}
        S = \sup_{k \in \bbZ} d_u(x_k, \hat y_k)
    \end{math}
    satisfies $\frac{1}{C}  \le  S  \le  C$.
    Let $k \in \bbZ$ be such that
    \begin{math}
        d_u(x_k, \hat y_k) > (1-\ep_0) S.
    \end{math}
    The definition of $\ep_0$ implies that
    \begin{math}
        d_u(x_{k+1}, \hat y_{k+1}) > S,
    \end{math}
    a contradiction.
\end{proof}
\section{The dynamically-incoherent example} \label{sec-incoherent} %{{{1

This section gives a construction of the example due to Hertz, Hertz, and Ures
of a partially hyperbolic system on the 3-torus having an invariant $cs$-torus
\cite{RHRHU-non-dyn}.
For this specific construction,
$\Eu$ and $\Es$ are jointly integrable and
the tangent foliation has exactly one
compact leaf.
The system therefore gives an example of case (3) of \eqref{thm-ratAB}.
We use the following to prove the example is partially hyperbolic.

\begin{prop} \label{prop-phNW}
    Suppose $f$ is a diffeomorphism of a compact manifold $M$,
    $TM = \Es \oplus \Ec \oplus \Eu$ is an invariant splitting, and
    there is $k>1$ such that
    \[
        \|Tf^k v^s_x\| < \|Tf^k v^c_x\| < \|Tf^k v^u_x\|
        \qandq
        \|Tf^k v^s_x\| < 1 < \|Tf^k v^u_x\|
    \]
    for all $x \in NW(f)$ and unit vectors $v_x^* \in E^*_x$
    ($*=s,c,u$).
    Then, $f$ is partially hyperbolic.
\end{prop}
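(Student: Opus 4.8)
The plan is to show that the pointwise expansion, contraction, and domination estimates, assumed only over the compact invariant set $NW(f)$, propagate to all of $M$ once one replaces $f$ by a sufficiently high iterate $f^N$; for $f^N$ these estimates are then precisely the conditions that define partial hyperbolicity (as in the appendix), and since partial hyperbolicity of $f^N$ is equivalent to that of $f$, this finishes the proof.

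First I would record uniform one-step estimates on $NW(f)$. Since $NW(f)$ is compact, the splitting $TM=\Es\oplus\Ec\oplus\Eu$ is continuous and $Tf$-invariant, and $f$ is a diffeomorphism, the four \emph{strict} inequalities in the hypothesis give, by compactness, a single constant $\lam<1$ with
\[
  \|T_xf^k|_{\Es_x}\|\le\lam,\qquad \|(T_xf^k|_{\Eu_x})^{-1}\|\le\lam,
\]
\[
  \|T_xf^k|_{\Es_x}\|\cdot\|(T_xf^k|_{\Ec_x})^{-1}\|\le\lam,\qquad
  \|T_xf^k|_{\Ec_x}\|\cdot\|(T_xf^k|_{\Eu_x})^{-1}\|\le\lam
\]
for all $x\in NW(f)$ (here $T_xf^k|_{\Ec_x}$ is invertible because $\Ec$ is $Tf$-invariant). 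For instance, $\|T_xf^kv^s_x\|<\|T_xf^kv^c_x\|$ for all unit $v^s_x,v^c_x$ forces $\|T_xf^k|_{\Es_x}\|<\|(T_xf^k|_{\Ec_x})^{-1}\|^{-1}$, where the left side is a continuous function of $x$ and the right side is the conorm, also continuous, so the strict pointwise inequality upgrades to $\le\lam<1$ on the compact set $NW(f)$.

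The heart of the argument is to replace $NW(f)$ by $M$ in these estimates. Put $g=f^k$. Each of the four quantities above is $e^{\chi(x)}$ for a continuous $\chi\colon M\to\bbR$ that is submultiplicative along $g$-orbits: using submultiplicativity of the operator norm and of the norm of an inverse,
\[
  \|T_xg^n|_{\Es_x}\|\le\prod_{j=0}^{n-1}\|T_{g^j(x)}g|_{\Es}\|,
\]
and similarly for the other three quantities. Now $NW(g)\subseteq NW(f)$, so $\chi\le\log\lam<0$ on $NW(g)$; since every $g$-invariant Borel probability measure is supported on $NW(g)$ (Poincar\'e recurrence), $\int\chi\,d\nu<0$ for every $g$-invariant $\nu$. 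By the standard principle that
\[
  \limsup_{n\to\infty}\ \tfrac1n\,\sup_{x\in M}\sum_{j=0}^{n-1}\chi(g^j(x))\ \le\ \max_{\nu}\int\chi\,d\nu\ <\ 0
\]
(the maximum over $g$-invariant probability measures), there is $n_0\ge1$ and $c>0$ with $\sum_{j=0}^{n_0-1}\chi(g^j(x))\le-c\,n_0$ for all $x\in M$; taking $n_0$ large enough to work simultaneously for the four functions $\chi$ and setting $N=k\,n_0$, $\lam'=e^{-cn_0}<1$, submultiplicativity yields for every $x\in M$
\[
  \|T_xf^N|_{\Es_x}\|\le\lam',\qquad \|(T_xf^N|_{\Eu_x})^{-1}\|\le\lam',
\]
together with the two domination inequalities with $\lam'$ in place of $\lam$. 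These are exactly the conditions defining partial hyperbolicity, verified for $f^N$, so $f$ is partially hyperbolic.

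The main obstacle is the displayed limit, i.e.\ the passage from $NW(f)$ to $M$: this is the Ma\~n\'e-type fact that the worst subexponential growth rate of a continuous submultiplicative cocycle over a compact manifold is governed by invariant measures, which all live on $NW(g)\subseteq NW(f)$. If one prefers not to cite it, the short proof is a subadditivity-and-compactness argument: were the uniform upper averages to stay at or above $\max_\nu\int\chi\,d\nu$, extracting orbit segments of growing length and passing to a weak-$*$ limit of the corresponding empirical measures would produce a $g$-invariant measure $\nu$ with $\int\chi\,d\nu\ge0$, a contradiction. Everything else---extracting uniform constants by compactness, the submultiplicativity bookkeeping, and recognizing the final estimates as the definition of partial hyperbolicity for an iterate---is routine.
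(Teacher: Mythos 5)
Your proof is correct, but it goes by a genuinely different route than the paper's. The paper's argument (sketched after the proposition) is a direct covering/combinatorial one: the strict inequalities, holding on the compact set $NW(f)$, persist on an open neighbourhood $U$ of $NW(f)$ by continuity; since $M\setminus U$ is compact and consists of wandering points, it can be covered by finitely many wandering open sets, each of which any orbit visits at most once, so every orbit spends at most a uniformly bounded number of iterates outside $U$. Multiplying the good estimates from $U$ against the bounded bad excursions then gives the uniform estimates for some iterate $f^N$. Your version instead passes through invariant measures: all $g$-invariant measures are supported on $NW(g)\subseteq NW(f)$, so the integrals of your submultiplicative rate functions $\chi$ are uniformly negative over invariant measures, and the Ma\~n\'e-type principle (worst-case Birkhoff averages of a continuous function over a compact system are controlled by invariant measures) then forces the uniform orbit averages on all of $M$ to go negative for some $n_0$. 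Both are standard devices; the paper's is slightly more elementary (no ergodic theory, just a finite cover of the wandering set), while yours cleanly reduces to a single well-known lemma, whose proof, as you note, is itself a weak-$*$ compactness argument and so is not far removed from the paper's in spirit. One small bookkeeping point you handle correctly and that is worth flagging: the quantities $\|T_xg^n|_{\Es}\|\cdot\|(T_xg^n|_{\Ec})^{-1}\|$ and $\|T_xg^n|_{\Ec}\|\cdot\|(T_xg^n|_{\Eu})^{-1}\|$ are indeed submultiplicative along $g$-orbits because both factors are, so the Ma\~n\'e lemma applies to all four rate functions simultaneously.
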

To prove this, note
that if the above inequalities hold on $NW(f)$,
they also hold on a neighbourhood $U$ of $NW(f)$ and any orbit of $f$ has
a uniformly bounded number of points which lie outside of $U$.
The details are left to the reader.

\medskip

Now, we return to constructing the example on $\bbT^3$.
The example has a linear stable bundle, so we first consider dynamics in
dimension two.
Define $\lam = \frac{1}{2}(1 + \sqrt{5})$
and functions
\[
    \psi:\bbR \to \bbR, \, x \mapsto x + \tfrac{2}{3}\sin x \qandq
    g:\bbR^2 \to \bbR^2, \, (x,y) \mapsto (\psi(x), \lam y + \cos x).
\]
The derivative of $g$ is
\[
    Dg =
        \begin{pmatrix}
        \psi'(x) & 0 \\ - \sin x & \lam  \end{pmatrix}
    .
\]
On the vertical line $x=0$, there is an expanding fixed point for $g$.
Through this point is an invariant
one-dimensional unstable manifold associated to the larger eigenvalue
of $Dg$.
One can show that this unstable manifold may be expressed as the graph of a
function $u:(-\pi,\pi) \to \bbR$.
For now,
only consider $u$ on $[0,\pi)$.
By an invariant cone argument, one can show that $u'(x) < 0$
for all $x \in (0,\pi)$.
Using that $\psi'(x)<\lam$ when $x$ is close to $\pi$
and that
\[
    \frac{|\lam t - \sin x|}{|\psi'(x)|} >
    \frac{\lam}{|\psi'(x)|} |t| > |t|,
\]
for $t < 0$,
one can show that $\lim_{x \nearrow \pi} u'(x) = -\infty$.

Define a foliation $\Wu$ on $[0,\pi) \times \bbR$
by all graphs of functions of the form
$x \mapsto u(x) + b$
for $b \in \bbR$.
This foliation is $g$-invariant.
Reflecting about the $y$-axis,
extend this to a foliation on $(-\pi,\pi) \times \bbR$.
By including the vertical lines on the boundary, extend this foliation to
$[-\pi,\pi] \times \bbR$ and then, by $2 \pi$-periodicity in $x$, to all 
of $\bbR^2$.
Call this foliation $\Wu$ and let $\Eu$ be the $C^0$ line field tangent to
it.

Now consider the hyperbolic fixed point of $g$ on the line $x=\pi$.
Part of the stable manifold of this point is given by the graph
of a function
$c:(0,\pi] \to \bbR$.
One can show that $c'(x) > 0$ for all $x \in (0,\pi)$
and, since $\psi'(0) > \lam$, that $\lim_{x \searrow 0} c'(x) = +\infty$.
From the definition of $g$,
there is a constant $C>1$ such that $g \inv$ maps the region
$[-C,C] \times [0,\pi]$ into itself.
The stable manifold given by $\graph(c)$ must therefore
be contained in this region,
showing that $c$ is a bounded function
and can be continuously extended to all
of $[0,\pi]$.
By reflection and periodicity,
further extend $c$ to a continuous function $\bbR \to \bbR$
which is differentiable except at $2 \pi \bbZ$
and such that $g(\graph(c))=\graph(c)$.
By considering translates, $x \mapsto c(x)+b$,
define a foliation $\Wc$ on $\bbR^2$
and let $\Ec$ be the unique continuous line field on $\bbR^2$
which is tangent to $\Wc$
on $(\bbR \setminus 2 \pi \bbZ) \times \bbR$.
As $u' < 0 < c'$ on $(0,\pi)$,
$\Eu$ and $\Ec$ are transverse.
%This shows that $\Eu \oplus \Ec$ is a (weakly) partially hyperbolic splitting
%for $g:\bbR^2 \to \bbR^2$.

The matrix
\[
    \left(
    \begin{array}{cc}
        {1} & {1} \\
        {1} & {0}
    \end{array}
    \right)
\]
has eigenvalues $\lam = \frac{1}{2}(1 + \sqrt{5})$ and $-\lam \inv$.
Therefore, there is a lattice $\Lam \subset \bbZ^2$
such that $(y,z) \mapsto (\lam y,\,-\lam \inv z)$
quotients to an Anosov diffeomorphism on the 2-torus $\bbR^2/\Lam$.
Define $f:\bbR^3 \to \bbR^3$
by
\[
    f(x,y,z) =
    (x + \tfrac{2}{3}\sin x,\,
    \lam y + \cos x,\,
    -\lam \inv z)
\]
and a splitting $\Ec \oplus \Eu \oplus \Es$
by $\Es = \frac{\partial}{\partial z}$
and where $\Ec \oplus \Eu$ on each $x y$-plane is given by the earlier
splitting constructed for $g$.
This splitting is $f$-invariant
and there is a foliation tangent to $\Eu \oplus \Es$.
Define $M = (\bbR \times \bbR^2) / (2 \pi \bbZ \times \Lam)$.
Both $f$ and the splitting descend to $M$.
Here, $NW(f) \subset M$ consists of two tori,
one tangent to $\Ec \oplus \Es$ and
the other tangent to $\Eu \oplus \Es$.
Using \eqref{prop-phNW}, one can verify that $f$ is partially hyperbolic.
It has a foliation tangent to $\Eu \oplus \Es$ with one compact leaf and all
other leaves are planes.

This is not an example of an AB-system as there is no invariant foliation
tangent to $\Ec$.
In the above analysis, the crucial properties needed for the 
term $\cos x$ in the formula $\lam y + \cos x$ for the second coordinate
of $g$ were that $\cos' < 0$ on $(0,\pi)$ and
$\cos'(\pi)  \le  0 = \cos'(0)$.
Therefore, replace $\lam y + \cos x$
by $\lam y + \sin x - x$ in all of the above analysis.
As $\sin x - x$ is an odd function, the resulting function
$c:\bbR \to \bbR$ is odd and its graph is a $C^1$ submanifold in $\bbR^2$.
Defining
$f:\bbR^3 \to \bbR^3$
now by
\[
    f(x,y,z) =
    (x + \tfrac{2}{3}\sin x,\,
    \lam y + \sin x - x,\,
    -\lam \inv z)
\]
and quotienting by the lattice in $\bbR^3$ generated by
$\{0\} \times \Lam$ and $(2 \pi, \tfrac{2 \pi}{\lam-1}, 0)$
one constructs a skew product on $\bbT^3$
having a foliation tangent to $\Eu \oplus \Es$
with exactly one compact leaf.

\appendix \section{Definitions} \label{sec-define} %{{{1

This appendix defines a number of notions in smooth dynamical theory.

All manifolds considered in this paper are Riemannian manifolds without
boundary.
Suppose $f$ is a $C^1$ diffeomorphism on a compact manifold
and there is
a $Tf$-invariant splitting $TM = \Eu \oplus \Ec \oplus \Es$ of the tangent
bundle and $k \ge 1$ such that
\begin{math}
        \|Tf^k v^s\| < 1 < \|Tf^k v^u\|
\end{math}
for all unit vectors $v^s \in \Es$ and $v^u \in \Eu$.
If $\Ec$ is the zero bundle, then $f$ is an \emph{Anosov} diffeomorphism.
If $\Eu$, $\Ec$, and $\Es$ are all non-zero
and
\begin{math}
        \|Tf^k v^s\| < \|Tf^k v^c\| < \|Tf^k v^u\|
\end{math}
for all $p \in M$ and unit vectors
$v^s \in \Es_p$, $v^c \in \Ec_p$, and $v^u \in \Eu_p$
then $f$ is a
\emph{partially hyperbolic} diffeomorphism.
The notion of partially hyperbolicity is also extended 
to certain non-compact manifolds in Section \ref{sec-AIsys}.

A $C^1$ flow is an \emph{Anosov} flow if its time-one map is a partially
hyperbolic diffeomorphism with a center bundle given by the direction of the
flow.

A partially hyperbolic diffeomorphism $f$ is \emph{dynamically coherent}
if there are invariant foliations $\Wcu$ and $\Wcs$ tangent to
$\Ec \oplus \Eu$ and $\Ec \oplus \Es$.  As a consequence,
there is also an
invariant center foliation $\Wc$ tangent to $\Ec$.
Global product structure is defined in Section \ref{sec-outline}.

For homeomorphisms $f:X \to X$ and $g:Y \to Y$,
a \emph{topological semiconjugacy}
is a continuous surjection $h:X \to Y$
such that $hf = gh$.
If $h$ is a homeomorphism,
it is a \emph{topological conjugacy}.

Partially hyperbolic diffeomorphisms $f$ and $g$ are \emph{leaf conjugate}
if they are dynamically coherent and there is a homeomorphism $h$
such that for every center leaf $L$ of $f$, $h(L)$ is a center leaf of $g$
and $hf(L) = gh(L)$.

A homeomorphism $f:M \to M$ is \emph{(topologically) transitive}
if every non-empty open $f$-invariant subset of $M$ is dense in $M$.

For a homeomorphism $f:M \to M$, a Borel measure $\mu$ is \emph{invariant}
if $\mu(X) = \mu(f(X))$ for every measurable set $X \subset M$.
The pair $(f,\mu)$ is \emph{ergodic} if $\mu$ is $f$-invariant
and either $\mu(X)=0$ or $\mu(X) = 1$
for every $f$-invariant measurable $X \subset M$.
We often write that $f$ is ergodic or $\mu$ is ergodic if the context is clear.
For brevity, we sometimes say that a system $f$ with a finite non-probability 
measure $\mu$ is ergodic when,
to be precise, we should actually say that the pair $(f, \frac{1}{\mu(M)} \mu)$
is ergodic.
A homeomorphism $f$ is \emph{conservative} if it has an invariant measure given
by a smooth volume form on $M$.
A conservative $C^2$ diffeomorphism is \emph{stably ergodic} if it has
a neighbourhood $\mathcal{U}$ in the $C^1$ topology of $C^1$ diffeomorphisms
such that
every conservative $C^2$ diffeomorphism in $\mathcal{U}$ is also ergodic.
For a discussion of why the quirky combination of $C^1$ and $C^2$ regularity is
necessary, see \cite{wilkinson2010conservative}.

If $\tN$ is a simply connected nilpotent Lie group and $\Gamma$ is a discrete
subgroup such that $N := \tN / \Gamma$ is a compact manifold,
then $N$ is called
a (compact) \emph{nilmanifold} \cite{malcev}.
If $\tilde A:\tN \to \tN$ is a Lie group automorphism which descends to
$A:N \to N$, then $A$ is a \emph{nilmanifold automorphism}
(also called a toral automorphism when $N = \bbT^d$).
If $A$ is Anosov, it is called \emph{hyperbolic}.
Infranilmanifolds and their automorphisms are defined in
Section \ref{sec-infra}.

If $f:M \to N$ is a continuous function
and $\pi_M:\hat M \to M$ and $\pi_N:\hat N \to N$ are covering maps,
then a \emph{lift} of $f$ is a function $\hat f:\hat M \to \hat N$ such that
$\pi_N \hat f = f \pi_M$.
Note that if $\pi_M$ and $\pi_N$ are universal covering maps,
then at least one such lift exists,
but is not unique in general.

% Intro {{{1

%center:
%    emph(Draft \today) % -- Not for redistribution)

\section{Erratum} \label{sec:erratum}

This erratum addresses two issues
with the proofs in the paper. % \cite{ham2017erg}.
The first issue is that proposition (6.4) as stated
is not correct.\footnote{
Note that the numbering of sections in some preprint versions
may differ from the published version.}
For instance, the automorphism
\begin{math}
    \bbZ^2 \to \bbZ^2, \ (x,y) \mapsto (5x + 2y, 2x + y)
\end{math}
gives a counterexample
as it fixes a coset of $\bbZ \ti 2 \bbZ.$
% 5 2  1  =  5  
% 2 1  0     2
%
% 5 2  0  =  4  
% 2 1  2     2
The flaw in the proof is that it confuses invertibility
in $GL(n,\bbZ)$ with invertibility in $GL(n, \bbR)$
and the notions are not equivalent.
In fact, the proposition holds in the following revised version.

\begin{prop} \label{prop:indexcoset}
    Let $G$ be a torsion-free, finitely-generated, nilpotent group
    and suppose
    $\phi \in \Aut(G)$ is such that $\phi(g) \ne g$ for all non-trivial
    $g \in G$.
    If $H$ is a normal, $\phi$-invariant subgroup,
    then $\phi$ fixes at most \textbf{finitely many} cosets of $H$.
\end{prop}
We prove this revised version below.
The original proposition (6.4)
is used in only two places
in the proofs of theorem (4.3) and lemma (6.5) and we show
below how to use the revised version of the proposition
to recover the proofs of those two results.

\medskip{}

The other issue to address in the original paper
comes at the start of section 8 which deals with AB-systems.
That section states that $h f h \inv$ is homotopic to $f_{A B}$
and uses this to lift $h f h \inv$ to a map on $N \ti \bbR.$
In fact, the two functions are not homotopic in general.
For instance, for the linear partially hyperbolic maps
on the 3-torus
$\bbT^3 = \bbR^3 / \bbZ^3$ given by the matrices
\[
    \begin{pmatrix}
        5&2&0 \\
        2&1&0 \\
        0&0&1 \end{pmatrix}
    \qandq
    \begin{pmatrix}
        5&2&0 \\
        2&1&0 \\
        1&0&1
    \end{pmatrix}  \]
both have vertical center foliations and the identity
map is a leaf conjugacy between the two systems.
The two systems are not homotopic to each other
and attempting to lift the two systems to AI-system
on $\bbT^2 \ti \bbR$ as in section 8 will not work.
To fix this, we amend the definition of an AB-system to add the homotopy
as an assumption.
That is, a partially hyperbolic diffeomorphism $f$ is an \emph{AB-system} if
\begin{enumerate}
    \item it preserves the orientation of the center bundle $\Ec,$
    \item
    there is a leaf conjugacy $h$ between $f$
    and an AB-prototype $\fAB,$ and
    \item
    $h f h \inv$ is homotopic to $\fAB.$
\end{enumerate}
This additional assumption can always be achieved
by lifting $f$ and $f_{AB}$ to finite covers:

\begin{prop} \label{prop:liftAB}
    If a partially hyperbolic diffeomorphism $f$
    satisfies conditions (1) and (2) above,
    then a lift of $f$ to a finite cover satisfies
    all of (1), (2), and (3).
\end{prop}
The proof of this is given in the final section of this erratum.

For those readers interested only in the case where
the nilmanifold $N$ is a torus $\bbT^d,$
we have structured the proofs below so that
most of the details specific to the non-toral
case may be skipped over.

\smallskip{}

\noindent \textbf{Erratum acknowledgements.}\ \ 
The author wishes to thank
Danijela Damjanovic, Amie Wilkinson, and Disheng Xu
for bringing these issues to his attention
and for helpful input.
He also thanks
Jonathan Bowden,
Davide Ravotti,
Heiko Deitrich,
and
Santiago Barrera Acevedo
for helpful conversations.

Karel Dekimpe was also very helpful and suggested an alternative
method to establish \cref{prop:indexcoset}.
Instead of proving the proposition directly,
one can instead show the following fact,
from which the proposition follows as a corollary:
\begin{quote}
    Let $G$ be a finitely generated nilpotent torsion free nilpotent group and $\varphi\in{\rm \Aut}(G)$ be fixed point free. Assume that $H$ is a $\varphi$ invariant subgroup of $G$ such that $G/H$ is torsion free. Then it follows that the induced automorphism on $G/H$ is also fixed point free.
\end{quote}
\section*{Proof of \cref{prop:indexcoset}} %{{{1

This section gives a proof of \cref{prop:indexcoset}.
We first prove this in the abelian case
and then use induction on the nilpotency class
to handle the non-abelian case.

\begin{lemma} \label{lemma:zcoset}
    Let $G$ be isomorphic to $\bbZd$
    and suppose
    $\phi \in \Aut(G)$ is such that $\phi(g) \ne g$ for all non-trivial
    $g \in G$.
    If $H$ is a normal, $\phi$-invariant subgroup,
    then $\phi$ fixes at most finitely many cosets of $H$.
\end{lemma}
\begin{proof}
    Assume $G = \bbZd$ and define
    a linear map $A : \bbQd \to \bbQd$
    such that $A z = \phi(z)$ for all $z \in \bbZd.$
    If $A$ had an eigenvalue of 1,
    the corresponding eigenspace would intersect $\bbZd$
    in a non-trivial fixed point $\phi(z) = z \in \bbZd.$
    Hence, 1 is not an eigenvalue of $A$.

    Let $V \subof \bbQd$ be the set of all $\bbQ$-linear
    combinations of elements of $H.$
    We may assume $H$ has infinite index in $\bbZd,$
    and so
    $V$ is a proper $A$-invariant subspace of $\bbQd.$
    It induces a linear map $\bar A$ on the quotient space $\bbQd / V.$
    If $z \in \bbZd$ is such that $\phi(z + H) = z + H,$
    then $\bar A(z + V) = z + V$ and so $\bar A$ and therefore $A$ has
    an eigenvalue of 1.
\end{proof}
%prop:indexcoset:
%    Let $G$ be a torsion-free, finitely-generated, nilpotent group
%    and suppose
%    $phi in Aut(G)$ is such that $phi(g) != g$ for all non-trivial
%    $g in G$.
%    Then $phi$ fixes at most textbf(finitely many) cosets of $H$.

\begin{lemma} \label{lemma:fixedfactor}
    Let $\phi : G \to G$ be a group automorphism
    and let $X$ be a normal $\phi$-invariant subgroup.
    If $\phi|_X$ has at most finitely many fixed points
    and $\phi$ fixes at most finitely many cosets of $X$,
    then $\phi$ itself has finitely many fixed points.
\end{lemma}
\begin{proof}
    If $\phi(g) = g$ and $\phi(g') = g'$
    are fixed points in the same coset $g X = g' X,$
    then $\phi(g' g \inv) = g' g \inv$
    is a fixed point in $X.$
    Hence, each of the finitely many fixed cosets has
    finitely many fixed points.
\end{proof}
\begin{cor} \label{cor:cosetfactor}
    Suppose $\phi$ is an automorphism
    of a group $G$
    with center $Z,$
    and $H$ is a $\phi$-invariant normal subgroup of $G.$
    If the induced maps on $Z / (H \cap Z)$
    and $G / H Z$ have finitely many fixed points,
    then the induced map on $G / H$
    has finitely many fixed points.
\end{cor}
\begin{proof}
    Apply the previous lemma to the quotient
    \[
        0 \to Z / (H \cap Z) \to G / H \to G / H Z \to 0
        \qedhere
    \] \end{proof}
%    In general, Z might not be a subset of [G,G]
%    for instance in the nilmanifold N_k which is k-covered
%    by the standard Heisenberg nilmanifold N_1.
%    We now consider the general case.
%    Let Z be the center of G and note that
%    Z H is a normal subgroup of G
%    and that H, Z, H cap Z, and H Z are all $phi$-invariant.
%
%    Write the upper central series of G as
%
%        1 = Z_0 < Z_1 < ··· < Z_n = G.
%
%    Here Z_{i+1} / Z_i is the center of G / Z_i
%    and is a free abelian group.
%    See cite(manning1973) and the references therein
%    for further details.
%    Define H_i = H cup Z_i.
%    As the upper central series is group-theoretic invariant,
%    phi maps each Z_i to itself.
%    Hence it also maps each H_i to itself and
%    so there 
%    (See, for instance, cite(manning1973).)
%
%    where G_1 is the center of G.
    
\begin{lemma} \label{lemma:centerfix}
    Suppose $G$ is a
    finitely generated torsion free nilpotent group
    and let $\phi : G \to G$ be an automorphism.
    Let $Z$ denote the center of $G.$
    If there is a non-trivial fixed coset $\phi(g Z) = g Z,$
    then $\phi$ has a non-trivial fixed point.
\end{lemma}
\begin{proof}
    By the properties of such groups \cite{dek1996book},
    $Z$ is isomorphic to $\bbZd$ and
    $G / Z$ is torsion free.
    Suppose $\phi(g Z) = g Z$ is a non-trivial fixed coset.
    Let $Y$ be the subgroup generated by $g$ and $Z.$
    Then $Y$ is isomorphic to $\bbZ^{d+1}$
    and within $Y,$ there are infinitely many
    fixed cosets: $\phi(g^k Z) = g^k Z$
    for $k \in \bbZ.$
    \Cref{lemma:zcoset} implies that $\phi|_{Y}$
    has a non-trivial fixed point.
\end{proof}
\begin{proof}
    [Proof of \cref{prop:indexcoset}]
    We prove this by induction on the length
    of the upper central series of $G.$
    The abelian base case is given by \cref{lemma:zcoset}.
    Assume now that $G$ is non-abelian with center $Z$
    and that \cref{prop:indexcoset} is already known to hold
    for the quotient map $\Phi : G / Z \to G / Z.$

    Since $\phi|_Z$ has no non-trivial fixed points,
    \cref{lemma:zcoset} implies that $\phi|_Z$
    fixes at most finitely many cosets of $H \cap Z.$
    \Cref{lemma:centerfix} implies that
    $\Phi$ has no non-trivial fixed points.
    By the inductive hypothesis,
    $\Phi$ fixes at most finitely many
    cosets of $H Z / Z.$
    Then \cref{cor:cosetfactor} implies that $\phi$ (on all of $G$)
    fixes at most finitely many cosets of $H.$
\end{proof}
\section*{Revised proof of lemma (6.5)} %{{{1

The incorrect proposition (6.4)
is used in the proof of (6.5)
only to establish $\lam \ne 1.$
Recall in that proof that there is 
$F \in \Aut(G)$ with no non-trivial fixed points
and a non-zero homomorphism $\tau : G \to \bbR$
such that $\tau F = \lam \tau.$
Define $H \subof G$ to be the kernel of $\tau.$
Note that the cosets of $H$ are exactly the level sets of $\tau.$
If $\lam = 1,$ then every level set of $\tau$ is fixed by $F.$
Since $\tau$ is non-zero, there are infinitely many such level sets
and \cref{prop:indexcoset} above gives a contradiction.

\section*{Circle bundles over nilmanifolds} %{{{1

Before revising the proof of (4.3),
we first prove the following.

\begin{prop} \label{prop:virttriv}
    Suppose $M$ is a circle bundle
    with oriented fibers over a nilmanifold $N.$
    If $M$ has a compact horizontal submanifold $\Sig,$
    then $M$ is a trivial bundle.
\end{prop}
\begin{remark}
    We consider everything in the $C^0$ setting here.
    The circle bundle is defined by a $C^0$
    map $p : M \to N$
    and a compact horizontal submanifold
    $\Sig$ is a codimension one $C^0$ submanifold
    such that 
    $p|_\Sig : \Sig \to N$
    is a covering map of finite degree.
    To show that $M$ is trivial,
    it enough to find another horizontal submanifold $\Sig_1$
    such that $p|_{\Sig_1} : \Sig_1 \to N$
    is a homeomorphism.
    To simplify the proof,
    we assume that
    the circle fibers are tangent to a $C^0$ vector field
    as is the case for the center leaves of
    a partially hyperbolic skew product.
\end{remark}
% Do we need to worry about rectifiable fibers?
% Should we add that fibers are C^1?

\begin{proof}
    Assume $\Sig$ intersects each fiber in exactly $k$ points.
    We may define a metric on each fiber
    such that the length of every fiber is exactly one
    and that its points of intersection with $\Sig$ are equally spaced;
    that is,
    the distance between one point of intersection
    and the next
    is exactly $\tfrac{1}{k}.$
    We may choose these metrics to vary continuously along $M.$

    Let $\pi : \tM \to M$ be the universal covering map.
    We may assume that $\tM = \tN \ti \bbR$
    where $\tN$ is the nilpotent Lie group
    covering $N$
    and such that the fibers of $M$
    lift to lines of the form $v \ti \bbR$
    with $v \in \tN.$
    We further assume that the metric on a fiber of $M$
    lifts to a metric on $v \ti \bbR$
    which is equal to the standard Euclidean metric given by $\bbR.$
    In particular,
    $\pi \inv(\Sig)$ intersects each fiber $v \ti \bbR$
    in a set of points of the form
    \[
        \{ \ (v, \sig(v) + t) \ : \ t \in \tfrac{1}{k} \bbZ \ \}
    \]
    for some $\sig(v)$ depending on $v.$
    We may assume $\sig : \tN \to \bbR$ is continuous.
    To see this,
    choose a connected component $\tSig$ of $\pi \inv(\Sig)$
    and define $\sig(v)$ to be the unique intersection
    of $v \ti \bbR$ with $\tSig.$

    Write $G = \pi_1(M),$ 
    and $H = \pi_1(N).$
    %and let Z = langle z rangle,
    %where z ∈ G is the deck transformation corresponding
    %to moving once around a fiber.
    % Need N aspherical?
    The bundle projection $p : M \to N$
    induces a surjective homomorphism $p_* : G \to N.$
    We now use $\tSig$ to define a homomorphism $\tau : G \to \frac{1}{k} \bbZ.$
    Without loss of generality,
    assume $\sig(e) = 0$ where $e$ is the identity element of $\tN.$
    For a deck transformation $g \in G,$
    let $\tau(g)$ be such that $(e, \tau(g))$
    is the unique intersection of $g(\tSig)$ with $e \ti \bbR.$
    Similar to lemma (7.6) in the original paper,
    one may show that $\tau : G \to \tfrac{1}{k} \bbZ$ is a homomorphism.
    We claim the following.
    \begin{quote}
        \textbf{Claim.}\ There is a (not necessarily unique)
        homomorphism \\ $\psi : H \to \kbbZ$
        such that
        $\psi p_*(g) - \tau(g) \in \bbZ$
        for all $g \in G.$
    \end{quote}
    We leave the proof of this to the end
    and first show that this gives the desired result.
    By the properties of nilmanifolds \cite{malcev},
    $\psi$ determines a Lie group homomorphism
    $\psi : \tN \to \bbR$
    where if we regard $H$ as a discrete subgroup of $\tN$
    then this is an extension of $\psi$ from $H$ to all of $\tN.$
    Define a submanifold $\tSig_1$
    as the graph of $\sig - \psi;$
    that is,
    $(v,t) \in \tSig_1$ if and only if $t = \sig(v) - \psi(v).$
    By the above claim,
    for all deck transformations $g \in G,$
    the intersection of $g(\tSig_1)$ with $e \ti \bbR$
    lies inside $e \ti \bbZ.$
    Hence $\tSig_1$
    quotients down to a compact horizontal submanifold $\Sig_1 \subof M$
    which intersects each fiber exactly once
    and therefore shows that the circle bundle is trivial

    \medskip{}

    It remains to prove the claim.
    We first consider the abelian case
    where $H$ is isomorphic to $\bbZ^d.$
    Let $\{ h_1, \ldots h_d \}$ be a generating set for $H$
    and choose elements $g_i \in G$ such that $p_*(g_i) = h_i.$
    Let $z \in G$ be the deck transformation
    \begin{math}
        (v, t) \mapsto (v, t+1)
    \end{math}
    corresponding to going once around a fiber of the circle fibering.
    Note that $\tau(z) = 1.$
    As explained in the original proof of (4.3),
    $\langle z \rangle$ is the kernel of $p_*,$
    and so $\{ z, g_1, \ldots g_d \}$ is a generating set for $G.$
    Define $\psi : H \to \kbbZ$ by $\psi(h_i) = \tau(g_i).$
    Then $\psi p_*(z) - \tau(z) = -1$
    and $\psi p_*(g_i) - \tau(g_i) = 0.$
    As $\psi p_* - \tau$ takes integer values on a generating set for $G,$
    it must take integer values on all of $G.$
    
    \medskip{}

    We now extend this argument to the non-abelian case.
    Note that both $M$ and $N$ are nilmanifolds.
    Consider the root set $G_1$ of the commutator subgroup of $G.$
    That is $g \in G_1$ if and only if
    $g^k \in [G,G]$ for some $k \ge 1.$
    Such sets are discussed in detail in Chapter 1 of \cite{dek1996book}
    (where the notation there is $\sqrt[G]{\gam_2(G)}$ instead of $G_1$).
    In particular, $G_1$ is a normal subgroup
    and any homomorphism from $G$ to a torsion-free abelian group $R$
    is identically zero on $G_1$ and so factors through
    \begin{math}
        G \to G / G_1 \to R
    \end{math}
    We can therefore define a homomorphism
    $\tau_1 : G / G_1 \to \frac{1}{k} \bbZ$ 
    as the quotient of $\tau.$

    Similarly
    write $H_1$ for the root set of $[H,H]$.
    Then $H / H_1$ is a torsion-free abelian group homomorphic
    to $\bbZd$ for some $d$ \cite{dek1996book},
    and $p_* : G \to H$ descends to a map $p_1 : G / G_1 \to H / H_1.$
    Adapting the argument above, we may define
    a map $\psi_1 : H / H_1 \to \kbbZ$ such that
    $\psi_1 p_1 - \tau_1$ takes integer values on all of $G / G_1.$
    Then $\psi_1$ determines a map $\psi : H \to \kbbZ$ as desired.
\end{proof}

\section*{Revised proof of theorem (4.3)} %{{{1

This section revises the proof of theorem (4.3)
to use \cref{prop:indexcoset} above
in place of the incorrect proposition (6.4)
of the original paper.
By virtue of \cref{prop:virttriv} above,
we need only show that the partially hyperbolic
system has a compact us-leaf.

\medskip{}

The proof of (4.3) is unchanged up to
the definition of $\hat \tau : G \to \bbR / \bbZ$
and the first use of (6.4).
Using instead \cref{prop:indexcoset} above,
the most we can say is
that $\hat \tau$ has a finite image.
In other words, there is an integer $k \ge 1$
such that $\tau(G) = \tfrac{1}{k} \bbZ.$

The existence of $\tau$ is given by
(6.1) and (6.2).
From the proofs of those results,
we can see that
then there is a measure $\mu$ on $\tilde S$
invariant under the action of $G$
and such that $\tau(g) = \mu[x, g(x))$
for any $x \in \Lam$ and $g \in G.$
Here, $\Lam$ is the intersection of the non-open accessibility classes
$\Gam$ with $\tilde S$.
Choose some point $x_0 \in \Lam$
and for each $t \in \tfrac{1}{k} \bbZ,$
define a set $X_t \subof \Lam$
by 
\[
    X_t = \{ x \in \Lam \ : \ \mu[x_0, x) = t \}.
\]
The sets $X_t$
are disjoint and the action of $g \in G$
on $\Lam$ takes $X_t$ to $X_{t + \tau(g)}.$
Define $y_t = \sup X_t$ where we are identifying
$\tilde S$ with $\bbR$ in order to define the supremum.
Then
\begin{math}
    \{ y_t : t \in \tfrac{1}{k} \bbZ \}
\end{math}
is a discrete subset of $\Lam$ which is
invariant under the action of $G.$
This implies that for any point $y_t,$
its accessibility class $AC(y_t) \subof \tM$
quotients down to a compact us-leaf on $M.$
%By prop:virttriv, M is a trivial circle bundle.

\section*{Proof of \cref{prop:liftAB}} %{{{1

We now prove \cref{prop:liftAB}.
Assume $f : M \to M$ is a partially hyperbolic diffeomorphism
which preserves the orientation of $\Ec$
and $h : M \to M_B$ is a leaf conjugacy to $\fAB : M_B \to M_B.$
We want to show that after lifting $f$ and $\fAB$ to maps $\hat f$
and $f_{\hat A \hat B}$ on finite covers $\hat M$ of $M$
and $M_{\hat B}$ of $M_B$
that there is a leaf conjugacy $\hat h : \hat M \to M_{\hat B}$
such that $\hat h \hat f \hat h \inv$ and $f_{\hat A \hat B}$ are homotopic.
As all of the manifolds involved are
Eilenberg-MacLane spaces of type $K(\pi, 1),$
the existence of such a homotopy
is purely a question involving the actions of the functions
on the fundamental groups of the manifolds.
(See, for instance, Propositions 1.33 and 1B.9 of \cite{hat2002algebraic}.)
In particular, we do not need to use the smoothness of $f$ in any way.
Therefore,
we may replace $f$ by $h f h \inv$
and assume without loss of generality
that $M = M_B$ and that $h$ is the identity map.

\smallskip{}

In this section, we
write $G$ for the simply connected nilpotent Lie group and
$\Gam$ for the cocompact lattice such that $N = \Gam \backslash G$
is the nilmanifold.
Then quotienting $G$ by $[G, G]$ yields an abelian Lie group
isomorphic to $\bbRd$ for some $d.$
This defines a projection from $G$ to $\bbRd,$
and for $x \in G,$ we write $\bar x \in \bbRd$ for its image
under the projection.
This projection may further be chosen such that
$\Gam$ is mapped to $\bbZd.$
(If the nilmanifold is a torus $N = \bbZd \backslash \bbRd$,
then the projection $G \to \bbRd$ is the identity map
and all of the overlines in what follows may be safely
ignored.)

Let A, $B : G \to G$ be the commuting Lie group automorphisms
defining the AB-prototype.
These induce linear automorphisms $\bar A$ and $\bar B$ on $\bbRd$
with the property that $\overline{A(x)} = \bar A(\bar x)$
and $\overline{B(x)} = \bar B(\bar x)$.

The universal cover of $M_B$ is $G \ti \bbR.$
Define $\bt(x,t) = (B(x), t - 1).$
For $\gam \in \Gam,$ define $\tau_\gam(x,t) = (\gam \cdot x, t).$
Note that $\bt \tau_\gam = \tau_{B(\gam)} \bt$
and that every deck transformation may be written in the form
$\tau_\gam \bt^n$ for $\gam \in \Gam$ and $n \in \bbZ.$

Lift $f$ to a diffeomorphism $\tf : G \ti \bbR \to G \ti \bbR$
such that $\tf(0 \ti \bbR) = 0 \ti \bbR$
where 0 is the identity element of $G.$
Such a lift exists because of the leaf conjugacy.
This lift then
determines an automorphism $f_*$ of the fundamental group $\pi_1(M_B)$
defined by the property
$f_*(\tau) \circ \tf = \tf \circ \tau$
for all deck transformations $\tau.$
Since $0 \ti \bbR$ projects to an $f$-invariant circle in $M_B,$
one can show that $f_*(\bt) = \bt.$
By the leaf conjugacy,
$\tf(x \ti \bbR) = A(x) \ti \bbR$
for all $x \in G,$
and so for each $\gam \in \Gam,$
there is an integer $L(\gam)$ such that
$f_*(\tau_\gam) = \tau_{A(\gam)} \bt^{L(\gam)}.$
Using that $f_*$ is a group homomorphism,
one can show that
$L(\gam_1 \cdot \gam_2) = L(\gam_1) + L(\gam_2)$
and
$A(\gam_1 \cdot \gam_2) = A(\gam_1) B^{L(\gam_1)}$ A($\gam_2$)
for all $\gam_1, \gam_2 \in \Gam.$
This implies that $L : \Gam \to \bbZ$ is a group homomorphism
and that there is $k \ge 0$ such that $L(\Gam) = k \bbZ$ and
$B^k$ is the identity map on $G.$
If $k = 0,$ then $f$ induces the same action on $\pi_1(M_B)$
as the AB-prototype $\fAB$
and
this would imply the desired result.
Therefore, we assume in what follows that $k \ge 1.$

By the properties of nilmanifolds \cite{malcev},
$L$ extends to a Lie group homomorphism $L : G \to \bbR.$
Since $\bbR$ is abelian,
$L|_{[G,G]} \equiv 0$ and 
there is a linear map $\bar L : \bbRd \to \bbR$
such that $\bar L(\bar x) = L(x)$ for all $x \in G.$
Let $I$ denote the identity map on $\bbRd.$
As $\bar A$ is hyperbolic,
$\bar A - I$ is invertible.
Define $\bar S : \bbRd \to \bbR$
by $\bar S = \bar L (\bar A - I) \inv$
and $S : G \to \bbR$
by $S(x) = \bar S(\bar x).$
By Cramer's rule, $\bar S(m \bbZd) \subof k \bbZ$
where $m = \det(\bar A - I).$
Using $f_*(\bt \tau_\gam) = f_*(\tau_{B(\gam)} \bt),$ one can show
$L B(\gam) = L(\gam)$ for all $\gam \in \Gam.$
Hence,
$L B = L$ as functions on $G$ and one may use this to show
$\bar L \bar B = \bar L, \bar S \bar B = \bar S,$ and $S B = S.$

\smallskip{}

Define $\Gam_0 \subof \Gam$ by $\gam \in \Gam_0$ if and only if
$\bar \gam \in m \bbZd.$
Since $\bar A(m \bbZd) = m \bbZd$ and $\bar B(m \bbZd) = m \bbZd$,
it follows that $A(\Gam_0) = \Gam_0$ and $B(\Gam_0) = \Gam_0.$
Hence, $A$ and $B$ define commuting automorphisms $\hat A$ and $\hat B$ of a nilmanifold
$\hat N = \Gam_0 \backslash G$ that finitely covers $N.$
Using this we define a new AB-prototype $f_{\hat A \hat B}$
on a new suspension manifold $M_{\hat B}$ which finitely covers the original.
Further, $\tf$ quotients to a function $\hat f : M_{\hat B} \to M_{\hat B}$
which is a lift of the original $f.$

Define $\tilh : G \ti \bbR \to G \ti \bbR$ by
$\tilh(x, t) = (x, t + S(x)).$
If $\gam \in \Gam_0,$ then
$S(\gam) \in k \bbZ$ and since $B^k$ is the identity,
it follows that
\begin{math}
    \bt^{S(\gam)}(x, t) = (x, t - S(\gam))
\end{math}
which may be used to show that $\tilh \tau_\gam = \tau_\gam \bt^{-S(\gam)} \tilh.$
This implies that $\tilh$ quotients to a diffeomorphism
$\hat h$ on $M_{\hat B}$ and that induced action on $\pi_1(M_{\hat B})$
satisfies
$\hat h_*(\bt) = \bt$ and $\hat h_*(\tau_\gam) = \tau_\gam \bt^{-S(\gam)}$
for all $\gam \in \Gam_0.$
Further note that $\hat h$ is a leaf conjugacy between $\hat f$ and $f_{\hat A \hat B}.$
Since
\[
    \hat h_* \hat f_* \hat h_* \inv (\tau_\gam)
    = 
    \hat h_*     f_* \hat h_* \inv (\tau_\gam)
    =
    \tau_{A(\gam)} \bt^{-S A(\gam)} \bt^{L(\gam)} \bt^{S(\gam)}
    =
    \tau_{A(\gam)},
\]
it follows that  $\hat h \hat f \hat h \inv$ and $f_{\hat A \hat B}$
have the same action on $\pi_1(M_{\hat B})$
and so are homotopic.

\bigskip{}

\bigskip

\acknowledgement
The author would like to thank Alexander Fish,
Rafael Potrie, Federico Rodriguez Hertz,
Jana Rodriguez Hertz, Ra\'ul Ures, and Amie Wilkinson for helpful comments.
This research was partially funded by
the Australian Research Council. %Grant DP$120104514$.

% Epilogue {{{1

\bibliographystyle{plain}
\bibliographystyle{alpha}
\bibliography{dynamics}

\def\cprime{$'$}
\begin{thebibliography}{10}

\bibitem{Anosov}
D.~V. Anosov.
\newblock Geodesic flows on closed {Riemannian} manifolds with negative
  curvature.
\newblock {\em Proc. Inst. Steklov}, 90:1--235, 1967.

\bibitem{as1967}
D.~V. Anosov and Ja.~G. Sina{\u\i}.
\newblock Certain smooth ergodic systems.
\newblock {\em Uspehi Mat. Nauk}, 22(5 (137)):107--172, 1967.

\bibitem{aoki-hiraide-book}
N.~Aoki and K.~Hiraide.
\newblock {\em Topological theory of dynamical systems}, volume~52 of {\em
  North-Holland Mathematical Library}.
\newblock North-Holland Publishing Co., Amsterdam, 1994.
\newblock Recent advances.

\bibitem{Auslander}
L.~Auslander.
\newblock Bieberbach's theorems on space groups and discrete uniform subgroups
  of {Lie} groups.
\newblock {\em Annals of Math.}, 71(3):579--590, 1960.

\bibitem{bohnet2011thesis}
D.~Bohnet.
\newblock {\em Partially hyperbolic diffeomorphisms with a compact center
  foliation with finite holonomy}.
\newblock PhD thesis, Universit{\"a}t Hamburg, 2011.

\bibitem{bohnet2013codimension}
D.~Bohnet.
\newblock Codimension one partially hyperbolic diffeomorphisms with a uniformly
  compact center foliation.
\newblock {\em Journal of Modern Dynamics}, 7(4):565--604, 2013.

\bibitem{BBI2}
M.~Brin, D.~Burago, and S.~Ivanov.
\newblock Dynamical coherence of partially hyperbolic diffeomorphisms of the
  3-torus.
\newblock {\em Journal of Modern Dynamics}, 3(1):1--11, 2009.

\bibitem{brin1975trans}
M.~I. Brin.
\newblock Topological transitivity of a certain class of dynamical systems, and
  flows of frames on manifolds of negative curvature.
\newblock {\em Funkcional. Anal. i Prilo\v zen.}, 9(1):9--19, 1975.

\bibitem{BI}
D.~Burago and S.~Ivanov.
\newblock Partially hyperbolic diffeomorphisms of 3-manifolds with abelian
  fundamental groups.
\newblock {\em Journal of Modern Dynamics}, 2(4):541--580, 2008.

\bibitem{bpw}
K.~Burns, C.~Pugh, and A.~Wilkinson.
\newblock Stable ergodicity and {A}nosov flows.
\newblock {\em Topology}, 39(1):149--159, 2000.

\bibitem{BW-skew}
K.~Burns and A.~Wilkinson.
\newblock Stable ergodicity of skew products.
\newblock {\em Ann. Sci. \'Ecole Norm. Sup. (4)}, 32(6):859--889, 1999.

\bibitem{BW-annals}
K.~Burns and A.~Wilkinson.
\newblock On the ergodicity of partially hyperbolic systems.
\newblock {\em Annals of Math.}, 171(1):451--489, 2010.

\bibitem{carrasco2010thesis}
P.~Carrasco.
\newblock {\em Compact dynamical foliations}.
\newblock PhD thesis, University of Toronto, 2010.

\bibitem{dek1996book}
Karel Dekimpe.
\newblock {\em Almost-{B}ieberbach groups: affine and polynomial structures},
  volume 1639 of {\em Lecture Notes in Mathematics}.
\newblock Springer-Verlag, Berlin, 1996.

\bibitem{fg2012anosov}
F.~T. Farrell and A.~Gogolev.
\newblock Anosov diffeomorphisms constructed from {$\pi\sb k({\rm Diff}(S\sp
  n))$}.
\newblock {\em J. Topology}, 5(2):276--292, 2012.

\bibitem{fj1978anosov}
F.~T. Farrell and L.~E. Jones.
\newblock Anosov diffeomorphisms constructed from {$\pi \sb{1}\,{\rm
  Diff}\,(S\sp{n})$}.
\newblock {\em Topology}, 17(3):273--282, 1978.

\bibitem{fmt2005}
M.~Field, I.~Melbourne, and A.~T{\"o}r{\"o}k.
\newblock Stable ergodicity for smooth compact {L}ie group extensions of
  hyperbolic basic sets.
\newblock {\em Ergodic Theory Dynam. Systems}, 25(2):517--551, 2005.

\bibitem{fmt2007}
M.~Field, I.~Melbourne, and A.~T{\"o}r{\"o}k.
\newblock Stability of mixing and rapid mixing for hyperbolic flows.
\newblock {\em Ann. of Math. (2)}, 166(1):269--291, 2007.

\bibitem{Franks2}
J.~Franks.
\newblock Anosov diffeomorphisms on tori.
\newblock {\em Transactions of the American Mathematical Society},
  145:117--124, 1969.

\bibitem{Franks1}
J.~Franks.
\newblock Anosov diffeomorphisms.
\newblock {\em Global Analysis: Proceedings of the Symposia in Pure
  Mathematics}, 14:61--93, 1970.

\bibitem{gogolev2011compact}
A.~Gogolev.
\newblock Partially hyperbolic diffeomorphisms with compact center foliations.
\newblock {\em J. Mod. Dyn.}, 5(4):747--769, 2011.

\bibitem{grayson1994stably}
M.~Grayson, C.~Pugh, and M.~Shub.
\newblock Stably ergodic diffeomorphisms.
\newblock {\em Annals of Math.}, 140(2):295--330, 1994.

\bibitem{gromov1981groups}
M.~Gromov.
\newblock Groups of polynomial growth and expanding maps.
\newblock {\em Publications Math{\'e}matiques de l'IH{\'E}S}, 53(1):53--78,
  1981.

\bibitem{HP2}
A.~Hammerlindl and R.~Potrie.
\newblock Classification of partially hyperbolic diffeomorphisms in 3-manifolds
  with solvable fundamental group.
\newblock {\em J. Topol.}, 8(3):842--870, 2015.

\bibitem{HamUres}
A.~Hammerlindl and R.~Ures.
\newblock Ergodicity and partial hyperbolicity on the 3-torus.
\newblock {\em Commun. Contemp. Math.}, 16(4):1350038, 22, 2014.

\bibitem{hat2002algebraic}
Allen Hatcher.
\newblock {\em Algebraic topology}.
\newblock Cambridge University Press, Cambridge, 2002.

\bibitem{HPS}
M.~Hirsch, C.~Pugh, and M.~Shub.
\newblock {\em Invariant Manifolds}, volume 583 of {\em Lecture Notes in
  Mathematics}.
\newblock Springer-Verlag, 1977.

\bibitem{hopf1939}
E.~Hopf.
\newblock Statistik der geod\"atischen {L}inien in {M}annigfaltigkeiten
  negativer {K}r\"ummung.
\newblock {\em Ber. Verh. S\"achs. Akad. Wiss. Leipzig}, 91:261--304, 1939.

\bibitem{lee-raymond}
K.~B. Lee and F.~Raymond.
\newblock Rigidity of almost crystallographic groups.
\newblock In {\em Combinatorial methods in topology and algebraic geometry
  ({R}ochester, {N}.{Y}., 1982)}, volume~44 of {\em Contemp. Math.}, pages
  73--78. Amer. Math. Soc., Providence, RI, 1985.

\bibitem{malcev}
A.~I. Malcev.
\newblock On a class of homogeneous spaces.
\newblock {\em Amer. Math. Soc. Translation}, 1951(39):33, 1951.

\bibitem{Manning}
A.~Manning.
\newblock There are no new {Anosov} diffeomorphisms on tori.
\newblock {\em Amer. J. Math.}, 96(3):422--42, 1974.

\bibitem{mm1974}
L.~Markus and K.~R. Meyer.
\newblock {\em Generic {H}amiltonian dynamical systems are neither integrable
  nor ergodic}.
\newblock American Mathematical Society, Providence, R.I., 1974.
\newblock Memoirs of the American Mathematical Society, No. 144.

\bibitem{mnt2012}
I.~Melbourne, V.~Ni{\c{t}}ic{\u{a}}, and A.~T{\"o}r{\"o}k.
\newblock Transitivity of {H}eisenberg group extensions of hyperbolic systems.
\newblock {\em Ergodic Theory Dynam. Systems}, 32(1):223--235, 2012.

\bibitem{newhouse1970codimension}
S.E. Newhouse.
\newblock On codimension one {Anosov} diffeomorphisms.
\newblock {\em American Journal of Mathematics}, 92(3):761--770, 1970.

\bibitem{plante1972}
J.F. Plante.
\newblock Anosov flows.
\newblock {\em Amer. J. Math.}, 94:729--754, 1972.

\bibitem{plante1975foliations}
J.F. Plante.
\newblock Foliations with measure preserving holonomy.
\newblock {\em Annals of Math.}, 102(2):327--361, 1975.

\bibitem{planteSolv}
J.F. Plante.
\newblock Solvable groups acting on the line.
\newblock {\em Trans. Amer. Math. Soc.}, 278(4):401--414, 1983.

\bibitem{pughshub1972}
C.~Pugh and M.~Shub.
\newblock Ergodicity of {A}nosov actions.
\newblock {\em Invent. Math.}, 15:1--23, 1972.

\bibitem{pugh2000stable}
C.~Pugh and M.~Shub.
\newblock Stable ergodicity and julienne quasi-conformality.
\newblock {\em Journal of the European Mathematical Society}, 2(1):1--52, 2000.

\bibitem{PSW}
C.~Pugh, M.~Shub, and A.~Wilkinson.
\newblock H\"older foliations.
\newblock {\em Duke Math.~J.}, 86(3):517--546, 1997.

\bibitem{PSWc}
C.~Pugh, M.~Shub, and A.~Wilkinson.
\newblock Correction to: ``{H}\"older foliations'' [{D}uke {M}ath. {J}. {\bf
  86} (1997), no. 3, 517--546; {MR}1432307 (97m:58155)].
\newblock {\em Duke Math. J.}, 105(1):105--106, 2000.

\bibitem{RHRHU-survey}
F.~Rodriguez~Hertz, M.~A. Rodriguez~Hertz, and R.~Ures.
\newblock A survey of partially hyperbolic dynamics.
\newblock In {\em Partially hyperbolic dynamics, laminations, and
  {T}eichm\"uller flow}, volume~51 of {\em Fields Inst. Commun.}, pages 35--87.
  Amer. Math. Soc., Providence, RI, 2007.

\bibitem{RHRHU-accessibility}
F.~Rodriguez~Hertz, M.~A. Rodriguez~Hertz, and R.~Ures.
\newblock Accessibility and stable ergodicity for partially hyperbolic
  diffeomorphisms with 1{D}-center bundle.
\newblock {\em Invent.~Math.}, 172(2):353--381, 2008.

\bibitem{RHRHU-nil}
F.~Rodriguez~Hertz, M.~A. Rodriguez~Hertz, and R.~Ures.
\newblock Partial hyperbolicity and ergodicity in dimension three.
\newblock {\em Journal of Modern Dynamics}, 2(2):187--208, 2008.

\bibitem{RHRHU-tori}
F.~Rodriguez~Hertz, M.~A. Rodriguez~Hertz, and R.~Ures.
\newblock Tori with hyperbolic dynamics in 3-manifolds.
\newblock {\em J. Mod. Dyn.}, 5(1):185--202, 2011.

\bibitem{RHRHU-non-dyn}
F.~Rodriguez~Hertz, M.~A. Rodriguez~Hertz, and R.~Ures.
\newblock A non-dynamically coherent example on {$\mathbb{T}^3$}.
\newblock {\em Ann. Inst. H. Poincar\'e Anal. Non Lin\'eaire}, to appear.

\bibitem{rokhlin1962}
V.~A. Rokhlin.
\newblock On the fundamental ideas of measure theory.
\newblock {\em Translations Amer. Math. Soc., Series 1}, pages 1--54, 1952.

\bibitem{wilkinson2010conservative}
A.~Wilkinson.
\newblock Conservative partially hyperbolic dynamics.
\newblock In {\em Proceedings of the International Congress of Mathematicians},
  volume 901, pages 1816--1836, 2010.

\end{thebibliography}
%\printbibliography{}

\end{document}